\def\bbm{\begin{bmatrix}}
	\def\ebm{\end{bmatrix}}
\def\bpm{\begin{pmatrix}}
	\def\epm{\end{pmatrix}}
\def\bvm{\begin{vmatrix}}
	\def\evm{\end{vmatrix}}
\def\bin{\begin{enumerate}}
	\def\ein{\end{enumerate}}
\def\bit{\begin{itemize}}
	\def\eit{\end{itemize}}
\def\bid{\begin{description}}
	\def\eid{\end{description}}
\newcommand*\xbar[1]{%
	\hbox{%
		\vbox{%
			\hrule height 0.5pt 
			\kern0.5ex
			\hbox{%
				\kern-0.1em
				\ensuremath{#1}%
				\kern-0.1em
			}%
		}%
	}%
}
\newcommand{\VERTiii}[1]{{\left\vert\kern-0.3ex\left\vert\kern-0.3ex\left\vert #1
		\right\vert\kern-0.3ex\right\vert\kern-0.3ex\right\vert}}
\newcommand{\VERT}{\vert\kern-0.3ex\vert\kern-0.3ex\vert}
\newcommand{\VERTl}{\left\vert\kern-0.3ex\left\vert\kern-0.3ex\left\vert}
\newcommand{\VERTr}{\right\vert\kern-0.3ex\right\vert\kern-0.3ex\right\vert}
\newcommand{\VERTbig}{\big\vert\kern-0.3ex\big\vert\kern-0.3ex\big\vert}
\newcommand{\VERTBig}{\Big\vert\kern-0.3ex\Big\vert\kern-0.3ex\Big\vert}
\newcommand*{\medcap}{\mathbin{\scalebox{1.5}{\ensuremath{\cap}}}}%
\DeclareFontFamily{OMX}{MnSymbolE}{}
\DeclareSymbolFont{MnLargeSymbols}{OMX}{MnSymbolE}{m}{n}
\DeclareFontShape{OMX}{MnSymbolE}{m}{n}{
	<-6>  MnSymbolE5 <6-7>  MnSymbolE6 <7-8>  MnSymbolE7 <8-9>  MnSymbolE8 <9-10> MnSymbolE9 <10-12> MnSymbolE10 <12->   MnSymbolE12
}{}
\DeclareFontShape{OMX}{MnSymbolE}{b}{n}{
	<-6>  MnSymbolE-Bold5 <6-7>  MnSymbolE-Bold6 <7-8>  MnSymbolE-Bold7 <8-9>  MnSymbolE-Bold8 <9-10> MnSymbolE-Bold9 <10-12> MnSymbolE-Bold10 <12->   MnSymbolE-Bold12
}{}
\let\llangle\@undefined
\let\rrangle\@undefined
\DeclareMathDelimiter{\llangle}{\mathopen}%
{MnLargeSymbols}{'164}{MnLargeSymbols}{'164}
\DeclareMathDelimiter{\rrangle}{\mathclose}%
{MnLargeSymbols}{'171}{MnLargeSymbols}{'171}
\newcommand{\md}{\mathrm{d}}
\renewcommand{\div}{\mathrm{div}}
\newcommand{\mr}{\mathbb{R}}
\newcommand{\mR}{\mathbb{R}}
\newcommand{\deq}{:=}
\newcommand{\bt}{\boldsymbol{\tau}}
\newcommand{\bn}{\boldsymbol{\nu}}
\newcommand{\bh}{\boldsymbol{h}}
\newcommand{\xxi}{\boldsymbol{\xi}}
\newcommand{\be}{\mathbf{e}}
\newcommand{\p}{\partial}
\newcommand{\gr}{\nabla}
\newcommand{\cxi}{{\xi_1}}
\newcommand{\ceta}{{\xi_2}}
\renewcommand{\r}{\rho}
\newcommand{\g}{\gamma}
\newcommand{\eps}{\varepsilon}
\newcommand{\z}{\zeta}
\renewcommand{\O}{\Omega}
\newcommand{\E}{G}
\newcommand{\xx}{\mathbf{x}}
\newcommand{\vv}{\mathbf{v}}
\newcommand{\ee}{\mathbf{e}}
\newcommand{\pp}{\mathbf{p}}
\newcommand{\rr}{\mathbf{r}}
\newcommand{\Gw}{\Gamma_{\text{\rm wedge}}}
\newcommand{\Gso}{\Gamma_{\text{\rm sonic}}}
\newcommand{\Gsh}{\Gamma_{\text{\rm shock}}}
\newcommand{\dist}{{\rm dist}}
\newcommand{\mS}{{\mathcal S}}
\newcommand{\mO}{{\mathcal O}}
\newcommand{\mN}{{\mathcal N}}
\newcommand{\rc}{r}
\newcommand{\PtLwR}{{P_3}}
\newcommand{\Pd}{P_{\rm d}}
\newtheorem{thm}{Theorem}[section]
\newtheorem{lem}{Lemma}[section]
\newtheorem{prop}[lem]{Proposition}
\newtheorem{cor}[lem]{Corollary}
\newtheorem{defi}[lem]{Definition}
\newtheorem{rem}[lem]{Remark}
\numberwithin{equation}{section}
\numberwithin{figure}{section}
\numberwithin{equation}{section}
\numberwithin{thm}{section}
\numberwithin{lem}{section}
 \journalname{Archive for Rational Mechanics and Analysis, }
\begin{document}

\title{Convexity of
	Self-Similar Transonic Shocks\\ and Free Boundaries for the Euler Equations \\ for Potential Flow\thanks{The research of
		Gui-Qiang G. Chen was supported in part by
		the UK
		Engineering and Physical Sciences Research Council Award
		EP/E035027/1 and
		EP/L015811/1, and the Royal Society--Wolfson Research Merit Award (UK).
		The research of Mikhail Feldman was
		supported in part by the National Science Foundation under Grants DMS-1401490 and DMS-1764278,
		and the Van Vleck
		Professorship Research Award  by the University of Wisconsin-Madison.
		The research of Wei Xiang was supported in part by the UK EPSRC Science and Innovation
		Award to the Oxford Centre for Nonlinear PDE (EP/E035027/1),
		the CityU Start-Up Grant for New Faculty 7200429(MA),
		the Research Grants Council of the HKSAR,
		China (Project No. CityU 21305215, Project No. CityU 11332916, Project No. CityU 11304817,
		and Project No. CityU 11303518),
		and partly by
		the National Science Foundation Grant
		DMS-1101260 while visiting the University of Wisconsin-Madison.}
}

\titlerunning{Convexity of Self-Similar Transonic Shocks}        

\author{Gui-Qiang G. Chen \and \\
        Mikhail Feldman \and \\
        Wei Xiang
}

\institute{Gui-Qiang G. Chen,
               Mathematical Institute, University of Oxford,
              Oxford, OX2 6GG, UK\\
              \email{chengq@maths.ox.ac.uk}\\{}\\           
                 Mikhail Feldman,
              Department of Mathematics, University of Wisconsin-Madison,
              Madison, WI 53706-1388, USA\\
              \email{feldman@math.wisc.edu}\\{}\\
             Wei Xiang,
           Department of Mathematics, City University of Hong Kong, Kowloon, Hong Kong, China  \\
           \email{weixiang@cityu.edu.hk}
}
\date{}

\maketitle

\begin{abstract}
We are concerned with geometric properties of transonic shocks as free boundaries
in two-dimensional self-similar coordinates for
compressible fluid flows,
which are not only important for the understanding of geometric structure and stability of fluid motions in
continuum mechanics but also fundamental in the mathematical theory of multidimensional
conservation laws.
A transonic shock for the Euler equations for self-similar potential flow
separates elliptic (subsonic) and hyperbolic (supersonic) phases
of the self-similar solution of the corresponding
nonlinear partial differential equation
in a domain under consideration,
in which the location of the transonic shock
is {\it apriori} unknown.
We first develop a general framework
under which self-similar transonic shocks,
as free boundaries, are proved to be uniformly convex,
and then apply this framework to prove the uniform convexity
of transonic shocks in the two longstanding
fundamental shock problems -- the shock reflection-diffraction by wedges
and the Prandtl-Meyer reflection
for supersonic flows past solid ramps.
To achieve this, our approach is to exploit underlying nonlocal properties of the solution and
the free boundary for the potential flow equation.

\keywords{Transonic shock \and free boundary \and strict convexity \and uniform convexity \and	Euler equations \and compressible flow \and potential flow \and self-similar \and	conservation laws \and PDE \and shock reflection-diffraction \and	Prandtl-Meyer reflection \and nonlinear \and global approach \and techniques \and	geometric shapes \and fine properties}
\subclass{Primary: 35R35\and 35M12 \and
	35C06 \and 35L65 \and 35L70 \and 35J70 \and 76H05 \and 35L67 \and
	35B45 \and 35B35 \and	35B40 \and 35B36 \and	35B38;
	Secondary: 35L15 \and 35L20 \and 35J67 \and 76N10 \and 76L05 \and 76J20 \and 76N20 \and 76G25}
\end{abstract}

\section{\, Introduction}\label{sec:introduction}
We are concerned with geometric properties of transonic shocks
as free boundaries in two-dimensional self-similar coordinates for compressible fluid flows,
which are not only important for the understanding of geometric structure and stability of fluid motions in
continuum mechanics but also fundamental in the mathematical theory of multidimensional
conservation laws (see \cite{bers-book1958SubsonicTransonicGas,cf-book2014shockreflection,Da}).
Mathematically, a transonic shock for the Euler equations for potential flow
separates elliptic (subsonic) and hyperbolic (supersonic) phases
of the self-similar solution of the corresponding
nonlinear partial differential equation (PDE)
in a domain under consideration, in which the location
of the transonic shock is {\it apriori} unknown.
The Rankine-Hugoniot conditions on the shock, together
with the nonlinear PDE in the
elliptic and hyperbolic regions, provide
the sufficient overdeterminancy for finding the shock location.
This enforces a restriction to the shock
and yields its fine properties such as
its possible geometric shapes,
which is the main theme of this paper.
For this purpose, we formulate the transonic shock problem
as a one-phase free boundary problem for the nonlinear elliptic PDE
in a domain with a part of the boundary fixed,
as illustrated in Fig. \ref{figure:free boundary problems}.
More precisely, we first develop a general framework
under which self-similar transonic shock
waves, as the free boundaries in the one-phase problem,
are proved to be uniformly convex,
and then apply this framework to prove the uniform convexity
of transonic shocks in the two longstanding
fundamental shock problems -- the shock reflection-diffraction by wedges
and the Prandtl-Meyer reflection for supersonic flows past solid ramps.
In particular, the convexity of transonic shocks
is consistent with the geometric configurations of shocks observed in
physical experiments and numerical simulations; see
\textit{e.g.} \cite{BD,Chapman,cdx-1,GlimmMajda},
\cite{DP,DG,Hindman-Kutler-Anderson,Kutler-Shankar,Schneyer,Shankar-Kutler-Anderson},
\cite{Glaz-Colella1,Glaz-Colella2,GWGH,IVF,WC}, and the references cited therein.
Also see \cite{CCY2,CCY3,KTa,LaxLiu,LL2,SCG,Serre} for the geometric structure
of numerical Riemann solutions involving transonic shocks for
the Euler equations for compressible fluids.

One of our key observations in this paper is that
the convexity of transonic shocks is not a local property.
In fact, for the regular shock reflection-diffraction problem as described in \S 7.1,
the uniform convexity is a result of the interaction between the cornered wedge
and the incident shock, since the reflected shock remains flat when the wedge is a flat wall.
Therefore, any local argument is not sufficient to lead to a proof of the uniform convexity.
In this paper, we develop a global approach
by exploiting some nonlocal properties of transonic shocks
in self-similar coordinates
and employ it to prove that the transonic shocks must
be convex.
Our approach is based on two
features related to the global and nonlinear phenomena.
One is that the convexity of transonic shocks is closely related to the monotonicity properties
of the solution, which is derived from the global structure in the applications.
These properties are also crucial in the proof of the existence of the two shock problems
in \cite{Bae-Chen-Feldman-2,cf-book2014shockreflection}.
The other is that the Rankine-Hugoniot conditions, combined with the monotonicity properties,
enforce
the nonlocal dependence between the values of the velocity at the points of the transonic shock,
as well as the nonlocal dependence between the velocity and
the geometric shape of the shock.
Moreover, for this problem, it seems to be difficult to apply directly the methods
as in \cite{CS1, CS2, DM},
owing to the difference and more complicated structure of the boundary conditions.

The convexity of shock waves is not only an important geometric property
observed frequently in physical experiments and numerical simulations,
but also crucial in the analysis of multidimensional shock waves.
For example, the convexity property of transonic shocks plays
an essential role
in the proof of the uniqueness and stability of shock waves
with large curvature in \cite{CFX2017}.
Therefore, our approach can be useful for
other nonlinear problems involving transonic shocks, especially for the problems
that cannot be handled by the perturbation methods.

In particular, as an application
of our general framework for the convexity of shocks,
we prove the uniform convexity of transonic shocks in
the two longstanding fundamental shock problems.
The first is the problem of shock reflection-diffraction
by concave cornered wedges as analyzed in \S 7.1. It has been
analyzed
in Chen-Feldman \cite{cf-annofmath20101067regularreflection,cf-book2014shockreflection},
in which von Neumann's sonic and detachment conjectures for the existence of regular
shock reflection-diffraction configurations have been solved all the way up to
the detachment wedge-angle for potential flow.
The second is the Prandtl-Meyer reflection problem
for supersonic flow past a solid ramp as analyzed in \S 7.2.
Elling-Liu \cite{ellingliu-CPAMPrandtlMeyerReflection20081347} made a first rigorous
analysis of the problem for which the steady supersonic weak shock solution is a large-time asymptotic limit
of an unsteady flow
under certain assumptions for an important class of wedge angles and potential fluids.
Recently, in Bae-Chen-Feldman \cite{baechenfeldman-prandtlmayerReflection,Bae-Chen-Feldman-2},
the existence theorem for the general case all the way up to the detachment wedge-angle
has been established
via new techniques
based on those developed in Chen-Feldman \cite{cf-book2014shockreflection}.
For both problems, we apply the general framework
developed in this paper to prove the uniform convexity of the transonic shocks involved.

The study of geometric properties of free boundaries, such as the convexity of free boundaries and the monotonicity properties of the corresponding solutions under consideration,
is fundamental in the mathematical theory of free boundary problems;
see \cite{CJK,CS1,CS2,DM,EvansSpruck,Friedman_book,Toland} and the
references cited therein.
Furthermore, as mentioned earlier, the convexity of free boundaries has played an essential
role in the analysis of the uniqueness and stability of solutions of the free boundary problems,
as shown in \cite{CFX2017}.

\smallskip
The organization of this paper is as follows:
In \S \ref{sec:potential flow wquation and free boundary problem},
we introduce the potential flow equation and
the Rankine-Hugoniot conditions on the shock, and
set up a framework as a general free boundary problem on which we focus in this paper,
and then we present the main theorem for this free boundary problem.
In \S \ref{sec:proof of main thoerem}, we show some useful lemmas.
Then we develop our
approach to prove first
the strict
convexity of the shock,
\emph{i.e.}, Theorem \ref{thm:main theorem}
in \S \ref{sec:proof of the main theorem},
and to prove further the uniform convexity
of the shock
on compact subsets of its relative interior,
\emph{i.e.}, Theorem \ref{thm:main theorem-strictUnif}
in \S \ref{sec:proof of the main theorem-1}.
In \S 6, we establish the relation between the strict convexity of the transonic shock
and the monotonicity properties of the solution, \emph{i.e.}, Theorem \ref{theorem:1x}.
Finally, in \S \ref{sec:application}, we apply the main theorems to prove the uniform convexity
of transonic shocks in the two
shock problems -- the shock reflection-diffraction by wedges
and the Prandtl-Meyer reflection
for supersonic flows past solid ramps.

A note regarding terminology for simplicity:
Since our main concern is the convexity of the elliptic (subsonic) region
for which the transonic shock as a free boundary
is a part of the boundary of the region throughout this paper,
we use the term $-$ convexity $-$ for the free boundary, even though it corresponds to the
concavity of the shock location function in a natural coordinate system.
Moreover, we use the term $-$ {\it uniform convexity}
$-$ for a transonic shock to represent that the transonic shock is
of non-vanishing curvature on any compact subset of its relative interior.

\section{\, The Potential Flow Equation and Free Boundary
	Problems}\label{sec:potential flow wquation and free boundary problem}

\subsection{\, The potential flow equation}\label{subsec:potential flow rh condition}

As in \cite{bcf-invent2009174505optimalregularity,cf-annofmath20101067regularreflection},
the Euler equations for potential flow consist of the conservation
law of mass for the density and the Bernoulli law for
the velocity potential $\Psi$:
\begin{eqnarray}
&&\partial_t\rho+\gr_{\mathbf{x}}\cdot(\rho\nabla_{\xx}\Psi)=0,  \label{equ:1}\\
&&\p_t\Psi+\frac{1}{2}|\gr_{\mathbf{x}}\Psi|^2+i(\r)=B_0,  \label{equ:2}
\end{eqnarray}
where $B_0$ is the Bernoulli constant determined by the incoming
flow and/or boundary conditions,
$\mathbf{x}=(x_1,x_2)\in\mr^2$,
$
i(\r)=\int^{\rho}_1\frac{p'(\tau)}{\tau}d\tau
$
for the pressure function $p=p(\rho)$,
and $\mathbf{v}=\nabla \Psi$ is the velocity.

For polytropic gas, by scaling,
$$
p(\r)=\frac{\r^{\g}}{\g},\quad c^2(\r)=\r^{\g-1},\quad
i(\r)=\frac{\r^{\g-1}-1}{\g-1} \qquad \mbox{for $\g>1$},
$$
where $c(\r)$ is the sound speed.

If the initial-boundary value problem is invariant under
the self-similar scaling:
$$
(\mathbf{x},t)\rightarrow(\alpha \mathbf{x},\alpha t),\quad
(\rho,\Psi)\rightarrow(\rho,\frac{\Psi}{\alpha})\qquad\,\,\,\,
\mbox{for $\alpha\neq0$},
$$
then we can seek self-similar solutions with the form:
$$
\rho(\mathbf{x},t)=\rho(\xxi),\quad
\Psi(\mathbf{x},t)=t\big(\varphi(\xxi)+\frac{1}{2}|\xxi|^2\big)
\qquad\,\,\mbox{for $\xxi=(\xi_1,\xi_2)=\frac{\mathbf{x}}{t}$},
$$
where $\varphi$ is called a pseudo-velocity potential that satisfies
$D\varphi:=(\varphi_{\xi_1}, \varphi_{\xi_2})=\mathbf{v}-\xxi$, which
is called a pseudo-velocity.
The pseudo--potential function
$\varphi$ satisfies the following potential flow equation in the self-similar
coordinates:
\begin{equation}\label{1.1}
\mbox{div}(\rho D\varphi)+2\rho=0,
\end{equation}
where the density function $\rho=\rho(|D\varphi|^2,\varphi)$ is determined by
\begin{equation}\label{1.2}
\rho(|D\varphi|^2,\varphi)
=\big(\rho_0^{\gamma-1}-(\gamma-1)(\varphi+\frac{1}{2}|D\varphi|^2)\big)^{\frac{1}{\gamma-1}},
\end{equation}
with constant $\rho_0>0$, and the {\it divergence}  div and {\it gradient} $D$ are with respect to the
self-similar variables $\xxi$.

From \eqref{1.1}--\eqref{1.2}, we see that the potential function
$\varphi$ is governed by the following potential flow equation of
second order:
\begin{equation}\label{equ:potential flow equation}
\div\big(\r(|D\varphi|^2,\varphi)D\varphi\big)+2\r(|D\varphi|^2,\varphi)=0.
\end{equation}
Equation (\ref{equ:potential flow equation}) written in
the non-divergence form is
\begin{equation}\label{nondivMainEq}
(c^2-\varphi_{\cxi}^2)\varphi_{\cxi\cxi}-2\varphi_\cxi\varphi_\ceta\varphi_{\cxi\ceta}
+(c^2-\varphi_{\ceta}^2)\varphi_{\ceta\ceta}+2c^2-|D\varphi|^2=0,
\end{equation}
where the sound speed $c=c(|D\varphi|^2,\varphi,\rho_0)$ is determined by
\begin{equation}\label{c-through-density-function}
c^2(|D\varphi|^2,\varphi,\rho_0)=
\rho^{\gamma-1}(|D\varphi|^2,\varphi,\rho_0^{\gamma-1})
=\rho_0^{\gamma-1}-(\gamma-1)\big(\frac{1}{2}|D\varphi|^2+\varphi\big).
\end{equation}
Equation \eqref{equ:potential flow equation} is a second-order
equation of mixed hyperbolic-elliptic type, as it can be seen
from (\ref{nondivMainEq}):  It is elliptic if
and only if
\begin{equation}\label{criterion:ellipticity-a}
|D\varphi|<c(|D\varphi|^2,\varphi, \rho_0),
\end{equation}
which is equivalent to
\begin{equation}\label{criterion:ellipticity}
|D\varphi|<c_{\star}(\varphi,\rho_0)\deq\sqrt{\frac{2}{\g+1}\big(\rho_0^{\gamma-1}-(\gamma-1)\varphi\big)}.
\end{equation}
Moreover,  from  (\ref{nondivMainEq})--(\ref{c-through-density-function}),
equation \eqref{equ:potential flow equation} satisfies the Galilean invariance property:
If $\varphi(\xxi)$ is a solution, then its shift $\varphi(\xxi-\xxi_0)$ for any constant vector $\xxi_0$ is also a solution.
Furthermore,
$\varphi(\xxi)+const.$ is a solution of  \eqref{equ:potential flow equation}
with adjusted constant $\rho_0$ correspondingly in \eqref{1.2}.

One class of solutions of (\ref{equ:potential flow equation})
is that of {\em constant states} that are the solutions
with constant velocity $\mathbf{v}=(u, v)$.
This implies that the pseudo-potential $\varphi$ of a constant state satisfies
$D\varphi=\mathbf{v}-\xxi$ so that
\begin{equation}\label{constantStatesForm}
\varphi(\xxi)=-\frac 12|\xxi|^2+\mathbf{v}\cdot\xxi +C,
\end{equation}
where $C$ is a constant. For such $\varphi$, the expressions
in \eqref{1.2}--\eqref{c-through-density-function}
imply that the density and sonic
speed are positive constants $\rho$ and $c$, {\it i.e.},
independent of $\xxi$.
Then,
from \eqref{criterion:ellipticity-a} and \eqref{constantStatesForm},
the ellipticity condition for the constant state is
$$
|\xxi -\mathbf{v}|<c.
$$
Thus, for a constant state $\mathbf{v}$,
equation (\ref{equ:potential flow equation})
is elliptic inside the {\em sonic circle},
with center $\mathbf{v}$ and radius $c$.

\subsection{\, Weak solutions and the Rankine-Hugoniot conditions}

Since the problem involves transonic shocks, we define the notion
of weak solutions of equation \eqref{equ:potential flow equation},
which admits shocks.
As in \cite{cf-annofmath20101067regularreflection},
it is defined in the distributional sense.

\begin{defi}\label{def:weak solution}
	A function $\varphi\in W^{1,1}_{\rm loc}(\O)$ is called a weak solution of
	\eqref{equ:potential flow equation} if
	\begin{enumerate}[\rm (i)]
		\item  \label{def:weak solution-i1}
		$\rho_0^{\gamma-1}-(\gamma-1)(\varphi+\frac{1}{2}|D\varphi|^2)\geq0\quad\text{a.e. in }\O${\rm ;}
		
		\smallskip
		\item \label{def:weak solution-i2}
		$(\r(|D\varphi|^2,\varphi),\r(|D\varphi|^2,\varphi)|D\varphi|)\in(L^1_{\rm loc}(\O))^2${\rm ;}
		
		\smallskip
		\item \label{def:weak solution-i3}
		For every $\z\in C^{\infty}_{c}(\O)$,
		\begin{equation}\label{def:weak solution-i3-Eqn}
		\int_{\O}\big(\r(|D\varphi|^2,\varphi)D\varphi\cdot
		D\z-2\r(|D\varphi|^2,\varphi)\z\big)\mathrm{d}\xxi=0.
		\end{equation}
	\end{enumerate}
\end{defi}

A piecewise $C^2$ solution $\varphi$ in $\Omega$, which is $C^2$ away from and $C^1$ up
to the $C^1$--shock curve $S$,
satisfies the conditions of
Definition \ref{def:weak solution} if and only if it is a $C^2$--solution of
\eqref{equ:potential flow equation} in each subregion
and satisfies the following
Rankine-Hugoniot conditions across curve $S$:
\begin{eqnarray}
&&[\r(|D\varphi|^2,\varphi)D\varphi\cdot\bn]_{S}=0, \label{RH conditon involve normal direction}\\[1mm]
&&[\varphi]_{S}=0, \label{RH conditon continous}
\end{eqnarray}
where the square bracket $[\,\cdot\,]_{S}$ denotes the jump across $S$,
and $\bn$ is the unit normal vector to $S$.
Condition  (\ref{RH conditon continous})
follows from the requirement: $\varphi\in W^{1,1}_{\rm loc}(\O)$
for piecewise-smooth $\varphi$,
and condition (\ref{RH conditon involve normal direction})
is obtained from (\ref{def:weak solution-i3-Eqn})
via integration by parts and by using (\ref{RH conditon continous}) and
the piecewise-smoothness of $\varphi$.
Physically, condition (\ref{RH conditon involve normal direction})
is owing to the conservation of mass across
the shock, and (\ref{RH conditon continous})
is owing to the irrotationality. From now on,
we denote $D\varphi\cdot\bn=\partial_{\bn}\varphi=\varphi_{\bn}$
when no confusion arises.

It is well known that there are fairly many weak solutions to conservation
laws \eqref{equ:potential flow equation}.
In order to single out the physically relevant solutions, the entropy condition
is required.
A discontinuity of $D\varphi$ satisfying the Rankine-Hugoniot
conditions
\eqref{RH conditon involve normal direction}--\eqref{RH conditon continous}
is called a shock if it satisfies the following
physical entropy condition:
\begin{equation}\label{2.13e}
\begin{split}
&\mbox{\it The density function $\r$ increases across
   	the discontinuity}\\
&\mbox{\it in the pseudo-flow direction}.
\end{split}
\end{equation}

\smallskip
\noindent
From \eqref{RH conditon involve normal direction},
the entropy condition indicates that the normal
derivative function $\varphi_{\bn}$ on a shock
always decreases across
the shock in the pseudo-flow direction.
That is, when the pseudo-flow direction
and the unit normal vector $\nu$ are both from state $(0)$ to $(1)$,
then $\r_1>\r_0$ and $\varphi_{1\bn}<\varphi_{0\bn}$.

\subsection{\, General framework and free boundary problems}
Now we develop a general framework for the transonic shocks as free boundaries,
on which we will focus our analysis in this paper.

As in Fig. \ref{figure:free boundary problems},
let $\Omega$ be a bounded, open, and connected set,
and $\partial \Omega = \Gsh\cup\Gamma_1\cup\Gamma_2$,
where the closed curve segment
$\Gsh$ is a transonic shock that separates a pseudo-supersonic
constant state $(0)$ outside $\Omega$ from a
pseudo-subsonic (non-constant) state $(1)$ inside $\Omega$,
and $\Gamma_1\cup\Gamma_2$ is a fixed boundary whose structure
will be specified later.
The dashed ball $B_{c_0}(O_0)$ is the sonic circle of state $(0)$ with
center $O_0=(u_0,v_0)$ and radius $c_0$.
Note that $\Gsh$ is outside of $B_{c_0}(O_0)$
because state (0) is pseudo-supersonic on $\Gsh$.
$A$ and $B$ are the endpoints of the free boundary $\Gsh$,
while $\bt_A$ and $\bt_B$ are the unit tangent vectors pointing into the interior of
$\Gsh$ at $A$ and $B$, respectively.
\begin{figure}[!ht]
	\centering
	\includegraphics[width=0.40\textwidth]{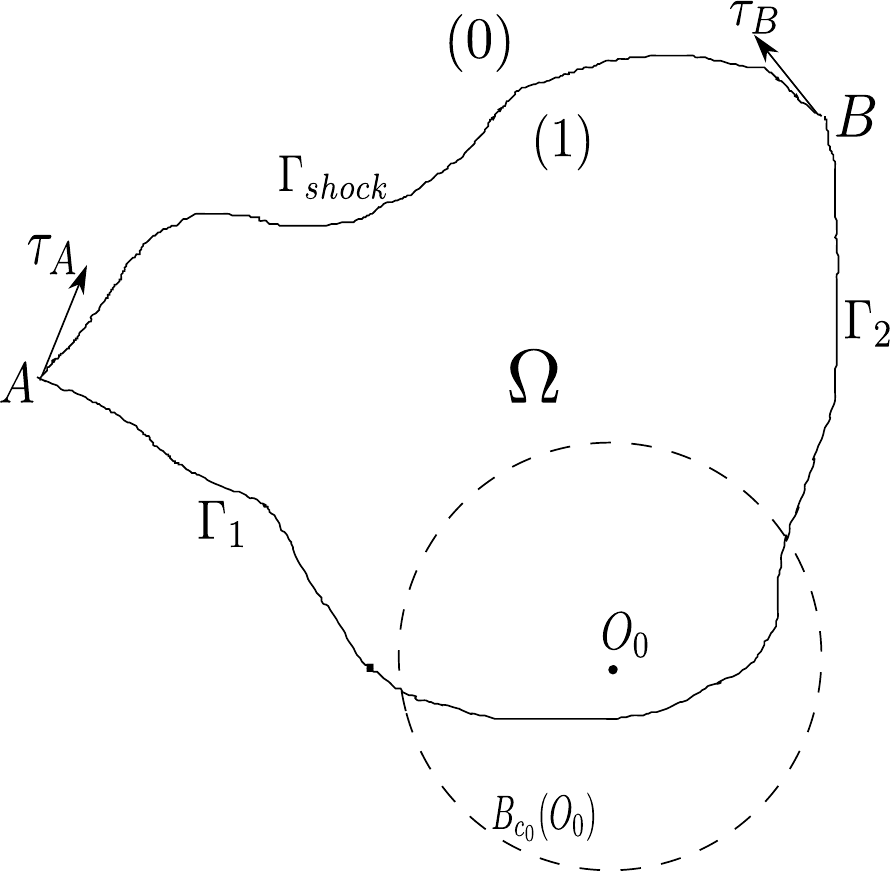}
	\hspace{0.9in}\caption{Free boundary problems}
	\label{figure:free boundary problems}
\end{figure}

Denote $\mathbf{v}_0=(u_0,v_0)$. Then the pseudo-potential of constant state $(0)$ with density $\rho_0>0$
has the form:
\begin{equation}\label{state0-Nonzero}
\varphi_0
=-\frac{1}{2}(\xxi-\mathbf{v}_0)^2.
\end{equation}
Let
$$
\phi:=\varphi-\varphi_0.
$$
Then we see from (\ref{nondivMainEq}) that $\phi=\varphi-\varphi_0$ satisfies the following equation in $\Omega$:
\begin{equation}\label{equ:study}
(c^2-\varphi_{\xi_1}^2)\phi_{\xi_1\xi_1}-2\varphi_{\xi_1}\varphi_{\xi_2}\phi_{\xi_1\xi_2}+(c^2-\varphi_{\xi_2}^2)\phi_{\xi_2\xi_2}=0,
\end{equation}
where $c=c(|D\varphi|^2, \varphi, \rho_0)$ is the sound speed, determined by (\ref{c-through-density-function}).
Along the shock curve $\Gsh$ that separates the constant state (0)
with pseudo-potential $\varphi_0$
from the non-constant state $\varphi$ in $\Omega$, the boundary conditions for $\phi$ are:
\begin{equation}\label{equ:boundary}
\phi=0,\,\,\, \r(|D\phi+D\varphi_0|^2,\phi+\varphi_0)D(\phi+\varphi_0)\cdot\bn=\rho_0D\varphi_0\cdot\bn\quad\, \mbox{on $\Gsh$},
\end{equation}
from the Rankine-Hugoniot conditions
\eqref{RH conditon involve normal direction}--\eqref{RH conditon continous}.

Now we state the main results of this paper.
We first state the following structural framework for domain $\Omega$ under consideration.

From now on, $\Gamma^0$ denotes the relative interior of a curve segment $\Gamma$.
In particular, $\Gsh^0$ is the relative interior of $\Gsh$.

\bigskip
\noindent
{\bf Framework (A)} -- The structural framework for domain $\Omega$:

\begin{enumerate}
	\item[(i)] Domain $\Omega$ is bounded.
	Its boundary $\partial\Omega$ is a continuous closed curve without self-intersections,
	piecewise $C^{1,\alpha}$  up to the endpoints of
	each smooth part for some $\alpha\in(0,1)$, and the number of smooth parts
	is finite.
	
	\item[(ii)]
	At each corner point of $\partial\Omega$,
	angle $\theta$ between the arcs meeting at that point from
	the interior of $\Omega$ satisfies $\theta\in (0, \pi)$.
	
	\item[(iii)]
	$\partial\Omega = \Gsh\cup\Gamma_1\cup\Gamma_2$,
	where $\Gsh$, $\Gamma_1$, and $\Gamma_2$ are connected and
	disjoint, and both $\Gsh^0$ and $\Gamma_1\cup\Gamma_2$ are non-empty.
	Moreover, if $\Gamma_i\ne \emptyset$ for some $i\in\{1,2\}$, then its
	relative interior is nonempty, \emph{i.e.}, $\Gamma_i^0\ne \emptyset$.

	\item[(iv)] $\Gsh$ includes its endpoints $A$ and $B$
      with corresponding unit tangent vectors $\bt_A$ and $\bt_B$  pointing into the interior of
      $\Gsh$ respectively.
	If $\Gamma_1\ne \emptyset$, then
	$A$ is a common endpoint of $\Gsh$ and $\Gamma_1$.
	If $\Gamma_2\ne \emptyset$, then
	$B$ is a common endpoint of $\Gsh$ and $\Gamma_2$.
\end{enumerate}

If $\bt_A\ne \pm\bt_B$, define the cone:
$$
Con:=\{r\bt_A+s\bt_B\; : \; r\,,s\in(0,\infty)\}.
$$
Then we have
\begin{thm}\label{thm:main theorem}
	Assume that domain $\Omega$ satisfies Framework {\rm (A)}.
	Assume that $\phi\in C^1(\overline{\Omega})\cap C^{2}(\Omega\cup\Gsh^0)\cap C^{3}(\Omega)$ is
	a solution of \eqref{equ:study}--\eqref{equ:boundary}, which is not a constant state in $\Omega$.
	Moreover, let $\phi$ satisfy the following conditions{\rm :}
	\smallskip
	\begin{enumerate}
		\item[\rm (A1)]\label{Assumpt-H1}
		The entropy condition holds across $\Gsh${\rm :}
		$\rho(|D\varphi|^2, \varphi)>\rho_0$ and
		$\phi_{\bn}<0$
		along $\Gsh$, where $\bn$ is the interior normal vector to $\Gsh$,
		i.e., pointing into $\Omega${\rm ;}
		
		\smallskip
		\item[\rm (A2)]\label{Assumpt-H2}
		There exist constants $C_1>0$ and $\alpha_1\in(0,1)$ such that
		$\|\phi\|_{1+\alpha_1,\overline{\Omega}}\leq C_1${\rm ;}
		
		\smallskip
		\item[\rm (A3)]\label{Assumpt-H3}
		In $\Omega\cup\Gsh^0$, equation \eqref{equ:study} is
		strictly
		elliptic{\rm :} $c^2-|D(\phi+\varphi_0)|^2>0${\rm ;}
		
		\smallskip
		\item[\rm (A4)]\label{Assumpt-H4-a}
		$\Gsh$ is $C^2$ in its relative interior{\rm ;}
		
		\smallskip
		\item[\rm (A5)]\label{Assumpt-H4}
		$\bt_A\ne\pm\bt_B$, and $\{P+Con\}\cap\Omega=\emptyset$
		for any point $P\in\overline{\Gsh}${\rm ;}
		
		\smallskip
		\item[\rm (A6)]\label{Assumpt-H5}
		There exists a vector $\mathbf{e}\in Con$ such that
		one of the following conditions holds{\rm :}
		\begin{enumerate}[\rm (i)]
			\item\label{H5Cases-1}
			$\Gamma_1\ne\emptyset$, and
			the directional derivative
			$\phi_{\mathbf{e}}$
			cannot have a local maximum point on $\Gamma_1^0\cup\{A\}$ and a local minimum point on $\Gamma_2^0$,
			\item\label{H5Cases-2}
			$\Gamma_2\ne\emptyset$, and $\phi_{\mathbf{e}}$ cannot have a local minimum point on
			$\Gamma_1^0$ and a local maximum point on $\Gamma_2^0\cup\{B\}$,
			\item\label{H5Cases-3}
			$\phi_{\mathbf{e}}$ cannot have a local minimum point on $\Gamma_1\cup\Gamma_2$,
		\end{enumerate}
		where all the local maximum or minimum points are relative to $\overline\Omega$.
	\end{enumerate}
	Then the free boundary $\Gsh$ is a convex graph{\rm .} That is, there exists a concave
	function $f\in C^{1,\alpha}(\mR)$ in some orthonormal coordinate system $(S,T)$ in $\mR^2$ such that
	\begin{equation}\label{shock-graph-inMainThm}
	\begin{split}
    &\Gsh =\{(S,T):\; S=f(T), \; T_A<T<T_B\},\\
    &\Omega\cap\{T_A<T<T_B\} \subset\{S<f(T)\}
    \end{split}	
\end{equation}
	with $f\in C^\infty((T_A, T_B))$,
	and shock $\Gsh$ is strictly convex in its relative interior
	in the sense that,
	if $P=(S,T)\in\Gsh^0$ and $f''(T)=0$, then
	there exists an integer $k>1$, independent of the choice of the coordinate system $(S,T)$, such that
	\begin{equation}\label{strictConvexity-degenerate-graph}
	f^{(n)}(T)=0 \;\;\mbox{ for $n=2, \dots, 2k-1$},
	\qquad\,\,\, f^{(2k)}(T)<0.
	\end{equation}
	The number of the points at which $f''(T)=0$ is at most finite on each compact subset of $\Gsh^0$.
	In particular, the free boundary $\Gsh$ cannot contain any straight segment.
\end{thm}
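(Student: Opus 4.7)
The plan is to convert the convexity question for $\Gsh$ into a maximum-principle problem for the directional derivative $w:=\phi_\mathbf{e}$, where $\mathbf{e}\in Con$ is supplied by (A6). The decisive point is that, along $\Gsh$, the curvature of the free boundary can be expressed, modulo the Rankine-Hugoniot conditions, as a first-order boundary operator applied to $w$, so that information about the sign of $w$ inside $\Omega$ translates into information about $f''$ on $\Gsh^0$.

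\textbf{Reduction to a graph and linearization.} First I would fix an orthonormal coordinate system $(S,T)$ in which both $\bt_A$ and $\bt_B$ have positive $T$-component, and use the cone condition (A5) together with the $C^{1,\alpha}$ regularity from (A2) and (A4) to show that $\Gsh$ is the graph $\{S=f(T),\;T_A<T<T_B\}$ of a function $f\in C^{1,\alpha}(\mathbb R)\cap C^\infty((T_A,T_B))$, with $\Omega\cap\{T_A<T<T_B\}\subset\{S<f(T)\}$. Differentiating \eqref{equ:study} in direction $\mathbf{e}$, and using that $\varphi_0$ is quadratic, $w=\phi_\mathbf{e}$ solves a homogeneous linear equation $\mathcal L w=0$ in $\Omega$, strictly elliptic up to $\Gsh^0$ by (A3). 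From $\phi\equiv 0$ on $\Gsh$ (the first relation in \eqref{equ:boundary}) we get $\phi_\bt\equiv 0$ along $\Gsh$, hence $w=(\mathbf{e}\cdot\bn)\phi_\bn$ on $\Gsh$, whose sign is fixed by $\mathbf{e}\cdot\bn$ via the entropy condition (A1).

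\textbf{Boundary identity for the curvature.} Differentiating the mass-balance part of \eqref{equ:boundary} tangentially along $\Gsh$, and combining it with the tangential derivative of $\phi_\bt=0$ (which produces the curvature $\kappa$ of $\Gsh$ via the Frenet-type identity $\partial_\bt\bn=-\kappa\bt$), one arrives at an identity of the schematic form
\begin{equation*}
\alpha_1 w_\bt + \alpha_2 w_\bn + \alpha_3 w \;=\; -\kappa\,\Phi(D\varphi)\qquad\text{on } \Gsh^0,
\end{equation*}
where $\alpha_i$ are smooth and $\Phi$ has a definite sign prescribed by (A1). In the graph coordinates, $\kappa$ and $f''$ carry the same sign up to a positive factor, so signing $f''$ is equivalent to signing the boundary operator on the left-hand side applied to $w$.

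\textbf{No straight segments and strict convexity.} If $\Gsh$ contained a nontrivial straight segment $\sigma$, then $\kappa\equiv 0$ and $\bn$ would be constant along $\sigma$; this, combined with $w=(\mathbf{e}\cdot\bn)\phi_\bn$, yields two independent relations for $w$ along $\sigma$, and unique continuation for $\mathcal L w=0$ propagates them into $\Omega$, ultimately forcing $w$ to attain a local extremum at a point of $\Gamma_1\cup\Gamma_2$ forbidden by the case of (A6) in force. Applied more carefully on the whole of $\Gsh^0$, the strong maximum principle and Hopf's lemma for $\mathcal L w=0$, together with the boundary identity above, deliver $\kappa\le 0$, hence the concavity of $f$. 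At an interior point $P=(f(T_0),T_0)\in\Gsh^0$ with $f''(T_0)=0$, a Hopf-type boundary expansion of $w$ produces a first nonzero even order $2k$ at which $f$ deviates from its tangent line, with $f^{(2k)}(T_0)<0$, establishing \eqref{strictConvexity-degenerate-graph}; local finiteness of $\{f''=0\}$ in $\Gsh^0$ then follows from the finite-order vanishing by the isolation of zeros.

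\textbf{Main obstacle.} The hard part is converting the qualitative local input (A6), which only forbids extrema of $w$ on specified subsets of the fixed boundary $\Gamma_1\cup\Gamma_2$, into a global sign for $\kappa$ on all of $\Gsh^0$. Since $\mathcal L w=0$ is a global condition and the three cases (i)--(iii) of (A6) differ by which endpoints $A,B$ are admissible as extrema, the maximum-principle comparison must be carried out with an auxiliary function tailored to the case in force (for instance, a combination like $w-c_0\phi$ with $c_0>0$ chosen to neutralize the boundary contribution from $\Gamma_i$), and the interaction with the shock corners $A,B$ requires a separate analysis using the cone condition (A5) to prevent $w$ from developing a corner extremum. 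It is precisely the nonlocal nature of the Rankine-Hugoniot system, which links the geometry of $\Gsh$ to global information about $\phi$, that forces this global argument and rules out any purely local derivation of convexity.
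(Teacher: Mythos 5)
Your reduction to a graph, the linearized equation for $w=\phi_{\mathbf e}$, and the boundary identity linking $f''$ (equivalently $\phi_{\bt\bt}$) to a first-order operator in $w$ on $\Gsh^0$ are all consistent with the paper's preliminaries (Lemmas \ref{lem:sign of second derivative} and \ref{lem:shock graph}). But the decisive step — ``the strong maximum principle and Hopf's lemma \dots deliver $\kappa\le 0$'' — is exactly where the argument has a genuine gap, and your proposed fix (a comparison function such as $w-c_0\phi$, plus a corner analysis at $A,B$) does not close it. Hopf's lemma signs $w_{\bn}$, and hence $\phi_{\bt\bt}$, only at points where $w$ attains a local extremum \emph{relative to $\overline\Omega$}; a priori the shock could contain alternating convex and concave arcs, and at a point that is a minimum of $w$ only \emph{along} $\Gsh$ no sign information is produced. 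Condition (A6) constrains extrema of $w$ on the fixed boundary, not on $\Gsh$, so no purely local or single-comparison-function maximum-principle argument can transfer that information to every point of the free boundary — indeed the paper stresses that convexity here is not a local property (a flat wall gives a flat reflected shock). What is missing in your proposal is the paper's nonlocal mechanism: Lemma \ref{lem:interesting lemma} and its consequence Lemma \ref{distToSonicCenter-lemma}, which compare $\phi_{\mathbf e}$ at two shock points with parallel tangent lines through the distance of those tangent lines to the sonic circle of the upstream constant state. This is what forces a strict minimum of $\phi_{\mathbf e}$ inside any hypothetical convex-up portion (Proposition \ref{C-on-convex-Lemma}), and the minimal/maximal chain machinery (Definition \ref{def:minimal chain}, Lemmas \ref{lem:ExistMinMaxChain}--\ref{lem:chainsDoNotIntersect-MinMin}) then propagates that minimum to the fixed boundary or back to the shock, producing, after an iteration (Steps 2--3 of \S\ref{sec:proof of the main theorem}), a contradiction with (A6). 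Without an analogue of this comparison of values of $\phi_{\mathbf e}$ at distant shock points, your scheme cannot rule out a configuration with $f''>0$ on an interior arc while all the boundary restrictions of (A6) are respected.

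A secondary, smaller gap: your treatment of strict convexity and of the impossibility of straight segments relies on ``unique continuation'' and a ``Hopf-type boundary expansion'' producing a first nonzero even order, but no such expansion follows from Hopf's lemma alone. The paper obtains \eqref{strictConvexity-degenerate-graph} from the real analyticity of $\Gsh^0$ and of $\phi$ up to $\Gsh^0$ (Kinderlehrer--Nirenberg, Lemma \ref{lem:analyticity}) combined with an induction on tangential derivatives of the Rankine--Hugoniot condition and of the equation (Lemma \ref{noZeroesInfiniteOrderLemma}), which shows that some derivative $f^{(k)}(T_P)$ with $k\ge 2$ is nonzero; together with the already-established concavity this forces $k$ even and $f^{(k)}<0$, and local finiteness of $\{f''=0\}$ follows. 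If you want to salvage your outline, you would need to replace the unique-continuation step by this analyticity-plus-induction argument, and, more importantly, to supply the nonlocal comparison lemma and the chain construction that constitute the actual proof of concavity.
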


\begin{rem}
	Conditions {\rm (A2)} and {\rm (A5)}--{\rm (A6)} of Theorem {\rm \ref{thm:main theorem}}
	are the requirements on the global behavior of solutions.
	In fact, {\rm (A5)} ensures that there is a coordinate system in which the
	shock is
	a Lipschitz graph globally.
\end{rem}

\begin{rem}
	Condition {\rm (A6)} allows us to deal with three different kinds of boundary conditions.
	Moreover, at each of the endpoints of $\Gsh$, the ellipticity can be either
	uniform or degenerate.
	Some applications to each case can be found in \S {\rm \ref{sec:application}}.
\end{rem}

\begin{rem} The assumption that $\phi$ is not a constant state means that $\phi$ cannot
	be of the form{\rm :} $\phi=a_1+(a_2,a_3)\cdot\xxi$ in $\Omega$, where $a_j, j=1,2,3$, are constants.
	In fact, this assumption can be guaranteed by the boundary conditions
	assigned along $\Gamma_1\cup\Gamma_2$ in the applications in \S {\rm \ref{sec:application}}.
\end{rem}

In the next theorem, we show that, if assumptions (A1)--(A4) and (A6) hold,
then a monotonicity
condition for $\phi$ near $\Gsh^0$, which is slightly stronger than condition ${\rm (A5)}$,
is the necessary and sufficient condition for the strict convexity of
shock $\Gsh$.

\begin{thm}\label{theorem:1x}
	Let $\Omega$ and $\phi$ be as in Theorem {\rm \ref{thm:main theorem}} except
	condition ${\rm (A5)}$.
	Then the fact that the free boundary $\Gsh$ is a strictly convex graph
	in the sense of \eqref{shock-graph-inMainThm}--\eqref{strictConvexity-degenerate-graph}
	in Theorem {\rm \ref{thm:main theorem}} is the necessary and sufficient
	condition for the monotonicity property that
	$\phi_{\mathbf{e}}>0$ on $\Gsh^{0}$ for any unit vector
	$\mathbf{e}\in \overline{Con}$,
	where
	$\Gsh^0$ is the relative interior of
	$\Gsh$.
\end{thm}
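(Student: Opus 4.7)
The plan is to prove the biconditional by establishing each direction separately, starting from a simple identity on the shock that ties the sign of $\phi_{\mathbf{e}}$ to a purely geometric quantity. Since $\phi = 0$ on $\Gsh$, the tangential derivative of $\phi$ along $\Gsh$ vanishes, hence $D\phi = \phi_{\bn}\bn$ there; by the entropy condition (A1), $\phi_{\bn} < 0$ on $\Gsh$. Therefore, for any unit vector $\mathbf{e}$,
\begin{equation*}
\phi_{\mathbf{e}} = \phi_{\bn}\,(\mathbf{e}\cdot\bn) \qquad\mbox{on }\Gsh,
\end{equation*}
so the inequality $\phi_{\mathbf{e}} > 0$ on $\Gsh^0$ is equivalent to $\mathbf{e}\cdot\bn < 0$ on $\Gsh^0$. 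Both directions of the theorem thus reduce to comparing this sign condition on $\bn$ with the geometric properties of $\Gsh$.

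For the necessity direction, I would assume $\Gsh$ is the strictly convex graph $S = f(T)$ as in \eqref{shock-graph-inMainThm}--\eqref{strictConvexity-degenerate-graph}. The degenerate-convexity condition \eqref{strictConvexity-degenerate-graph} at each interior point, together with the finiteness of its degeneracy set, yields that $f'$ is strictly decreasing on $(T_A, T_B)$. Combined with the orientation $\Omega\cap\{T_A<T<T_B\}\subset\{S<f(T)\}$, which fixes $\bn = (-1,f'(T))/\sqrt{1+f'(T)^2}$, a direct computation gives $\bt_A\cdot\bn(P)\propto f'(T)-f'(T_A)<0$ and $\bt_B\cdot\bn(P)\propto f'(T_B)-f'(T)<0$ at every $P\in\Gsh^0$. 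Taking positive combinations $\mathbf{e} = r\bt_A + s\bt_B$ with $r,s\geq 0$ and $(r,s)\ne(0,0)$ then yields $\mathbf{e}\cdot\bn(P) < 0$, hence $\phi_{\mathbf{e}}(P) > 0$ on $\Gsh^0$, as claimed.

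For the sufficiency direction, the plan is to derive condition (A5) from the monotonicity and then invoke Theorem \ref{thm:main theorem}. The requirement $\bt_A\ne\pm\bt_B$ is implicit in the hypothesis, since $Con$ is defined only under this condition. It remains to establish the global geometric statement $\{P+Con\}\cap\Omega=\emptyset$ for every $P\in\overline{\Gsh}$. I would argue by contradiction: suppose $P\in\overline{\Gsh}$, $\mathbf{e}\in Con$, and $P+t_0\mathbf{e}\in\Omega$ for some $t_0>0$. Since $\mathbf{e}\cdot\bn(P)<0$ on $\Gsh^0$ (and by a one-sided analogue at the endpoints $A, B$), the segment $\{P + t\mathbf{e}: 0<t\leq t_0\}$ starts outside $\Omega$ and therefore first crosses $\partial\Omega$ at some point $P^*$; at $P^*$ the inward-pointing direction $\mathbf{e}$ satisfies $\mathbf{e}\cdot\bn(P^*)\geq 0$. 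A case analysis on which part of $\partial\Omega$ contains $P^*$ produces a contradiction: if $P^*\in\Gsh^0$, the established $\mathbf{e}\cdot\bn<0$ on $\Gsh^0$ is violated directly; if $P^*$ is a corner, the angle bound $\theta\in(0,\pi)$ in Framework (A) rules this out; and if $P^*\in\Gamma_1^0\cup\Gamma_2^0$, assumption (A6) is invoked via the maximum/minimum principle for $\phi_{\mathbf{e}}$ to exclude it.

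The main obstacle will be the case $P^*\in\Gamma_1^0\cup\Gamma_2^0$. My plan there is to first propagate the monotonicity $\phi_{\mathbf{e}}>0$ from $\Gsh^0$ into the interior of $\Omega$ by applying the maximum principle to the linear elliptic equation satisfied by $\phi_{\mathbf{e}}$ (obtained by differentiating \eqref{equ:study} in direction $\mathbf{e}$; the ellipticity is guaranteed by (A3)), using (A6) to discard extremal points on $\Gamma_1\cup\Gamma_2$. Once $\phi_{\mathbf{e}}>0$ in $\overline{\Omega}\setminus(\Gamma_1\cup\Gamma_2)$ is in hand, Hopf's lemma applied at $P^*\in\Gamma_i^0$ relates the sign of $\phi_{\mathbf{e}}$ to the transversality of the entering ray and yields the required contradiction with (A6). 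After (A5) is verified in this way, Theorem \ref{thm:main theorem} applies directly and delivers the strict convexity of $\Gsh$ in the sense of \eqref{shock-graph-inMainThm}--\eqref{strictConvexity-degenerate-graph}, completing the biconditional.
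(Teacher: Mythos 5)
Your first direction (strict convexity of the graph implies the monotonicity) is correct and is essentially the paper's argument: since $\phi=0$ on $\Gsh$ and $\phi_{\bn}<0$ by (A1), one has $\phi_\ee=(\ee\cdot\bn)\,\phi_{\bn}$ on $\Gsh$, and the strict decrease of $f'$ forced by \eqref{strictConvexity-degenerate-graph} gives $\bt_A\cdot\bn<0$ and $\bt_B\cdot\bn<0$ at every interior point, hence $\ee\cdot\bn<0$ for every unit $\ee\in\overline{Con}$. (The paper phrases this as ``the tangent direction at points of $\Gsh^0$ does not lie in $\overline{Con}$,'' which is the same computation; also note your labels ``necessity/sufficiency'' are swapped relative to the statement, though the content is clear.)

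The gap is in your derivation of (A5) from the monotonicity, where two of your three cases do not hold up. First, the corner case: an interior angle $\theta\in(0,\pi)$ does not prevent a straight ray from first meeting $\partial\Omega$ at a corner and entering $\Omega$ there (think of a ray entering a square diagonally through a vertex), so Framework (A)(ii) cannot be used to discard $P^*$ being a corner point. Second, and more seriously, the case $P^*\in\Gamma_1^0\cup\Gamma_2^0$: in the general framework no boundary condition for $\phi$ is imposed on $\Gamma_1\cup\Gamma_2$, so Hopf's lemma at $P^*$ has nothing to contradict, and positivity of $\phi_\ee$ inside $\Omega$ carries no information about whether the straight ray can geometrically enter $\Omega$ through the fixed boundary -- that is determined solely by the shape of $\Gamma_1\cup\Gamma_2$, which the PDE does not constrain. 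Moreover, (A6) provides information only for one particular vector $\ee\in Con$, and in its cases (i)--(ii) only as a joint condition on maxima and minima, whereas your scheme would need to exclude minima of $\phi_\ee$ on $\Gamma_1\cup\Gamma_2$ for every $\ee\in Con$. The endpoint case ($\ee\cdot\bn$ possibly vanishing at $A$ or $B$) is also only gestured at. In fact the literal set-theoretic statement $\{P+Con\}\cap\Omega=\emptyset$ is a global property of $\Omega$ that cannot be deduced from a sign condition along $\Gsh^0$ alone, which is exactly why this part of your argument cannot close.

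The paper takes a different, much shorter route for this direction and you should follow it: by \eqref{equ:boundary} and (A1), $D\phi=\phi_{\bn}\bn$ with $\phi_{\bn}<0$ on $\Gsh$, so the monotonicity $\phi_\ee>0$ on $\Gsh^0$ for all unit $\ee\in\overline{Con}$ is precisely the statement that $\ee\cdot\bn<0$ along $\Gsh^0$; this is the form in which (A5) actually enters the proof of Theorem \ref{thm:main theorem}, namely through Lemma \ref{lem:shock graph} (the graph property \eqref{shock-graph-inMainThm}, the local position of $\Omega$ on the subgraph side, and the tangent-slope bounds in part (ii)), all of which follow from this sign condition. With that reading, (A5) is available and Theorem \ref{thm:main theorem} applies directly, so no ray-crossing analysis is needed.
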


\begin{rem}\label{theorem:1xRem}
	Let $\Omega$ and $\phi$ be as in Theorem {\rm \ref{theorem:1x}}, including that
	the monotonicity property {\rm (}or equivalently, the strict convexity of $\Gsh${\rm )}
	holds.
	In addition, assume that,
	for any unit vector $\mathbf{e}\in\overline{Con}$
	and any point $\xxi$  in the fixed boundary part $\Gamma_1\cup\Gamma_2$,
	$\phi_{\mathbf{e}}$ satisfies that either $\phi_{\mathbf{e}}(\xxi)\geq0$ or
	$\phi_{\mathbf{e}}$
	cannot attain its local minimum at $\xxi$ with respect to $\Omega$. Then  $\phi_{\mathbf{e}}>0$ in $\Omega\cup\Gsh^{0}$
	for any unit vector $\mathbf{e}\in \overline{Con}$.
\end{rem}

The proof of Remark \ref{theorem:1xRem} is given after the proof of Theorem {\rm \ref{theorem:1x}}
in \S \ref{EquivalenceSect}.
Moreover, the assumptions of
Remark \ref{theorem:1xRem} can be justified for
the two applications{\rm :} the regular shock reflection problem and the Prandtl-Meyer
reflection problem{\rm ;} see \S \ref{sec:application}.

Furthermore, under some additional assumptions
that are satisfied in the two applications,
the shock curve is uniformly convex
in its relative interior in the sense defined in the following theorem:

\begin{thm}\label{thm:main theorem-strictUnif}
	Let $\Omega$ and $\phi$ be as in Theorem {\rm \ref{thm:main theorem}}.
	Furthermore, assume that, for any unit vector $\mathbf{e}\in\mathbb{R}^2$,
	the boundary part $\Gamma_1\cup\Gamma_2$ can be further decomposed so that
	\begin{enumerate}[\rm (A1)]
		\item[\rm (A7)]\label{Assumpt-H6}
		$\Gamma_1\cup\Gamma_2=
		\hat \Gamma_0\cup\hat{\Gamma}_1\cup\hat{\Gamma}_2\cup\hat \Gamma_3$,
		where some of $\hat\Gamma_i$ may be empty,
		$\hat\Gamma_i$ is connected for each $i=0,1,2,3$,
		and all curves $\hat \Gamma_i$ are located along $\partial\Omega$ in the
		order of their indices,
		i.e., non-empty sets $\hat\Gamma_j$ and $\hat\Gamma_k$, $k>j$,
		have a common endpoint if and only if either $k=j+1$ or $\Gamma_i=\emptyset$
		for all $i=j+1, \dots, k-1$.
		Also, the non-empty set $\hat\Gamma_i$ with the smallest {\rm (}resp. largest{\rm )} index
		has the
		common endpoint $A$ {\rm (}resp. $B${\rm )} with $\Gsh$.
		Moreover, if $\hat\Gamma_i\ne \emptyset$ for some $i\in\{0,1,2,3\}$, then its
		relative interior is nonempty{\rm :} $\hat\Gamma_i^0\ne \emptyset${\rm ;}
		
		\smallskip
		\item[\rm (A8)]\label{Assumpt-H7}
		$\phi_{\mathbf{e}}$ is constant along $\hat\Gamma_0$ and $\hat\Gamma_3${\rm ;}
		
		\smallskip
		\item[\rm (A9)]\label{Assumpt-H8}
		For $i=1,2$, if $\phi_\ee$ attains its local
		minimum or maximum relative to $\overline\Omega$ on $\hat\Gamma_i^0$,
		then $\phi_{\mathbf{e}}$ is constant along $\hat\Gamma_i${\rm ;}
		
		\smallskip
		\item[\rm (A10)]\label{Assumpt-H9} One of the following two conditions holds{\rm :}
		\begin{enumerate}
 			\item[\rm (i)] Either  $\hat{\Gamma}_1=\emptyset$ or $\hat{\Gamma}_2=\emptyset${\rm ;}
			\item[\rm (ii)] Both $\hat{\Gamma}_1$ and $\hat{\Gamma}_2$
			are non-empty, and $\hat{\Gamma}_3=\emptyset$,
			so that $\hat\Gamma_2$ has the common
			endpoint $B$ with $\Gsh$.
			At point $B$, the following conditions hold{\rm :}
			
			\smallskip
			\begin{itemize}
				\item
				If $\bn_{\rm sh}(B)\cdot {\mathbf{e}}<0$,
				then $\phi_{\mathbf{e}}$ cannot attain  its
				local maximum
				relative to $\overline\Omega$ at $B${\rm ,}
				\item
				If $\bn_{\rm sh}(B)\cdot {\mathbf{e}}=0$, then $\phi_\ee(B)=\phi_\ee(Q^*)$
				for  the common endpoint $Q^*$ of $\hat\Gamma_1$ and $\hat\Gamma_2$,
			\end{itemize}
			
			\smallskip
			\noindent
			where $\bn_{\rm sh}(B):=\lim\limits_{\Gsh^0\ni P\to B}\bn(P)$, which exists since $\Gsh$ is $C^1$
			up to $B$.
		\end{enumerate}
	\end{enumerate}
	Then
	the shock function $f(T)$ in \eqref{shock-graph-inMainThm}
	satisfies that $f''(T)<0$ for all $T\in (T_A, T_B)${\rm ;} that is, $\Gsh$ is uniformly
	convex on closed subsets of its relative interior.
\end{thm}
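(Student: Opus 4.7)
The plan is to argue by contradiction: suppose that some $T^*\in(T_A,T_B)$ satisfies $f''(T^*)=0$. By Theorem \ref{thm:main theorem}, there is an integer $k\ge 2$ with $f^{(n)}(T^*)=0$ for $n=2,\ldots,2k-1$ and $f^{(2k)}(T^*)<0$. I would work in rotated orthonormal coordinates $(S,T)$ centered at $P^*=(f(T^*),T^*)$ in which the tangent to $\Gsh$ at $P^*$ is the $T$-axis and $\Omega\subset\{S<f(T)\}$ locally; then $f(0)=f'(0)=\cdots=f^{(2k-1)}(0)=0$ and $f^{(2k)}(0)<0$. Fix $\ee:=(0,1)$ and set $w:=\phi_\ee=\phi_T$. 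Differentiating \eqref{equ:study} in $T$ gives $\mathcal{L}w=0$ in $\Omega\cup\Gsh^0$, a linear strictly elliptic equation by (A3) with H\"older coefficients by (A2). Differentiating the shock relation $\phi(f(T),T)\equiv 0$ once yields $w=-f'(T)\phi_S(f(T),T)$ along $\Gsh$. In these coordinates the interior normal at $P^*$ is $(-1,0)$, so the entropy condition (A1) reads $\phi_S(P^*)>0$; together with $f'(T)\sim\frac{f^{(2k)}(0)}{(2k-1)!}T^{2k-1}$ near $T^*=0$ and $f^{(2k)}(0)<0$, this gives $w\sim c\,T^{2k-1}$ on $\Gsh$ for some $c>0$. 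Hence $w$ changes sign across $P^*$ on $\Gsh$, so $\max_{\overline\Omega}w>0>\min_{\overline\Omega}w$.

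By the strong maximum principle, both extrema are attained on $\partial\Omega=\Gsh\cup\Gamma_1\cup\Gamma_2$. I would invoke the decomposition $\Gamma_1\cup\Gamma_2=\hat{\Gamma}_0\cup\hat{\Gamma}_1\cup\hat{\Gamma}_2\cup\hat{\Gamma}_3$ provided by (A7) for this fixed direction $\ee$. By (A8), $\phi_\ee$ is constant on $\hat{\Gamma}_0$ and $\hat{\Gamma}_3$; by (A9), an extremum of $\phi_\ee$ relative to $\overline\Omega$ on $\hat{\Gamma}_i^0$ ($i\in\{1,2\}$) would force $\phi_\ee$ to be constant on that piece; the corner dichotomy in (A10) handles the endpoint $B$ (using the sign of $\bn_{\rm sh}(B)\cdot\ee$), and symmetrically $A$ is handled through $\hat{\Gamma}_0$. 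Propagating these constant values across the connection points and comparing with the observed sign change of $w$ on $\Gsh$ rules out both positive and negative extrema of $w$ on $\Gamma_1\cup\Gamma_2$. Consequently, $\max_{\overline\Omega}w$ and $\min_{\overline\Omega}w$ are attained on $\overline{\Gsh}$.

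At any interior extremum $\tilde P\in\Gsh^0$ of $w$ over $\overline\Omega$, Hopf's lemma yields a strict sign for the outward normal derivative $-w_\bn(\tilde P)$. I would then differentiate the Rankine--Hugoniot condition \eqref{equ:boundary} tangentially along $\Gsh$ at $\tilde P$, using $\phi_\bt\equiv 0$ on $\Gsh$ and the Frenet identity $\partial_\bt\bn=\kappa\bt$ with $\kappa\ge 0$ (from the concavity of $f$ in Theorem \ref{thm:main theorem}), to produce a nonlinear relation coupling $w_\bn(\tilde P)$, $\phi_\bn(\tilde P)$, and $\kappa(\tilde P)$. The entropy inequality $\phi_\bn<0$ from (A1), the ellipticity from (A3), and $\kappa(\tilde P)\ge 0$ pin down the sign of $w_\bn(\tilde P)$ opposite to that given by Hopf, producing the contradiction. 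Endpoint extrema at $\tilde P=A$ or $B$ are excluded using (A10) and a corner adaptation of Hopf's lemma that accommodates the possibly degenerate ellipticity at these points. Hence no degenerate interior point exists, so $f''(T)<0$ on $(T_A,T_B)$.

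The \textbf{main technical obstacle} is the precise sign extraction from the tangentially differentiated Rankine--Hugoniot relation: this is a fully nonlinear computation coupling $\phi$, $D\phi$, and the shock geometry, and the correct signs must be isolated by combining the ellipticity (A3) with the entropy condition (A1) and the concavity of $f$ in a delicate way. The corner analysis at $A,B$, where the ellipticity of \eqref{equ:study} may degenerate and only partial boundary information is available through (A10), constitutes the second delicate ingredient and dictates the split into the alternatives (A10)(i)--(ii).
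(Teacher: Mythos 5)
Your opening reduction is fine as far as it goes: with $\ee=\bt(P^*)$ one indeed gets $w=\phi_\ee=\phi_{\bn}\,(\bn\cdot\ee)$ changing sign along $\Gsh$ at $P^*$, and at an extremum of $w$ over $\overline\Omega$ located at an \emph{interior} shock point with the right sign of $\bn\cdot\ee$, the Hopf lemma combined with the tangentially differentiated Rankine--Hugoniot condition forces $\phi_{\bt\bt}$ to have a sign contradicting the convexity already established in Theorem \ref{thm:main theorem}. But that last computation is not the ``main technical obstacle'' you name --- it is exactly Lemma \ref{lem:sign of second derivative}, which is already available. The genuine gap is the step you dispose of in one sentence: ruling out that the extrema of $w$ over $\overline\Omega$ are attained on $\Gamma_1\cup\Gamma_2$ or at the endpoints $A$, $B$ of $\Gsh$. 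Conditions (A8)--(A9) only say that $\phi_\ee$ is \emph{constant} on $\hat\Gamma_0$, $\hat\Gamma_3$ (and on $\hat\Gamma_1$, $\hat\Gamma_2$ if an extremum occurs there); by continuity these constants equal $w(A)$ or $w(B)$, which are themselves limits of $w$ along $\Gsh$ and may perfectly well be the global maximum or minimum of $w$ over $\overline\Omega$. Nothing in (A7)--(A10) lets you ``propagate constants'' to exclude this, and your fallback --- ``a corner adaptation of Hopf's lemma'' at $A$, $B$ --- is not available: at these points the ellipticity of \eqref{equ:study} may degenerate (they are sonic points in both applications), $\phi$ is only $C^{1,\alpha}$ there, and (A10) supplies information at $B$ only, and only in case (ii). So the case in which the relevant extremum sits at $A$, $B$, or on $\hat\Gamma_0$, $\hat\Gamma_3$ (or on $\hat\Gamma_1$, $\hat\Gamma_2$ with $\phi_\ee$ constant there) is left completely open, and it cannot be closed by a local maximum-principle argument.

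This is precisely where the paper's proof spends essentially all of its effort, and the machinery it uses is nonlocal: starting from the degenerate point $\Pd$ it chooses a \emph{non-tangential} direction $\ee$ with $\ee\cdot\bn(\Pd)<0$ and $g(\ee)(\Pd)=0$ (Lemma \ref{choseVecEE-strictConv}), shows $\Pd$ is a local minimum of $\phi_\ee$ along $\Gsh$ but not relative to $\overline\Omega$, launches a minimal chain (Definition \ref{def:minimal chain}, Lemmas \ref{lem:ExistMinMaxChain}--\ref{lem:chainsDoNotIntersect-MinMin}) ending at a boundary local minimum $C^k$, and then excludes every possible location of $C^k$ case by case (Lemmas \ref{lem:strict convexity 16}--\ref{NotInShock-Lem}). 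The exclusion of $\overline{\hat\Gamma_0}\cup\overline{\hat\Gamma_3}$ --- the analogue of your missing endpoint case --- requires the nonlocal comparison Lemma \ref{distToSonicCenter-lemma} (built on Lemma \ref{lem:interesting lemma} about distances to the sonic circle of state $(0)$), an infinite iteration of maximal/minimal chains producing a monotone sequence of local minima on $\Gsh$, and a real-analyticity argument (Lemma \ref{lem:analyticity}) forcing $\phi_\ee\equiv 0$ on $\Gsh$ and hence a contradiction with the entropy condition (A1); condition (A10) enters only to handle the corner $Q^*$ in Lemma \ref{NotInGam2-2-cornerPt-Lem}. None of this, or any substitute for it, appears in your proposal, so as it stands the argument does not prove the theorem; if you want to pursue your route, the part that must be supplied is exactly an argument excluding extrema of $\phi_\ee$ at $A$, $B$ and on the arcs where $\phi_\ee$ is constant, and you should expect it to require the same kind of global input.
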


\begin{rem}\label{FB-levelSet-rmk}
	By \eqref{equ:boundary} and condition {\rm (A1)} of Theorem {\rm \ref{thm:main theorem}},
	it follows that $\phi<0$ in $\Omega$ near $\Gsh$.
	Since $\Gsh$ is the zero level set of $\phi$, then the following statements hold $($see also Lemma {\rm \ref{lem:shock graph}(v)}$)${\rm :}
	\begin{enumerate}[\rm (i)]
		\item\label{FB-levelSet-rmk-i1}
		The convexity of $\Gsh$ is equivalent to
		the fact that
		$\phi_{\bt\bt}\ge 0$ on $\Gsh$. Moreover,
		by \eqref{strictConvexity-degenerate-graph},
		if $\phi_{\bt\bt}=0$ at some $P\in \Gsh$,
		then there exists an integer $k>1$ such that
		\begin{equation}\label{strictConvexity-degenerate}
		\partial^n_{\bt}\phi=0 \quad\mbox{for $n=2, \dots, 2k-1$},
		\qquad\,\,\,\,\,
		\partial_{\bt}^{(2k)}\phi>0 \quad\mbox{at $P$},
		\end{equation}
		where $k$ is the same as in \eqref{strictConvexity-degenerate-graph}.
		In particular, this implies that $k$ is independent of the choice of the coordinate system $(S,T)$
		used in \eqref{shock-graph-inMainThm}{\rm ;}
		
		\item\label{FB-levelSet-rmk-i2}
		The conclusion of Theorem {\rm \ref{thm:main theorem-strictUnif}} is equivalent
		to the following{\rm :}
		$\phi_{\bt\bt}>0$ along $\Gsh^0$, where $\Gsh^0$ is the interior points of $\Gsh$.
	\end{enumerate}
\end{rem}

\begin{rem}
	If the conclusion of Theorem  {\rm \ref{thm:main theorem-strictUnif}}
	holds, then the  curvature of $\Gsh${\rm :}
	$$
	\kappa=-\frac{f''(T)}{\big(1+(f'(T))^2\big)^{3/2}}
	$$
	has a positive lower bound
	on any closed subset of $(T_A, T_B)$.
\end{rem}

\begin{rem}
	The definition of $\hat{\Gamma}_0$ and $\hat{\Gamma}_3$ is motivated by the observation
	that $\phi_{\mathbf{e}}$ is constant along
	the sonic arcs in the two shock problems{\rm ;} see the applications in \S {\rm \ref{sec:application}}
	for more details.
\end{rem}

\begin{rem}
	We can simplify \eqref{state0-Nonzero} as follows{\rm :}
	By the Galilean invariance of the potential flow equation \eqref{equ:study}
	{\rm (}{\it i.e.}, invariance with respect to the shift of coordinates{\rm )},
	we assume without loss of generality that $\mathbf{v}_0=(0,0)${\rm ;}
	indeed, this can be achieved by introducing the new coordinates $\xxi'=(\xi_1-u_0,\xi_2-v_0)$.
	Furthermore, we choose constant $\rho_0$ in \eqref{1.2}
	to be the density of state $(0)$.
	Then the pseudo-potential of state $(0)$ is
	\begin{equation}\label{state0-zero}
	\varphi_0=-\frac{1}{2}|\xxi|^2.
	\end{equation}
	We will use this form in the proof of the main theorems.
\end{rem}

\begin{rem}
	Rewrite the condition{\rm :} $\phi_{\bn}<0$ in {\rm (A1)}, as
	$D\varphi\cdot\bn<D\varphi_0\cdot\bn$. Then, replacing
	$\phi+\varphi_0$ by  $\varphi$ in the second equality
	in \eqref{equ:boundary} and using that $\rho>\rho_0$
	by  {\rm (A1)} for $\rho_0>0$, we have
	\begin{equation}\label{ps-potentials-on-shock}
	D\varphi_0\cdot\bn>D\varphi\cdot\bn>0\qquad\,
	\,\mbox{on $\Gsh$}.
	\end{equation}
\end{rem}

The theorems stated above are proved in \S 3--\S 6.
In \S \ref{generalPropertiesFB-sect}, we first prove some general properties
of the free boundary $\Gsh$,
and then derive some additional properties from the assumptions in the theorems.
In \S \ref{sec:proof of the main theorem}--\S 6,
we employ all of these
properties to prove Theorems
\ref{thm:main theorem}--\ref{thm:main theorem-strictUnif}.
Specifically, we prove Theorem
\ref{thm:main theorem} in
\S \ref{sec:proof of the main theorem},
Theorem \ref{thm:main theorem-strictUnif} in \S \ref{sec:proof of the main theorem-1},
and Theorem \ref{theorem:1x} in \S 6.
Then, in \S \ref{sec:application}, we apply the general framework
to show the convexity results for the two shock problems: the shock reflection-diffraction problem
and the Prandtl-Meyer reflection problem.
In the appendix,
we construct paths in $\Omega$ satisfying certain properties -- these paths are used
in the proof of the main results.

In the rest of the paper, we use the following terminology: A statement that a function
attains a local extremum at $P\in\partial\Omega$ means that the local extremum is
relative to $\overline\Omega$.
In the case when the local extremum is along (or relative to) $\partial\Omega$,
we always state that explicitly.

\section{\, Basic Properties of Solutions}\label{sec:proof of main thoerem}
\label{generalPropertiesFB-sect}
In this section, we list several lemmas for the solutions
of the self-similar potential flow equation \eqref{equ:study},
which will be used in the subsequent development.
Some of them have been proved in Chen-Feldman \cite{cf-book2014shockreflection}
for a specific geometric situation for the
shock reflection-diffraction problem.
Here we list these facts under the general
conditions of Theorem \ref{thm:main theorem}
and present them in the form
convenient for
the use in the general situation considered here.
For many of them, the proofs are similar to the
arguments in \cite{cf-book2014shockreflection},
in which cases
we omit or sketch them only below for the sake of brevity.

\subsection{\, Additional properties from {\rm (A1)}--{\rm (A5)}}
Let
$\phi\in C(\overline{\Omega})\cap C^{2}(\Omega\cup\Gsh^0)
\cap C^{3}(\Omega)$ be
a solution of \eqref{equ:study}--\eqref{equ:boundary}.
In this subsection, we use the
results
of \cite[Lemma 6.1.4]{cf-book2014shockreflection}
to show some properties
as the consequences of conditions {\rm (A1)}--{\rm (A5)}
of Theorem \ref{thm:main theorem}.
First, for a given unit constant vector $\ee\in \mathbb{R}^2$,
we derive the equation and the boundary conditions for $\phi_\ee$.

Let $\ee^{\perp}$ be the unit vector orthogonal to $\ee$,
and let $(S,T)$ be the coordinates with basis $\{\ee, \ee^{\perp}\}$.
Then equation \eqref{equ:study} in the $(S,T)$--coordinates is
\begin{equation}\label{equ:in ST coordinate}
(c^2-\varphi_S^2)\phi_{SS}-2\varphi_S\varphi_T\phi_{ST}+(c^2-\varphi_T^2)\phi_{TT}=0.
\end{equation}
Differentiating \eqref{equ:in ST coordinate} with respect to $S$ and using the Bernoulli law:
$$
\partial_Sc^2=-(\gamma-1)(\varphi_S\phi_{SS}+\varphi_T\phi_{ST}),
$$
we obtain the following equation for $w=\partial_S\phi=\partial_\ee\phi$:
\begin{equation}\label{equ:phi e}
\begin{split}
&(c^2-\varphi_S^2)w_{SS}
-2\varphi_S\varphi_Tw_{ST}+\big(c^2-\varphi_T^2\big)w_{TT}\\
&+
\big(\partial_S(c^2-\varphi_S^2)-(\gamma-1)\varphi_S\phi_{TT}\big)w_S
\\
&-\big(2\partial_S(\varphi_S\varphi_T)-2\varphi_T\phi_{TT}+
(\gamma-1)\varphi_T\phi_{TT}\big)w_T=0.
\end{split}
\end{equation}
Since the coefficients of the second-order
terms of \eqref{equ:phi e}
are the same as the ones of \eqref{equ:in ST coordinate},
we find that \eqref{equ:phi e} is
strictly
elliptic in
$\Omega\cup\Gsh^{0}$.
Using the regularity of $\phi$ above, we find that
the coefficients of (\ref{equ:phi e}) are continuous on
$\Omega\cup\Gsh^{0}$. Thus,  \eqref{equ:phi e} is uniformly
elliptic on compact subsets of $\Omega\cup\Gsh^{0}$.

For the boundary conditions along $\Gsh$, we first have
$$
\phi=0\qquad\, \mbox{along $\Gsh$}.
$$
Thus, the unit normal vector $\boldsymbol{\nu}$ and the tangent vector $\boldsymbol{\tau}$ of $\Gsh$ are
\begin{equation}\label{equ:normal tangential}
\boldsymbol{\nu}=(\nu_1,\nu_2)=\frac{D\phi}{|D\phi|},
\qquad \boldsymbol{\tau}=(\tau_1,\tau_2)=
\frac{(-\partial_{\xi_2}\phi,\partial_{\xi_1}\phi)}{|D\phi|}.
\end{equation}
Notice that, from the entropy condition -- condition (A1) of Theorem \ref{thm:main theorem},
we have
$$
D\phi\neq0, \quad\rho>\rho_0 \qquad\,\, \mbox{on $\Gsh$},
$$
so that \eqref{equ:normal tangential} is well defined.
Taking the tangential  derivative of the second equality in
\eqref{equ:boundary} along $\Gsh$ and using \eqref{equ:normal tangential},
we have
$$
(-\partial_{\xi_2}\phi\,\partial_{\cxi}+\partial_{\xi_1}\phi\,\partial_{\ceta})
\big((\rho D\varphi-\rho_0 D\varphi_0)\cdot\boldsymbol D\phi\big)=0
\qquad\,\, \mbox{on $\Gsh$}.
$$
From this, after a careful calculation by using
equation \eqref{equ:study} (see \cite[Sect. 5.1.3]{cf-book2014shockreflection} for details),
we have
\begin{equation}\label{equ:boundary for phi e}
D^2\phi[\bt,\boldsymbol{h}]=0\qquad\,\, \mbox{on $\Gsh$},
\end{equation}
where $D^2\phi[a,b]
:=\sum_{i,j=1}^2a_ib_j\partial_{ij}\phi$
and
\begin{equation}\label{Expression-h}
\boldsymbol{h}=-\frac{\rho-\rho_0}{\rho_0 c^2}\big(\rho(c^2-\varphi_{\bn}^2)\varphi_{\bn}\bn -(\rho\varphi_{\bn}^2+\rho_0c^2)\varphi_{\bt}\bt\big).
\end{equation}
Using (\ref{ps-potentials-on-shock}) and conditions (A1) and (A3) of Theorem \ref{thm:main theorem},
we obtain from
(\ref{Expression-h}) that
\begin{equation}\label{equ:27 oblique of h}
\bh\cdot\bn=-\frac{\rho-\rho_0}{\rho_0c^2}\rho(c^2-\varphi_{\bn}^2)\varphi_{\bn}<0\qquad\,\,\mbox{along $\Gsh^0$}.
\end{equation}

Based on equation \eqref{equ:phi e} and the boundary condition \eqref{equ:boundary for phi e}, we have the following lemma.

\begin{lem}\label{lem:sign of second derivative}
	Let $\Omega$ be a domain with piecewise $C^1$ boundary,
	and let $\Gsh\subset\partial\Omega$ be $C^2$ in its relative interior. Let
	$\phi\in C^{2}(\Omega\cup\Gsh^{0})\cap C^{3}(\Omega)$ be a solution of
	\eqref{equ:study} in $\Omega$ and satisfy
	\eqref{equ:boundary} on $\Gsh$, and let $\phi$ be not a constant state in $\Omega$.
	Assume also that $\phi$ satisfies conditions {\rm (A1)}--{\rm (A3)} of Theorem {\rm \ref{thm:main theorem}}.
	For a fixed unit vector $\ee\in\mr^2$ with $\bn\cdot \ee<0$,
	if a local minimum or maximum of $w{\rm :}=\partial_\ee\phi$ in $\Omega$ is attained at $P\in\Gsh^{0}$,
	then $\phi_{\bt\bt}>0$ or $\phi_{\bt\bt}<0$, respectively,
	where $\bn$ denotes the interior unit normal vector to $\Gsh$ pointing into $\Omega$.
\end{lem}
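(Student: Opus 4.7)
The plan is to combine the Hopf boundary-point lemma for $w:=\partial_\ee\phi$ with the oblique structure of the shock boundary condition \eqref{equ:boundary for phi e}, reducing the sign of $\phi_{\bt\bt}(P)$ to a sign bookkeeping that invokes precisely the hypotheses (A1)--(A3) and the estimate \eqref{equ:27 oblique of h}.

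First I would prepare the application of Hopf's lemma to $w$ at $P$. Equation \eqref{equ:phi e} has no zero-order term and is uniformly elliptic on compact subsets of $\Omega\cup\Gsh^0$, and $\Gsh$ is $C^2$ near $P$, so the interior sphere condition holds. I would also verify that $w$ cannot be constant in $\Omega$: if $\partial_\ee\phi\equiv\alpha$ were constant, writing $\phi(S,T)=\alpha S+g(T)$ in the $(\ee,\ee^\perp)$--basis and substituting into \eqref{equ:in ST coordinate} gives $(c^2-\varphi_T^2)g''(T)=0$, which by strict ellipticity (A3) forces $g''\equiv 0$; but then $\phi$ is affine, so $\varphi=\phi+\varphi_0$ is a constant state, contradicting the hypothesis. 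Therefore, at a local minimum (resp. maximum) of $w$ attained at $P\in\Gsh^0$, Hopf's lemma yields $\partial_\bn w(P)>0$ (resp. $<0$). In addition, since $w\in C^1$ up to $\Gsh^0$ and $w|_\Gsh$ has a critical point at the interior boundary point $P$, the tangential derivative $\partial_\bt w(P)$ vanishes; using linearity and $h_\bn=\bh\cdot\bn<0$ from \eqref{equ:27 oblique of h}, one obtains
\[
\partial_\bh w(P)=h_\bn\,\partial_\bn w(P)+h_\bt\,\partial_\bt w(P)=h_\bn\,\partial_\bn w(P),
\]
which is strictly negative at a local minimum and strictly positive at a local maximum.

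The core algebraic step is to independently express $\partial_\bh w(P)$ in terms of $\phi_{\bt\bt}(P)$ using the PDE and the BC at $P$. In the orthonormal frame $(\bn,\bt)$ at $P$, the BC \eqref{equ:boundary for phi e} reads $h_\bn\phi_{\bn\bt}+h_\bt\phi_{\bt\bt}=0$, and equation \eqref{equ:study} in this frame takes the form $A\phi_{\bn\bn}-2B\phi_{\bn\bt}+C\phi_{\bt\bt}=0$ with $A:=c^2-\varphi_\bn^2$, $B:=\varphi_\bn\varphi_\bt$, $C:=c^2-\varphi_\bt^2$. Solving this linear system for $\phi_{\bn\bt}$ and $\phi_{\bn\bn}$ as multiples of $\phi_{\bt\bt}$, substituting into $\partial_\bh w(P)=D^2\phi[\bh,\ee]=e_\bn(h_\bn\phi_{\bn\bn}+h_\bt\phi_{\bn\bt})+e_\bt(h_\bn\phi_{\bn\bt}+h_\bt\phi_{\bt\bt})$, and using the BC to annihilate the $e_\bt$--parenthesis, a short calculation yields the clean identity
\[
\partial_\bh w(P)=-\,\frac{e_\bn\,Q(\bh)}{A\,h_\bn}\,\phi_{\bt\bt}(P), \qquad Q(\bh):=A h_\bt^2+2B h_\bn h_\bt+C h_\bn^2.
\]
The quadratic form $Q$ has matrix $\bigl(\begin{smallmatrix}A&B\\B&C\end{smallmatrix}\bigr)$ with $A>0$ and $AC-B^2=c^2(c^2-|D\varphi|^2)>0$ by (A3), so $Q$ is positive definite; and $\bh\ne 0$ by \eqref{equ:27 oblique of h}, so $Q(\bh)>0$.

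Finally, the scalar coefficient $-e_\bn Q(\bh)/(A h_\bn)$ has a definite sign: $-e_\bn>0$ (since $\ee\cdot\bn<0$), $Q(\bh)>0$, $A>0$, and $h_\bn<0$, making the product strictly negative. Hence $\partial_\bh w(P)$ and $\phi_{\bt\bt}(P)$ have opposite signs. Matching this with the sign information obtained from Hopf in the first paragraph gives $\phi_{\bt\bt}(P)>0$ at a local minimum and $\phi_{\bt\bt}(P)<0$ at a local maximum, as desired. The principal technical hurdle is the algebraic reduction producing the identity with $Q(\bh)$ factored out; once that identity is available, the conclusion follows by sign arithmetic tracking exactly the information in (A1), (A3), and \eqref{equ:27 oblique of h}. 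A minor but necessary side step is the verification that $w$ is non-constant in $\Omega$, so that Hopf's lemma yields the strict inequality.
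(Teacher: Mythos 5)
Your proof is correct and reaches the paper's conclusion with the same basic ingredients---Hopf's lemma for $w=\phi_\ee$ at the boundary extremum, the vanishing of $\partial_{\bt}w(P)$, the differentiated Rankine--Hugoniot condition \eqref{equ:boundary for phi e} together with the obliqueness \eqref{equ:27 oblique of h}, and the strict ellipticity (A3)---but it is organized differently. The paper's own proof is essentially by citation: it imports from \cite{cf-book2014shockreflection} (Lemmas 8.2.4 and 8.2.15) the facts that $\bh(P)=k\ee$ with $k>0$ at such an extremum point and that $w_{\bn}(P)$ is an explicit positive multiple of $\phi_{\bt\bt}(P)$. You bypass the parallelism $\bh(P)=k\ee$ altogether: using $\partial_{\bt}w(P)=0$ you reduce to $\partial_{\bh}w(P)=h_{\bn}w_{\bn}(P)$, and you derive the identity $\partial_{\bh}w(P)=-\frac{e_{\bn}Q(\bh)}{A\,h_{\bn}}\,\phi_{\bt\bt}(P)$ directly from the boundary condition (which kills the $e_{\bt}$-component of $D^2\phi[\bh,\ee]$) and from \eqref{equ:study} written in the $(\bn,\bt)$-frame, with $Q$ the principal symbol, positive definite by (A3) since $A>0$ and $AC-B^2=c^2(c^2-|D\varphi|^2)>0$, and $\bh\ne 0$ by \eqref{equ:27 oblique of h}. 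I checked the algebra: up to positive factors your $Q(\bh)$ is exactly the positive bracket in the $w_{\bn}$-formula that the paper quotes from the book, so the two computations agree, and your sign bookkeeping ($e_{\bn}<0$, $h_{\bn}<0$, Hopf) yields precisely $\phi_{\bt\bt}(P)>0$ at a local minimum and $\phi_{\bt\bt}(P)<0$ at a local maximum. What your route buys is self-containedness, plus the explicit verification that $w$ is not constant, which the paper leaves implicit; the only small point worth spelling out when invoking Hopf's lemma is the standard passage, via the strong maximum principle in a half-neighborhood of $P$, from the non-strict local extremum to the strict inequality $w>w(P)$ (resp. $w<w(P)$) in the interior tangent ball.
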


\begin{proof}
	First, we note that the proof of \cite[Lemma 8.2.4]{cf-book2014shockreflection} applies to the present case
	so that the conclusion of that lemma holds:
	$$
	\bh(P)=k\ee\qquad  \mbox{at $P$ for some $k\in\mr$}.
	$$
	Since $\bn\cdot \ee<0$, we follow the proof of \cite[Lemma 8.2.15]{cf-book2014shockreflection} to obtain
	that $k>0$ and
	$$
	w_{\bn}=\frac{c^2}{k\rho\varphi_{\bn}(c^2-\varphi_{\bn}^2)} \left(\rho^2\varphi_{\bn}^2(c^2-|D\varphi|^2)+\rho_1^2c^2\varphi_{\bt}^2\right)\phi_{\bt\bt}
	\qquad\,\, \mbox{at $P$}.
	$$
	Thus, by ellipticity and (\ref{ps-potentials-on-shock}), $\phi_{\bt\bt}$ has
	the same sign as $w_{\bn}$. Also, $w$ satisfies equation (\ref{equ:phi e}), which is
	strictly  elliptic in $\Omega\cup\Gsh^0$.
	Then,
	from Hopf's lemma, $w_{\bn}(P)<0$ if $w$ attains its local maximum at $P$,
	while $w_{\bn}(P)>0$ if $w$ attains its local minimum at $P$.
	Then $\phi_{\bt\bt}(P)<0$ if $w$ attains its local maximum at $P$,
	while $\phi_{\bt\bt}(P)>0$ if $w$ attains its local minimum at $P$.
\end{proof}

Next we consider the geometric shape of $\Gsh$ under the conditions
listed in Theorem \ref{thm:main theorem}.

\begin{lem}\label{lem:shock graph}
	Let $\Omega$ be a domain with piecewise $C^1$ boundary,
	and let $\Gsh\subset\partial\Omega$ be $C^2$ in its relative interior.
	Let $\phi\in C(\overline{\Omega})\cap C^{2}(\Omega\cup\Gsh^0)\cap C^{3}(\Omega)$ be
	a solution of \eqref{equ:study}--\eqref{equ:boundary}.
	Assume also that conditions {\rm (A1)}--{\rm (A5)} of Theorem {\rm \ref{thm:main theorem}}
	are satisfied.
	For a unit vector $\ee\in Con$, which is defined in Theorem {\rm \ref{thm:main theorem}(A5)},
	let $\ee^{\perp}$ be the orthogonal unit vector
	to $\ee$ with $\ee^{\perp}\cdot\bt_A>0$.
	Let $(S,T)$ be the coordinates with respect to basis $\{\ee, \ee^{\perp}\}$,
	and let $(S_P,T_P)$ be the coordinates of point $P$ in the $(S,T)$--coordinates.
	Note that
	$T_B>T_A$ since $\ee^{\perp}\cdot\bt_A>0$. Then there
	exists $f_\ee\in C^{1,\alpha}(\mr)$ such that
	\begin{enumerate}[\rm (i)]
		\item \label{lem:shock graph-i1}
		$\Gsh=\{S=f_\ee(T)\,:\,T_A<T<T_B\}$, $\Omega\subset\{S<f_\ee(T) :\,T\in\mr\}$,
		$A=(f_\ee(T_A),T_A)$, $B=(f_\ee(T_B),T_B)$, and $f\in C^2((T_A,T_B))${\rm ;}
		
		\smallskip
		\item \label{lem:shock graph-i2}
		The directions of the tangent lines to $\Gsh$ lie between $\bt_A$ and $\bt_B${\rm ;}
		that is, in the $(S,T)$--coordinates,
		$$
		-\infty<\frac{\bt_B\cdot \ee}{\bt_B\cdot \ee^{\perp}}=f_\ee'(T_B)\leq
		f'_\ee(T)\leq f'_\ee(T_A)
		=\frac{\bt_A\cdot \ee}{\bt_A\cdot \ee^{\perp}}<\infty
        $$
		for any $T\in(T_A,T_B)$;
		
        \smallskip
        \item \label{lem:shock graph-i2-00}
		$\bn(P)\cdot \ee<0$ for any $P\in\Gsh${\rm ;}
		
		\smallskip
		\item \label{lem:shock graph-i2-01}
		$\phi_\ee>0$ on $\Gsh${\rm ;}
		
		\smallskip
		\item \label{lem:shock graph-i3}
		For any $T\in(T_A,T_B)$,
		$$
		\phi_{\bt\bt}(f_\ee(T),T)<0\quad \Longleftrightarrow
		\quad f''_\ee(T)>0,
		$$
		while
		$$
		\phi_{\bt\bt}(f_\ee(T),T)>0\quad  \Longleftrightarrow
		\quad f''_\ee(T)<0.
		$$
	\end{enumerate}
\end{lem}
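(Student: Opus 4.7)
The plan is to work through the items in the order (iii), (i), (ii), (iv), (v). The cone condition (A5) is the key geometric input: it constrains the direction of the interior normal on $\Gsh$, which yields the graph structure and slope bounds; then the Rankine--Hugoniot condition $\phi=0$ on $\Gsh$ together with the entropy condition (A1) produces (iv), and a curvature computation gives (v).

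For (iii), I would argue by contradiction. If $\bn(P)\cdot\ee\ge 0$ at some $P\in\Gsh^0$ for $\ee\in Con$, then by openness of $Con$ I can perturb to $\ee'\in Con$ with $\bn(P)\cdot\ee'>0$; the $C^2$-regularity of $\Gsh$ at $P$ and the fact that $\bn(P)$ points into $\Omega$ imply $P+t\ee'\in\Omega$ for small $t>0$, contradicting $\{P+Con\}\cap\Omega=\emptyset$. The one-sided $C^{1,\alpha}$-regularity of $\Gsh$ up to its endpoints, together with the fact that $\bt_A,\bt_B\in\partial Con$ while $\ee$ lies in the open interior of the cone, handles $P=A,B$. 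Once (iii) holds, (i) follows: $\bn\cdot\ee<0$ implies $\bt\cdot\ee^{\perp}\ne 0$ along $\Gsh$, and the orientation choice $\bt_A\cdot\ee^{\perp}>0$ together with continuity gives $\bt\cdot\ee^{\perp}>0$ throughout, so $T$ is strictly monotone in an arclength parametrization of $\Gsh$; the resulting graph function $f_\ee$ has the claimed regularity (after a routine continuous extension outside $[T_A,T_B]$), and the inclusion $\Omega\cap\{T_A<T<T_B\}\subset\{S<f_\ee(T)\}$ is local from $\bn\cdot\ee<0$ and global by connectedness of $\Omega$. For (ii), the cone condition yields $\overline{Con}\subset\{v:\bn(P)\cdot v\le 0\}$ at every $P\in\Gsh$, so in particular $\bn(P)\cdot\bt_A\le 0$ and $\bn(P)\cdot\bt_B\le 0$; since $\bt_A\ne\pm\bt_B$, these two inequalities confine $\bn(P)$ to a proper cone, and correspondingly $\bt(P)$ to an angular range whose extreme slopes are precisely $f'_\ee(T_A)$ and $f'_\ee(T_B)$.

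For (iv), the boundary condition $\phi=0$ on $\Gsh$ gives $\phi_{\bt}=0$, hence $D\phi=\phi_{\bn}\bn$ and $\phi_{\ee}=\phi_{\bn}(\bn\cdot\ee)$; both factors are negative by (A1) and (iii), so $\phi_{\ee}>0$. For (v), I would parametrize $\Gsh^0$ by arclength $\gamma(s)$ and differentiate $\phi\circ\gamma\equiv 0$ twice, using $\gamma''(s)=\kappa\bn$ with $\kappa$ the signed curvature relative to the interior normal, to obtain
\begin{equation*}
\phi_{\bt\bt}+\kappa\phi_{\bn}=0 \qquad \text{on } \Gsh^0.
\end{equation*}
Since $\Omega\subset\{S<f_\ee(T)\}$, the interior normal at $(f_\ee(T),T)$ is the leftward normal of the graph, and the standard graph-curvature formula gives $\kappa=-f''_\ee(T)/(1+(f'_\ee(T))^2)^{3/2}$; combined with $\phi_{\bn}<0$, this yields $\mathrm{sign}(\phi_{\bt\bt})=\mathrm{sign}(-f''_\ee(T))$, which is exactly the two equivalences stated in (v). The step I expect to be most delicate is the slope bound (ii): unlike (iii) and (i), which use (A5) at a single point, (ii) requires translating the two half-plane conditions on $\bn(P)$ into a clean statement about the slope of the parametrizing tangent $\bt(P)$, and this depends on careful orientation tracking together with the hypothesis $\bt_A\ne\pm\bt_B$ to ensure the relevant cone is proper and the slope interval is bounded with the claimed endpoints.
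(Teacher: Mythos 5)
Your proposal is correct, but it is organized differently from the paper's argument. The paper first obtains the graph property (i) via the implicit function theorem: from $\phi=0$ and $\phi_{\bn}<0$ on $\Gsh$ one gets $D\phi\neq 0$, and (A5) together with the openness of $Con$ gives $D\phi\cdot\ee>0$ on $\Gsh$; then (ii) is proved by a secant estimate, $f_\ee(T)+\frac{\bt_A\cdot\ee}{\bt_A\cdot\ee^{\perp}}\tau\geq f_\ee(T+\tau)\geq f_\ee(T)+\frac{\bt_B\cdot\ee}{\bt_B\cdot\ee^{\perp}}\tau$, read off directly from $\{P+Con\}\cap\Omega=\emptyset$; and (iii) is deduced algebraically from (ii) by decomposing $\ee$ in the basis $\{\bt_A,-\bt_B\}$, the strictness coming from $f'_\ee(T_A)\neq f'_\ee(T_B)$, i.e.\ from $\bt_A\neq-\bt_B$. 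You instead prove the normal-sign condition (iii) first by a local perturbation argument, obtain the graph from monotonicity of $T$ along an arclength parametrization, and get (ii) from the pointwise inequalities $\bn(P)\cdot\bt_A\leq 0$, $\bn(P)\cdot\bt_B\leq 0$, which translate exactly into the slope bounds. Your (iv) is identical to the paper's, and your curvature identity $\phi_{\bt\bt}+\kappa\,\phi_{\bn}=0$ with $\kappa=-f''_\ee/(1+(f'_\ee)^2)^{3/2}$ is the same computation as the paper's formula $f''_\ee=-\phi_{\bn}^2\phi_{\bt\bt}/\phi_\ee^3$. The paper's ordering buys a proof of (iii) with no separate endpoint discussion; yours makes (i)--(iii) essentially geometric, using the PDE data only through the orientation of $\bn$.

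Two spots in your sketch need one more line each, and as literally written would not quite go through. At $P=A,B$ the argument ``$\bn(P)\cdot\ee'>0$ forces $P+t\ee'\in\Omega$'' can fail, because at a corner of $\partial\Omega$ the domain is locally only a sector bounded by $\Gsh$ and $\Gamma_1$ (or $\Gamma_2$), so a direction with positive normal component need not enter $\Omega$; the correct completion is the one you gesture at: from the interior case, $\bn\cdot v\leq 0$ on $\Gsh^0$ for every $v\in\overline{Con}$, pass to the limit to get $\bn(A)\cdot\bt_B\leq 0$, observe $\bn(A)\cdot\bt_B\neq 0$ since $\bn(A)\perp\bt_A$ and $\bt_B\neq\pm\bt_A$, and conclude $\bn(A)\cdot\ee=s\,\bn(A)\cdot\bt_B<0$ for $\ee=r\bt_A+s\bt_B$ with $r,s>0$ (similarly at $B$). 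Second, the inclusion $\Omega\cap\{T_A<T<T_B\}\subset\{S<f_\ee(T)\}$ is shakier via ``local plus connectedness'' than you suggest, since a path from a putative bad point to the good side could cross the extended graph at $T\notin(T_A,T_B)$, where it is not part of $\partial\Omega$; it is instead an immediate consequence of (A5): if $Q=(S_Q,T_Q)$ with $T_Q\in(T_A,T_B)$ and $S_Q\geq f_\ee(T_Q)$, then $Q\in P'+\mathbb{R}_{>0}\,\ee\subset P'+Con$ for $P'=(f_\ee(T_Q),T_Q)\in\Gsh$ (if $S_Q=f_\ee(T_Q)$, then $Q\in\Gsh\subset\partial\Omega$), so $Q\notin\Omega$. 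With these repairs, which use only tools you already invoke, your route is complete and equivalent in strength to the paper's.
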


\begin{proof}
	By the first condition in (\ref{equ:boundary}) and
	the entropy condition {\rm(A1)},
	\begin{equation}\label{phiOnShock}
	\phi=0, \quad \phi_{\bn}<0 \qquad\,\,\mbox{ on $\Gsh$}.
	\end{equation}
	From this,  we have the following two facts:
	\begin{enumerate}
		\item[\rm (a)] $D\phi\neq(0,0)$ on $\Gsh$;
		\item[\rm (b)] Combining (\ref{phiOnShock})
		with  assumption {\rm(A5)}, $D\phi\cdot\ee\geq 0$ on $\Gsh$ for each $\ee\in Con$.
	\end{enumerate}
	Using facts (a)--(b) and recalling that $Con$ denotes the open cone,
	we conclude that $D\phi\cdot\ee>0$ on $\Gsh$ for any $\ee\in Con$.
	Then the implicit function theorem ensures the existence of $f_\ee$
	such that property {\rm (\ref{lem:shock graph-i1})} holds.
	
	For property {\rm (\ref{lem:shock graph-i2})},
	from the definition that $\ee^{\perp}\cdot\bt_A>0$
	and the fact that $\{P+Con\}\cap\Omega=\emptyset$,
	we find that, in the $(S,T)$--coordinates,
	for any given $T\in(T_A,T_B)$ and small $\sigma>0$,
	$$
	f_\ee(T)+\frac{\bt_A\cdot \ee}{\bt_A\cdot \ee^{\perp}}\sigma
	\geq f_\ee(T+\sigma)\geq f_\ee(T)+\frac{\bt_B\cdot \ee}{\bt_B\cdot \ee^{\perp}}\sigma.
	$$
	From this, noting that
	$f'_\ee(T_A)
	=\frac{\bt_A\cdot \ee}{\bt_A\cdot \ee^{\perp}}$ and the similar expression for
	$f'_\ee(T_B)$ follow from the definition of $f'_\ee$, we obtain  {\rm (ii)}.
	
	Next we show  (\ref{lem:shock graph-i2-00}). From
	(\ref{lem:shock graph-i1}),
	$\bn=\frac{(f'_\ee(T), -1)}{\sqrt{1+(f'_\ee(T))^2}}$,
	$\bt_A=\frac{(1, f'_\ee(T_A))}{\sqrt{1+(f'_\ee(T_A))^2}}$,
	and $\bt_B=-\frac{(1, f'_\ee(T_B))}{\sqrt{1+(f'_\ee(T_B))^2}}$.
	Since $\ee\in Con$, then
	$\ee=s_1 (1, f'_\ee(T_A))-s_2 (1, f'_\ee(T_B))$ for some $s_1, s_2>0$.
	Also, the condition that $\bt_A\ne -\bt_B$ in (A5) implies that
	$f'_\ee(T_A)\ne f'_\ee(T_B)$.
	Then
	$$
	\bn\cdot\ee=\frac{1}{\sqrt{1+(f'_\ee(T))^2}}
	\big(
	s_1(f'_\ee(T)-f'_\ee(T_A)) +s_2(f'_\ee(T_B)-f'_\ee(T))
	\big)<0,
	$$
	where we have used {\rm (\ref{lem:shock graph-i2})}
	and the fact that $f'_\ee(T_A)\ne f'_\ee(T_B)$ to obtain the last inequality.
	Now (\ref{lem:shock graph-i2-00}) is proved.
	
	To show property (\ref{lem:shock graph-i2-01}),
	we notice that, along $\Gsh$, $\phi_{\bt}=0$, $\phi_{\bn}<0$ by assumption (A1) of Theorem \ref{thm:main theorem},
	and $\bn\cdot \ee<0$ by (\ref{lem:shock graph-i2-00}).
	Therefore, $\phi_\ee=(\bn\cdot\ee)\phi_{\bn}>0$,
	which is (\ref{lem:shock graph-i2-01}).
	
	Finally,  property {\rm (\ref{lem:shock graph-i3})}
	follows
	from the boundary conditions along $\Gsh$.
	More precisely, in the $(S,T)$--coordinates, differentiating twice with respect to $T$
	in the equation: $\phi(f_\ee(T),T)=0$, and using
	that $\phi_{\bt}=0$ and $\phi_\ee\ne 0$  along $\Gsh$
	by property (\ref{lem:shock graph-i2-01}), we have
	\begin{equation}\label{tangDerivOnShock}
	f''_\ee(T)=-\frac{D^2\phi[D^{\perp}\phi,D^{\perp}\phi]}{(\phi_{\ee})^3}(f_\ee(T),T)
	=-\frac{\phi_{\bn}^2\phi_{\bt\bt}}{\phi_\ee^3}(f_\ee(T),T).
	\end{equation}
	Now property {\rm (\ref{lem:shock graph-i3})} directly follows
	from (\ref{tangDerivOnShock}) and properties
	(\ref{lem:shock graph-i2-00})--(\ref{lem:shock graph-i2-01}).
	This completes the proof.
\end{proof}

In order to show Lemma \ref{distToSonicCenter-lemma} below,  we
first note the following property of solutions of the potential flow equation:

\begin{lem}[\cite{cf-book2014shockreflection}, Lemma 6.1.4]
	\label{lem:interesting lemma}
	Let $\Omega\subset\mathbb{R}^2$ be open, and let $\Omega$ be divided by a smooth curve $S$ into two
	subdomains $\Omega^+$ and $\Omega^-$.
	Let $\varphi\in C^{0,1}(\Omega)$ be a weak solution in $\Omega$ as defined in Definition {\rm \ref{def:weak solution}}
	such that $\varphi\in C^2(\Omega^{\pm})\cap C^1(\Omega^{\pm}\cup S)$.
	Denote $\varphi^{\pm}:=\varphi\big|_{\Omega^{\pm}}$. Suppose that $\varphi$ is a constant state
	in $\Omega^-$ with density $\rho_-$ and sound speed $c_-$, that is,
	$$
	\varphi^-(\xxi)=-\frac{1}{2}|\xxi|^2+\vv_-\cdot\xxi +A^-,
	$$
	where $\vv_-$ is a constant vector and $A^-$ is a constant.
	Let $P_k\in S$, for $k=1,2$, be such that
	\begin{itemize}
		\item[\rm (i)] $\varphi^-$ is supersonic at $P_k${\rm :}
		$|D\varphi^-|>c_-:=c(|D\varphi^-|^2,\varphi^-, \rho_0)$ at $P_k${\rm ;}
		
		\smallskip
		\item[\rm (ii)] $D\varphi^-\cdot\boldsymbol{\nu}>D\varphi^+\cdot\boldsymbol{\nu}>0$ at $P_k$,
		where $\boldsymbol{\nu}$ is the unit normal vector to $S$ oriented
		from $\Omega^-$ to $\Omega^+${\rm ;}
		
		\smallskip
		\item[\rm (iii)] For  the tangent line $L_{P_k}$ to $S$ at $P_k$, $k=1,2$,
		$L_{P_1}$ is parallel to $L_{P_2}$ with
		$\boldsymbol{\nu}(P_1)=\boldsymbol{\nu}(P_2)${\rm ;}
		
		\smallskip
		\item[\rm (iv)] $d(P_1)>d(P_2)$, where $d(P_k)$ is the distance between line $L_{P_k}$ and center $O^-=\vv_-$ of
		the sonic circle of state $\varphi^-$ for each $k=1,2$.
	\end{itemize}
	Then
	$$
	\phi^+_{\boldsymbol{\nu}}(P_1)<\phi^+_{\boldsymbol{\nu}}(P_2),
	$$
	where $\phi^+(\xxi)=\frac{1}{2}|\xxi|^2+\varphi^+(\xxi)$.
\end{lem}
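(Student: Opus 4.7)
The plan is to reduce the two-point comparison to a single-variable monotonicity statement about the shock polar behind the fixed upstream constant state $\varphi^-$, and then prove that monotonicity by implicit differentiation of the Rankine--Hugoniot relations using the entropy condition and the subsonic character of $\varphi^+$ behind the shock.

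First, because $\varphi^-(\xxi)=-\tfrac12|\xxi|^2+\vv_-\cdot\xxi+A^-$ is a constant state, $D\varphi^-=\vv_--\xxi$, so
$$
\varphi^-_{\bn}(P_k)=(\vv_--P_k)\cdot\bn(P_k)=d(P_k),
$$
where the second equality uses (ii) to fix the sign of $(\vv_--P_k)\cdot\bn(P_k)$ and then identifies this quantity with the distance from $O^-=\vv_-$ to $L_{P_k}$. Writing $\phi^+(\xxi)=\tfrac12|\xxi|^2+\varphi^+(\xxi)$ gives $\phi^+_{\bn}=\xxi\cdot\bn+\varphi^+_{\bn}$, and substituting yields
$$
\phi^+_{\bn}(P_k)=\vv_-\cdot\bn(P_k)-\bigl(d(P_k)-\varphi^+_{\bn}(P_k)\bigr).
$$
By (iii), $\bn(P_1)=\bn(P_2)$, so the $\vv_-\cdot\bn$ term drops out of the comparison, and the lemma reduces to showing that the shock-strength jump $d-\varphi^+_{\bn}$ is strictly larger at $P_1$ than at $P_2$.

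Next, I would use the Rankine--Hugoniot conditions to cast $\varphi^+_{\bn}$ as a function of $d$ alone. The condition $[\varphi]=0$ on $S$ gives $\varphi^+_{\bt}=\varphi^-_{\bt}$ there, whence $|D\varphi^+|^2-|D\varphi^-|^2=(\varphi^+_{\bn})^2-d^2$ on $S$. Combined with the Bernoulli invariance $\varphi^-+\tfrac12|D\varphi^-|^2\equiv B_-$ of the constant state, this gives
$$
\varphi^++\tfrac12|D\varphi^+|^2 = B_-+\tfrac12\bigl((\varphi^+_{\bn})^2-d^2\bigr) \qquad\mbox{on }S,
$$
so that, through \eqref{1.2}, the downstream density $\rho^+:=\rho(|D\varphi^+|^2,\varphi^+)$ on $S$ depends only on $d$ and $\varphi^+_{\bn}$. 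The mass-flux condition $\rho^+\varphi^+_{\bn}=\rho_- d$ then defines $\varphi^+_{\bn}=F(d)$ implicitly as a function of $d$ alone, \emph{independent} of the particular shock point. Hence the task becomes to show that $h(d)\deq d-F(d)$ is strictly increasing.

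The core of the proof, and the step I expect to be the main obstacle, is to extract $F'(d)$ in a form where the sign of $h'(d)=1-F'(d)$ is transparent. Differentiating $\rho^+ F=\rho_- d$ implicitly, using the Bernoulli-based expressions for $\partial_d\rho^+$ and $\partial_u\rho^+$ at $u=F(d)$ together with $(\rho^+)^{\gamma-1}=c_+^2$, should yield after a short manipulation
$$
F'(d)=\frac{F(d)\,(c_+^2-d^2)}{d\,(c_+^2-F(d)^2)},
$$
where $c_+$ is the downstream sonic speed on $S$. The subsonicity of $\varphi^+$ behind the shock gives $c_+^2-F(d)^2>0$, and the entropy condition together with (ii) gives $0<F(d)<d$. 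An elementary check then reduces $F'(d)<1$ to the inequality $(F-d)(c_+^2+Fd)<0$, which is immediate. Thus $h'(d)>0$, and applying strict monotonicity to $d(P_1)>d(P_2)$ from (iv) gives $\phi^+_{\bn}(P_1)<\phi^+_{\bn}(P_2)$, as required.
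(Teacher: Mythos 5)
Your strategy is sound, and it is essentially the classical argument behind the cited Lemma 6.1.4 of \cite{cf-book2014shockreflection}: since $D\varphi^-=\vv_--\xxi$, condition (ii) gives $\varphi^-_{\boldsymbol{\nu}}(P_k)=(\vv_--P_k)\cdot\boldsymbol{\nu}=d(P_k)$, the common term $\vv_-\cdot\boldsymbol{\nu}$ cancels by (iii), and the conclusion reduces to strict monotonicity in $d$ of the jump $d-\varphi^+_{\boldsymbol{\nu}}$ dictated by the Rankine--Hugoniot relations together with \eqref{1.2}. Your implicit differentiation is also correct: $F'(d)=\frac{F(c_+^2-d^2)}{d(c_+^2-F^2)}$, and $F'<1$ is indeed equivalent to $(F-d)(c_+^2+Fd)<0$, so $h(d)=d-F(d)$ is strictly increasing wherever $0<F<d$ and $F<c_+$.

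The gap is in how those last two inequalities, and the very definition of $F$, are justified. The lemma does not assume that $\varphi^+$ is subsonic, so ``subsonicity of $\varphi^+$ behind the shock'' cannot be invoked for $c_+^2-F^2>0$; similarly, ``the entropy condition together with (ii)'' only concerns the two actual shock points, whereas you need $0<F(d)<d$ and $F(d)<c_+$ for \emph{every} $d$ in $[d(P_2),d(P_1)]$, where no shock point is given and $F(d)$ is a purely algebraic root. Moreover, $\rho^+(d,u)\,u=\rho_- d$ always has the trivial root $u=d$, so saying the relation ``defines'' $F(d)$ requires selecting a branch and proving it is single-valued and smooth. All of this is repaired by the observation you already use implicitly: $\partial_u\bigl(\rho^+(d,u)u\bigr)=\rho^+\bigl(1-u^2/c_+^2\bigr)$, so $u\mapsto\rho^+(d,u)u$ is strictly increasing up to the unique $u_*(d)$ with $u_*=c_+$ and strictly decreasing beyond. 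Consequently, at $P_k$, the equality $\rho^+u=\rho_- d$ with $0<u<d$ from (ii) forces $u<u_*(d)<d$, i.e. $d(P_k)>c_-$ and the downstream \emph{normal} speed is subsonic there (no subsonicity hypothesis needed); for every $d>c_-$ there is exactly one root $F(d)\in(0,u_*(d))$, it satisfies $F(d)<d$ and $F(d)<c_+$, it is smooth in $d$ by the implicit function theorem since the $u$-derivative above is positive on that branch, and (ii) identifies $\varphi^+_{\boldsymbol{\nu}}(P_k)$ with this branch. With that unimodality statement inserted, your proof is complete; note in passing that hypothesis (i) is then not needed separately, being implied by $d(P_k)>c_-$.
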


Now we prove a technical fact used in the main argument of the paper.

\begin{lem}\label{distToSonicCenter-lemma}
	Let $\Omega$, $\Gsh$, and $\phi$ be as in Lemma {\rm \ref{lem:shock graph}}.
	For the unit vector $\ee\in Con$, let $(S,T)$ be the coordinates defined in
	Lemma {\rm \ref{lem:shock graph}}, and let $f_\ee$ be the function
	from Lemma {\rm \ref{lem:shock graph}(\ref{lem:shock graph-i1})}.
	Assume that, for two different points $P=(T,f_\ee(T))$
	and $P_1=(T_1,f_\ee(T_1))$
	on $\Gsh$,
	$$
	f_\ee(T)>f_\ee(T_1)+f'_\ee(T)(T-T_1),
	\qquad f'_\ee(T)=f'_\ee(T_1).
	$$
	Then
	\begin{enumerate}[\rm (i)]
		\item \label{distToSonicCenter-lemma-i1}
		$d(P):=\mbox{\rm dist}(O_0,L_{P})>\mbox{\rm dist}(O_0,L_{P_1})=:d(P_1)$,
		where $O_0$ is the center of sonic circle of state $(0)$, and $L_{P}$ and $L_{P_1}$ are
		the tangent lines of $\Gsh$ at $P$ and $P_1$, respectively.
		
		\smallskip
		\item  \label{distToSonicCenter-lemma-i2}
		$\phi_\ee(P)>\phi_\ee(P_1).$
	\end{enumerate}
\end{lem}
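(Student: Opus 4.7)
The assumption $f_\ee'(T)=f_\ee'(T_1)$ makes the tangent lines $L_P$ and $L_{P_1}$ parallel with a common interior unit normal $\bn(P)=\bn(P_1)$. I will prove (i) by a direct calculation of signed distances and then derive (ii) from (i) via Lemma \ref{lem:interesting lemma}.

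For (i), I first locate $O_0$ with respect to the tangent lines. Reducing, as in the remark preceding \eqref{state0-zero}, to the coordinates in which $\mathbf{v}_0=0$ so that $O_0=0$ and $\varphi_0(\xxi)=-\tfrac12|\xxi|^2$, the Rankine--Hugoniot inequality \eqref{ps-potentials-on-shock} gives $D\varphi_0\cdot\bn=-\xxi\cdot\bn>0$ on $\Gsh$, i.e.\ $\bn(Q)\cdot(O_0-Q)>0$ for every $Q\in\Gsh$. This places $O_0$ on the $\Omega$-side of every tangent line $L_Q$, so the distance is simply the signed projection
\[
d(Q)=\bn(Q)\cdot(O_0-Q).
\]
Since $\bn(P)=\bn(P_1)$, a one-line computation in the basis $\{\ee,\ee^\perp\}$, using the formula for the inward normal to the graph $S=f_\ee(T)$ derived in the proof of Lemma \ref{lem:shock graph}, yields
\[
d(P)-d(P_1)=\bn\cdot(P_1-P)=\frac{f_\ee(T)-f_\ee(T_1)-f_\ee'(T)(T-T_1)}{\sqrt{1+(f_\ee'(T))^2}}>0,
\]
where positivity is exactly the stated hypothesis. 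This establishes (i).

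For (ii), I apply Lemma \ref{lem:interesting lemma} with $\Omega^-$ the state-$(0)$ side of $\Gsh$, $\Omega^+=\Omega$, and the labelling $P_1^{\rm there}=P$, $P_2^{\rm there}=P_1$. Hypotheses (i)--(ii) of that lemma follow from the supersonicity of state $(0)$ on $\Gsh$ and from \eqref{ps-potentials-on-shock}; (iii) is the equality of normals; and (iv) is (i) just proved. Since $\mathbf{v}_0=0$, the function $\phi^+$ of Lemma \ref{lem:interesting lemma} coincides with our $\phi$, so its conclusion reads $\phi_\bn(P)<\phi_\bn(P_1)<0$, the upper bound supplied by (A1). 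To pass to $\phi_\ee$, I use $\phi\equiv 0$ on $\Gsh$ (hence $\phi_\bt=0$), which gives $\phi_\ee=\phi_\bn\,(\bn\cdot\ee)$ along $\Gsh$; Lemma \ref{lem:shock graph}(iii) gives $\bn\cdot\ee<0$, and this factor is the same at $P$ and $P_1$ because $\bn(P)=\bn(P_1)$. Multiplying the negative quantity $\phi_\bn(P)-\phi_\bn(P_1)$ by the negative factor $\bn\cdot\ee$ yields $\phi_\ee(P)>\phi_\ee(P_1)$, proving (ii).

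The only delicate point is the removal of the absolute value in the definition of $d(Q)$: without the sign information delivered by the entropy/Rankine--Hugoniot inequality \eqref{ps-potentials-on-shock}, one could not conclude that the perpendicular from $O_0$ to both tangent lines lands on the same side, and the computed difference of signed projections would not equal $d(P)-d(P_1)$. Once that issue is settled, (i) reduces to a direct computation and (ii) is an immediate application of Lemma \ref{lem:interesting lemma} combined with the boundary condition $\phi=0$ on $\Gsh$.
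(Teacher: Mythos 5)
Your proof is correct and follows essentially the same route as the paper: part (i) by projecting $P-P_1$ onto the common normal and using $\bn\cdot\ee<0$ from Lemma \ref{lem:shock graph}(\ref{lem:shock graph-i2-00}), and part (ii) by applying Lemma \ref{lem:interesting lemma} together with $\phi_{\bt}=0$ on $\Gsh$ and the sign of $\bn\cdot\ee$. Your explicit verification via \eqref{ps-potentials-on-shock} that $O_0$ lies on the interior side of both tangent lines (so the signed projection really is the distance) is a point the paper leaves implicit, but it is the same argument, just spelled out.
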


\begin{proof}
	First, since $f'_\ee(T)=f'_\ee(T_1)$, denote $\bn:=\bn(P)=\bn(P_1)$ and $\bt:=\bt(P)=\bt(P_1)$. In addition,
	$$
	d(P)=\mbox{dist}(O_0,L_P)=PO_0\cdot\bn,\qquad
	d(P_1)=\mbox{dist}(O_0,L_{P_1})=P_1O_0\cdot \bn.
	$$
	Therefore, it suffices to find the expression of vector $PO_0$ in terms of vector $P_1O_0$.
	
	\smallskip
	From the definition of the $(S,T)$--coordinates and the shock function $f_\ee$ in the previous lemmas, we have
	$$
	(T,f_\ee(T))=(T_1,f_\ee(T_1))+\big(f_\ee(T)-f_\ee(T_1)\big)\ee+(T-T_1)\ee^{\perp},
	$$
	so that
	\begin{align}
	(T,f_\ee(T))=&\, (T_1,f_\ee(T_1))+\big(f_\ee(T)-f_\ee(T_1)-f_\ee'(T_1)(T-T_1)\big)\ee\nonumber\\
	         &\,\, +(T-T_1)\big(\ee^{\perp}+f_\ee'(T_1)\ee\big).
	\end{align}
	Since $\big(\ee^{\perp}+f_\ee'(T_1)\ee\big)\cdot\bn=0$,
	$$
	PO_0\cdot\bn=\big(O_0-(T,f_\ee(T))\big)\cdot\bn
	=P_1O_0\cdot\bn
	-\big(f_\ee(T)-f_\ee(T_1)-f_\ee'(T_1)(T-T_1)\big)\ee\cdot\bn.
	$$
	From Lemma \ref{lem:shock graph}(\ref{lem:shock graph-i2-00}) and
	the fact that $f_\ee(T)>f_\ee(T_1)+f'_\ee(T_1)(T-T_1)$,
	we conclude that $PO_0\cdot\bn >P_1O_0\cdot\bn$.
	This implies
	$$
	d(P)=\mbox{dist}(O_0,L_{P})>\mbox{dist}(O_0,L_{P_1})=d(P_1).
	$$
	Then (\ref{distToSonicCenter-lemma-i1}) is proved.
	
	\medskip
	Now we prove (\ref{distToSonicCenter-lemma-i2}). By (\ref{distToSonicCenter-lemma-i1})
	and Lemma \ref{lem:interesting lemma},
	$$
	\phi_{\bn}(P)<\phi_{\bn}(P_1).
	$$
	Also,  $\partial_{\bt}\phi=0$ on $\Gsh$
	by the first condition in (\ref{equ:boundary}).
	Thus,
	$\partial_{\bt}\phi(P)=\partial_{\bt}\phi(P_1)=0$.
	Then, using $\ee\cdot\bn<0$, we obtain
	$$
	D\phi(P)\cdot\ee =\partial_{\bn}\phi(P)\,\bn\cdot\ee
	>\partial_{\bn}\phi(P_1)\,\bn\cdot\ee
	=D\phi(P_1)\cdot\ee,
	$$
	which is (\ref{distToSonicCenter-lemma-i2}).
\end{proof}

\subsection{\, Real analyticity of the shock and related properties}
In this subsection, we show that the shock, $\Gsh^0$, is real analytic and
$\phi$ is real analytic in $\Omega\cup\Gsh^0$.
To see that, we note that the free boundary
problem (\ref{equ:potential flow equation}) and
(\ref{RH conditon involve normal direction})--(\ref{RH conditon continous})
can be written in terms of $\phi=\varphi-\varphi_0$ with  $\bn=\frac{D\phi}{|D\phi|}$ in the form:
\begin{align}
&N(D^2\phi, D\phi, \phi, \xxi)=0 &&\mbox{in $\Omega$}, \label{eqnForFbp}\\
&M(D\phi, \phi, \xxi)=0 &&\mbox{on $\Gsh$}, \label{derivBCForFbp}\\
&\phi=0 &&\mbox{on $\Gsh$}, \label{zeroBCForFbp}
\end{align}
where, for $(\rr, \pp, z, \xxi)\in S^{2\times 2}\times \mr^2\times \mr\times\overline\Omega$
with $S^{2\times 2}$ as the
set of symmetric $2\times 2$ matrices,
\begin{align}
&N(\rr, \pp, z, \xxi):=
\big(c^2-(p_1-\xi_1)^2\big)r_{11}-2(p_1-\xi_1)(p_2-\xi_2)r_{12}\nonumber\\
&\qquad\qquad\qquad\,\,\,\, +\big(c^2-(p_2-\xi_2)^2\big)r_{22}, \label{func-eqnForFbp}\\
&M(\pp, z, \xxi):=\big(\rho(\pp, z, \xxi) (\pp+D\varphi_0)-\rho_0 D\varphi_0\big)\cdot
\frac{\pp}{|\pp|} \label{func-derivBCForFbp}
\end{align}
with
\begin{eqnarray*}
	c^2(\pp, z, \xxi)=\rho_0^{\gamma-1}-(\gamma-1)\big(z-\xxi\cdot \pp+\frac{1}{2}|\pp|^2\big),\quad\,
	\rho(\pp, z, \xxi)=c(\pp, z, \xxi)^{\frac{2}{\gamma-1}}.
\end{eqnarray*}

Equation \eqref{eqnForFbp} is quasilinear, so that its ellipticity depends only on $(\pp, z, \xxi)$.
By assumption, the equation is strictly elliptic on solution $\phi$, {\it i.e.},
for $(\pp, z, \xxi) = (D\phi(P), \phi(P), P)$ for all
$P\in \Omega\cup\Gsh^0$.

Furthermore, it is easy to check by an explicit calculation that the ellipticity of the equation
and the fact that $\bn=\frac{D\phi}{|D\phi|}$ on $\Gsh^0$
imply the obliqueness of the boundary condition (\ref{derivBCForFbp})
on $\Gsh^0$ for solution $\phi$:
$$
D_\pp M(D\phi, \phi, \xxi)\cdot \bn>0 \qquad \;\mbox{ on $\Gsh^0$}.
$$

Moreover, from the explicit expressions, $N(\rr, \pp, z, \xxi)$ is real analytic on
$S^{2\times 2}\times \mr^2\times \mr\times\overline\Omega$, and
$M(\pp, z, \xxi)$ is real analytic on
$$
\{(\pp, z, \xxi)\;: \; \rho_0^{\gamma-1}-(\gamma-1)\big(z-\xxi\cdot \pp+\frac{1}{2}|\pp|^2\big)>0\}.
$$
Since $\varphi_0$ is pseudo-supersonic,  $\varphi$ is pseudo-subsonic on $\Gsh$, and conditions
(\ref{RH conditon involve normal direction})--(\ref{RH conditon continous}) hold, we have
$$
\rho(D\phi, \phi, \xxi)>\rho_0 \qquad \mbox{ for all $\xxi\in\Gsh$},
$$
so that
$$
\rho_0^{\gamma-1}-(\gamma-1)\big(z-\xxi\cdot \pp+\frac{1}{2}|\pp|^2\big)>\rho_0^{\gamma-1}
$$
for all $(\pp, z, \xxi)=(D\phi(\xxi), \phi(\xxi), \xxi)$ with $\xxi\in\Gsh$.
That is,
$M(\pp, z, \xxi)$ is real analytic in an open set containing $(\pp, z, \xxi)=(D\phi(\xxi), \phi(\xxi), \xxi)$
for all $\xxi\in\Gsh$.

Then, by Theorem 2 in Kinderlehrer-Nirenberg \cite{KinderlehrerNirenberg-ASNSPCS1977373},
we have the following lemma:

\begin{lem}\label{lem:analyticity}
	Let $\Omega$, $\Gsh$, and  $\phi$ be as in Lemma {\rm \ref{lem:shock graph}}.
	Then $\Gsh^0$ is real analytic in its relative interior{\rm ;}
	in particular,
	$f_\ee$ is real analytic on $(T_A, T_B)$ for any $\ee\in Con$.
	Moreover,  $\phi$ is real analytic in $\Omega$ up to $\Gsh^0$.
\end{lem}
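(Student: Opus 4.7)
The plan is to invoke the Kinderlehrer--Nirenberg analytic regularity theorem for free boundary problems on the reformulated system \eqref{eqnForFbp}--\eqref{zeroBCForFbp}, which is exactly what the paragraphs preceding the statement have been setting up. The preparation shows that $N(\rr,\pp,z,\xxi)$ is real-analytic on all of $S^{2\times 2}\times\mr^2\times\mr\times\overline\Omega$, and that $M(\pp,z,\xxi)$ is real-analytic on the open set in which $\rho_0^{\gamma-1}-(\gamma-1)(z-\xxi\cdot\pp+\tfrac12|\pp|^2)>0$; since $\rho>\rho_0>0$ on $\Gsh$ by the entropy condition (A1), the trace of the solution $(D\phi,\phi,\xxi)$ lies comfortably inside this open set of analyticity. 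Strict ellipticity of \eqref{eqnForFbp} on $\Omega\cup\Gsh^0$ is given by (A3), and the obliqueness $D_\pp M\cdot\bn>0$ on $\Gsh^0$ has just been verified. The Dirichlet condition \eqref{zeroBCForFbp} is trivially analytic.

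Next, I would verify that the initial regularity required by Kinderlehrer--Nirenberg is in place. Assumption (A2) supplies $\phi\in C^{1,\alpha_1}(\overline\Omega)$, and Lemma \ref{lem:shock graph}(\ref{lem:shock graph-i1}) already provides, for any $\ee\in Con$, a $C^{1,\alpha}$ graph representation $\Gsh=\{S=f_\ee(T)\}$ in the $(S,T)$-coordinates adapted to $\ee$. Fixing any point $P\in\Gsh^0$, one can therefore apply \cite{KinderlehrerNirenberg-ASNSPCS1977373} in a neighborhood of $P$: the theorem upgrades the $C^{1,\alpha}$ graph $f_\ee$ to a real-analytic one and provides a real-analytic extension of $\phi$ across $\Gsh^0$ on that neighborhood. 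Since these local analytic extensions must coincide with $\phi$ on the interior side of $\Gsh$ (both being $C^2$ and agreeing to infinite order along $\Gsh^0$ via the equation and boundary conditions), $\phi$ is real-analytic on $\Omega\cup\Gsh^0$ globally, and $\Gsh^0$ is a real-analytic curve.

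Finally, to recover the stated analyticity of $f_\ee$ on all of $(T_A,T_B)$ and not merely near one point, I would use that $\phi_\ee>0$ on $\Gsh^0$ by Lemma \ref{lem:shock graph}(\ref{lem:shock graph-i2-01}), so the zero level set of the analytic extension of $\phi$ is locally defined by the analytic implicit function theorem as $S=f_\ee(T)$ around every $T\in(T_A,T_B)$. Uniqueness of the graph representation then yields $f_\ee\in C^{\omega}((T_A,T_B))$.

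The main obstacle in this program is purely verificational rather than conceptual: one must check carefully that each hypothesis of the Kinderlehrer--Nirenberg theorem (analyticity of the nonlinearities in a full neighborhood of the solution trace, strict ellipticity, obliqueness, and the baseline $C^{1,\alpha}$ regularity of both $\phi$ and the free boundary) is valid at the specific point under consideration. The entropy condition (A1), strict ellipticity (A3), regularity assumption (A2), and Lemma \ref{lem:shock graph} each contribute exactly one of these ingredients, so the bookkeeping is straightforward; there is no genuinely hard step, which is why the proof is naturally short.
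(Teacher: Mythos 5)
Your proposal is correct and follows essentially the same route as the paper: the paper also applies Theorem 2 of Kinderlehrer--Nirenberg \cite{KinderlehrerNirenberg-ASNSPCS1977373} to the reformulated free boundary problem \eqref{eqnForFbp}--\eqref{zeroBCForFbp}, relying on exactly the verifications you list (real analyticity of $N$ and of $M$ near the solution trace via $\rho>\rho_0$, strict ellipticity from (A3), obliqueness of \eqref{derivBCForFbp}, and the $C^{1,\alpha}$ regularity of $\phi$ and of the graph $f_\ee$). The only point worth noting is that, as the paper remarks, the analyticity of the solution up to the free boundary is not in the statement of Kinderlehrer--Nirenberg's Theorem 2 but is contained in its proof, which is the fact your argument implicitly uses when you speak of the analytic extension of $\phi$ across $\Gsh^0$.
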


We remark here that the assertion on the analyticity of the solution up to the free boundary
is not listed in the formulation of Theorem 2 in
\cite{KinderlehrerNirenberg-ASNSPCS1977373}, but is shown in its proof.

\smallskip
Now we show the following fact that will be repeatedly used for subsequent development.
\begin{lem}\label{noZeroesInfiniteOrderLemma}
	Let $\Omega$, $\Gsh$, and  $\phi$ be as in Lemma {\rm \ref{lem:shock graph}}.
	Assume that $\phi$ is not a constant state in $\Omega$.
	Let $\ee\in Con$, and let $T_A, T_B$, and $f_\ee$ be from
	Lemma {\rm \ref{lem:shock graph}(\ref{lem:shock graph-i1})}.
	Then, for any $T_P\in(T_A, T_B)$, there exists an integer $k\ge 2$
	such that $f_\ee^{(k)}(T_P)\ne 0$.
\end{lem}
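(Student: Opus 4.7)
The plan is to argue by contradiction using the real analyticity established in Lemma \ref{lem:analyticity}. Suppose for contradiction that $f_\ee^{(k)}(T_P)=0$ for every integer $k\ge 2$ at some $T_P \in (T_A,T_B)$. Since $f_\ee$ is real analytic on $(T_A,T_B)$, its Taylor expansion at $T_P$ reduces to the affine function $f_\ee(T)=f_\ee(T_P)+f_\ee'(T_P)(T-T_P)$, and by analytic continuation this identity persists on all of $(T_A,T_B)$. Hence $\Gsh$ is a straight line segment with constant unit normal $\bn$ and tangent $\bt$. The goal is then to show that this forces $\phi$ to be a constant state in $\Omega$, contradicting the hypothesis.

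To this end, I would first produce a constant state $\varphi_1(\xxi)=-\tfrac12|\xxi|^2+\vv_1\cdot\xxi+C_1$ that matches $\varphi_0$ across the straight shock $\Gsh$. The requirement $\varphi_1=\varphi_0$ along the line $\Gsh$ forces $\vv_1=s\bn$ for some scalar $s$, with $C_1$ then determined by a single point of $\Gsh$. A direct computation---exploiting that $\xxi\cdot\bn$ is constant on the straight line $\Gsh$ while the $\xxi\cdot\bt$ contributions cancel in $\varphi_1+\tfrac12|D\varphi_1|^2$---shows that $\rho_1=\rho(|D\varphi_1|^2,\varphi_1)$ is actually constant along $\Gsh$. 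The Rankine--Hugoniot mass-flux condition \eqref{RH conditon involve normal direction} thus collapses to a single algebraic equation in $s$, admitting the trivial root $s=0$ and a unique nontrivial root satisfying the entropy condition $s<0$. Selecting the latter, set $\phi_1:=\varphi_1-\varphi_0$, an affine function.

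By Lemma \ref{lem:analyticity}, $\phi$ is real analytic on $\Omega\cup\Gsh^0$, while $\phi_1$ is affine and hence real analytic. Both satisfy \eqref{equ:study} in $\Omega$, and on $\Gsh^0$ they share Cauchy data: their tangential derivatives agree because $\phi=\phi_1=0$ there, and their normal derivatives coincide because both are determined by the same Rankine--Hugoniot algebraic relation and the same entropy branch singled out by (A1), giving $\partial_\bn\phi=\partial_\bn\phi_1=s$ on $\Gsh^0$. The strict ellipticity hypothesis (A3) renders the analytic curve $\Gsh^0$ non-characteristic for the quasilinear equation \eqref{eqnForFbp}, so the Cauchy--Kovalevskaya uniqueness theorem for analytic solutions with analytic Cauchy data yields $\phi\equiv\phi_1$ in a one-sided neighbourhood of $\Gsh^0$ inside $\Omega$. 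Connectedness of $\Omega$ together with real analyticity of $\phi$ (Lemma \ref{lem:analyticity}) then propagates this equality throughout $\Omega$; since $\phi_1$ is affine, $\varphi=\phi+\varphi_0$ is a constant state, contradicting the hypothesis.

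The main obstacle is the construction of the matching constant state $\varphi_1$ and the verification that it reproduces the same normal-derivative Cauchy data as $\phi$ on $\Gsh^0$. This rests on the cancellation that makes $\rho_1$ constant along the straight shock, combined with the entropy condition uniquely selecting the nontrivial branch of the Rankine--Hugoniot relation. Once this matching is in place, the Cauchy--Kovalevskaya step and the analytic continuation are routine.
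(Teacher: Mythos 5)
Your argument is correct in substance but follows a genuinely different route from the paper. The paper stays entirely local at the single point $P$: from \eqref{tangDerivOnShock} and its $T$-derivatives it gets $\partial_{\bt}^i\phi(P)=0$ for all $i\ge 2$, then kills all higher derivatives of $\phi$ at $P$ by an induction that alternates between tangential derivatives of the differentiated Rankine--Hugoniot condition \eqref{equ:boundary for phi e} (using the obliqueness $h_{\bn}<0$ from \eqref{equ:27 oblique of h}) and derivatives of the strictly elliptic equation \eqref{equ:in ST coordinate}; analyticity of $\phi$ up to $\Gsh^0$ (Lemma \ref{lem:analyticity}) then forces $\phi$ to be affine in $\Omega$, contradicting non-constancy. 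You instead globalize first: analyticity of $f_\ee$ makes the whole shock a straight segment, you build the flat-shock downstream constant state $\varphi_1$, match Cauchy data on $\Gsh^0$, and conclude by Cauchy--Kovalevskaya/Holmgren uniqueness in the analytic class plus analytic continuation. Both proofs end at the same contradiction, and your matching of the normal derivative is legitimate because, with $\varphi_0=-\frac12|\xxi|^2$ as in \eqref{state0-zero}, both $D\varphi_0\cdot\bn$ and $\varphi_0+\frac12|D\varphi_0|^2$ are constant along the straight shock, so the Rankine--Hugoniot relation reduces to one algebraic equation for $\varphi_{\bn}$ that is identical at every shock point. The one place where you owe an argument is the assertion that this equation has a \emph{unique} nontrivial root compatible with (A1): this is true because the mass flux $v\mapsto\rho(v)v$ along the Bernoulli relation is strictly increasing for $v<c$ and strictly decreasing for $v>c$, hence unimodal, so besides $v=D\varphi_0\cdot\bn$ there is at most one other root, and \eqref{ps-potentials-on-shock} together with $\rho>\rho_0$ places $\varphi_{\bn}$ there; also, existence of the nontrivial root is most cleanly obtained by simply setting $s:=\phi_{\bn}$ at one shock point rather than solving the algebraic equation abstractly. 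What each approach buys: the paper's pointwise induction is self-contained (no shock-polar uniqueness, no C--K machinery) and only needs the derivatives of $f_\ee$ to vanish at the one point $T_P$; yours is conceptually transparent (a flat transonic shock must bound the classical flat-shock constant state), at the cost of importing two standard but nontrivial facts -- uniqueness of the entropy root of the normal shock relation and analytic Cauchy uniqueness across a non-characteristic curve.
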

\begin{proof}
	In this proof, we use equation
	\eqref{equ:boundary for phi e}
	in the $(S,T)$--coordinates with basis $\{\bn,\bt\}=\{\bn(P),\bt(P)\}$ (constant vectors).
	
	We argue by a contradiction.
	Assume that $P=(f_\ee(T_P),T_P)\in\Gsh^0$ is such that $f_\ee^{(i)}(T_P)=0$ for all $i>1$.
	From (\ref{tangDerivOnShock}) and
	its derivatives with respect to $T$,
	we use assumption (A1) of Theorem \ref{thm:main theorem} to obtain
	$$
	\partial_{\bt}^i\phi(P)=0\qquad \mbox{ for all $i>1$}.
	$$
	
	Writing \eqref{equ:boundary for phi e} in the coordinates with the basis of the normal vector $\bn$
	and tangent vector $\bt$
	on $\Gsh$ at $P$,
	and writing vector $\bh$ given in \eqref{Expression-h} as $\bh=h_{\bn} \bn+h_{\bt}\bt$,
	we have
	\begin{equation}\label{DerivRH-nu-tau}
	h_{\bt} \phi_{\bt\bt}+h_{\bn}\phi_{\bn\bt}=0 \qquad\,\, \mbox{ at $P$}.
	\end{equation}
	From  \eqref{equ:27 oblique of h}, $h_{\bn}=\bh\cdot\bn<0$
	at $P$ so that $\phi_{\bt\bt}=0$ implies that $\phi_{\bn\bt}=0$.
	Now, from equation \eqref{equ:in ST coordinate}
	and assumption (A3) of Theorem \ref{thm:main theorem},
	we obtain that $\phi_{\bn\bn}=0$, so that
	\begin{equation}\label{all2ndDerivZero}
	\phi_{\bt\bt}=\phi_{\bn\bt}=\phi_{\bn\bn}=0\qquad\,\,\mbox{ at $P$}.
	\end{equation}
	
	Continuing inductively with respect to order $k$ of differentiation,
	we fix $k>2$,
	and assume that $D^j\phi(P)=0$ for $j=2,\dots, k-1$. With this,
	taking the $(k-1)$-th tangential
	derivative of (\ref{equ:boundary for phi e}), we obtain
	$$
	h_{\bt}\partial^k_{\bt}\phi+h_{\bn}\partial^{k-1}_{\bt}\partial_{\bn}\phi=0 \,\qquad \mbox{ at $P$}.
	$$
	Thus, from $\partial^k_{\bt}\phi(P)=0$, we have
	$$
	\partial^{k-1}_{\bt}\partial_{\bn}\phi=0\;\qquad \mbox{ at $P$}.
	$$
	Then, using the $\partial^{k-2}_T$--derivative of equation  \eqref{equ:in ST coordinate},
	we see that $\partial^{k-2}_{\bt}\partial^2_{\bn}\phi(P)=0$. Furthermore,
	using the $\partial^{k-3}_T\partial_S$--derivative of equation \eqref{equ:in ST coordinate},
	we see that $\partial^{k-3}_{\bt}\partial^3_{\bn}\phi(P)=0$, {\it etc}.
	Thus, we obtain that
	all the derivatives of $\phi$ of order two and higher are zero at $P$.
	Now, from the analyticity of $\phi$ up to $\Gsh^0\ni P$,
	we conclude that $\phi$ is linear in the whole domain $\Omega$,
	which is a contradiction to the condition of Theorem {\rm \ref{thm:main theorem}}
	that $\varphi$
	is not a constant state.
\end{proof}

\subsection{\, Minimal and maximal chains: Existence and properties}
\label{ExistMinMaxChainsSection}
In this subsection, we assume that $\Omega\subset \mR^2$  is open, bounded, and connected,
and that $\partial\Omega$ is a continuous curve, piecewise $C^{1,\alpha}$
up to the endpoints of each smooth part
and has a finite number of smooth parts.
Moreover, at each corner point of $\partial\Omega$,
angle $\theta$ between the arcs meeting at that point from the interior of
$\Omega$ satisfies $\theta\in (0, \pi)$.
Note that Theorem \ref{thm:main theorem} requires all these conditions.

Let $\phi\in C(\overline{\Omega})\cap C^2(\Omega\cup\Gsh^0)\cap C^3(\Omega)$
be a solution of equation (\ref{equ:study}) in $\Omega$ satisfying
conditions (A2)--(A3) of Theorem \ref{thm:main theorem}.
Let ${\mathbf e}\in\mathbb{R}^2$ be a unit vector.

\begin{defi}\label{def:minimal chain}
	Let $E_1, E_2\in \partial\Omega$.
	We say that points $E_1$ and $E_2$ are connected by a minimal {\rm (}resp.  maximal{\rm )}
	chain with radius $r$ if
	there exist $r>0$, integer $k_1\ge 1$, and a chain of balls $\{B_{r}(C^i)\}_{i=0}^{k_1}$ such that
	\begin{enumerate}[\rm (a)]
		\item \label{def:minimal chain-Ia}
		$C^0=E_1$, $C^{k_1}=E_2$, and $C^i\in\overline\Omega$  for $i=0,\dots, k_1${\rm ;}
		
		\smallskip
		\item \label{def:minimal chain-Ib}
		$C^{i+1}\in \overline {B_{r}(C^i)\cap\Omega}$ for $i=0,\dots, k_1-1${\rm ;}
		
		\smallskip
		\item \label{def:minimal chain-Ic}
		$\phi_\ee(C^{i+1})={\displaystyle \min_{\overline{B_{r}(C^i)\cap\Omega}}\phi_\ee<\phi_\ee(C^i)}$
		{\rm (}resp. $\phi_\ee(C^{i+1})={\displaystyle\max_{\overline{B_{r}(C^i)\cap\Omega}}\phi_\ee>\phi_\ee(C^i)}${\rm )}
		for $i=0,\dots, k_1-1${\rm ;}
		
		\smallskip
		\item \label{def:minimal chain-Id}
		$\phi_\ee(C^{k_1})={\displaystyle\min_{\overline{B_{r}(C^{k_1})\cap\Omega}}\phi_\ee}$
		{\rm (}resp. $\phi_\ee(C^{k_1})={\displaystyle\max_{\overline{B_{r}(C^{k_1})\cap\Omega}}\phi_\ee}${\rm )}.
	\end{enumerate}
	For such a chain, we also use the following terminology{\rm :} The chain starts at $E_1$ and ends at $E_2$, or
	the chain is from $E_1$ to $E_2$.
\end{defi}

\begin{rem}
	This definition does not rule out the possibility that $B_r(C^i)\cap\partial\Omega\ne\emptyset$,
	or even $C^i\in\partial\Omega$, for some or all $i=0,\dots, k_1-1$.
\end{rem}

\begin{rem}
	Radius $r$ is a parameter in the definition of minimal or maximal chains.
	We do not fix $r$ at this point.
	In the proof of Theorems {\rm \ref{thm:main theorem}}--{\rm \ref{thm:main theorem-strictUnif}},
	the radii will be determined for various chains in such a way that
	Lemmas {\rm \ref{lem:ExistMinMaxChain}}--{\rm \ref{lem:chainsDoNotIntersect-MinMin}} below
	can be applied.
\end{rem}

We now consider the minimal and maximal chains for $\phi_\ee$ in $\Omega$.
In the results of these subsections, all the constants depend on the parameters
in the conditions
listed above, {\it i.e.}, the $C^{1,\alpha}$--norm of the smooth parts of $\partial \Omega$,
the angles at the corner points,
and $\|\phi\|_{C^{1,\alpha}(\overline\Omega)}$, in addition to the further parameters
listed in the statements.

We first show that the chains with sufficiently small radius are connected sets.

\begin{lem}\label{lem:connectBall}
	There exists $r^*>0$, depending only on the $C^{1,\alpha}$--norms of
	the smooth parts of $\partial\Omega$
	and angles $\theta\in (0, \pi)$ in the corner points, such that,
	for any $E\in\overline\Omega$
	and $r\in (0, r^*]$,
	\begin{enumerate}[\rm (i)]
		\item\label{lem:connectBall-i1}
		$B_r(E)\cap\Omega$ is connected{\rm ;}
	
	\smallskip
		\item\label{lem:connectBall-i2}
		For any $G\in\overline{B_r(E)\cap\Omega}$,
		$B_r(E)\cap B_r(G)\cap\Omega$ is nonempty.
	\end{enumerate}
\end{lem}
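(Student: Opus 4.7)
The plan is to choose $r^*$ small enough to reduce the lemma to a purely local geometric question near each point of $\overline\Omega$, then handle a short list of cases. First I would dispose of part (ii) immediately: since $G\in\overline{B_r(E)\cap\Omega}$ and $B_r(G)$ is an open neighborhood of $G$, the intersection $B_r(G)\cap(B_r(E)\cap\Omega)$ is non-empty directly from the definition of closure, and no smallness of $r$ is actually needed here.

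For part (i), I would choose $r^*>0$ smaller than (a) half the minimum distance between any two smooth arcs of $\partial\Omega$ that do not share a corner, (b) the tubular radius in which each smooth $C^{1,\alpha}$ arc can be written as a graph in adapted rotated coordinates with small slope, and (c) a small fraction of the length of each smooth arc and of a neighborhood of each corner on which the two adjacent arcs stay close to their tangent rays at the corner. With this choice, for every $E\in\overline\Omega$ and $r\in (0, r^*]$, the intersection $B_r(E)\cap\partial\Omega$ falls into exactly one of the following: it is empty, it lies in a single smooth arc, or it lies in two smooth arcs meeting at a single corner $P\in B_r(E)$ with interior angle $\theta(P)\in(0,\pi)$.

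Next I would prove connectedness of $B_r(E)\cap\Omega$ in each case. In the empty-intersection case, $E\in\overline\Omega$ together with $B_r(E)\cap\partial\Omega=\emptyset$ forces $B_r(E)\subset\Omega$ (since the connected open ball $B_r(E)$ cannot be split by $\partial\Omega$ and it must contain a point of $\Omega$), so we are left with a ball. In the single-arc case, adapted rotated coordinates express $\partial\Omega\cap B_r(E)$ as a graph $y=g(x)$ with $\|g'\|_{L^\infty}$ as small as we wish, and $B_r(E)\cap\Omega=\{(x,y)\in B_r(E): y>g(x)\}$, which is path-connected by first moving vertically up to the level $y=r/2$ and then horizontally. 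In the corner case, I would use a $C^{1,\alpha}$-diffeomorphism $\Phi$ defined on a fixed neighborhood of each corner $P$ that straightens the two adjacent arcs onto the tangent rays at $P$; the image of $\Omega$ becomes the standard open convex wedge $W_\theta$ of opening $\theta=\theta(P)\in(0,\pi)$, and path-connectedness of $\Phi(B_r(E)\cap\Omega)=\Phi(B_r(E))\cap W_\theta$ follows from the convexity of $W_\theta$ combined with the fact that $\Phi(B_r(E))$ is a Hausdorff-close perturbation of a round ball for small $r$.

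The main obstacle will be the corner case, specifically producing the straightening map $\Phi$ with bi-Lipschitz bounds depending only on the $C^{1,\alpha}$-norms of the two adjacent arcs and on $\theta$, and then verifying that $\Phi(B_r(E))\cap W_\theta$ is connected even when $E$ is only near the corner (so that $\Phi(E)$ may lie outside $W_\theta$, and the "wedge slice" of the distorted disk must be joined to the interior component through a single path near the apex). Since there are only finitely many corners and smooth pieces, an admissible $r^*$ can be extracted as the minimum of the constants produced by these finitely many local arguments, yielding the claimed dependence of $r^*$ on only the $C^{1,\alpha}$-norms and the angles $\theta\in(0,\pi)$.
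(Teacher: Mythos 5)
Your treatment of part (ii) is correct and in fact cleaner than the paper's: since $B_r(G)$ is an open neighborhood of $G$ and $G\in\overline{B_r(E)\cap\Omega}$, the intersection $B_r(G)\cap B_r(E)\cap\Omega$ is nonempty by the definition of closure, with no smallness of $r$ needed (the paper instead revisits its explicit formulas for $B_r(E)\cap\Omega$). Your interior case and single-arc case are essentially the paper's argument: in adapted coordinates the paper writes $B_r(E)\cap\Omega=\{(s,t):\max(f^-(t),g(t))<s<f^+(t),\ t^-<t<t^+\}$ with $f^\pm$ the two semicircles and uses $|g'(t)|\le Ct^\alpha$ to show $f^+>g$ exactly on an interval, which is the rigorous form of your ``go up, then across'' path. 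Your extra requirement that $r^*$ be below the separation of non-adjacent boundary arcs is also fine; the paper implicitly needs the same thing to have its local graph representations in boxes of size comparable to $r$.

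The genuine gap is in the corner case, which is where the hypothesis $\theta\in(0,\pi)$ actually enters. First, a small slip: your trichotomy asserts that when $B_r(E)$ meets two arcs the corner $P$ lies in $B_r(E)$; for small angles this is false ($E$ on the bisector of a thin wedge can be within distance $r$ of both arcs while the apex is farther than $r$ away), so the case list is incomplete, although your wedge argument does not really need $P\in B_r(E)$. The serious issue is the justification that $\Phi(B_r(E))\cap W_\theta$ is connected ``by convexity of $W_\theta$ combined with Hausdorff-closeness of $\Phi(B_r(E))$ to a round ball'': Hausdorff closeness to a ball does not prevent an open set from meeting a convex wedge in several components, so this implication is not valid as stated. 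What one needs is a quantitative structural property of the distorted disk (for instance that $\partial\Phi(B_r(E))$ is a $C^1$-small radial graph, or star-shapedness about a point that can be taken inside $W_\theta$), and when $E\in\partial\Omega$ near the apex the natural center $\Phi(E)$ lies on $\partial W_\theta$, so even star-shapedness about $\Phi(E)$ would not immediately conclude; this is precisely the step you flag as the main obstacle but do not supply, and constructing $\Phi$ with uniform $C^{1,\alpha}$ bounds is itself nontrivial. Note that the paper avoids straightening altogether: the angle condition gives, in a single coordinate frame near the corner, $\Omega=\{s>\max(g_1(t),g_2(t))\}$ with $g_1'\le-\lambda<0<\lambda\le g_2'$; each supergraph region $\{s>g_k\}$ is intersected with $B_r(E)$ exactly as in the smooth case (after a rotation adapted to the nearest point on that arc), and taking the intersection of the two expressions yields $B_r(E)\cap\Omega=\{(s,t):\max(f^-(t),g_1(t),g_2(t))<s<f^+(t),\ t^-<t<t^+\}$ with $f^+>\max(g_1,g_2)$ on $(t^-,t^+)$, which is connected because its vertical slices are nonempty intervals over an interval of $t$. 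If you wish to keep the diffeomorphism route you must prove the distorted-disk/wedge connectivity claim directly, which amounts to roughly the same computation; otherwise I recommend switching to the paper's graph-intersection argument for the corner case.
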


\begin{proof}
	We only sketch the argument,
	since the details are standard.
	
	\smallskip
	We first prove (\ref{lem:connectBall-i1}).
	Denote $Q_{\rc}:=(-L\rc, L\rc)\times(-\rc, \rc)$.
	The conditions on $\partial\Omega$ imply that there exist $L, N>4$
	such that, for any sufficiently small $\rc>0$, the following facts hold:
	
	\begin{enumerate}[\rm (a)]
		\item \label{domainSmooth-case}
		If $P\in\partial\Omega$ has  the distance at least
		$N r$ from the corner points of $\partial\Omega$,
		then, in some orthonormal coordinate system in $\mR^2$ with the origin at $P$,
		\begin{equation}\label{domainSmoothCase}
        \begin{split}
		&\Omega\cap Q_{2\rc}=\{(s,t)\in Q_{2\rc}\;:\; s>g(t)\},\\
		&\partial\Omega\cap Q_{2\rc}=\{(s,t)\in Q_{2\rc}\;:\; s=g(t)\}
	    \end{split}
     	\end{equation}
		for some $g\in C^{1,\alpha}(\mR)$ with $g(0)=g'(0)=0$;
		
		\smallskip
		\item\label{domainNearCorner-case}
		If $P\in\partial\Omega$ is a corner point, then, in some
		orthonormal coordinate system in $\mR^2$ with the origin at $P$,
		\begin{equation}\label{domainNearCorner}
		\begin{split}
		&\Omega\cap Q_{4N \rc}=\{(s,t)\in Q_{4N \rc}\;:\; s>\max(g_1(t), g_2(t))\},\\[1mm]
		&\partial\Omega\cap Q_{4N \rc}=\{(s,t)\in Q_{4N\rc}\;:\; s=\max(g_1(t), g_2(t))\}
		\end{split}
		\end{equation}
		for some $g_1$ and $g_2$ satisfying
		\begin{equation}\label{domainNearCorner-funct}
		\begin{split}
		&g_1, g_2\in C^{1,\alpha}(\mR), \;\; g_1(0)=g_2(0)=0,\;\;g_1'(0)< 0,\;\;g_2'(0)> 0,\\
		& g_1(t)>g_2(t)\;\mbox{  for $t<0$},\qquad\;
		g_1(t)<g_2(t)\;\mbox{  for $t>0$}.
		\end{split}
		\end{equation}
		Note that, in order to obtain
		(\ref{domainNearCorner})--(\ref{domainNearCorner-funct}),
		we use the condition that angle $\theta$ at $P$ satisfies $\theta \in(0, \pi)$.
	\end{enumerate}
	
	\smallskip
	Let $E\in \overline\Omega$.  Without loss of generality, we assume that $\dist(E, \partial\Omega)<r$;
	otherwise, (\ref{lem:connectBall-i1}) already holds.
	
	\smallskip
	The first case is that the distance from $E$ to the corner points is at least $2Nr$.
	Then, denoting by $P$ the nearest point on $\partial\Omega$ to $E$,
	it follows that $P$ satisfies the condition for Case (a) above,
	so that $P$ is the unique nearest point on $\partial\Omega$ to $E$,
	and $E=(s^*, 0)$ with $s^*\in [0, r)$ in the coordinate system
	described in (\ref{domainSmooth-case}) above.
	Then, denoting
	$f^\pm(t):=s^*\pm\sqrt{r^2-t^2}$ on $[-r, r]$,
	and using that $|g'(t)|\le Ct^\alpha$ and
	$|g(t)|\le Ct^{1+\alpha}$ on $[-r, r]$ for $C$ depending on
	the $C^{1,\alpha}$--norm of the smooth
	parts of $\partial\Omega$, we obtain that, if $r$ is small,
	there exist $t^+\in (\frac 9{10} r, r]$
	and $t^-\in [-r, -\frac 9{10} r)$
	such that
	\begin{equation}\label{connectedRnbhd-smooth-0}
	f^+>g \,\,\,\mbox{on $(t^-, t^+)$}, \qquad
	\, f^+<g \,\,\, \mbox{on $[-r, r]\setminus [t^-, t^+]$},
	\end{equation}
	where the last set is empty if $t^\pm=\pm r$, and
	\begin{equation}\label{connectedRnbhd-smooth}
	\Omega\cap B_{r}(E)=\{(s,t)\;:\;
	\max(f^-(t), g(t))<s<f^+(t),\; t^-<t< t^+\},
	\end{equation}
	which is a connected set, by  the first inequality in
	(\ref{connectedRnbhd-smooth-0})
	and the fact that $f^-<f^+$ in $(-r,r)$.
	
	\smallskip
	In the other case, when the distance from $E$
	to the corner points is smaller than $2Nr$,
	we argue similarly by using the coordinates
	described in Case (\ref{domainNearCorner-case}) above,
	related to the corner point $P$ that is the nearest to $E$.
	The existence of such a coordinate system and the fact that $\dist(E,P)<2Nr$
	also imply that the nearest corner
	$P$ is unique for $E$. Then, in these coordinates,
	$$
	E=(s^*, t^*)\in
	\overline{\Omega\cap Q_{2N r}}.
	$$
	Let
	$$
	\Omega^{(k)}:= \{s>g^{(k)}(t),\;\; t\in\mR\},\,\,\,\,	
 \Gamma^{(k)}:= \{s=g^{(k)}(t),\;\; t\in\mR\}\qquad \mbox{ for $k=1,2$}.
	$$
	Then, by (\ref{domainNearCorner}),
	\begin{equation}\label{domainNearCorner-Un}
	\Omega\cap Q_{4N r}=\Omega^{(1)}\cap\Omega^{(2)}\cap Q_{4N r}.
	\end{equation}
	
	If $r$ is sufficiently small, we deduce from (\ref{domainNearCorner-funct})
	that there exists $\lambda\in(0,1)$ such that
	\begin{equation}\label{domainNearCorner-funct-1}
	-\lambda^{-1}\le g_1'(t)\le -\lambda,\quad
	\lambda\le g_2'(t)\le \lambda^{-1}
	\qquad\,\, \mbox{ for all $t\in (-4Nr, 4Nr)$}.
	\end{equation}
	
	Let $P^{(k)}=(s^{(k)},  t^{(k)})$ be the nearest point to $E$ on $\Gamma^{(k)}$.
	Then
	$P^{(k)}\in \Gamma^{(k)}\cap Q_{2N r}$.

	Assume that ${\dist}(E,\Gamma^{(1)}) <r$, which implies that $E\in B_r(P^{(1)})$.
	Using  (\ref{domainNearCorner-funct-1}),
	$g_1'(t^{(1)})<0$.
	Then, reducing $r$ depending on the $C^{1,\alpha}$--norm of
	$g_1$,
	rotating the coordinate system $(s,t)$
	by angle $\arctan(|g_1'(t^{(1)})|)$ clockwise, and shifting the origin
	into $P^{(1)}$, we conclude that, in the resulting coordinate system $(S,T)$,
	\begin{align*}
   	&\Omega^{(1)}\cap Q_{r}=\{(S,T)\in Q_{r}\;:\; S>G(T)\},\\
	&\Gamma^{(1)}\cap Q_{r}=\{(S,T)\in Q_{r}\;:\; S=G(T)\},
	\end{align*}
	for some $G\in C^{1,\alpha}(\mR)$ with $G(0)=G'(0)=0$, which
	is similar to (\ref{domainSmoothCase}). Then, arguing as in Case
	(\ref{domainSmooth-case}), we obtain an expression similar
	to (\ref{connectedRnbhd-smooth}) for $\Omega^{(1)}\cap B_{r}(E)$
	in the $(S,T)$--coordinates.
	Changing
	back to the $(s,t)$--coordinates and possibly further reducing $r$ depending
	on $\lambda$, we obtain the existence of $t^-\in [t^*- r, t^*)$
	such that
	\begin{equation}\label{connectedRnbhd-corner-0-0}
	f^+>g_1 \,\,\mbox{on $(t^-, t^*+r)$}, \quad\,\,
	f^+<g_1 \,\, \mbox{on $[t^*-r, t^*+r]\setminus [t^-, t^*+r]$},
	\end{equation}
	where the last set is empty if $t^-=t^*-r$, and
	\begin{equation}\label{connectedRnbhd-corner-0}
	\Omega^{(1)}\cap B_{r}(E)=\{(s,t)\;:\;
	\max(f^-(t), g_1(t))<s<f^+(t),\; t^-<t<t^*+r\},
	\end{equation}
	where $f^\pm(t):=s^*\pm\sqrt{r^2-(t-t^*)^2}$ on $[t^*-r, t^*+r]$.
	Note that (\ref{connectedRnbhd-corner-0}) also holds if
	${\dist}(E,\Gamma^{(1)})\ge r$: Indeed, in this case,
	$\Omega^{(1)}\cap B_{r}(E)=B_{r}(E)$
	and $g_1(t)\le f^-(t)$ on $[t^*-r, t^*+r]$, so that
	(\ref{connectedRnbhd-corner-0}) holds with $t^-=t^*-r$.
	
	By a similar argument, we show the existence of $t^+\in (t^*, t^*+r]$ such
	that
	\begin{equation}\label{connectedRnbhd-corner-1-0}
	f^+>g_2 \,\,\mbox{on $(t^*-r, t^+)$}, \quad\,\,
	f^+<g_2 \,\, \mbox{on $[t^*-r, t^*+r]\setminus [t^*-r, t^+]$},
	\end{equation}
	where the last set is empty if $t^+=t^*+r$, and
	\begin{equation}\label{connectedRnbhd-corner-1}
	\Omega^{(2)}\cap B_{r}(E)=\{(s,t)\;:\;
	\max(f^-(t), g_2(t))<s<f^+(t),\; t^*-r<t<t^+\}.
	\end{equation}
	From (\ref{domainNearCorner-Un}), (\ref{connectedRnbhd-corner-0}), and
	(\ref{connectedRnbhd-corner-1}), we obtain
	\begin{equation}\label{connectedRnbhd-corner}
	\Omega\cap B_{r}(E)=\{(s,t)\;:\;
	\max(f^-(t), g_1(t), g_2(t))<s<f^+(t),\; t^-<t<t^+\},
	\end{equation}
	which is a connected set, by the first inequalities
	in (\ref{connectedRnbhd-corner-0-0}) and
	(\ref{connectedRnbhd-corner-1-0})
	and the fact that $f^-<f^+$ in $(t^*-r, t^*+r)$.
	
	\medskip
	Now we prove assertion (\ref{lem:connectBall-i2}).
	We can assume that $G\in B_r(E)\cap\partial\Omega$;  otherwise,
	(\ref{lem:connectBall-i2}) already holds.
	Then we again consider two cases, as above, and use
	expressions (\ref{connectedRnbhd-smooth}) and (\ref{connectedRnbhd-corner})
	to conclude the proof.
\end{proof}

\begin{rem}
	The condition that the interior angles $\theta$ at the corner points of $\partial\Omega$
	satisfy $\theta\in(0, \pi)$ is necessary for Lemma {\rm \ref{lem:connectBall}}.
	Indeed, let $\theta\in(\pi, 2\pi)$ at some corner $Q\in\partial\Omega$.
	For simplicity, consider first the case when $\partial\Omega\cap B_{5R}(Q)$
	consists of two straight lines intersecting at $Q$ for some $R>0$.
	Then it is easy to see that, for any $E\in\partial\Omega$ with $d:=\dist(E, Q)\in (0, R]$,
	$B_r(E)\cap\Omega$ is not connected for all $r\in (d\sin(2\pi-\theta),\, d)$.
	With the assumption that $\partial\Omega$ is piecewise $C^{1,\alpha}$ up to the
	corner points {\rm (}without assumption that $\partial\Omega\cap B_{5R}(Q)$
	is piecewise-linear{\rm )}, the same is
	true for all $r\in (d_1, d)$ for some
	$d_1\in (d\sin(2\pi-\theta), d)$
	if $d$ is sufficiently small.
\end{rem}

\begin{lem}\label{lem:connected-chain}
	There exists $r^*>0$ such that
	any chain in Definition {\rm \ref{def:minimal chain}} with $r\in (0, r^*)$ satisfies
	
	\begin{enumerate}[\rm (i)]
		\item \label{lem:connected-chain-i1}
		${\displaystyle \bigcup_{i=0}^{k_1}\left(B_{r}(C^i)\cap \Omega\right)}$
		is connected{\rm ;}
		
		\smallskip
		\item \label{lem:connected-chain-i2}
		There exists a continuous curve
		$\mS$ with endpoints $C^0$ and $C^{k_1}$
		such that
		$$
		\mS^0\subset {\displaystyle \bigcup_{i=0}^{k_1}\left(B_{r}(C^i)\cap \Omega\right)},\,\,\,\,
		\; {\dist}(\mS_r, \partial\Omega)>0 \qquad\;\;
		\mbox{for all $r>0$},
		$$
		where $\mS_r=\mS\setminus\big(B_r(C^0)\cup B_r(C^{k_1})\big)$,
		and $\mS^0$ denotes the open curve that does not include the endpoints.
		More precisely, $\mS=g([0,1])$, where $g\in C([0, 1];\mR^2)$ and is locally Lipschitz
		on $(0,1)$ with $g(0)=C^0$, $g(1)=C^{k_1}$, and
		${\displaystyle g(t)\in\bigcup_{i=0}^{k_1}\left(B_{r}(C^i)\cap \Omega\right)}$ for all $t\in (0,1)$.
	\end{enumerate}
\end{lem}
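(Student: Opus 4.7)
The plan is to take $r^*>0$ from Lemma \ref{lem:connectBall} (possibly shrinking it) and treat the two parts separately: (i) by an elementary union-of-overlapping-connected-sets argument, and (ii) by concatenating polygonal paths, one inside each set $U_i:=B_r(C^i)\cap\Omega$, with a separate treatment of the two endpoint segments that may abut $\partial\Omega$.

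\textbf{Part (i).} By Lemma \ref{lem:connectBall}(\ref{lem:connectBall-i1}), each $U_i$ is connected. By Definition \ref{def:minimal chain}(\ref{def:minimal chain-Ib}), $C^{i+1}\in\overline{U_i}$, so Lemma \ref{lem:connectBall}(\ref{lem:connectBall-i2}) (applied with $E=C^i$, $G=C^{i+1}$) gives $U_i\cap U_{i+1}\ne\emptyset$. The standard fact that the union of two overlapping connected sets is connected, iterated $k_1$ times, yields the connectedness of $\bigcup_{i=0}^{k_1}U_i$.

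\textbf{Part (ii).} First I choose intermediate points $P_i\in U_{i-1}\cap U_i$ for $i=1,\dots,k_1$ (nonempty by part (i)), and set $P_0:=C^0$ and $P_{k_1+1}:=C^{k_1}$. For each interior index $i\in\{1,\dots,k_1-1\}$, both $P_i,P_{i+1}$ lie in the open connected planar set $U_i$, which is therefore polygonally path-connected; I pick a piecewise-linear path $\gamma_i\subset U_i$ from $P_i$ to $P_{i+1}$, which is automatically Lipschitz. The delicate segments are $\gamma_0$ (from $C^0$ to $P_1$) and $\gamma_{k_1}$ (from $P_{k_1}$ to $C^{k_1}$), since $C^0$ or $C^{k_1}$ may sit on $\partial\Omega$. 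I handle these using the local description of $\Omega$ from the proof of Lemma \ref{lem:connectBall}, namely \eqref{connectedRnbhd-smooth} at a smooth point and \eqref{connectedRnbhd-corner} at a corner (whose angle is in $(0,\pi)$): from $C^0$ one can draw a short straight segment into $U_0$ (the direction is, e.g., the inward normal in the smooth case or the angle bisector at a corner), reaching a point $\widetilde P\in U_0$; then $\widetilde P$ is joined to $P_1$ polygonally inside $U_0$, and the concatenation is continuous at $C^0$ and lies in $U_0$ except at $C^0$. The argument for $\gamma_{k_1}$ is identical. Concatenating and reparametrizing over $[0,1]$ gives $g\in C([0,1];\mR^2)$ locally Lipschitz on $(0,1)$ with $g(0)=C^0$, $g(1)=C^{k_1}$, and $g((0,1))\subset\bigcup_i U_i\subset\Omega$.

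\textbf{The distance property and main obstacle.} For any $r'>0$, continuity of $g$ at $0$ and $1$ gives $\varepsilon=\varepsilon(r')>0$ with $g([0,\varepsilon])\subset B_{r'}(C^0)$ and $g([1-\varepsilon,1])\subset B_{r'}(C^{k_1})$, so $\mS_{r'}\subset g([\varepsilon,1-\varepsilon])$ is a compact subset of the open set $\Omega$, and hence $\dist(\mS_{r'},\partial\Omega)>0$. The main obstacle is the construction of $\gamma_0$ and $\gamma_{k_1}$ when the endpoints lie on $\partial\Omega$, especially at a corner; this is resolved by invoking the explicit local parametrizations \eqref{connectedRnbhd-smooth} and \eqref{connectedRnbhd-corner} set up in the proof of Lemma \ref{lem:connectBall}, which is precisely where the hypothesis that the interior corner angles lie in $(0,\pi)$ is used.
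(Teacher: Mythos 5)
Your proof is correct, and for part (ii) it takes a genuinely different route from the paper. Part (i) is the same induction the paper uses via Lemma \ref{lem:connectBall}. For part (ii), the paper does not argue directly: it reduces the statement (with a further reduced $r^*$) to the appendix Lemmas \ref{lem:connected-path} and \ref{lem:connected-path-inChain}, which in turn rest on Lemma \ref{lem:connected-path-2d}; there the curve is built by a ball-covering argument inside the shrunk sets $\Lambda_\rho=\{\dist(\cdot,\partial\Lambda)>\rho\}$ (this is why connectedness of $\Omega_\rho$ and of $B_r(E)\cap\Omega_\rho$ must be established from the boundary structure), and a boundary endpoint such as $C^0$ is reached by concatenating infinitely many polygonal pieces through an approximating sequence $P_m\to C^0$, which is why the paper only claims local Lipschitz regularity on $(0,1)$. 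You instead exploit that each $U_i=B_r(C^i)\cap\Omega$ is open and connected, hence polygonally connected, that consecutive $U_i$ overlap by Lemma \ref{lem:connectBall}(\ref{lem:connectBall-i2}) together with Definition \ref{def:minimal chain}(\ref{def:minimal chain-Ib}), and that from a boundary endpoint one can enter $\Omega$ along a single short straight segment thanks to the local $C^{1,\alpha}$ graph/corner description (interior angles in $(0,\pi)$) from the proof of Lemma \ref{lem:connectBall}; your compactness argument for $\dist(\mS_{r'},\partial\Omega)>0$ is also fine. This buys a finite polygonal, hence globally Lipschitz, parametrization (stronger than what the lemma asks) and avoids the $\Lambda_\rho$ machinery entirely, while the paper's route is more quantitative about how the path sits inside the shrunk domains. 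One small point worth making explicit: if $C^0$ is a smooth boundary point lying within the corner chart (i.e. within distance $O(Nr)$ of a corner), the entry direction is the inward normal to the smooth piece containing $C^0$, and the segment must be taken shorter than the distance to the other boundary piece, which is positive since $C^0$ is not the corner itself; with that remark all endpoint cases are covered.
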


\begin{proof}
	We use
	$r^*$ in Lemma \ref{lem:connectBall}.
	We prove (\ref{lem:connected-chain-i1}) by induction:
	We first note that $B_{r}(C^1)\cap \Omega$
	is connected by Lemma \ref{lem:connectBall}(\ref{lem:connectBall-i1}).
	Suppose that, for $m\in \{1, 2,\dots, k_1-1\}$,
	${\displaystyle A_m=\bigcup_{i=0}^{m}\left(B_{r}(C^i)\cap \Omega\right)}$
	is connected.
	We note that $A_m$ has a nonempty intersection with
	$B_{r}(C^{m+1})\cap \Omega$ by
	Definition \ref{def:minimal chain}(\ref{def:minimal chain-Ib}) and
	Lemma \ref{lem:connectBall}(\ref{lem:connectBall-i2}).
	Also, $B_{r}(C^{m+1})\cap \Omega$
	is a connected set.
	Then it follows that
	$\displaystyle \bigcup_{i=0}^{m+1}\left(B_{r}(C^i)\cap \Omega\right)$ is connected.
	This proves \eqref{lem:connected-chain-i1}.
	
	Assertion
	\eqref{lem:connected-chain-i2} with reduced $r^*$ follows
	from Lemmas \ref{lem:connected-path} and
	\ref{lem:connected-path-inChain}.
\end{proof}

\begin{rem}\label{rem:connected-chain-i2}
	Lemma {\rm \ref{lem:connected-chain}\eqref{lem:connected-chain-i2}}
	implies that $\mS^0$ lies in the interior of $\Omega$.
\end{rem}

Now we show the existence of minimal (resp. maximal) chains. We use
$r^*$ from Lemma \ref{lem:connected-chain} from now on.

\begin{lem}\label{lem:ExistMinMaxChain}
	If $E_1\in\partial\Omega$ and is not a local minimum point {\rm (}resp. maximum point{\rm )}
	of $\phi_\ee$ with respect to $\overline\Omega$, then, for any $r\in (0, r^*)$,
	there exists a minimal {\rm (}resp. maximal{\rm )} chain $\{\E^i\}_{i=0}^{k_1}$ for $\phi_\ee$ of radius $r$
	in the sense of  Definition {\rm \ref{def:minimal chain}}, starting at $E_1$, i.e., $\E^0=E_1$.
	Moreover, $\E^{k_1}\in\partial\Omega$ is a local minimum {\rm (}resp. maximum{\rm )} point of
	$\phi_\ee$ with respect
	to $\overline\Omega$, and $\phi_\ee(\E^{k_1})<\phi_\ee(E_1)$ {\rm (}resp. $\phi_\ee(\E^{k_1})>\phi_\ee(E_1)${\rm )}.
\end{lem}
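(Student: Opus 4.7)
My plan is to construct the chain greedily by iterating a minimizing selection, and then to prove finite termination by a compactness argument. Set $C^0 := E_1$; given $C^i$, I check whether $\phi_\ee(C^i) = \min_{\overline{B_r(C^i) \cap \Omega}} \phi_\ee$. If so, I set $k_1 := i$ and terminate; otherwise, I pick $C^{i+1}$ to be any minimizer of the continuous function $\phi_\ee$ on the compact set $\overline{B_r(C^i) \cap \Omega}$, which exists by Weierstrass. By the failure of the stopping criterion, $\phi_\ee(C^{i+1}) < \phi_\ee(C^i)$ strictly, verifying condition (c) of Definition~\ref{def:minimal chain}; conditions (a), (b), (d) are immediate from the construction. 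Since $E_1$ is not a local minimum of $\phi_\ee$ relative to $\overline{\Omega}$, and since $B_r(E_1) \cap \Omega \ne \emptyset$ (as $E_1 \in \partial\Omega$ and $\Omega$ is open), for any $r \in (0, r^*)$ the set $\overline{B_r(E_1) \cap \Omega}$ contains a point where $\phi_\ee < \phi_\ee(E_1)$, so the procedure does not stop at $i=0$ and $k_1 \ge 1$.

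The crux of the proof, and the main obstacle, is to show that this iteration terminates in finitely many steps. I argue by contradiction, supposing an infinite sequence $\{C^i\}_{i \ge 0}$ is produced. Since $\phi_\ee$ is continuous on the compact set $\overline{\Omega}$, the strictly decreasing real sequence $\{\phi_\ee(C^i)\}$ is bounded below and converges to some limit $L$. By compactness of $\overline\Omega$, a subsequence $C^{i_k} \to C^* \in \overline{\Omega}$, and continuity gives $\phi_\ee(C^*) = L$. For $k$ sufficiently large, $|C^{i_k} - C^*| < r/2$, so $C^* \in B_r(C^{i_k})$; together with $C^* \in \overline{\Omega}$, this forces $C^* \in \overline{B_r(C^{i_k}) \cap \Omega}$ (every neighborhood of $C^*$ meets the open set $B_r(C^{i_k}) \cap \Omega$). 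The minimality defining $C^{i_k + 1}$ then yields $\phi_\ee(C^{i_k + 1}) \le \phi_\ee(C^*) = L$, which contradicts the strict monotonicity $\phi_\ee(C^{i_k + 1}) > L$. Hence $k_1 < \infty$.

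It remains to verify the stated properties of $C^{k_1}$. The bound $\phi_\ee(C^{k_1}) < \phi_\ee(E_1)$ is immediate from $k_1 \ge 1$ and the strict decrease, while the stopping rule is exactly the statement that $C^{k_1}$ is a local minimum of $\phi_\ee$ relative to $\overline{\Omega}$. To show $C^{k_1} \in \partial\Omega$, I would argue by contradiction: if $C^{k_1}$ lay in $\Omega$, set $r' := \min(r, \dist(C^{k_1}, \partial\Omega)) > 0$, so that $\overline{B_{r'}(C^{k_1})} \subset \overline{B_r(C^{k_1}) \cap \Omega}$, making $C^{k_1}$ an interior minimum of $\phi_\ee$ on $\overline{B_{r'}(C^{k_1})}$. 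The function $w := \phi_\ee$ satisfies the linear equation~\eqref{equ:phi e}, which is strictly elliptic on $\Omega \cup \Gsh^0$ with continuous coefficients by (A3) and the $C^{3}(\Omega)$--regularity of $\phi$; the strong maximum principle then forces $\phi_\ee$ to be constant on $B_{r'}(C^{k_1})$, and the real analyticity of $\phi$ on $\Omega$ (Lemma~\ref{lem:analyticity}) propagates this identity to all of $\Omega$. By the $C^{1,\alpha}(\overline{\Omega})$--regularity from (A2), $\phi_\ee$ extends continuously as the same constant to $\overline{\Omega}$, contradicting $\phi_\ee(E_1) > \phi_\ee(C^{k_1})$. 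The maximal-chain assertion follows by applying the same argument with $\phi_\ee$ replaced by $-\phi_\ee$, which satisfies a linear elliptic equation of the same structure.
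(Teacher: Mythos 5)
Your proof is correct, and its skeleton is the same greedy construction as the paper's: iterate the minimizing selection over $\overline{B_r(C^i)\cap\Omega}$, stop when the center itself is the minimizer, and then verify finiteness, the boundary location of the endpoint, and the strict decrease. The two sub-steps where you diverge are both sound and worth noting. For finite termination, the paper shows that non-consecutive chain points are separated by distance at least $r$ (if $E^i\in \overline{B_r(E^j)\cap\Omega}$ for $i>j+1$, its value could not lie strictly below $\min_{\overline{B_r(E^j)\cap\Omega}}\phi_\ee=\phi_\ee(E^{j+1})$), so a bounded $\Omega$ admits only finitely many; you instead pass to the limit $L$ of the strictly decreasing values, locate a cluster point $C^*$ with $\phi_\ee(C^*)=L$ inside some $\overline{B_r(C^{i_k})\cap\Omega}$, and contradict $\phi_\ee(C^{i_k+1})>L$ via minimality -- a slightly more streamlined route to the same end. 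For the claim $C^{k_1}\in\partial\Omega$, the paper excludes an interior local minimum by the strong maximum principle for \eqref{equ:phi e} together with the assertion that $\phi_\ee$ is not constant because $\varphi$ is not a constant state; your variant does not need that assertion: constancy on a ball, propagated to all of $\Omega$ (by analyticity, or simply by the strong maximum principle on the connected domain, which makes the appeal to Lemma \ref{lem:analyticity} -- whose stated hypotheses include (A1)--(A5) -- unnecessary) and extended continuously to $\overline\Omega$ via (A2), directly contradicts $\phi_\ee(C^{k_1})<\phi_\ee(E_1)$. This is a tidier closure of that case, and it is also more faithful to Definition \ref{def:minimal chain} in minimizing over $\overline{B_r(C^i)\cap\Omega}$ rather than over $\overline{B_r(C^i)}\cap\overline\Omega$ as in the paper's construction; the constant $r^*$ from Lemma \ref{lem:connectBall} plays no active role in either argument, exactly as in the paper.
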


\begin{proof}
	We discuss only the case of the minimal chain,
	since the case of the maximal chain can be considered similarly.
	Thus, $E_1$ is not a local minimum point
	of $\phi_\ee$ with respect to $\overline\Omega$.
	
	\smallskip
	Let $\E^0=E_1$. Choose $\E^{i+1}$ to be the point such that
	the minimum of $w=\phi_\ee$ in $\overline{B_{r}(\E^i)}\cap\overline{\Omega}$
	is attained at $\E^{i+1}$, provided that $w(\E^{i+1})<w( \E^i)$;
	otherwise ({\it i.e.}, if the minimum of $w=\phi_\ee$ in $\overline{B_{r_1}(\E^i)}\cap\overline{\Omega}$
	is attained at $\E^i$ itself),
	the process ends and we set $k_1:= i$.
	
	\smallskip
	In order to show that $\{\E^i\}_{i=0}^{k_1}$ is a minimal chain for $r\in (0, r^*)$,
	it suffices to
	show that $\E^{k_1}\in\partial\Omega$ and that $k_1$ is positive and finite.
	These can be seen as follows:
	
	\begin{enumerate}[\rm (i)]
		\item Since $\E^0 = E_1$ is not a local minimum point relative to $\overline\Omega$,
		it follows that $\E^1\ne \E^0$ so that $k_1\ge 1$ and $\phi_\ee(\E^0)<\phi_\ee(\E^1)$.
		
		\smallskip
		\item
		There is only a finite number of $\{\E^i\}$.
		Indeed, on the contrary, since domain $\Omega$ is bounded,
		there exists a subsequence $\{\E^{i_m}\}$
		such that $\E^{i_m}\rightarrow \hat C$ as $m\rightarrow\infty$,
		where $\hat C$ is a point lying in $\overline{\Omega}$.
		Thus, for any $\epsilon<r$, there is a large number $N$ such that, for any $j,\,m>N$,
		$\mbox{dist}\{\E^{i_j},\,\E^{i_m}\}<\epsilon$.
		On the other hand, by construction, for any $j<i-1$, $\E^{i}$ cannot lie in the ball centering at $\E^j$
		with radius $r$ so that $\mbox{dist}\{\E^i,\,\E^j\}\geq r$ for any $j<i-1$. This is a contradiction.
		
		\smallskip
		\item
		$\E^{k_1}\in\partial\Omega$. Otherwise, $\E^{k_1}\in\Omega$ is an interior
		local minimum point of $\phi_\ee$,
		which contradicts
		the strong maximum principle, since $\phi_\ee$ satisfies equation \eqref{equ:phi e}
		that is strictly elliptic
		in $\Omega$, and
		$\phi_\ee$ is not constant in $\Omega$
		by the assumption that $\varphi$ is not a uniform state.
	\end{enumerate}
	
	Therefore, $\{\E^i\}_{i=0}^{k_1}$ is a minimal chain with $\E^{k_1}\in \partial\Omega$.
	Also, from the construction, $\E^{k_1}$ is a local minimum point of $w$ with respect
	to $\overline\Omega$ with $w(\E^{k_1})<w(E_1)$.
\end{proof}

\begin{lem}\label{lem:chainsDoNotIntersect-MinMax}
	For any $\delta>0$,
	there exists $r_1^*\in(0, r^*]$ such that the following holds{\rm :}
	Let ${\mathcal C}\subset\partial\Omega$
	be connected, let $E_1$ and $E_2$ be the endpoints
	of ${\mathcal C}$, and let there  be
	a minimal chain $\{E^i\}_{i=0}^{k_1}$ of radius $r_1\in (0, r_1^*]$
	which starts at $E_1$ and ends at $E_2$,
	and
    $H_1\in {\mathcal C}^0={\mathcal C}\setminus \{E_1,E_2\}$ such that
	$$
	\phi_\ee(H_1)\ge \phi_\ee(E_1)+\delta.
	$$
	Then, for any $r_2\in (0, r_1]$,
	any maximal chain $\{H^j\}_{j=0}^{k_2}$ of radius $r_2$
	starting from $H_1$
	satisfies $H^{k_2}\in  {\mathcal C}^0$,
	where ${\mathcal C}^0$ denotes the relative interior of curve  ${\mathcal C}$ as before.
\end{lem}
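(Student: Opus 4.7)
The strategy is to exploit the $C^{1,\alpha_1}$ regularity from condition (A2) to separate the values of $\phi_\ee$ along the minimal chain from those along the maximal chain, conclude that the associated connecting curves from Lemma \ref{lem:connected-chain}(ii) are disjoint, and then close by a planar topological argument.

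First, using the uniform $\alpha_1$-Hölder continuity of $\phi_\ee$ on $\overline\Omega$ coming from (A2), choose $r_1^*\in(0,r^*]$ so small that
\[
|\phi_\ee(\xi)-\phi_\ee(\eta)|<\frac{\delta}{3}\qquad\text{whenever }\xi,\eta\in\overline\Omega\text{ and }|\xi-\eta|\le r_1^*.
\]
Fix $r_1\in(0,r_1^*]$ and $r_2\in(0,r_1]$ as in the hypothesis. Applying Lemma \ref{lem:connected-chain}(ii) to the given minimal chain $\{E^i\}_{i=0}^{k_1}$ and to any maximal chain $\{H^j\}_{j=0}^{k_2}$ starting at $H_1$ produces continuous curves $\mS'$ from $E_1$ to $E_2$ and $\mS$ from $H_1$ to $H^{k_2}$, with $(\mS')^0,\,\mS^0\subset\Omega$. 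From Definition \ref{def:minimal chain}(c) one has $\phi_\ee(E^i)\le\phi_\ee(E_1)$ for every $i$ and $\phi_\ee(H^j)\ge\phi_\ee(H_1)\ge\phi_\ee(E_1)+\delta$ for every $j$, so combining these with the oscillation bound gives
\[
\phi_\ee<\phi_\ee(E_1)+\tfrac{\delta}{3}\text{ on }\mS',\qquad \phi_\ee>\phi_\ee(E_1)+\tfrac{2\delta}{3}\text{ on }\mS.
\]
In particular $\mS\cap\mS'=\emptyset$.

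For the topological step, Framework (A)(i) makes $\partial\Omega$ a Jordan curve, of which $\mathcal{C}$ is the sub-arc with endpoints $E_1,E_2$ that contains $H_1$. Extract from $\mS'$ a simple sub-arc $\widetilde{\mS'}$ joining $E_1$ and $E_2$ (a standard fact for continuous planar paths), which remains disjoint from $\mS$; then $\widetilde{\mS'}\cup\mathcal{C}$ is a Jordan curve bounding, by the Jordan curve theorem, a subdomain $\Omega_1\subset\Omega$ with $\overline{\Omega_1}\cap\partial\Omega=\overline{\mathcal{C}}$. Since $H_1\in\mathcal{C}^0\subset\partial\Omega_1$ and the connected curve $\mS$ starts at $H_1$ into $\Omega$ and cannot cross $\widetilde{\mS'}$, one concludes $\mS\subset\overline{\Omega_1}$, so in particular $H^{k_2}\in\partial\Omega\cap\overline{\Omega_1}=\overline{\mathcal{C}}$. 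Continuity of $\phi_\ee$ and the value bound on $\mS$ give $\phi_\ee(H^{k_2})\ge\phi_\ee(E_1)+2\delta/3>\phi_\ee(E_1)$, while the strict decrease along the minimal chain yields $\phi_\ee(E_2)<\phi_\ee(E_1)$; hence $H^{k_2}\notin\{E_1,E_2\}$ and thus $H^{k_2}\in\mathcal{C}^0$, as required.

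The main obstacle is the planar-topology step: the curves from Lemma \ref{lem:connected-chain}(ii) are only continuous and may self-intersect, so one first has to extract simple sub-arcs preserving the value separation, and then use the Jordan curve theorem to produce $\Omega_1$ and to justify the trapping $\mS\subset\overline{\Omega_1}$. Everything else is a direct consequence of the Hölder continuity of $\phi_\ee$ together with the strict monotonicity of $\phi_\ee$ built into the two chains.
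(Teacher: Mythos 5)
Your proof is correct and follows essentially the same route as the paper: both use the $C^{1,\alpha_1}$ bound from (A2) to choose $r_1^*$ so that the values of $\phi_\ee$ on the neighborhoods of the minimal and maximal chains are separated by a fixed fraction of $\delta$, both invoke Lemma \ref{lem:connected-chain}(ii) to produce a curve joining $E_1$ to $E_2$ that, together with ${\mathcal C}$, bounds a subregion $\Omega_1$ trapping the maximal chain, and both finish with $\phi_\ee(H^{k_2})>\phi_\ee(E_1)>\phi_\ee(E_2)$ to exclude the endpoints. The only minor difference is that the paper separates the two chains at the level of their ball-neighborhoods and tracks the connected component $K_1$ of $\overline\Omega\setminus\Lambda$ containing the maximal chain (via Lemma \ref{lem:connected-chain}(i)), whereas you apply the connecting-curve lemma to both chains and make explicit (via extraction of a simple sub-arc and the crosscut argument) the Jordan-type separation that the paper uses implicitly when speaking of ``the open region bounded by $\mS$ and ${\mathcal C}$.''
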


\begin{proof}
	Using the bound: $\|\phi\|_{1+\alpha_1, \overline\Omega}\le C$ by condition
	(A2) of Theorem \ref{thm:main theorem},
	we can find a radius $r_1^*\in (0, r^*]$
	small enough such that
	$$
    \underset{B_{r_1^*}(P)\cap\Omega}{\rm osc}\,
	    \phi_\ee \le \frac\delta 4
	\qquad\,\,\, \mbox{for all $P\in\overline\Omega$}.
	$$
	We fix this $r_1^*$ and
	assume that the minimal chain $\{E^i\}_{i=0}^{k_1}$ from $E_1$ to $E_2$ is of
	radius $r_1\in (0, r_1^*]$.
	
	\smallskip
	Recall that,
	from Definition \ref{def:minimal chain} for the minimal and maximal chains,
	$\phi_\ee(E_1)>\phi_\ee(E^i)$ for $i=1, \dots, k_1$, and
	$\phi_\ee(H_1)<\phi_\ee(H^j)$ for $j=1, \dots, k_2$.
	Then,
	for each $i=0,\dots, k_1$, and $j=0,\dots, k_2$,
	\begin{align*}
	\min_{\overline{B_{r_1}(H^j)\cap\Omega}}\phi_\ee
     &>	\phi_\ee(H^j)-\frac\delta 2
	\ge\phi_\ee(H_1)-\frac\delta 2 \\
	&\ge \phi_\ee(E_1)+\frac\delta 2\ge\phi_\ee(E^i)+\frac\delta 2>
	\max_{\overline{B_{r_1}(E^i)\cap\Omega}}\phi_\ee+\frac\delta 4,
	\end{align*}
	where we have used that $E_1=E^0$, $H_1=H^0$,
	and $0<r_1\le r_1^*$.
	Then
	\begin{equation}\label{A-C-chains-disjoint}
	\overline{B_{r_1}(H^j)\cap\Omega}
	\medcap \overline{B_{r_1}(E^i)\cap\Omega}
	=\emptyset\quad\, \mbox{for each $i=0,\dots, k_1$, and $j=0,\dots, k_2$}.
	\end{equation}
	From this, we have
	$$
	B_{r_1}(H^j)\cap\Omega
	\subset \Omega\setminus \Lambda \quad\,\, \mbox{ for each $j=1, \dots,  k_2$},
	$$
	where $\Lambda:=  \bigcup_{i=0}^{k_1}\overline{B_{r_1}(E^i)\cap\Omega}$.
	
	\smallskip
	Since $\overline{B_{r_1}(H^1)\cap\Omega}$
	is a connected set, then one of connected components of  set
	$\displaystyle \overline\Omega\setminus \Lambda$
	contains $\overline{B_{r_1}(H^1)\cap\Omega}$.
	We denote this component
	by $K_1$.
	Since $\Omega$ is a connected set, then it follows from (\ref{A-C-chains-disjoint}) and
	Lemma \ref{lem:connected-chain}(\ref{lem:connected-chain-i1}) applied to
	chain $\{H^j\}$ that
	$$
	\bigcup_{j=0}^{ k_2}\overline{B_{r_2}(H^j)\cap\Omega} \subset K_1.
	$$
	Thus, $H^{k_2}\in \partial K_1\cap\partial\Omega$.
	It remains to show that $\partial K_1\cap\partial\Omega$ lies within
	${\mathcal C}$.
	
	\smallskip
	Notice that $H_1\in \partial K_1\cap{\mathcal C}$ so that
	$\partial K_1\cap{\mathcal C}\ne \emptyset$.
	Also, $K_1$ is a connected set with $K_1\cap\Lambda=\emptyset$.
	From Lemma \ref{lem:connected-chain}(\ref{lem:connected-chain-i2})
	applied to chain $\{E^i\}$,
	we obtain the existence of a continuous curve $\mS\subset\Lambda$ connecting $E_1$ to $E_2$
	with the properties listed in Lemma \ref{lem:connected-chain}(\ref{lem:connected-chain-i2}).
	Combining
	these properties with Remark \ref{rem:connected-chain-i2}, we see that
	$K_1\subset\overline\Omega_1$, where $\Omega_1$ is the open
	region bounded by curves $\mS$ and ${\mathcal C}$.
	Notice that  $\Omega_1\subset\Omega$.
	Thus, $\partial K_1\cap\partial\Omega$ lies within
	$\partial\Omega_1 \cap\partial\Omega= {\mathcal C}$,
	which implies that $H^{k_2}\in {\mathcal C}$.
	Moreover, the definition of minimal and maximal chains and our assumptions in this lemma imply
	$$
	\phi_\ee(H^{k_2})>\phi_\ee(H^1)>\phi_\ee(E_1)>\phi_\ee(E_2).
	$$
	Thus, $H^{k_2}\in {\mathcal C}^0$.
\end{proof}

\begin{rem}
	In Lemma {\rm \ref{lem:chainsDoNotIntersect-MinMax}}, we have not discussed
	the existence of the maximal chain
	$\{H^j\}_{j=0}^{k_2}$ of radius $r_2$
	starting from $H_1$.
	If $H_1$ is not a local maximum point of $\phi_\ee$ with respect
	to $\overline\Omega$, such an existence follows from
	Lemma {\rm \ref{lem:ExistMinMaxChain}}.
\end{rem}

We also have a version of Lemma \ref{lem:chainsDoNotIntersect-MinMax}
in which the roles of minimal and maximal chains are interchanged:

\begin{lem}\label{lem:chainsDoNotIntersect-MaxMin}
	For any $\delta>0$,
	there exists $r_1^*\in(0, r^*]$ such that the following holds{\rm :}
	Let ${\mathcal C}\subset\partial\Omega$
	be connected, let $E_1$ and $E_2$ be the endpoints of ${\mathcal C}$,
	and let there exist a maximal chain $\{E^i\}_{i=0}^{k_1}$ of radius $r_1\in (0, r_1^*]$
	which starts at $E_1$ and ends at $E_2$,
	and $H_1\in {\mathcal C}^0$ such that
	$$
	\phi_\ee(H_1)\le \phi_\ee(E_1)-\delta.
	$$
	Then, for any $r_2\in (0, r_1]$,
	any minimal chain $\{H^j\}_{j=0}^{k_2}$ of radius $r_2$,
	starting from $H_1$,
	satisfies that $H^{k_2}\in  {\mathcal C}^0$.
\end{lem}

The proof
follows the argument of
Lemma \ref{lem:chainsDoNotIntersect-MinMax} with the changes resulting
from switching
between the minimal and
maximal chains and the correspondingly reversed signs in the inequalities.

\begin{lem}\label{lem:chainsDoNotIntersect-MinMin}
	For any $r_1\in (0, r^*]$, there exists $r_2^*=r_2^*(r_1)\in (0, r^*]$
	such that the following holds{\rm :}
	Let ${\mathcal C}\subset\partial\Omega$
	be connected, let $E_1$ and $E_2$ be the endpoints of ${\mathcal C}$,
	let there exist a minimal chain $\{E^i\}_{i=0}^{k_1}$ of radius $r_1\in (0, r^*]$
	which starts at $E_1$ and ends at $E_2$, and let there exist
	$H_1\in {\mathcal C}^0$ such that
	$$
	\phi_\ee(H_1)< \phi_\ee(E_2).
	$$
	Then, for any $r_2\in(0, r_2^*]$,
	any minimal chain $\{H^j\}_{j=0}^{k_2}$ of radius $r_2$,
	starting from $H_1$,
	satisfies that $H^{k_2}\in  {\mathcal C}^0$.
\end{lem}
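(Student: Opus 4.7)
The plan is to adapt the proof of Lemma \ref{lem:chainsDoNotIntersect-MinMax} with the twist that, since both chains are minimal here, the ``separation'' between the two families of neighborhoods has to be extracted from the single inequality $\phi_\ee(H_1)<\phi_\ee(E_2)$ rather than from a pre-given gap $\delta$. Set $\delta:=\phi_\ee(E_2)-\phi_\ee(H_1)>0$. Using the $C^{1,\alpha_1}$-bound from assumption (A2), choose $r_2^*\in(0,r^*]$ depending on $(r_1,\delta)$, so small that
\[
{\rm osc}_{\overline{B_{r_2^*}(P)\cap\Omega}}\phi_\ee<\tfrac{\delta}{2}\qquad\text{for every }P\in\overline\Omega.
\]

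The key step is to establish that
\[
\overline{B_{r_1}(E^i)\cap\Omega}\;\cap\;\overline{B_{r_2}(H^j)\cap\Omega}=\emptyset
\]
for all admissible $i,j$. On the $E$-side, Definition \ref{def:minimal chain}(c)--(d) together with the strictly decreasing values $\phi_\ee(E^0)>\phi_\ee(E^1)>\cdots>\phi_\ee(E^{k_1})=\phi_\ee(E_2)$ force
\[
\phi_\ee\ge\phi_\ee(E^{i+1})\ge\phi_\ee(E_2)\qquad\text{on }\overline{B_{r_1}(E^i)\cap\Omega}
\]
for $i<k_1$, and likewise $\phi_\ee\ge\phi_\ee(E_2)$ on $\overline{B_{r_1}(E^{k_1})\cap\Omega}$ via Definition \ref{def:minimal chain}(d). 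On the $H$-side, the decreasing property together with the oscillation bound gives
\[
\phi_\ee\le\phi_\ee(H^j)+\tfrac{\delta}{2}\le\phi_\ee(H_1)+\tfrac{\delta}{2}=\phi_\ee(E_2)-\tfrac{\delta}{2}
\]
on each $\overline{B_{r_2}(H^j)\cap\Omega}$. The two value ranges are disjoint, which yields the required separation.

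Once disjointness is in hand, the topological argument runs exactly as in Lemma \ref{lem:chainsDoNotIntersect-MinMax}. The connected $H$-neighborhood union (Lemma \ref{lem:connected-chain}(i)) is contained in a single connected component $K_1$ of $\overline\Omega\setminus\Lambda$, where $\Lambda:=\cup_i\overline{B_{r_1}(E^i)\cap\Omega}$; the continuous curve $\mS\subset\Lambda$ produced by Lemma \ref{lem:connected-chain}(ii) for the $E$-chain, together with $\mathcal{C}$, bounds an open subregion $\Omega_1\subset\Omega$ with $K_1\subset\overline{\Omega_1}$; hence $\partial K_1\cap\partial\Omega\subset\mathcal{C}$, and therefore $H^{k_2}\in\mathcal{C}$. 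To rule out $H^{k_2}\in\{E_1,E_2\}$, note that the minimality of the $E$-chain gives $\phi_\ee(E_1)>\phi_\ee(E_2)$, and by construction
\[
\phi_\ee(H^{k_2})\le\phi_\ee(H_1)<\phi_\ee(E_2)<\phi_\ee(E_1),
\]
so $H^{k_2}\in\mathcal{C}^0$, as desired.

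I do not anticipate any real obstacle beyond setting up the oscillation estimate correctly: the main content is purely bookkeeping of the value inequalities along the two minimal chains to mimic the disjointness step of Lemma \ref{lem:chainsDoNotIntersect-MinMax}. The one subtlety worth double-checking is that the bound $\phi_\ee\ge\phi_\ee(E_2)$ actually holds on the \emph{last} ball $\overline{B_{r_1}(E^{k_1})\cap\Omega}$ (not only on balls with $i<k_1$); this is precisely the role of clause (d) in Definition \ref{def:minimal chain}, which ensures the minimal chain terminates at a local minimum on the last ball.
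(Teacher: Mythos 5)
Your proposal is correct and follows essentially the same route as the paper: set $\delta=\phi_\ee(E_2)-\phi_\ee(H_1)$, shrink $r_2^*$ via the $C^{1,\alpha_1}$-bound (A2) so that the oscillation of $\phi_\ee$ on small balls is below a fraction of $\delta$, use clauses (c)--(d) of the minimal-chain definition to get $\phi_\ee\ge\phi_\ee(E_2)$ on every $\overline{B_{r_1}(E^i)\cap\Omega}$ and $\phi_\ee\le\phi_\ee(E_2)-\delta/2$ on every $\overline{B_{r_2}(H^j)\cap\Omega}$, and then invoke the topological argument of Lemma \ref{lem:chainsDoNotIntersect-MinMax} verbatim. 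Your explicit check that the last ball is controlled by clause (d), and the final inequalities ruling out $H^{k_2}\in\{E_1,E_2\}$, are exactly the adaptations the paper leaves implicit when it says the rest of that proof applies without changes.
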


\begin{proof}
	As in the proof of Lemma \ref{lem:chainsDoNotIntersect-MinMax}, we need to show
	(\ref{A-C-chains-disjoint}). Set $\delta:=\phi_\ee(E_2)- \phi_\ee(H_1)$. Then $\delta>0$.
	
	Using condition (A2) of Theorem \ref{thm:main theorem},
	we can find a radius $r_2^*\in (0, r^*]$
	small enough such that
    $\underset{B_{r_2^*}(P)\cap\Omega}{\rm osc}\, \phi_\ee \le \frac\delta 4$
	for all $P\in\overline\Omega$.
	We fix this $r_2^*$ and
	assume that the minimal chain $\{H^j\}_{j=0}^{k_2}$ starting at $H_1$ is of
	radius $r_2\in (0, r_2^*]$.
	Then, using properties (\ref{def:minimal chain-Ic})--(\ref{def:minimal chain-Id})
	in Definition \ref{def:minimal chain} for the minimal
	chains, we have
	$$
	\phi_\ee(E_2)=
	\phi_\ee(E^{k_1})=\min_{\overline{B_{r_1}(E^{k_1})\cap\Omega}}\phi_\ee<
	\phi_\ee(E^{k_1-1})=\min_{\overline{B_{r_1}(E^{k_1-1})\cap\Omega}}\phi_\ee<
	\cdots,
	$$
	that is,
	$$
	\phi_\ee(E_2)\le\min_{\overline{B_{r_1}(E^{i})\cap\Omega}}\phi_\ee
	\qquad\,\,\mbox{for $i=0, \dots, k_1$}.
	$$
	Then, for $i=0, \dots, k_1$ and $j=0, \dots, k_2$,
	\begin{equation*}
	\max_{\overline{B_{r_2}(H^j)\cap\Omega}}\phi_\ee\le
	\phi_\ee(H^{j})+\frac \delta 2 \le \phi_\ee(H_1)+\frac \delta 2=
	\phi_\ee(E_2)-\frac {\delta} 2 \le
	\min_{\overline{B_{r_1}(E^i)\cap\Omega}}\phi_\ee-\frac {\delta} 2 .
	\end{equation*}
	This implies (\ref{A-C-chains-disjoint}). Then the rest of the proof of
	Lemma \ref{lem:chainsDoNotIntersect-MinMax} applies without changes.
\end{proof}

\section{\, Proof of Theorem \ref{thm:main theorem}}
\label{sec:proof of the main theorem}
In this section, we first prove Theorem \ref{thm:main theorem}, based on the lemmas obtained in \S 3.

We use the $(S,T)$--coordinates from Lemma \ref{lem:shock graph}
for a unit vector $\ee\in Con$ chosen below so that it suffices
to prove that the graph of $f''_\ee$ is concave:
$$
f''_\ee(T)\leq0 \qquad\; \mbox{for all $T\in(T_A,T_B)$},
$$
and satisfies the strict convexity in the sense of Theorem \ref{thm:main theorem}.

In the following, we denote all the points on $\Gsh$ with respect to $T$;
that is,
for any point $P\in\Gsh$, there exists $T_P$ such that
$P=(f_\ee(T_P),T_P)$ in the $(S,T)$--coordinates.

\smallskip
The proof of Theorem \ref{thm:main theorem} consists of the following four steps,
where the non-strict concavity of $f''_\ee$ is shown in Steps 1--3, while
the strict convexity is shown in Step 4:

\begin{enumerate}[\rm Step 1.]
	\item \label{StesOfProof-step1}
	For any fixed $\ee\in Con$,
	if there exists $\hat P\in \Gsh^0$ with $f_\ee''(T_{\hat P})>0$,
	we prove the existence of a point $C\in \Gsh^0$, depending on $\ee$,
	such that $f_\ee''(T_C)\ge 0$, and $C$ is a
	local minimum point of $\phi_\ee$ along $\Gsh$,
	but $C$ is {\em not} a local minimum point
	of $\phi_\ee$ relative to $\overline\Omega$.
	
	\smallskip
	\item  \label{StesOfProof-step2}
	We fix $\ee\in Con$ to be the vector from condition (A6).
	Then we prove the existence of $C_1\in \Gsh^0$ such that
	there exists a  minimal chain with radius $r_1$ from $C$ to $C_1$.
	
	\smallskip
	\item Let  $\ee\in Con$ be the same as in Step \ref{StesOfProof-step2}.
	We show that the existence of points $C$ and $C_1$ described above yields a contradiction,
	from which we conclude that there is no $\hat P\in \Gsh^0$ with $f_\ee''(T_{\hat P})>0$.
	More precisely, it will be proved by showing the following facts:
	
	\smallskip
	\begin{itemize}
		\item Let $A_2$ be a maximum point of $\phi_\ee$ along $\Gsh$ lying between points $C$ and $C_1$.
		Then $A_2$ is a local maximum point of $\phi_\ee$ relative to $\overline\Omega$,
		and there is no point between $C$ and $C_1$ on $\Gsh$ such that
		the tangent line at this point is parallel to the one at $A_2$.
		
		\smallskip
		\item Between $C$ and $A_2$, or between $C_1$ and $A_2$, there exists a local minimum point $C_2$
		of $\phi_\ee$ along $\Gsh$ such that $C_2\ne C$, or $C_2\ne C_1$, and $C_2$ is not a local minimum point
		of $\phi_\ee$ relative to  domain $\overline\Omega$.
		
		\smallskip
		\item Then, by applying the results on the minimal chains obtained in \S 3.3
		and the facts obtained above in this step, and iterating these arguments,
		we can conclude our contradiction argument.
	\end{itemize}
	
	\smallskip
	\item Fix $\ee\in Con$. We show that, for every $P\in \Gsh^0$,
	either $f_\ee''(T_P)<0$ or there exists an even integer $k>2$ such that
	$f_\ee^{(i)}(T_P)=0$ for all $i=2,\dots,k-1$, and $f_\ee^{(k)}(T_P)<0$.
	This
	proves the strict convexity of the shock.
	We also note that $k$ is independent of the choice of $\ee\in Con$,
	since, by Lemma \ref{lem:shock graph},
	the above property is equivalent to the facts that
	$\partial_{\bt}^{i}\phi(P)=0$
	for all $i=2,\dots,k-1$,
	and
	$\partial_{\bt}^k\phi(P)>0$.
\end{enumerate}

Now we follow these steps to prove Theorem \ref{thm:main theorem} in the rest of this section.

\subsection{\, Step 1: Existence of a local minimum point $C\in\Gsh^0$ along $\Gsh$ in the convex part.}

We choose any $\ee \in Con$ and keep it fixed through Step 1.
Assume that
\begin{equation}\label{non-convexity-Assumption}
\mbox{
	There exists a point $\hat{P}\in\Gsh^0$ such that $f_\ee''(T_{\hat{P}})>0$.
}
\end{equation}
Then, in this step, we prove that there exist points $\hat{A}, \hat{B}, C\in \Gsh^0$ such that
$T_C\in (T_{\hat{A}},T_{\hat{B}})$ with $f_\ee''(T_C)\ge 0$,  $f_\ee''(T)< 0$
for all $T\in(T_{\hat{A}},T_{\hat{B}})$ which are sufficiently close to $T_{\hat{A}}$ and $T_{\hat{B}}$,
and
$$
\phi_\ee(C)=\min_{T\in[T_{\hat{A}},T_{\hat{B}}]}\phi_\ee(f_\ee(T),T).
$$
Moreover, the minimum at $C$ is strict in the sense
that
$$
\phi_\ee(f_\ee(T),T)>\phi_\ee(C)
\qquad\mbox{for all $T\in(T_{\hat{A}},T_{\hat{B}})$ with $f_\ee''(T)< 0$}.
$$

\begin{lem}\label{Properties-Iplus}
	Let
	$$
	I^+:=I^+(\hat P)=(T_{A^+}, T_{B^+})
	$$
	be the maximal interval satisfying
	\begin{itemize}
		\item $I^+\subset(T_A,T_B)${\rm ,}
		
		\smallskip
		\item $T_{\hat P} \in I^+${\rm ,}
		
		\smallskip
		\item $f_\ee''(T_P)\ge 0$ for all $T_P\in I^+${\rm ,}
		
		\smallskip
		\item Maximality{\rm :}\,
		If $(T_{P_1},T_{P_2})\subset(T_A,T_B)$ such that $\hat{P}\in (T_{P_1},T_{P_2})$
		and \\ $f_\ee''(T_P)\ge 0$ for all $T_P\in(T_{P_1},T_{P_2})$,
		then $(T_{P_1},T_{P_2})\subset I^+$.
	\end{itemize}
	Note that such $I^+$ exists and is nonempty because $\hat{P}\in\Gsh^0$ and $f_\ee''(T_{\hat{P}})>0$.
	Then
	\begin{enumerate}[\rm (i)]
		\item \label{Properties-Iplus-ia}
		$T_A<T_{A^+}< T_{B^+}<T_B${\rm ,}
		
		\smallskip
		\item \label{Properties-Iplus-ic}
		$f_\ee'(T_{A^+})<f_\ee'(T_{\hat{P}})<f_\ee'(T_{B^+})$ and $f_\ee'(T_{A^+})\le f_\ee'(T)\le f_\ee'(T_{B^+})$ for all $T\in I^+${\rm ,}
		
		\smallskip
		\item \label{Properties-Iplus-ib}
		There exists an open interval $J^+\subset (T_A,T_B)$ such that $[T_{A^+}, T_{B^+}]\subset J^+$
		and
		\begin{equation}\label{Extend-Iplus-Properties}
		f_\ee''(T)<0, \,\,\, f_\ee'(T_{A^+})\le f_\ee'(T)\le f_\ee'(T_{B^+})
		\qquad\mbox{for all $T\in J^+\setminus\overline{I^+}$},
		\end{equation}
		where $J^+\backslash\overline{I^+}$ is non-empty, since
		$\overline{I^+} \subset J^+$ and $J^+$ is open.
	\end{enumerate}
\end{lem}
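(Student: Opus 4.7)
The plan is to leverage two structural facts: the real analyticity of $f_\ee$ on $(T_A,T_B)$ from Lemma \ref{lem:analyticity}, and the slope bounds at the endpoints from Lemma \ref{lem:shock graph}(\ref{lem:shock graph-i2}). The key observation driving the argument is that $f_\ee''$ is real analytic on $(T_A,T_B)$ and, since $f_\ee''(T_{\hat P})>0$, is not identically zero, so its zero set is a discrete subset of $(T_A,T_B)$; consequently, $f_\ee''$ has constant sign on each punctured neighborhood of any of its zeros.

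For part (\ref{Properties-Iplus-ia}), I argue by contradiction. Suppose $T_{A^+}=T_A$, so that $f_\ee''\ge 0$ throughout $I^+=(T_A,T_{B^+})$ and $f_\ee'$ is nondecreasing there. The $C^{1,\alpha}$ regularity of $f_\ee$ up to $T_A$ gives $\lim_{T'\to T_A^+}f_\ee'(T')=f_\ee'(T_A)$, which by Lemma \ref{lem:shock graph}(\ref{lem:shock graph-i2}) is an upper bound for $f_\ee'$ on $(T_A,T_B)$. For any $T\in I^+$, monotonicity and this endpoint limit force $f_\ee'(T)\ge f_\ee'(T_A)$; combined with the upper bound, $f_\ee'\equiv f_\ee'(T_A)$ and $f_\ee''\equiv 0$ on $I^+$, and analyticity then propagates this to all of $(T_A,T_B)$, contradicting $f_\ee''(T_{\hat P})>0$. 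The case $T_{B^+}=T_B$ is ruled out symmetrically. For part (\ref{Properties-Iplus-ic}), the weak inequalities on $I^+$ follow from $f_\ee''\ge 0$, and the strict inequalities at $T_{\hat P}$ use the same analyticity trap: an equality like $f_\ee'(T_{A^+})=f_\ee'(T_{\hat P})$ would render $f_\ee'$ constant on $[T_{A^+},T_{\hat P}]$ and hence $f_\ee''\equiv 0$ there, forcing $f_\ee''\equiv 0$ on $(T_A,T_B)$ by analyticity.

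For part (\ref{Properties-Iplus-ib}), the maximality of $I^+$ implies that $f_\ee''$ takes negative values in every left-neighborhood of $T_{A^+}$ and every right-neighborhood of $T_{B^+}$. Since the zeros of $f_\ee''$ in $(T_A,T_B)$ are isolated, there exist $\delta',\delta''>0$ with $f_\ee''<0$ on $(T_{A^+}-\delta',T_{A^+})\cup(T_{B^+},T_{B^+}+\delta'')$; setting $J^+:=(T_{A^+}-\delta',T_{B^+}+\delta'')$ secures the sign half of (\ref{Extend-Iplus-Properties}). For the slope bounds, $f_\ee''<0$ on $(T_{A^+}-\delta',T_{A^+})$ makes $f_\ee'$ strictly decreasing there, so $f_\ee'(T)>f_\ee'(T_{A^+})$; by continuity of $f_\ee'$ at $T_{A^+}$ and the strict inequality $f_\ee'(T_{A^+})<f_\ee'(T_{B^+})$ from (\ref{Properties-Iplus-ic}), shrinking $\delta'$ further ensures $f_\ee'(T)<f_\ee'(T_{B^+})$, and the subinterval near $T_{B^+}$ is symmetric. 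The main obstacle in this argument is the step upgrading negativity of $f_\ee''$ on a sequence (given by maximality) to negativity on an entire one-sided neighborhood of $T_{A^+}$; real analyticity is indispensable here, since a merely $C^\infty$ function $f_\ee$ could have $f_\ee''$ oscillating between $0$ and negative values infinitely often as $T\to T_{A^+}^-$.
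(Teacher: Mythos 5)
Your proof is correct, but it diverges from the paper's argument at the two non-trivial points, so a comparison is worth recording. For part (i), the paper argues that $T_{A^+}=T_A$ together with convexity on $I^+$ and $f_\ee''(T_{\hat P})>0$ gives $f_\ee(T_{\hat P})>f_\ee(T_A)+f_\ee'(T_A)(T_{\hat P}-T_A)$, and then reads this off geometrically as $(A+Con)\cap\Omega\ne\emptyset$, contradicting (A5) via Lemma \ref{lem:shock graph}(\ref{lem:shock graph-i1}); you instead squeeze $f_\ee'$ between the endpoint slope bound of Lemma \ref{lem:shock graph}(\ref{lem:shock graph-i2}) and the monotonicity coming from $f_\ee''\ge 0$, forcing $f_\ee''\equiv 0$ on $I^+$, which already contradicts $f_\ee''(T_{\hat P})>0$ (your extra analyticity-propagation step here, and in part (ii), is superfluous, since the contradiction occurs inside $I^+$ itself, but it is harmless). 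Both routes ultimately rest on (A5) as packaged in Lemma \ref{lem:shock graph}; yours trades the cone picture for a purely one-dimensional slope argument. For part (iii), the key issue is exactly the one you identify: upgrading the negativity of $f_\ee''$ at a sequence of points (forced by maximality) to negativity on a full one-sided neighborhood of $T_{A^+}$ and $T_{B^+}$. The paper does this by assuming the contrary, producing sequences on which $f_\ee''\ge 0$ and $f_\ee''<0$ accumulating at $T_{A^+}$, deducing $f_\ee^{(k)}(T_{A^+})=0$ for all $k\ge 2$ from the analyticity in Lemma \ref{lem:analyticity}, and contradicting Lemma \ref{noZeroesInfiniteOrderLemma}; you use analyticity more directly, noting that $f_\ee''\not\equiv 0$ (as $f_\ee''(T_{\hat P})>0$) has isolated zeros, so it has a fixed sign on small one-sided punctured neighborhoods of $T_{A^+},T_{B^+}$ (note only the one-sided statement is true and is all you use), which maximality forces to be negative. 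Your version avoids invoking Lemma \ref{noZeroesInfiniteOrderLemma} but, like the paper's, is legitimate only because (A1)--(A5) are in force at this stage so that Lemma \ref{lem:analyticity} applies. The remaining steps (weak and strict slope inequalities in (ii), and shrinking $\delta',\delta''$ via continuity of $f_\ee'$ and the strict inequality $f_\ee'(T_{A^+})<f_\ee'(T_{B^+})$ to get the slope bounds in \eqref{Extend-Iplus-Properties}) match the paper's reasoning.
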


\begin{proof}
	Assume that $T_{A^+}=T_A$.
	By the definition of $I^+$,  $f_\ee$ is convex on $I^+$.
	From condition (A4) of Theorem \ref{thm:main theorem}, $f_\ee\in C^2((T_A,T_B))\cap
	C^{1,\alpha}([T_A,T_B])$.
	Combining these facts with $f_\ee''(T_{\hat P})>0$,
	we have
	$$
	f_\ee(T_{\hat P})>f_\ee(T_A)+f'_\ee(T_A)(T_{\hat P} - T_A).
	$$
	By Lemma \ref{lem:shock graph}(\ref{lem:shock graph-i1}),
	this implies that $(A+Con)\cap\Omega\ne \emptyset$, which contradicts (A5).
	Then $T_{A^+}>T_A$. Similarly, $T_{B^+}<T_B$. This proves (\ref{Properties-Iplus-ia}).
	
	\smallskip
	Property (\ref{Properties-Iplus-ic}) follows directly from the definition of $I^+$ and
	the fact that $f_\ee''(T_{\hat P})>0$, by combining with regularity $f_\ee\in C^2((T_A,T_B))$.
	
	\smallskip
	It remains to show (\ref{Properties-Iplus-ib}).
	We first show that
	\begin{equation}\label{Properties-Iplus-ib-eqn}
	\mbox{there exists $T_{\hat A_1}\in [T_A, T_{A^+})$ such that $f_\ee''<0$ on
		$(T_{\hat A_1}, T_{A^+})$,}
	\end{equation}
	where $T_A< T_{A^+} $ by (\ref{Properties-Iplus-ia}).
	If \eqref{Properties-Iplus-ib-eqn} is false, then there exists a sequence
	$\{T^+_i\}\subset (T_A, T_{A^+})$ such that $\lim_{i\to\infty}T^+_i=T_{A^+}$ and
	$f_\ee''(T^+_i)\ge 0$ for all $i$. Also, from the maximality part in the definition of $I^+$,
	there exists a sequence
	$\{T^-_i\}\subset (T_A, T_{A^+})$ such that $\lim_{i\to\infty}T^-_i=T_{A^+}$ and
	$f_\ee''(T^-_i)< 0$ for all $i$. From this, using the regularity of $f_\ee$ in
	Lemma \ref{lem:analyticity},
	it is easy to see that $f_\ee^{(k)}(A^+)=0$ for $k=2, 3, \dots$, which contradicts
	Lemma \ref{noZeroesInfiniteOrderLemma}. This proves \eqref{Properties-Iplus-ib-eqn}.
	
	Moreover, by property (\ref{Properties-Iplus-ic}),
	there exists $T_{\hat A}\in[T_{\hat A_1}, T_{A^+})$ satisfying $f_\ee'(T_{\hat A})\le  f_\ee'(T_{B^+})$.
	Now, since $f_\ee''<0$ on
	$(T_{\hat A_1}, T_{A^+})$, we obtain that
	$f_\ee''(T)<0$ and $f_\ee'(T_{A^+})<f_\ee'(T)\le  f_\ee'(T_{B^+})$
	for all $T\in (T_{\hat A}, T_{A^+})$.
	
	Similarly, we show that there exists $T_{\hat B}\in (T_{B^+}, T_B]$ such that
	$f_\ee''(T)<0$ and  $f_\ee'(T_{A^+})\le f_\ee'(T)<  f_\ee'(T_{B^+})$  for all
	$T\in(T_{B^+}, T_{\hat B})$.
	
	Now (\ref{Properties-Iplus-ib}) is proved with $J^+=(T_{\hat A}, T_{\hat B})$.
\end{proof}

Clearly, the interval, $J^+$, satisfying the properties in
Lemma \ref{Properties-Iplus}(\ref{Properties-Iplus-ib}) is non-unique.
From now on, we choose and fix an interval:
\begin{equation}\label{defAB}
J^+=(T_{\hat A}, T_{\hat B})
\end{equation}
satisfying the properties stated in Lemma \ref{Properties-Iplus}(\ref{Properties-Iplus-ib}).

Now we show the existence of a local minimum point $C\in\overline{I^+}$ along $\Gsh$.
\begin{prop}\label{C-on-convex-Lemma}
	Set
	$$
	w:=\phi_\ee.
	$$
	Then
	\begin{enumerate}[\rm (i)]
		\item \label{C-on-convex-Lemma-i1}
		There exists $T_C\in \overline{I^+}$
		such that
		$$
		w(C)=\min_{[T_{\hat{A}},T_{\hat{B}}]}w(f_\ee(T),T){\rm ;}
		$$
		\item \label{C-on-convex-Lemma-i1-1}
		$C\in\Gsh^0$ with $f''_\ee(T_C)\geq0${\rm ;}
		
		\item \label{C-on-convex-Lemma-i2}
		Furthermore,
		$$w(P)>w(C) \qquad\,\, \mbox{ for all $T_P\in (T_{\hat{A}}, T_{\hat{B}})\setminus[T_{A^+}, T_{B^+}]$}.
		$$
	\end{enumerate}
\end{prop}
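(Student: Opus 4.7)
The plan is to identify $C$ as the point in $\overline{I^+}$ at which $w := \phi_\ee$ restricted to $\Gsh$ attains its minimum (which exists by compactness and continuity), and then to use Lemma~\ref{distToSonicCenter-lemma} to show that $w$ strictly exceeds $w(C)$ on the two concave flanks $(T_{\hat A}, T_{A^+})$ and $(T_{B^+}, T_{\hat B})$. Assertion (i-1) will then be automatic, since $\overline{I^+} \subset (T_A, T_B)$ by Lemma~\ref{Properties-Iplus} gives $C \in \Gsh^0$, and $f_\ee'' \ge 0$ on $I^+$ holds by the definition of $I^+$ (with the endpoint values $\ge 0$ by continuity of $f_\ee''$, guaranteed by Lemma~\ref{lem:analyticity}). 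Assertion (ii) is precisely the flank bound, and (i) follows by combining it with the definition of $C$.

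The core step I plan to establish is the following comparison: for each $T_P \in (T_{\hat A}, T_{A^+}) \cup (T_{B^+}, T_{\hat B})$, there exists $T_{P_1} \in [T_{A^+}, T_{B^+}]$ with
\[
f_\ee'(T_{P_1}) = f_\ee'(T_P), \qquad f_\ee(T_P) > f_\ee(T_{P_1}) + f_\ee'(T_P)(T_P - T_{P_1}),
\]
from which Lemma~\ref{distToSonicCenter-lemma}(ii) yields $\phi_\ee(P) > \phi_\ee(P_1) \ge w(C)$. Existence of $T_{P_1}$ comes from the intermediate value theorem applied to $f_\ee'$ on $\overline{I^+}$, which takes the values $f_\ee'(T_{A^+})$ and $f_\ee'(T_{B^+})$ at the endpoints, combined with the slope bound $f_\ee'(T_P) \in [f_\ee'(T_{A^+}), f_\ee'(T_{B^+})]$ supplied by Lemma~\ref{Properties-Iplus}. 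The displayed inequality rewrites (up to an overall sign depending on which flank $T_P$ lies in) as positivity of the integral of $f_\ee'(T_P) - f_\ee'(t)$ over the range between $T_P$ and $T_{P_1}$. I then split this range at $T_{A^+}$ (for the left flank) or at $T_{B^+}$ (for the right flank): on the concave flank sub-interval, the strict inequality $f_\ee'' < 0$ makes $f_\ee'$ strictly monotone in the direction that gives the integrand a strictly correct sign; on the sub-interval inside $\overline{I^+}$, monotonicity of $f_\ee'$ from $f_\ee'(T_{A^+})$ (resp.\ $f_\ee'(T_{B^+})$) up to $f_\ee'(T_{P_1}) = f_\ee'(T_P)$ gives the integrand the same weak sign. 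Hence the sum is strictly positive.

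Once the comparison is in place, (ii) is immediate, and the minimum of $w$ over all of $[T_{\hat A}, T_{\hat B}]$ coincides with $\min_{\overline{I^+}} w = w(C)$, yielding (i). As a byproduct, the strict inequality extends to $T_P = T_{\hat A}, T_{\hat B}$ as well, since the strictly concave flank sub-interval has positive length in all relevant cases.

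The main obstacle, and in effect the only nontrivial step, is the comparison inequality itself: it is the sole place where the geometric structure of $f_\ee$ (a strictly concave arc adjacent to a convex arc that shares a tangent slope at an interior point of $\overline{I^+}$) is converted into the position hypothesis of Lemma~\ref{distToSonicCenter-lemma}. Everything else in the argument---compactness for existence of $C$, continuity of $w$, the IVT for existence of $T_{P_1}$, and the slope bounds inherited from Lemma~\ref{Properties-Iplus}---is routine.
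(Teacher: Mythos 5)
Your proposal is correct and follows essentially the same route as the paper: pick $T_{P_1}\in\overline{I^+}$ with matching slope via the intermediate value theorem and the slope bounds of Lemma~\ref{Properties-Iplus}, verify the strict tangent-line inequality by the sign of $f_\ee'(T_P)-f_\ee'(t)$ on the concave flank and inside $\overline{I^+}$ (the paper does this with the auxiliary function $g(T)=f_\ee(T)-f_\ee(T_{P_1})-f_\ee'(T_{P_1})(T-T_{P_1})$, which is the same computation), and then apply Lemma~\ref{distToSonicCenter-lemma}(ii) to get $w(P)>w(P_1)\ge w(C)$, from which (i)--(iii) follow exactly as you describe.
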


\begin{proof}
	Let  $J^+$ be the open interval from \eqref{defAB}, which
	satisfies the properties in Lemma {\rm \ref{Properties-Iplus}(\ref{Properties-Iplus-ib})}.
	Also, recall that $\overline{I^+}=[T_{A+},T_{B^+}]$. Then,  from (\ref{Properties-Iplus-ia}) and
	(\ref{Properties-Iplus-ib}) of Lemma {\rm \ref{Properties-Iplus}}, we obtain that
	$T_{\hat{A}} < T_{A+}<T_{B^+} < T_{\hat{B}}$.
	
	Fix $T_P\in J^+\setminus\overline{I^+}$. Then $f'_{\ee}(T_{A^+})\leq f'_\ee(T_P)\leq f'_\ee(T_{B^+})$
	by Lemma {\rm \ref{Properties-Iplus}(\ref{Properties-Iplus-ib})}.
	Thus, there exists $T_{P_1}\in \overline{I^+}=[T_{A+},\,T_{B^+}]$ such that $f'_\ee(T_{P_1})=f'_\ee(T_P)$.
	In addition,
	since $f_\ee''\ge 0$ in $\overline{I^+}$ by the definition of $I^+$,
	and $f_\ee''< 0$ in
	$J^+\setminus [T_{A+},T_{B^+}]$ by Lemma {\rm \ref{Properties-Iplus}(\ref{Properties-Iplus-ib})},
	then
	\begin{itemize}
		
		\smallskip
		\item If $T_P\in [T_{B^+},T_{\hat{B}}]$,
		$f'_\ee(T)\geq f'_\ee(T_{P_1})$ for all $T\in[T_{P_1},T_{P}]$,
		with strict inequality $f'_\ee(T)> f'_\ee(T_{P_1})$ for $T\in(T_{B^+},T_{P})$,
		
		\smallskip
		\item
		If $T_P\in [T_{\hat{A}},T_{A^+}]$,
		$f'_\ee(T)\leq f_\ee'(T_{P_1})$ for all $T\in[T_P,T_{P_1}]$,
		with strict inequality $f'_\ee(T)< f'_\ee(T_{P_1})$ for $T\in(T_P,T_{A^+})$.
	\end{itemize}
	
	\smallskip
	\noindent
	Thus, defining the function:
	$$
	g(T):=f_\ee(T)-f_\ee(T_{P_1})-f'_\ee(T_{P_1})(T-T_{P_1}),
	$$
	we obtain in the two cases considered above:
	\begin{itemize}
		
		\smallskip
		\item If $T_P\in [T_{B^+},T_{\hat{B}}]$, then
		$$
		g'(T)\begin{cases}
		\ge 0 \quad &\mbox{for $T\in [T_{P_1},T_P]$},\\
		>0 \quad &\mbox{for $T\in (T_{B^+},T_P)$}.
		\end{cases}
		$$
		
		\smallskip
		\item
		If $T_P\in [T_{\hat{A}},T_{A^+}]$, then
		$$
		g'(T)\begin{cases}
		\le 0 \quad &\mbox{for $T\in [T_P,T_{P_1}]$},\\
		<0  \quad &\mbox{for $T\in (T_P, T_{A^+})$}.
		\end{cases}
		$$
	\end{itemize}
	Therefore, in both cases, $g(T_P)>g(T_{P_1})$, which implies
	$$
	f_\ee(T_P)> f_\ee(T_{P_1})+f'_\ee(T_{P_1})(T_P-T_{P_1}).
	$$
	Now, by Lemma \ref{distToSonicCenter-lemma},
	\begin{equation}\label{strictMinOnConvex}
	w(P)>w(P_1).
	\end{equation}
	Thus we have proved that, for any $T_P\in J^+\setminus\overline{I^+}$, there exists $T_{P_1}\in \overline{I^+}$
	such that (\ref{strictMinOnConvex}) holds for $P=(f_\ee(T_{P}), T_{P})$ and
	$P_1=(f_\ee(T_{P_1}), T_{P_1})$.
	This implies that there exists $T_C\in\overline{I^+}$ such that $w(f_\ee(T),T)$ attains its minimum
	over $\overline {J^+}=[T_{\hat{A}},T_{\hat{B}}]$
	at
	$T_C$. This proves assertion (\ref{C-on-convex-Lemma-i1}).
	
	Moreover, we find from $T_C\in \overline{I^+}\subset J^+$ that
	$C\in \Gsh^0$. Also, from (\ref{C-on-convex-Lemma-i1}) and
	$\overline{I^+}\subset J^+= (T_{\hat{A}},T_{\hat{B}})$, $f''_\ee(T_C)\geq0$.
	This proves assertion (\ref{C-on-convex-Lemma-i1-1}).
	
	Assertion (\ref{C-on-convex-Lemma-i2}) for all
    $T_P\in (T_{\hat{A}}, T_{\hat{B}})\setminus[T_{A^+}, T_{B^+}]$ follows from the strict
    inequality in (\ref{strictMinOnConvex}).
\end{proof}

We derive a corollary of Lemma \ref{C-on-convex-Lemma}(\ref{C-on-convex-Lemma-i1-1}).
The property, $C\in\Gsh^0$, guarantees the strict ellipticity of equation \eqref{equ:study} at $C$,
where we have used
assumption (A3) of Theorem \ref{thm:main theorem}.
Combining $f''_\ee(T_C)\geq0$ with Lemma \ref{lem:shock graph}(\ref{lem:shock graph-i3})
implies that $\phi_{\bt\bt}(C)\le 0$.
Thus,
from Lemma \ref{lem:sign of second derivative} and Lemma \ref{lem:shock graph}(\ref{lem:shock graph-i2-00}), we obtain
\begin{cor}\label{cor:CnotLocMin}
	$C$ is not a local minimum point of $\phi_\ee$ with respect to $\overline\Omega$.
\end{cor}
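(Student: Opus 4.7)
The plan is to argue by contradiction: assume $C$ is a local minimum of $\phi_\ee$ relative to $\overline{\Omega}$, and derive two incompatible sign conditions on $\phi_{\bt\bt}(C)$. All the machinery has already been assembled; the corollary is essentially a bookkeeping step matching the information supplied by Proposition \ref{C-on-convex-Lemma} against the boundary Hopf-type statement in Lemma \ref{lem:sign of second derivative}.

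First, I would extract the lower bound $\phi_{\bt\bt}(C)>0$ by invoking Lemma \ref{lem:sign of second derivative}. The three hypotheses needed are at hand: (a) $C\in\Gsh^0$ by Proposition \ref{C-on-convex-Lemma}(\ref{C-on-convex-Lemma-i1-1}), so the strict ellipticity of \eqref{equ:study} at $C$ holds by assumption (A3) of Theorem \ref{thm:main theorem}; (b) the transversality $\bn\cdot\ee<0$ on $\Gsh$ for $\ee\in Con$, which is Lemma \ref{lem:shock graph}(\ref{lem:shock graph-i2-00}); and (c) the contradiction hypothesis that $\phi_\ee$ attains a local minimum at $C$. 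Lemma \ref{lem:sign of second derivative} then gives $\phi_{\bt\bt}(C)>0$.

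Next I would extract the opposing bound $\phi_{\bt\bt}(C)\le 0$ from the convex-side geometric fact $f''_\ee(T_C)\ge 0$ recorded in Proposition \ref{C-on-convex-Lemma}(\ref{C-on-convex-Lemma-i1-1}). The translation is the identity from the proof of Lemma \ref{lem:shock graph}:
$$
f''_\ee(T)=-\frac{\phi_{\bn}^2\,\phi_{\bt\bt}}{\phi_\ee^3}\bigl(f_\ee(T),T\bigr),
$$
valid on $\Gsh^0$. Since $\phi_{\bn}<0$ on $\Gsh$ by (A1) and $\phi_\ee>0$ on $\Gsh$ by Lemma \ref{lem:shock graph}(\ref{lem:shock graph-i2-01}), the coefficient of $\phi_{\bt\bt}$ is strictly negative, so $\mathrm{sign}(f''_\ee)=-\mathrm{sign}(\phi_{\bt\bt})$ at every point of $\Gsh^0$, with equality cases corresponding. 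Hence $f''_\ee(T_C)\ge 0$ yields $\phi_{\bt\bt}(C)\le 0$, contradicting the previous step and completing the proof.

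If there is any delicate point, it is making sure that the degenerate case $f''_\ee(T_C)=0$ is covered, since Lemma \ref{lem:shock graph}(\ref{lem:shock graph-i3}) is only stated with strict inequalities; the displayed identity above handles this directly without extra work. Otherwise the argument is immediate from the assembled lemmas.
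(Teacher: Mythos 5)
Your proof is correct and follows essentially the same route as the paper: it combines $f''_\ee(T_C)\ge 0$ from Proposition \ref{C-on-convex-Lemma}(\ref{C-on-convex-Lemma-i1-1}) with the sign relation between $f''_\ee$ and $\phi_{\bt\bt}$ (the paper quotes Lemma \ref{lem:shock graph}(\ref{lem:shock graph-i3}) in contrapositive form, which already covers the degenerate case you flag via \eqref{tangDerivOnShock}) to get $\phi_{\bt\bt}(C)\le 0$, and then contradicts this with the conclusion $\phi_{\bt\bt}(C)>0$ of Lemma \ref{lem:sign of second derivative} under the hypotheses $\bn\cdot\ee<0$ and the assumed local minimum.
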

This means that, for any radius $r>0$, there is a point $C_r\in \overline{ B_r(C)\cap\Omega}$ such
that $w(C_r)<w(C)$.

\subsection{\, Step 2: Existence of $T_{C_1}\in (T_A,T_B)\setminus [T_{\hat{A}},T_{\hat{B}}]$
	such that $C_1$ and $C$ are connected by a minimal chain with radius $r_1$, for vector $\ee$ from  condition (A6).}
\label{ProofStep2Sect}

In the argument, we use the minimal and maximal chains
in the sense of  Definition \ref{def:minimal chain}.

Through \S \ref{ProofStep2Sect}--\S \ref{ProofStep3Sect},
we fix $\ee\in Con$ to be the vector from condition (A6) of
Theorem \ref{thm:main theorem}, and use points $\hat{A}, \hat{B}, C\in \Gsh^0$ from Step 1
(which correspond to this vector $\ee$) and constant $r^*$ from Lemma \ref{lem:connectBall}.
In this step, we prove the following proposition:

\begin{prop}\label{Step3-Prop}
	Let $\ee\in Con$ be the vector from condition {\rm (A6)} of Theorem {\rm \ref{thm:main theorem}},
	and let $C$ be the corresponding point obtained in Proposition {\rm \ref{C-on-convex-Lemma}}.
	Then there exists $\hat r_1\in (0, r^*]$ such that, for any $r_1\in (0, \hat r_1)$ and
	any minimal chain $\{C^i\}_{i=0}^{k_1}$ of radius $r_1$
	for $w=\phi_\ee$ starting from point $C$,
	its endpoint $C_1:= C^{k_1}$ is in $\Gsh^0$, i.e.,
	$C_1\in \Gsh^0$.
	Moreover, $C_1$ is a local minimum point of $w$ relative to
	$\overline\Omega$
	such that
	$$
	w(C_1)<w(C).
	$$
\end{prop}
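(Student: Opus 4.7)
The plan is to split the proof into two parts: first invoke the general existence result of Section 3.3 to produce the chain, and then use the rigidity imposed by conditions (A5)--(A6) together with the chain-separation lemmas (Lemmas \ref{lem:chainsDoNotIntersect-MinMax}--\ref{lem:chainsDoNotIntersect-MinMin}) to confine the terminal point to $\Gsh^0$.

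The existence part is immediate. Corollary \ref{cor:CnotLocMin} says that $C$ is \emph{not} a local minimum of $w:=\phi_\ee$ relative to $\overline\Omega$, so Lemma \ref{lem:ExistMinMaxChain} produces, for every $r_1\in(0,r^*)$, a minimal chain $\{C^i\}_{i=0}^{k_1}$ starting at $C^0=C$ whose terminal point $C_1:=C^{k_1}\in\partial\Omega$ is a local minimum of $w$ relative to $\overline\Omega$ with $w(C_1)<w(C)$. The real task is to prove $C_1\in\Gsh^0$; I would argue this by contradiction, splitting into the cases $C_1\in\Gamma_1^0\cup\Gamma_2^0$ and $C_1\in\{A,B\}$ and matching them against whichever alternative of (A6) is assumed.

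For the subcase $C_1\in\Gamma_1^0\cup\Gamma_2^0$: alternative (A6)(iii) kills it directly. For (A6)(i) or (A6)(ii), the plan is to construct an auxiliary \emph{maximal} chain. Proposition \ref{C-on-convex-Lemma}(iii) supplies a point $P^*\in\Gsh^0$ with $w(P^*)>w(C)$, and, after extracting a uniform gap $\delta:=w(P^*)-w(C)>0$ and possibly replacing $P^*$ by a nearby local maximum using Lemma \ref{lem:ExistMinMaxChain}, one starts a maximal chain of radius $r_2\le r_1$ from $P^*$. Choosing $\hat r_1\le\min(r^*,r_1^*(\delta),r_2^*(r_1))$ so that Lemmas \ref{lem:chainsDoNotIntersect-MinMax}--\ref{lem:chainsDoNotIntersect-MinMin} can be applied to the pair of chains, the separation lemmas force the max-chain endpoint to land on the prescribed connected piece of $\partial\Omega$: on $\Gamma_1^0\cup\{A\}$ in case (i), on $\Gamma_2^0\cup\{B\}$ in case (ii). Coexistence of this local maximum with the assumed local minimum $C_1$ then violates (A6) outright. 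To rule out $C_1\in\{A,B\}$, I would use that, by Lemma \ref{lem:shock graph}(iii)--(iv), $w>0$ on $\Gsh^0$ and $\bn\cdot\ee<0$ along $\Gsh$; combined with the cone exclusion $\{P+Con\}\cap\Omega=\emptyset$ from (A5), any ball $B_{r_1}(C^{k_1-1})\cap\Omega$ approaching $A$ (resp.\ $B$) in direction $-\ee$ still contains $\Gsh^0$-points near $A$ at which $w$ dominates its limit at $A$, so the terminal inequality in Definition \ref{def:minimal chain}(d) cannot hold at the endpoint.

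The main obstacle will be alternative (A6)(i) or (A6)(ii), where one has only the conditional exclusion of simultaneous max/min configurations and must manufacture a maximal chain whose endpoint is trapped in the right piece of $\partial\Omega$. The calibration of the three radii $r^*,r_1^*(\delta),r_2^*(r_1)$ so that the chain-separation lemmas apply symmetrically to both chains, and the careful choice of $P^*$ (in particular, whether to place it in the portion of $\Gsh^0$ above $T_{\hat B}$ or below $T_{\hat A}$, based on the alternative of (A6) being used), is where the bulk of the bookkeeping lies; the geometric exclusion of $C_1\in\{A,B\}$ from (A5) should by contrast be short.
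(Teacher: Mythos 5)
Your overall architecture (existence of the minimal chain from Corollary~\ref{cor:CnotLocMin} and Lemma~\ref{lem:ExistMinMaxChain}, then a contradiction via an auxiliary maximal chain trapped by Lemma~\ref{lem:chainsDoNotIntersect-MinMax}) matches the paper's, but two essential steps are missing or wrong. First, for the maximal-chain argument to close, the chain must start at the point $A_1$ (resp.\ $B_1$) where $\phi_\ee$ attains its \emph{maximum over the whole arc} $\Gsh[A,C]$ (resp.\ $\Gsh[C,B]$) --- not at an arbitrary $P^*$ with $w(P^*)>w(C)$ --- because the contradiction comes from the chain's endpoint landing back on that arc with a strictly larger value of $\phi_\ee$ than the arc's maximum. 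Before one can start such a chain, one must prove that $A_1$ is \emph{not} a local maximum of $\phi_\ee$ relative to $\overline\Omega$; this is the heart of the paper's proof: if $A_1$ were a local maximum, Lemma~\ref{lem:sign of second derivative} would give $f_\ee''(T_{A_1})>0$, and the nonlocal comparison of Lemma~\ref{distToSonicCenter-lemma} applied to the supporting line at $A_1$ would produce a point $A^*$ on the arc with $\phi_\ee(A^*)>\phi_\ee(A_1)$, contradicting the maximality of $A_1$. Your ``possibly replacing $P^*$ by a nearby local maximum using Lemma~\ref{lem:ExistMinMaxChain}'' is self-defeating: that lemma requires the starting point \emph{not} to be a local maximum, so a local maximum cannot launch a nontrivial maximal chain. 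Without this step the case analysis under (A6)(i)--(ii) does not close.

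Second, your exclusion of $C_1\in\{A,B\}$ is not valid. That $w=\phi_\ee>0$ on $\Gsh$ and that $w$ exceeds $w(A)$ at nearby points of $\Gsh^0$ is perfectly consistent with the terminal condition of Definition~\ref{def:minimal chain}(\ref{def:minimal chain-Id}) holding at $A$: a point can realize the minimum over $\overline{B_r(A)\cap\Omega}$ while all nearby shock points carry strictly larger values, and nothing in (A5) forbids $A$ or $B$ from being a local minimum of $\phi_\ee$ relative to $\overline\Omega$. The paper rules out $C^{k_1}=A$ (and $=B$) by the \emph{same} maximal-chain mechanism: since $\phi_\ee(A)=\phi_\ee(C^{k_1})<\phi_\ee(C)$, the arc-maximum $A_1$ lies in $(\Gsh[A,C])^0$; the maximal chain started there must end in $(\Gsh[A,C])^0$ by Lemma~\ref{lem:chainsDoNotIntersect-MinMax}, and its endpoint carries a value of $\phi_\ee$ strictly larger than $\phi_\ee(A_1)=\max_{\Gsh[A,C]}\phi_\ee$ --- a contradiction. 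A minor further point: condition (A6)(i) is used in the paper as two separate prohibitions (no local maximum on $\Gamma_1^0\cup\{A\}$ and no local minimum on $\Gamma_2^0$), not as a prohibition only of their simultaneous occurrence; under your weaker reading the argument would not close either.
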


\medskip
In order to prove Proposition \ref{Step3-Prop},
we first notice that, by Corollary \ref{cor:CnotLocMin} and
Lemma \ref{lem:ExistMinMaxChain}, for any $r_1\in(0, r^*)$,
there exists a minimal chain $\{C^i\}_{i=0}^{k_1}$ of radius $r_1$ for $w=\phi_\ee$
in the sense of Definition \ref{def:minimal chain}, starting at $C$, {\it i.e.}, $C^0=C$.
Moreover, $C^{k_1}\in\partial\Omega$ is a local minimum point of $w$ with respect
to $\overline\Omega$, and $w(C^{k_1})<w(C)$.

Now, in order to complete the proof of Proposition \ref{Step3-Prop},
it suffices to prove the following lemma.

\begin{lem}\label{Step3-lemma}
	There exists $\hat r_1>0$ such that,
	if $r_1\in (0, \hat r_1]$, then
	$C^{k_1}\in \Gsh^0$.
\end{lem}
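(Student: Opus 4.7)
The plan is to argue by contradiction. Suppose that the conclusion fails; then there exist a sequence $r_1^{(n)} \downarrow 0$ and, for each $n$, a minimal chain $\{C^{i,n}\}_{i=0}^{k_1(n)}$ of radius $r_1^{(n)}$ starting at $C$ whose endpoint $C_n := C^{k_1(n),n}$ lies in $\partial\Omega \setminus \Gsh^0 = \Gamma_1 \cup \Gamma_2 \cup \{A, B\}$. By compactness we pass to a subsequence so that $C_n \to Q$. Combining the chain property $w(C_n) = \min_{\overline{B_{r_1^{(n)}}(C_n) \cap \Omega}} w$ with the uniform continuity of $w = \phi_{\ee}$ coming from (A2) gives, in the limit, that $Q$ is a local minimum of $w$ relative to $\overline\Omega$ with $w(Q) \le w(C)$.

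Next I would narrow the possible location of $Q$ using condition (A6) together with earlier results. The strong maximum principle for the strictly elliptic equation~\eqref{equ:phi e} in $\Omega$ excludes interior local minima since $\phi$ is not a constant state, and Lemma~\ref{lem:sign of second derivative} combined with Proposition~\ref{C-on-convex-Lemma} rules out $Q \in \Gsh^0$. Hence $Q \in \Gamma_1 \cup \Gamma_2 \cup \{A, B\}$. Applying (A6): under (A6)(iii) no point of $\Gamma_1 \cup \Gamma_2$ can be a local minimum of $w$, so $Q \in \{A, B\}$; under (A6)(i) the set $\Gamma_2^0$ is excluded, leaving $Q \in \Gamma_1 \cup \{A, B\}$; and (A6)(ii) is analogous with the roles of $\Gamma_1$ and $\Gamma_2$ swapped.

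To rule out the remaining possibilities, I would construct auxiliary chains along $\Gsh$ and apply the chain non-intersection Lemmas~\ref{lem:chainsDoNotIntersect-MinMax}--\ref{lem:chainsDoNotIntersect-MinMin}. By Proposition~\ref{C-on-convex-Lemma}(iii), on each of the ``wings'' $(T_{\hat A}, T_{A^+})$ and $(T_{B^+}, T_{\hat B})$ of $\Gsh$ immediately adjacent to the convex interval $\overline{I^+}$ we have $w > w(C)$, so we may select auxiliary points $H_A, H_B \in \Gsh^0$ with $w(H_A), w(H_B)$ strictly exceeding $w(C)$ by a definite amount $\delta > 0$. Via Lemma~\ref{lem:ExistMinMaxChain} we start maximal chains from these points (or minimal chains landing at $A$ or $B$, depending on the sub-case of (A6) at hand), and the appropriate non-intersection lemma, applied to a carefully chosen connected sub-arc $\mathcal{C} \subset \partial\Omega$, forces the chain from $C$ to be confined in a subregion of $\Omega$ whose boundary meets $\partial\Omega$ only in $\Gsh^0$; this contradicts $Q \notin \Gsh^0$.

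The main obstacle I anticipate is the judicious choice of the auxiliary chains and of the sub-arcs $\mathcal{C}$ entering the non-intersection lemmas, so that all three sub-cases of (A6) can be handled and, in particular, the corner cases $Q \in \{A, B\}$ are excluded. For the corner cases, (A6) alone is not enough, and the separation-of-chains argument has to exploit both the acute corner geometry guaranteed by Framework~(A)(ii) (opening angle in $(0,\pi)$) and the strict local-minimum property of $w$ at $C$ along $\Gsh$ quantified by Proposition~\ref{C-on-convex-Lemma}(iii), so that for sufficiently small $r_1$ the balls $B_{r_1}(C^{i,n}) \cap \Omega$ cannot sneak past the auxiliary chain to reach the corner.
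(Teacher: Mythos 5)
Your proposal does not close the argument; two of its load-bearing steps have genuine gaps. First, the compactness set-up does not deliver what you need: the endpoint $C_n$ of a minimal chain of radius $r_1^{(n)}$ only satisfies $\phi_\ee(C_n)=\min_{\overline{B_{r_1^{(n)}}(C_n)\cap\Omega}}\phi_\ee$, and since $r_1^{(n)}\downarrow 0$ this information degenerates in the limit, so the limit point $Q$ need \emph{not} be a local minimum of $\phi_\ee$ relative to $\overline\Omega$. Consequently none of the exclusions you then invoke (the strong maximum principle, Lemma \ref{lem:sign of second derivative}, condition (A6)) can be applied to $Q$. The paper avoids any limiting procedure: it fixes $\delta=\min\{\phi_\ee(A_1)-\phi_\ee(C),\,\phi_\ee(B_1)-\phi_\ee(C)\}>0$, where $A_1$ and $B_1$ are the maximum points of $\phi_\ee$ over $\Gsh[A,C]$ and $\Gsh[C,B]$ (positive by Proposition \ref{C-on-convex-Lemma}(\ref{C-on-convex-Lemma-i2}) and independent of the chain), takes $\hat r_1=r_1^*(\delta)$ from Lemma \ref{lem:chainsDoNotIntersect-MinMax}, and derives a contradiction directly for every $r_1\le\hat r_1$.

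Second, the heart of the matter—which your sketch skips—is the verification that the auxiliary point from which you launch the maximal chain is \emph{not} a local maximum of $\phi_\ee$ relative to $\overline\Omega$; without this, Lemma \ref{lem:ExistMinMaxChain} cannot be applied at all. In the paper this is the Claim in Step 2: one must take the auxiliary point to be precisely $A_1$ (not an arbitrary $H_A$ with $\phi_\ee>\phi_\ee(C)+\delta$), and if $A_1$ were a local maximum then $f_\ee''(T_{A_1})>0$ by Lemmas \ref{lem:sign of second derivative}--\ref{lem:shock graph}, whereupon the function $F(T)=f_\ee(T)-f_\ee(T_{A_1})-f_\ee'(T_{A_1})(T-T_{A_1})$, condition (A5), and the nonlocal Lemma \ref{distToSonicCenter-lemma} produce a point $A^*$ on the arc with $\phi_\ee(A^*)>\phi_\ee(A_1)$, contradicting the definition of $A_1$ (the case $A_1=A$ being excluded by (A6)(i)). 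Your contradiction mechanism is also inverted: Lemma \ref{lem:chainsDoNotIntersect-MinMax}, applied to the minimal chain from $C$ to $C^{k_1}$ and the maximal chain from $A_1$, forces the \emph{endpoint of the maximal chain} onto the sub-arc of $\partial\Omega$ between $C$ and $C^{k_1}$; then (A6) removes $\Gamma_1^0\cup\{A\}$, leaving an endpoint on $\Gsh^0$ between $A$ and $C$ with value exceeding $\phi_\ee(A_1)$, contradicting the maximality of $A_1$ there. One cannot instead ``confine'' the minimal chain from $C$ to a region meeting $\partial\Omega$ only in $\Gsh^0$—by assumption that chain ends on $\overline{\Gamma_1\cup\Gamma_2}$. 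Finally, the endpoint cases need no corner-geometry input beyond Framework (A): if $C^{k_1}=A$, then $\phi_\ee(A)<\phi_\ee(C)$ forces $A_1\ne A$, so $A_1\in\Gsh^0$ and the same maximal-chain argument applies verbatim (similarly for $B$).
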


\begin{proof}
	On the contrary,
	if $C^{k_1}\in\overline{\Gamma_1\cup\Gamma_2}$, we derive a
	contradiction for sufficiently small $r_1>0$.
	Now we divide the proof into five steps.
	
	\smallskip
	{\bf 1.}
	We first determine how small $r_1>0$ should be in the minimal chain
	$\{C^i\}$.
	Choose points $A_1, B_1\in\Gsh$ such that
	\begin{equation*}
	\begin{split}
	&T_{A_1}\in[T_A,T_C],\;\quad \;
	\phi_\ee(A_1)=\max_{T\in[T_A,\,T_C]}\phi_\ee(f_\ee(T),T),\\
	&T_{B_1}\in[T_C,T_B],\;\quad \;
	\phi_\ee(B_1)=\max_{T\in[T_C,\,T_B]}\phi_\ee(f_\ee(T),T).
	\end{split}
	\end{equation*}
	Note that the definition of  points
	$A_1$ and $B_1$ is independent of the choice of the minimal chain $\{C^i\}$ and its
	radius. Also, from
	Proposition \ref{C-on-convex-Lemma}(\ref{C-on-convex-Lemma-i2}),
	it follows that $\phi_\ee(A_1)>\phi_\ee(C)$ and  $\phi_\ee(B_1)>\phi_\ee(C)$. Let
	$$
	\delta:=\min\big\{\phi_\ee(A_1)-\phi_\ee(C), \; \phi_\ee(B_1)-\phi_\ee(C)\big\}.
	$$
	Then $\delta>0$. Lemma \ref{lem:chainsDoNotIntersect-MinMax}
	determines $r_1^*(\delta)$, so that $r_1 \in (0, r_1^*(\delta))$ is assumed in the
	minimal chain $\{C^i\}$.

	\smallskip
	{\bf 2.} We start from Case (i) of condition (A6).
	
	\smallskip
	\emph{Claim}: Under the condition of Case (i),
	$A_1$ cannot be a local maximum point of $w=\phi_\ee$ relative to $\overline\Omega$.
	
	In fact, for Case (i),
	if $A_1=A$,
	then $A_1$ cannot be a local maximum point.
	On the other hand, if $A_1\neq A$, and $A_1$ is a local maximum point,
	then
	$$
	f_\ee''(T_{A_1})>0 \qquad \mbox{in the $(S,T)$--coordinates},
	$$
	by Lemmas \ref{lem:sign of second derivative}--\ref{lem:shock graph}.
	Thus, we consider the function:
	$$
	F(T):=f_\ee(T)-f_\ee(T_{A_1})-f_\ee'(T_{A_1})(T-T_{A_1}).
	$$
	Then $F(T_{A_1})=0$, $F'(T_{A_1})=0$, and $F''(T_{A_1})>0$
	so that $F(T)>0$ near $T_{A_1}$.
	Let the maximum of $F(T)$ on $[T_A,T_{A_1}]$ be attained at $T_{A^*}$.
	Then $F(T_{A^*})>0$, which implies that $T_{A^*}\ne T_{A_1}$.
	
	If $T_{A^*}\neq T_{A}$,
	then $F'(T_{A^*})=0$, which implies that $f'_\ee(T_{A^*})-f'_\ee(T_{A_1})=0$.
	If $T_{A^*}=T_A$, then, using $f_\ee'(T_A)\geq f'_\ee(T_{A_1})$,
	condition (A5),
	and $F'(T_A)\leq0$ (since $T_A=T_{A^*}$ is a maximum point of $F(T)$ on $[T_A,T_{A_1}]$),
	we conclude that $f_\ee'(T_{A^*})=f_\ee'(T_A)=f_\ee'(T_{A_1})$.
	Thus, in both cases,
	$$
	f'_\ee(T_{A^*})-f'_\ee(T_{A_1})=0.
	$$
	Also, $F(T_{A^*})>0$ implies
	$$
	f_\ee(T_{A^*})>f_\ee(T_{A_1})+f'_\ee(T_{A_1})(T_{A^*}-T_{A_1}).
	$$
	Then, from Lemma \ref{distToSonicCenter-lemma},
	$\phi_\ee(A^*)>\phi_\ee(A_1)$, which contradicts the definition of $A_1$.
	Now the claim is proved.
	
	\smallskip
	{\bf 3.}
	In this step, for Case (i) of condition (A6),
	we obtain a contradiction to the assumption that
	$C^{k_1}\in\overline{\Gamma_1\cup\Gamma_2}$.
	
	\smallskip
	Since $C^{k_1}\in\overline{\Gamma_1\cup\Gamma_2}$  is a local minimum point
	of $\phi_\ee$, the condition for Case (i)
	implies that $C^{k_1}\in\overline{\Gamma_1}\cup\{B\}$.
	
	\smallskip
	We first consider the case that $C^{k_1}\in\overline{\Gamma_1}\setminus\{A\}$.
	Since $A_1$ is not a local maximum point of $w=\phi_\ee$,
	and $r_1\in (0, r^*]$, then, by
	Lemma \ref{lem:ExistMinMaxChain},
	there exists a maximal chain $\{A^j\}_{j=0}^{k_2}$ of radius $r_1$ for $w$
	in the sense of Definition \ref{def:minimal chain}, starting at $A_1$, {\it i.e.},
	$A^0=A_1$.
	Moreover,
	$A^{k_2}\in\partial\Omega$ is a local maximum point of $w$ with respect to
	$\overline\Omega$,
	and $w(A^{k_2})>w(A_1)$.
	Furthermore, by Lemma \ref{lem:chainsDoNotIntersect-MinMax}
	and the restriction for $r_1$
	described in Step 1,
	it follows that one of the following three cases occurs:
	
	\smallskip
	\begin{enumerate}[\rm (a)]
		\item $A^{k_2}$ lies on
		$\Gamma_1^0$ between $C^{k_1}$ and $A$;
		
		\smallskip
		\item[\rm (b)] $A^{k_2}=A$;
		
		\smallskip
		\item[\rm (c)]
		$A^{k_2}$ lies on $\Gsh^0$ strictly between $A$ and $C$.
	\end{enumerate}
	
	\smallskip
	Since  $A^{k_2}$ is
	a local maximum point of $\phi_\ee$,
	then it cannot lie on $\Gamma_1^0\cup\{A\}$
	by the condition of Case (i).
	Thus, only case (c) can occur,
	{\it i.e.}, $A^{k_2}$ lies on $\Gsh^0$ between $A$ and $C$.
	However, the property that $w(A^{k_2})>w(A_1)$
	contradicts the fact that $T_{A_1}$ is the maximum point
	of $\phi_\ee(f_\ee(T), T)$ on $[T_A,T_C]$.
	Thus, the case that $C^{k_1}\in\overline{\Gamma_1}\setminus\{A\}$
	is not possible.

	Next, consider the case that $C^{k_1}=A$. Then
	$$
	\phi_\ee(C)>\phi_\ee(C^{k_1}) =\phi_\ee(A),
	$$
	so that the definition of $A_1$ implies that $A_1\ne A$.
	Combining with the fact that $A_1\ne C$ proved above, we conclude that
	$A_1$ lies on $\Gsh^0$ strictly between $C$ and $C^{k_1}=A$.
	Then we obtain a contradiction
	by following the same argument as above.
	
	The remaining case, $C^{k_1}=B$, is considered similarly to the case that $C^{k_1}=A$.
	Indeed, in that argument, we have not used the condition that $A$ cannot be a
	local maximum point. Thus, the argument applies to the case that $C^{k_1}=B$, with only notational change:
	points $B$ and $B_1$ are used, instead of  $A$ and $A_1$.
	
	This completes the proof for Case (i) of condition (A6)
	of Theorem \ref{thm:main theorem}.
	
	\smallskip
	{\bf 4.} The proof for Case (ii) of condition (A6) of Theorem \ref{thm:main theorem}
	is similar to Case (i).
	The only difference is to replace both $A$ and $A_1$
	in the argument by $B$ and $B_1$.
	
	\smallskip
	{\bf 5.} Consider Case (iii) of condition (A6) of Theorem \ref{thm:main theorem},
	{\it i.e.}, when $\phi_\ee$ cannot have a local minimum point
	on $\Gamma_1\cup\Gamma_2$.
	For the local minimum point $C^{k_1}\in\overline{\Gamma_1\cup\Gamma_2}$,
	this implies that $C^{k_1}\in\{A, B\}$.
	Then the argument is the same as for the cases: $C^{k_1}=A$ and  $C^{k_1}=B$,
	at the end of Step 3.
	
	Proposition \ref{Step3-Prop} with $C_1=C^{k_1}$ follows directly from Lemma \ref{Step3-lemma}.
\end{proof}

\subsection{\, Step 3: Existence of points $C$ and $C_1$ yields a contradiction}\label{ProofStep3Sect}
In this section, we continue to denote by $\ee\in Con$
the vector from condition (A6) of
Theorem \ref{thm:main theorem}, and use points $\hat{A}, \hat{B}, C\in \Gsh^0$ from Step 1
which correspond to this vector $\ee$.
Then, for each $r_1\in (0, \hat r_1]$,
the corresponding point $C_1$ is defined in Proposition \ref{Step3-Prop}.
In this step, we will arrive at a contradiction to the existence of such $C$ and $C_1$
if $r_1$ is sufficiently small. This implies that \eqref{non-convexity-Assumption} cannot hold
for $\ee$ from condition (A6),
which means that
$f_\ee(\cdot)$ is concave, {\it i.e.},  $\Gsh$ is convex.

For $E_1, E_2\in\Gsh$,  denote by $\Gsh[E_1, E_2]$ the part of $\Gsh$ between points $E_1$ and $E_2$,
including the endpoints.

Fix $r_1\in (0, \hat r_1]$. This choice determines $C_1$.
Let $A_2\in \Gsh[C, C_1]$ be such that
\begin{equation}\label{defA2-eq-0}
\phi_\ee(A_2)=\max_{P\in \Gsh[C, C_1]}\phi_\ee(P).
\end{equation}

\begin{lem}\label{C-A2-separated}
	There exists $\delta>0$
	such that, for any
	$r_1\in (0, \hat r_1]$, the corresponding points $C$, $C_1$, and $A_2$ defined above
	satisfy
	\begin{equation}\label{C-A2-separated-ineq}
	\phi_\ee(A_2)\ge \phi_\ee(C)+\delta>\phi_\ee(C_1)+\delta.
	\end{equation}
\end{lem}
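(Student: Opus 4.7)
The second inequality in \eqref{C-A2-separated-ineq} is just $\phi_\ee(C) > \phi_\ee(C_1)$, which is supplied directly by Proposition \ref{Step3-Prop}. So the entire content of the lemma lies in the first inequality with a $\delta$ that does not depend on $r_1$. The plan is to construct $\delta$ out of objects fixed once and for all in Step~1, namely $C$, $\hat A$, $\hat B$, $A^+$, and $B^+$; then no dependence on the radius $r_1$ (and hence on $C_1$) can enter.

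First I would pin down where $C_1$ can sit on $\Gsh$. By Proposition \ref{C-on-convex-Lemma}(\ref{C-on-convex-Lemma-i1}), $\phi_\ee(C)$ is the minimum of $\phi_\ee(f_\ee(T),T)$ on $[T_{\hat A}, T_{\hat B}]$, whereas $\phi_\ee(C_1) < \phi_\ee(C)$ by Proposition \ref{Step3-Prop}. Hence $T_{C_1}\notin [T_{\hat A}, T_{\hat B}]$, and so either $T_{C_1} < T_{\hat A}$ or $T_{C_1} > T_{\hat B}$.

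Next I would define
\[
M_1 := \max_{T\in[T_{\hat A},T_{A^+}]} \phi_\ee(f_\ee(T),T), \qquad M_2 := \max_{T\in[T_{B^+},T_{\hat B}]} \phi_\ee(f_\ee(T),T).
\]
Both maxima are attained, since $\phi_\ee$ is continuous and the intervals are compact and nonempty, using $T_{\hat A} < T_{A^+} < T_{B^+} < T_{\hat B}$ from Lemma \ref{Properties-Iplus}(\ref{Properties-Iplus-ia}),(\ref{Properties-Iplus-ib}). By Proposition \ref{C-on-convex-Lemma}(\ref{C-on-convex-Lemma-i2}), $\phi_\ee > \phi_\ee(C)$ strictly on the nonempty open subintervals $(T_{\hat A}, T_{A^+})$ and $(T_{B^+}, T_{\hat B})$, which forces $M_1, M_2 > \phi_\ee(C)$ strictly: if, for instance, $M_1 = \phi_\ee(C)$, then any interior point of $(T_{\hat A}, T_{A^+})$ would violate the maximality. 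Then I would set
\[
\delta := \min\bigl( M_1 - \phi_\ee(C),\; M_2 - \phi_\ee(C) \bigr) > 0,
\]
which visibly depends only on the Step~1 data and not on $r_1$.

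To finish, I would use the fact that $\Gsh[C,C_1]$, viewed in $T$-coordinates, is the closed interval with endpoints $T_C$ and $T_{C_1}$, and that $T_{A^+}\le T_C\le T_{B^+}$ by Proposition \ref{C-on-convex-Lemma}(\ref{C-on-convex-Lemma-i1}). Thus, in the case $T_{C_1} < T_{\hat A}$, we have $[T_{\hat A}, T_{A^+}] \subset [T_{C_1}, T_C]$, giving $\phi_\ee(A_2) \ge M_1 \ge \phi_\ee(C) + \delta$; the symmetric case $T_{C_1} > T_{\hat B}$ yields $\phi_\ee(A_2) \ge M_2 \ge \phi_\ee(C) + \delta$. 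Either way, the first inequality holds. The argument is essentially bookkeeping on top of Proposition \ref{C-on-convex-Lemma}, so I do not anticipate a genuine obstacle; the only subtle point is to check that both of the open subintervals $(T_{\hat A}, T_{A^+})$ and $(T_{B^+}, T_{\hat B})$ are genuinely nonempty, which is exactly what Lemma \ref{Properties-Iplus} guarantees.
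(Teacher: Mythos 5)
Your proof is correct and follows essentially the same route as the paper: exclude $T_{C_1}$ from $[T_{\hat A},T_{\hat B}]$ via Proposition \ref{C-on-convex-Lemma}(\ref{C-on-convex-Lemma-i1}), build $\delta$ from Step-1 data using the strict inequality in Proposition \ref{C-on-convex-Lemma}(\ref{C-on-convex-Lemma-i2}), and note independence of $r_1$. The only (cosmetic) difference is that the paper takes the maxima over the full arcs $\Gsh[\hat A,C]$ and $\Gsh[\hat B,C]$ in \eqref{defDelta}, while you take them over the subarcs corresponding to $[T_{\hat A},T_{A^+}]$ and $[T_{B^+},T_{\hat B}]$, which yields a possibly smaller but equally valid $\delta$.
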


\begin{proof}
	We employ Proposition \ref{C-on-convex-Lemma} for  vector $\ee$ from condition (A6).
	Then, using that $\phi_\ee(C)>\phi_\ee(C_1)$
	by Proposition \ref{Step3-Prop}, it follows from
	Proposition \ref{C-on-convex-Lemma}(\ref{C-on-convex-Lemma-i1})
	that $T_{C_1}\notin [T_{\hat{A}},T_{\hat{B}}]$.
	
	Using this and (\ref{defA2-eq-0}), we conclude that (\ref{C-A2-separated-ineq}) holds
	with
	\begin{equation}\label{defDelta}
	\delta =\min\big\{\max_{P\in\Gsh[\hat{A},C]}\phi_\ee(P),\;
	\max_{P\in\Gsh[\hat{B},C]}\phi_\ee(P)\big\} - \phi_\ee(C),
	\end{equation}
	where
	$\delta>0$ by
	Proposition \ref{C-on-convex-Lemma}(\ref{C-on-convex-Lemma-i2}).
	Notice that the definition of points $\hat{A}$, $\hat{B}$, and  $C$ is independent of
	$r_1$; see \eqref{defAB} and
	Proposition \ref{C-on-convex-Lemma}(\ref{C-on-convex-Lemma-i1}).
	Then the right-hand side of \eqref{defDelta} is independent of $r_1>0$,
	so that $\delta>0$ is independent of $r_1$.
\end{proof}

The rest of the argument in this section involves only part $\Gsh[C, C_1]$ of
the shock curve, independent of the other parts of $\partial\Omega$.
Without loss of generality, we assume that $C_1\in\Gsh[A, C]$ so that
\begin{equation}\label{FixLocation-C-C1}
T_{C_1}\in[T_A,T_C].
\end{equation}
Indeed, if  $C_1\in\Gsh[B, C]$, we re-parameterize the shock curve by
$$
\Gsh=\{(\tilde f_{\ee}(T), T)\;:\;  -T_B\le T\le -T_A\},
$$
where
$\tilde f_{\ee}(T)=f_\ee(-T)$, and $T_A$ and $T_B$ are the
$T$--coordinates of $A$ and $B$ with respect to the original parameterization,
and then switch the notations for points $A$ and $B$.
Thus, (\ref{FixLocation-C-C1}) holds in the new parametrization.

Now (\ref{defA2-eq-0}) has the form:
\begin{equation}\label{defA2-eq}
\phi_\ee(A_2)=\max_{T\in[T_{C_1},T_C]}\phi_\ee(f_\ee(T),\,T).
\end{equation}
In particular, $T_{A_2}\in (T_{C_1}, T_C)$.  See also Fig. 2.

From Lemma \ref{C-A2-separated}
and Proposition \ref{Step3-Prop}, we obtain that, for any $r_1\in(0, \hat r_1]$,
\begin{equation}\label{defA2-ineq}
\phi_\ee(A_2)>\phi_\ee(C)+\delta>\phi_\ee(C_1)+\delta.
\end{equation}

Now we prove
\begin{lem}\label{lem:Case 3 in Step 3}
	If $r_1$ is sufficiently small, then
	\begin{enumerate}[\rm (i)]
		\item\label{lem:Case 3 in Step 3-i1}
		$A_2$ is a local maximum point of $\phi_\ee$ with respect to $\overline\Omega${\rm ;}
		
		\smallskip
		\item\label{lem:Case 3 in Step 3-i2}
		There is no point  $Q\ne A_2$
		between $C$ and $C_1$
		along the shock such that the tangent line
		at $Q$ is parallel to the one at $A_2$.
	\end{enumerate}
\end{lem}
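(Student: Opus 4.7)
The plan is to prove (i) by a contradiction based on a maximal chain that must stay trapped on the shock arc $\Gsh[C_1,C]$, and then to deduce (ii) by combining the resulting local strict convexity $f_\ee''(T_{A_2})>0$ with a case analysis built on Lemma \ref{distToSonicCenter-lemma}. Throughout, I would continue working in the $(S,T)$--coordinates associated with the vector $\ee\in Con$ from condition (A6), and with the parametrization $T_{C_1}<T_{A_2}<T_C$ fixed in \eqref{FixLocation-C-C1}--\eqref{defA2-eq}.

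For (i), I would suppose that $A_2$ is not a local maximum of $w:=\phi_\ee$ relative to $\overline\Omega$. Lemma \ref{lem:ExistMinMaxChain} then supplies, for any $r_2\in(0,r^*]$, a maximal chain $\{A_2^j\}_{j=0}^{k_3}$ of radius $r_2$ starting at $A_2$ whose endpoint $A_2^{k_3}\in\partial\Omega$ is a local maximum of $w$ relative to $\overline\Omega$ with $w(A_2^{k_3})>w(A_2)$. Lemma \ref{C-A2-separated} provides $\delta>0$, independent of $r_1$, such that $\phi_\ee(A_2)\ge\phi_\ee(C)+\delta$; this is exactly what is needed to apply Lemma \ref{lem:chainsDoNotIntersect-MinMax} with $\mathcal{C}=\Gsh[C_1,C]$, $E_1=C$, $E_2=C_1$, $H_1=A_2$, the minimal chain from Step 2 playing the role of $\{E^i\}$. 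Shrinking $r_1$ if necessary so that $r_1\in(0,\min(\hat r_1,r_1^*(\delta))]$ and then taking $r_2\in(0,r_1]$, Lemma \ref{lem:chainsDoNotIntersect-MinMax} forces $A_2^{k_3}\in\Gsh[C_1,C]^0$, contradicting the defining property \eqref{defA2-eq} of $A_2$.

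For (ii), part (i) combined with $\bn(A_2)\cdot\ee<0$ (Lemma \ref{lem:shock graph}(iii)) and Lemma \ref{lem:sign of second derivative} gives $\phi_{\bt\bt}(A_2)<0$, whence $f_\ee''(T_{A_2})>0$ by Lemma \ref{lem:shock graph}(v). Set $L_{A_2}(T):=f_\ee(T_{A_2})+f'_\ee(T_{A_2})(T-T_{A_2})$ and $g(T):=f_\ee(T)-L_{A_2}(T)$, so that $g(T_{A_2})=g'(T_{A_2})=0$ and $g''(T_{A_2})>0$. Assume for contradiction that $Q\in\Gsh[C_1,C]\setminus\{A_2\}$ satisfies $f'_\ee(T_Q)=f'_\ee(T_{A_2})$, i.e.\ $g'(T_Q)=0$; WLOG $T_Q>T_{A_2}$. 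If $g(T_Q)>0$, Lemma \ref{distToSonicCenter-lemma} applied to the pair $(Q,A_2)$ gives $\phi_\ee(Q)>\phi_\ee(A_2)$, contradicting \eqref{defA2-eq}. If instead $g(T_Q)\le 0$, then since $g''(T_{A_2})>0$ forces $g>0$ on a right neighborhood of $T_{A_2}$ while $g(T_Q)\le 0$, the function $g$ attains a strictly positive maximum on $[T_{A_2},T_Q]$ at some interior point $T^*\in(T_{A_2},T_Q)$, where $g'(T^*)=0$ and $g(T^*)>0$. Then $P^*:=(f_\ee(T^*),T^*)\in\Gsh[C_1,C]^0$ satisfies the hypotheses of Lemma \ref{distToSonicCenter-lemma} with $(P^*,A_2)$, yielding $\phi_\ee(P^*)>\phi_\ee(A_2)$ and again contradicting \eqref{defA2-eq}.

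The main obstacle is the subcase $g(T_Q)\le 0$ in part (ii): a direct application of Lemma \ref{distToSonicCenter-lemma} to $(Q,A_2)$ does not produce any useful inequality, so one must introduce the auxiliary intermediate critical point $T^*$ whose existence is forced by the local strict convexity of $f_\ee$ at $T_{A_2}$. This is precisely where the strict sign $f_\ee''(T_{A_2})>0$ (not just nonnegativity) is used. The smallness constraint on $r_1$ enters only through the chain argument in (i), and is determined by Lemmas \ref{lem:chainsDoNotIntersect-MinMax} and \ref{C-A2-separated}; no further smallness is required for (ii).
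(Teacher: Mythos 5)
Your proposal is correct and follows essentially the same route as the paper: part (i) via a maximal chain from $A_2$ trapped on $\Gsh[C_1,C]$ by Lemmas \ref{lem:ExistMinMaxChain}, \ref{C-A2-separated}, and \ref{lem:chainsDoNotIntersect-MinMax}, contradicting \eqref{defA2-eq}; part (ii) via $f_\ee''(T_{A_2})>0$ and the auxiliary function $F$ (your $g$), with the intermediate critical point $T^*$ handling the case $F(T_Q)\le 0$ exactly as in the paper's argument with $T_{Q_1}$. The only cosmetic differences are your WLOG reduction to $T_Q>T_{A_2}$ and a slightly more explicit bookkeeping of the radius constraints.
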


\begin{proof}
	The proof consists of two steps.
	
	\smallskip
	{\bf 1.} In this step, we prove (\ref{lem:Case 3 in Step 3-i1}).
	We first fix $r_1>0$.
	Let $\delta$ be from Lemma \ref{C-A2-separated}, and let
	$r_1^*>0$ be the constant from Lemma \ref{lem:chainsDoNotIntersect-MinMax}
	for this $\delta$.
	We fix $r_1=r_1^*$, and denote $C_1$ and $A_2$ as the corresponding
	points for this choice of $r_1$.
	Suppose that $A_2$ is not a local maximum point of $\phi_\ee$ with respect to $\overline\Omega$.
	Using (\ref{defA2-ineq}) and the existence of a minimal chain of radius $r_1$
	from $C$ to $C_1$,
	we can apply Lemma \ref{lem:chainsDoNotIntersect-MinMax} to obtain
	the existence
	of a maximal chain $\{A^j\}_{j=0}^{k_2}$ of radius $r_1$
	starting from $A_2$ ({\it i.e.},  $A_2=A^0$)
	such that $A^{k_2}$ is on $\Gsh$ between $C$ and $C_1$.
	Since $\phi_\ee(A_2)<\phi_\ee(A^{k_2})$, we obtain a contradiction to
	(\ref{defA2-eq-0}). Thus, $A_2$ is a local maximum point with respect to $\overline\Omega$.

	\begin{figure}[!ht]
		\centering{
		\includegraphics[height=1.35in,width=2.10in]{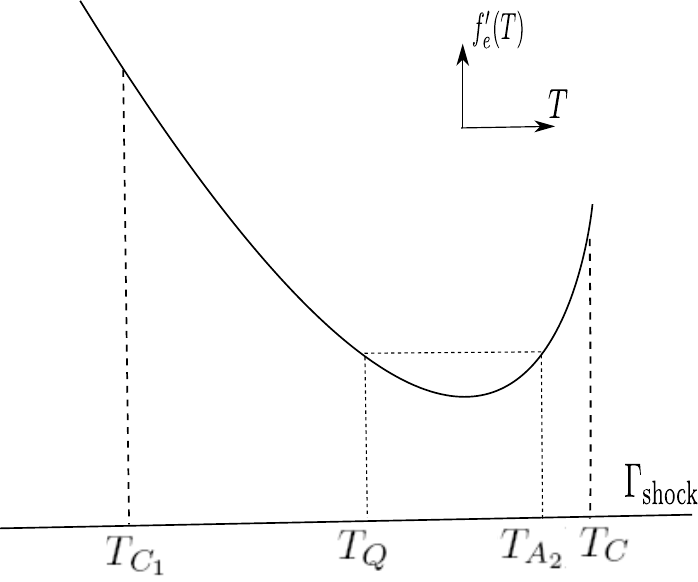}\qquad\quad
		\includegraphics[height=1.35in,width=2.10in]{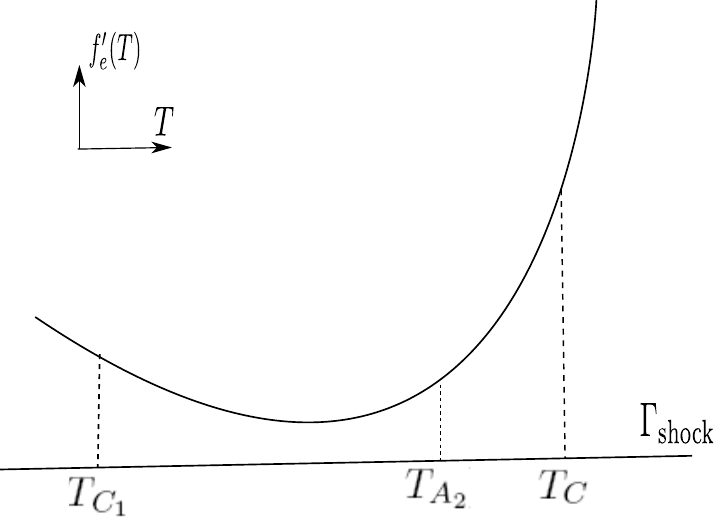}
     \caption{The graphs of function $f_\ee'(T)$}\label{figure:cases in step 3}
     }
	\end{figure}

	{\bf 2.} Now we prove (\ref{lem:Case 3 in Step 3-i2}).
	We use (\ref{FixLocation-C-C1}).
	Assume that there is a point $Q\ne A_2$ between $C$ and $C_1$
	such that the tangent line at $Q$ is parallel to the one at $A_2$.
	Since $A_2$ is a local maximum point of $\phi_\ee$ with respect to $\overline\Omega$
	as shown in Step 1 in this proof,
	we find that $f''_\ee(T_{A_2})>0$,
	by Lemmas \ref{lem:sign of second derivative}--\ref{lem:shock graph}.
	Define
	$$
	F(T):=f_\ee(T)-f_\ee(T_{A_2})-f_\ee'(T_{A_2})(T-T_{A_2}).
	$$
	Then
	\begin{equation}\label{eq:prop-F-E3}
	F(T_{A_2})=F'(T_{A_2})=0,\quad\; F''(T_{A_2})>0,
	\end{equation}
	and there is a point $T_Q\in(T_{C_1}, T_{A_2})\cup (T_{A_2}, T_{C})$
	such that $F'(T_Q)=0$.
	
	If $F(T_Q)>0$, then, by Lemma \ref{distToSonicCenter-lemma},
	we conclude that $\phi_\ee(Q)>\phi_\ee(A_2)$,
	which contradicts (\ref{defA2-eq}).
	
	If $F(T_Q)\le 0$, we first consider the case that $Q\in (T_{C_1}, T_{A_2})$.
	Using ${\displaystyle \max_{T\in [T_Q, T_{A_2}]}F(T)>0}$
	by (\ref{eq:prop-F-E3}) so that this maximum
	is attained at some point $T_{Q_1}\in(T_Q,T_{A_2})$,
	we obtain
	$$
	F(T_{Q_1})>0, \qquad F'(T_{Q_1})=0,
	$$
	so that Lemma \ref{distToSonicCenter-lemma} can be applied to
	obtain that $\phi_\ee(Q_1)>\phi_\ee(A_2)$, which is a contradiction.
	The case that $Q\in (T_{A_2}, T_{C})$ is considered similarly.
	
	Therefore, point $Q$ does not exist.
\end{proof}

\begin{figure}[!ht]
	\centering
	\includegraphics[width=0.5\textwidth]{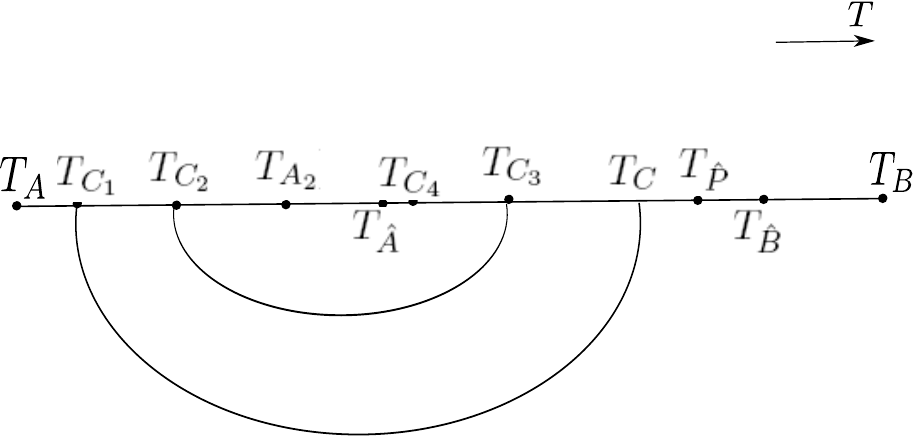}
	\caption{Proof of Step 3 of Theorem \ref{thm:main theorem}}
	\label{figure:proof of main theorem}
\end{figure}

With the facts established in  Lemma \ref{lem:Case 3 in Step 3}, we can conclude the proof of the
main assertion of
Step 3 by a contradiction for sufficiently small $r_1>0$.
The main idea of the remaining
argument  is illustrated in
Fig. \ref{figure:proof of main theorem}.
We first notice the following facts:

\begin{lem}\label{A2-and-tangents}
	$f_\ee(T)$ satisfies the following properties{\rm :}
	\begin{align}\label{2ndDerivFinC1A2}
	&f_\ee''(T_{C_1})<0,\quad\; f_\ee''(T_{A_2})>0,\\
	&f_\ee'(T)\leq f_\ee'(T_{A_2}) \qquad \mbox{for any $T\in [T_{C_1},T_{A_2}]$},
	\label{1stDerivFinC1A2}
	\\
	&f_\ee'(T)\geq f_\ee'(T_{A_2}) \qquad \mbox{for any $T\in [T_{A_2}, T_C]$}.
	\label{1stDerivFinC1A2-pr}
	\end{align}
\end{lem}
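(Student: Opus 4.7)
The plan is to settle (\ref{2ndDerivFinC1A2}) directly from the sign lemmas already established for local extrema of $\phi_\ee$ on the shock, and then to derive (\ref{1stDerivFinC1A2})--(\ref{1stDerivFinC1A2-pr}) by an intermediate-value argument that converts any hypothetical violation of the monotonicity into an interior shock point with tangent parallel to that at $A_2$, in contradiction with Lemma \ref{lem:Case 3 in Step 3}(ii).

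For (\ref{2ndDerivFinC1A2}), note first that both $C_1$ and $A_2$ lie in $\Gsh^0$: for $C_1$ this is part of Proposition \ref{Step3-Prop}, and for $A_2$ it follows from $T_{A_2}\in(T_{C_1},T_C)\subset(T_A,T_B)$. On $\Gsh^0$, the strict ellipticity (A3) holds, and $\bn\cdot\ee<0$ by Lemma \ref{lem:shock graph}. Proposition \ref{Step3-Prop} provides that $C_1$ is a local minimum of $w=\phi_\ee$ relative to $\overline\Omega$, and Lemma \ref{lem:Case 3 in Step 3}(i) that $A_2$ is a local maximum of $w$ relative to $\overline\Omega$. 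Lemma \ref{lem:sign of second derivative} then gives $\phi_{\bt\bt}(C_1)>0$ and $\phi_{\bt\bt}(A_2)<0$, and the sign-equivalence in Lemma \ref{lem:shock graph} turns these into $f_\ee''(T_{C_1})<0$ and $f_\ee''(T_{A_2})>0$.

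For (\ref{1stDerivFinC1A2}), suppose for contradiction that $f_\ee'(T^*)>f_\ee'(T_{A_2})$ for some $T^*\in[T_{C_1},T_{A_2})$. Since $f_\ee\in C^2((T_A,T_B))$ and $f_\ee''(T_{A_2})>0$, one can fix $\eta\in(0,T_{A_2}-T^*)$ so small that $f_\ee'(T)<f_\ee'(T_{A_2})$ on $[T_{A_2}-\eta,T_{A_2})$. The intermediate value theorem applied to the continuous function $f_\ee'$ on $[T^*,T_{A_2}-\eta]$ then yields $T^{**}\in(T^*,T_{A_2}-\eta)\subset(T_{C_1},T_{A_2})$ with $f_\ee'(T^{**})=f_\ee'(T_{A_2})$. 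The associated point $Q:=(f_\ee(T^{**}),T^{**})\in\Gsh^0$ lies strictly between $C_1$ and $A_2$, is distinct from $A_2$, and has tangent line parallel to that at $A_2$, contradicting Lemma \ref{lem:Case 3 in Step 3}(ii). The proof of (\ref{1stDerivFinC1A2-pr}) is symmetric: $f_\ee''(T_{A_2})>0$ forces $f_\ee'(T)>f_\ee'(T_{A_2})$ just to the right of $T_{A_2}$, so any $T^*\in(T_{A_2},T_C]$ with $f_\ee'(T^*)<f_\ee'(T_{A_2})$ produces via the intermediate value theorem an interior $T^{**}\in(T_{A_2},T_C)$ with $f_\ee'(T^{**})=f_\ee'(T_{A_2})$, again contradicting Lemma \ref{lem:Case 3 in Step 3}(ii).

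No substantive obstacle is anticipated, since each ingredient has already been proved; the only technical point to watch is that the intermediate-value point $T^{**}$ falls in one of the open intervals $(T_{C_1},T_{A_2})$ or $(T_{A_2},T_C)$ on which Lemma \ref{lem:Case 3 in Step 3}(ii) excludes parallel tangents, and this is precisely what the auxiliary choice of $\eta$ secures.
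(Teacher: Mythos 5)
Your proposal is correct and follows essentially the same route as the paper: the sign statements in \eqref{2ndDerivFinC1A2} come from the local extremum properties of $\phi_\ee$ at $C_1$ and $A_2$ via Lemmas \ref{lem:sign of second derivative} and \ref{lem:shock graph}, and the monotonicity statements \eqref{1stDerivFinC1A2}--\eqref{1stDerivFinC1A2-pr} are obtained, exactly as in the paper, by using $f_\ee''(T_{A_2})>0$ to control $f_\ee'$ near $T_{A_2}$ and then invoking the intermediate value theorem to produce an interior point with tangent parallel to that at $A_2$, contradicting Lemma \ref{lem:Case 3 in Step 3}(\ref{lem:Case 3 in Step 3-i2}). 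The extra care you take to place the intermediate-value point in the open intervals $(T_{C_1},T_{A_2})$ or $(T_{A_2},T_C)$ is exactly the point the paper's shorter argument relies on.
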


\begin{proof}
	Property (\ref{2ndDerivFinC1A2}) follows from
	Lemmas \ref{lem:sign of second derivative}--\ref{lem:shock graph},
	since $A_2$ and $C_1$ are the local maximum and minimum points  of $\phi_\ee$
	with respect to $\overline\Omega$, respectively.
	
	To show (\ref{1stDerivFinC1A2}), we note from $f_\ee''(T_{A_2})>0$ that
	$f_\ee'(T)< f_\ee'(T_{A_2})$ in $(T_{A_2}-\eps,\, T_{A_2})$ for some $\eps>0$.
	Then, if $f_\ee'(T_Q)> f_\ee'(T_{A_2})$ for some $T_Q\in [T_{C_1},T_{A_2})$,
	there exists $T_P\in (T_{Q}, T_{A_2})$ with $f_\ee'(T_P)=f_\ee'(T_{A_2})$,
	which contradicts Lemma \ref{lem:Case 3 in Step 3}(\ref{lem:Case 3 in Step 3-i2}).
	Thus, (\ref{1stDerivFinC1A2}) holds.
	Finally, (\ref{1stDerivFinC1A2-pr}) is proved by similar argument.
\end{proof}

\smallskip
Now we choose $T_{C_2}\in[T_{C_1},T_{A_2}]$ such that
\begin{equation}\label{DefC2-Eq}
\phi_\ee(C_2)=\min_{T\in[T_{C_1},T_{A_2}]}\phi_\ee(f_\ee(T),T).
\end{equation}
We show that
\begin{equation}\label{propertiesOfC2-Eq}
\begin{cases}
\phi_\ee(C_2)<\phi_\ee(C_1),\\
\mbox{$C_2$ is not a local minimum point of $\phi_\ee$ relative
	to domain $\overline\Omega$.}
\end{cases}
\end{equation}
To prove \eqref{propertiesOfC2-Eq}, we first establish
the following more general property of $\Gsh$ (which will also be used in the subsequent development):

\begin{lem}\label{propertiesOfC2}
	Assume that there exist points $E_1$, $E_2$, and $E_3$ on $\Gsh$
	such that
	\begin{enumerate}[\rm (i)]
		\item
		$T_{E_1}<T_{E_2}$ and $T_{E_3}\in[T_{E_1},T_{E_2}]${\rm ,}
		
		\smallskip
		\item\label{propertiesOfC2-i2}
		$f_\ee''(T_{E_1})<0${\rm ,}
		
		\smallskip
		\item\label{propertiesOfC2-i3}
		$f_\ee'(T_{E_1})\le f_\ee'(T_{E_2})${\rm ,}
		
		\smallskip
		\item\label{propertiesOfC2-i4-0}
		$\phi_\ee(E_1)<\phi_\ee(E_2)${\rm ,}
		
		\smallskip
		\item\label{propertiesOfC2-i4}
		$\phi_\ee(E_3)={\displaystyle \min_{T\in[T_{E_1},T_{E_2}]}\phi_\ee(f_\ee(T),T)}$.
	\end{enumerate}
	Then
	$\phi_\ee(E_3)<\phi_\ee(E_1)$,
	and $E_3$ is not a local minimum point of $\phi_\ee$ relative to domain $\overline\Omega$.
	\end {lem}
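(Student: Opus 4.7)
The plan is to prove both claims by producing auxiliary points on $\Gsh[E_1,E_2]$ that share tangent slope with $E_1$ or $E_3$, and then applying Lemma~\ref{distToSonicCenter-lemma} to convert the geometric statement ``one shock point lies above the other's tangent line'' into an inequality for $\phi_\ee$. The minimality of $E_3$ on $\Gsh[E_1,E_2]$ (hypothesis (v)) will then force each such inequality either to deliver the desired conclusion or to give a contradiction.

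For the first assertion $\phi_\ee(E_3)<\phi_\ee(E_1)$, I would use hypotheses (ii)--(iii). Since $f''_\ee(T_{E_1})<0$, $f'_\ee$ is strictly decreasing at $T_{E_1}$, while (iii) forces $f'_\ee$ to return to at least $f'_\ee(T_{E_1})$ by $T_{E_2}$; hence there is a smallest $T^*\in (T_{E_1},T_{E_2}]$ with $f'_\ee(T^*)=f'_\ee(T_{E_1})$, with $f'_\ee<f'_\ee(T_{E_1})$ on the intervening $(T_{E_1},T^*)$. Integrating this strict inequality and rewriting gives $f_\ee(T_{E_1})>f_\ee(T^*)+f'_\ee(T^*)(T_{E_1}-T^*)$, which matches the hypothesis of Lemma~\ref{distToSonicCenter-lemma} with $P=E_1$ and $P_1=E^*$; hence $\phi_\ee(E_1)>\phi_\ee(E^*)$. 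Since $E^*\in\Gsh[E_1,E_2]$ and $E_3$ is the minimum of $\phi_\ee$ there, $\phi_\ee(E_3)\leq\phi_\ee(E^*)<\phi_\ee(E_1)$.

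For the second assertion, combining the first part with hypothesis (iv) gives $\phi_\ee(E_3)<\phi_\ee(E_1)<\phi_\ee(E_2)$, so $E_3\notin\{E_1,E_2\}$ and hence $T_{E_3}\in(T_{E_1},T_{E_2})$ with $E_3\in\Gsh^0$. Suppose for contradiction that $E_3$ is a local minimum of $\phi_\ee$ relative to $\overline\Omega$. By Lemma~\ref{lem:sign of second derivative} (applicable because $\bn\cdot\ee<0$ by Lemma~\ref{lem:shock graph}(\ref{lem:shock graph-i2-00})) we get $\phi_{\bt\bt}(E_3)>0$, and then Lemma~\ref{lem:shock graph}(\ref{lem:shock graph-i3}) yields $f''_\ee(T_{E_3})<0$. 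I would split into two cases. Case (a): some $T^*\in(T_{E_3},T_{E_2}]$ has $f'_\ee(T^*)=f'_\ee(T_{E_3})$. Taking $T^*$ minimal and using $f''_\ee(T_{E_3})<0$ gives $f'_\ee<f'_\ee(T_{E_3})$ on $(T_{E_3},T^*)$; the same integration step places $E_3$ above the tangent at $E^*$, and Lemma~\ref{distToSonicCenter-lemma} yields $\phi_\ee(E_3)>\phi_\ee(E^*)$, contradicting the minimality of $E_3$ on $\Gsh[E_1,E_2]$. Case (b): no such $T^*$ exists, so by continuity $f'_\ee<f'_\ee(T_{E_3})$ throughout $(T_{E_3},T_{E_2}]$, which with (iii) forces $f'_\ee(T_{E_1})\leq f'_\ee(T_{E_2})<f'_\ee(T_{E_3})$. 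Since $f''_\ee(T_{E_3})<0$ also gives $f'_\ee>f'_\ee(T_{E_3})$ immediately left of $T_{E_3}$, the intermediate value theorem produces the largest $T^\#\in(T_{E_1},T_{E_3})$ with $f'_\ee(T^\#)=f'_\ee(T_{E_3})$, and $f'_\ee>f'_\ee(T_{E_3})$ on $(T^\#,T_{E_3})$. Integration then places $E_3$ above the tangent at $E^\#$, so Lemma~\ref{distToSonicCenter-lemma} gives $\phi_\ee(E_3)>\phi_\ee(E^\#)$, again contradicting the minimality of $E_3$.

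The delicate point will be extracting the strict one-sided inequality $f'_\ee\neq f'_\ee(T_{E_3})$ on the chosen open interval adjacent to $T_{E_3}$; this requires the minimality (resp.\ maximality) choice of $T^*$ (resp.\ $T^\#$) together with the one-sided sign information from $f''_\ee(T_{E_3})<0$ and continuity of $f'_\ee$ to rule out an earlier crossing of the level $f'_\ee(T_{E_3})$. Once this bookkeeping is carried out, the application of Lemma~\ref{distToSonicCenter-lemma} is routine, and cases (a) and (b) exhaust the possibilities, because their simultaneous negation would yield $f'_\ee(T_{E_1})>f'_\ee(T_{E_3})>f'_\ee(T_{E_2})$, in contradiction with (iii).
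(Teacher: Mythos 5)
Your proof is correct, and it rests on the same two pillars as the paper's argument: Lemma \ref{lem:sign of second derivative} combined with Lemma \ref{lem:shock graph}(\ref{lem:shock graph-i3}) to force $f''_\ee(T_{E_3})<0$ at a putative local minimum, and Lemma \ref{distToSonicCenter-lemma} to convert a parallel-tangent configuration into a strict inequality for $\phi_\ee$. The mechanism for locating the comparison point is different, though: the paper introduces the tangent-difference functions $F(T)$ and $G(T)$, minimizes them over $[T_{E_1},T_{E_2}]$ and $[T_{E_3},T_{E_2}]$, and splits according to whether the minimizer is interior (a critical point, hence parallel tangent) or the endpoint $E_2$, where hypothesis (iii) forces the slope equality; you instead track the first return of $f'_\ee$ to the level $f'_\ee(T_{E_1})$ (resp.\ $f'_\ee(T_{E_3})$) and integrate the strict derivative inequality, which in particular makes the first assertion a direct argument rather than a proof by contradiction. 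The more substantive difference is in the second assertion: the paper handles it by ``repeating Step 1'' with $E_3$, $G$, $T_{Q_1}$, but the endpoint case $Q_1=E_2$ of that argument used (iii), i.e.\ $f'_\ee(T_{E_1})\le f'_\ee(T_{E_2})$, whose analogue $f'_\ee(T_{E_3})\le f'_\ee(T_{E_2})$ is not among the stated hypotheses (it does hold in the application, via Lemma \ref{A2-and-tangents}, but not for the lemma in isolation). Your Case (b) covers exactly this scenario: when $f'_\ee$ never returns to the level $f'_\ee(T_{E_3})$ on $(T_{E_3},T_{E_2}]$, you deduce $f'_\ee(T_{E_1})\le f'_\ee(T_{E_2})<f'_\ee(T_{E_3})$ from (iii) and find the last crossing $T^\#$ to the left of $T_{E_3}$, so your write-up is self-contained under the hypotheses as stated. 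The only blemish is the closing remark that (a) and (b) exhaust the possibilities ``because their simultaneous negation would yield $f'_\ee(T_{E_1})>f'_\ee(T_{E_3})>f'_\ee(T_{E_2})$'': the two cases are complementary by definition, so no such justification is needed (and the quoted implication is not what their negations would give); this does not affect the validity of the proof.
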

	
	\begin{proof} We divide the proof into two steps.
		
		\medskip
		{\bf 1}. We first show that $\phi_\ee(E_3)<\phi_\ee(E_1)$. By condition \eqref{propertiesOfC2-i4},
		this is equivalent to the inequality:
		\begin{equation*}
		\phi_\ee(E_1)>\min_{T\in[T_{E_1},T_{E_2}]}\phi_\ee(f_\ee(T),T).
		\end{equation*}
		Thus, it suffices to show that it is impossible that
		\begin{equation}\label{E1-equals-E3}
		\phi_\ee(E_1)=\min_{T\in[T_{E_1},T_{E_2}]}\phi_\ee(f_\ee(T),T).
		\end{equation}
		
		Assume that (\ref{E1-equals-E3}) holds. Consider the function:
		$$
		F(T)=f_\ee(T)-f_\ee(T_{E_1})-f_\ee'(T_{E_1})(T-T_{E_1}).
		$$
		Then $F(T_{E_1})=F'(T_{E_1})=0$, and $F''(T_{E_1})=f_\ee''(T_{E_1})<0$ by condition
		(\ref{propertiesOfC2-i2}).
		This implies that $F(T)<0$ in $(T_{E_1}, T_{E_1}+\delta)$ for some small $\delta>0$.
		Denoting by $T_Q$ a minimum point of $F(T)$ in $[T_{E_1},T_{E_2}]$,
		then $F(T_Q)<0$. This implies that $Q\ne E_1$. Now we consider two cases:
		
		\smallskip
		If $Q\neq E_2$, then $F'(T_Q)=0$, {\it i.e.}, $f_\ee'(T_{Q})=f_\ee'(T_{E_1})$.
		With this, $F(T_Q)<0$ can be rewritten as
		$$
		f_\ee(T_{E_1})> f_\ee(T_Q)+f_\ee'(T_{Q})(T_{E_1}-T_Q).
		$$
		Then, by Lemma \ref{distToSonicCenter-lemma}(\ref{distToSonicCenter-lemma-i2}),
		we obtain that $\phi_\ee(E_1)>\phi_\ee(Q)$, which contradicts
		(\ref{E1-equals-E3}).
		
		If $Q=E_2$, then $F'(T_{E_2})\leq0$.
		Notice that $F'(T_{E_2})=f'_\ee(T_{E_2})-f'_\ee(T_{E_1})\geq0$
		by condition (\ref{propertiesOfC2-i3}).
		Thus, $F'(T_{E_2})=0$, which means that
		$f'_\ee(T_{E_2})=f'_\ee(T_{E_1})$.
		Then, using $F(T_{E_2})=F(T_Q)<0$ and arguing similar to the previous case,
		we employ Lemma \ref{distToSonicCenter-lemma}(\ref{distToSonicCenter-lemma-i2}) to
		obtain that $\phi_\ee(E_1)>\phi_\ee(E_2)$,
		a contradiction to (\ref{E1-equals-E3}).
		
		Therefore, we have proved that (\ref{E1-equals-E3}) is false.
		This implies that $\phi_\ee(E_3)<\phi_\ee(E_1)$, as we have shown above.
		
		\smallskip
		{\bf 2}. We now show that $E_3$ cannot be a local minimum
		point of $\phi_\ee$ relative to domain $\overline\Omega$.
		We have shown in Step 1 that $E_3\ne E_1$. Also, $E_3\ne E_2$
		by conditions (\ref{propertiesOfC2-i4-0})--(\ref{propertiesOfC2-i4}).
		Thus, $T_{E_3}\in(T_{E_1},T_{E_2})$, {\it i.e.}, $E_3\in \Gsh^0$.
		If $E_3$ is a local minimum point of $\phi_\ee$ relative to $\overline\Omega$,
		we obtain by Lemmas \ref{lem:sign of second derivative} and \ref{lem:shock graph}(\ref{lem:shock graph-i3})
		that
		$f''_\ee(T_{E_3})<0$. Let
		$$
		G(T):=f_\ee(T)-f_\ee(T_{E_3})-f'_\ee(T_{E_3})(T-T_{E_3}).
		$$
		Then
		$G(T_{E_3})=G'(T_{E_3})=0$ and $G''(T_{E_3})=f_\ee''(T_{E_3})<0$. This implies that
		$G(T)<0$ in $(T_{E_3}, T_{E_3}+\delta)$ for some $\delta>0$.
		Assume that $T_{Q_1}$ is a minimum point of $G(T)$ in $[T_{E_3},T_{E_2}]$.
		Then, repeating the argument in Step 1 (with $E_3$, $G$, and $T_{Q_1}$ instead of $E_1$, $F$,
		and $T_Q$, respectively),
		we obtain that
		$\phi_\ee(E_3)>\phi_\ee(Q_1)$, which contradicts condition (\ref{propertiesOfC2-i4}).
	\end{proof}
	
	Lemma \ref{propertiesOfC2}
	also holds if $T_{E_1}>T_{E_2}$, with only change in
	the condition that $f_\ee'(T_{E_1})\le f_\ee'(T_{E_2})$ that is now replaced by
    $f_\ee'(T_{E_1})\ge f_\ee'(T_{E_2})$.
	More precisely, we have
	
	\begin{cor}\label{cor:propertiesOfC2}
		Assume that there exist points $E_1$, $E_2$, and $E_3$ on $\Gsh$
		such that
		\begin{enumerate}[\rm (i)]
			\item
			$T_{E_1}>T_{E_2}$ and $T_{E_3}\in[T_{E_2},T_{E_1}]$,
			
			\smallskip
			\item
			$f_\ee''(T_{E_1})<0$,
			
			\smallskip
			\item\label{cor:propertiesOfC2-i3}
			$f_\ee'(T_{E_1})\ge f_\ee'(T_{E_2})$,
			
			\smallskip
			\item\label{cor:propertiesOfC2-i4-0}
			$\phi_\ee(E_1)<\phi_\ee(E_2)$,
			
			\smallskip
			\item\label{cor:propertiesOfC2-i4}
			$\phi_\ee(E_3)=\min_{T\in[T_{E_2},T_{E_1}]}\phi_\ee(f_\ee(T),T)$.
		\end{enumerate}
		Then
		$\phi_\ee(E_3)<\phi_\ee(E_1)$, and $E_3$ is not a local minimum point of $\phi_\ee$
		relative to domain $\overline\Omega$.
	\end{cor}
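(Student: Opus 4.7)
The plan is to reduce Corollary~\ref{cor:propertiesOfC2} to Lemma~\ref{propertiesOfC2} by reflecting the $T$-axis, exactly the device already used in this section in the reduction leading to~\eqref{FixLocation-C-C1}. Specifically, I would introduce the new parameter $\tilde T := -T$ and the reparametrized shock function $\tilde f_\ee(\tilde T) := f_\ee(-\tilde T)$, and set $\tilde T_{E_i} := -T_{E_i}$ for $i=1,2,3$. The same curve $\Gsh$ is now traced in the opposite direction, so the ordering $T_{E_1}>T_{E_2}$ turns into $\tilde T_{E_1}<\tilde T_{E_2}$, and the interval of interest is preserved: $\tilde T_{E_3}\in[\tilde T_{E_1},\tilde T_{E_2}]$.

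Next I would translate the derivative hypotheses. The chain rule gives $\tilde f_\ee'(\tilde T) = -f_\ee'(-\tilde T)$ and $\tilde f_\ee''(\tilde T) = f_\ee''(-\tilde T)$. Hence $\tilde f_\ee''(\tilde T_{E_1}) = f_\ee''(T_{E_1}) < 0$, which is hypothesis~(ii) of Lemma~\ref{propertiesOfC2}, and the reversed slope inequality $f_\ee'(T_{E_1}) \ge f_\ee'(T_{E_2})$ becomes $\tilde f_\ee'(\tilde T_{E_1}) \le \tilde f_\ee'(\tilde T_{E_2})$, which is hypothesis~(iii) of the lemma. The values $\phi_\ee(E_i)$ are attached to the points $E_i$ themselves and are unaffected by the reparametrization, so hypotheses~(iv)-(v) carry over verbatim once the minimization range $[T_{E_2},T_{E_1}]$ in the original parameter is rewritten as $[\tilde T_{E_1},\tilde T_{E_2}]$ in the new one.

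The only point to verify is that the conclusion of Lemma~\ref{propertiesOfC2}, applied in the tilde parametrization, yields the desired conclusion in the original one. This holds because both conclusions are intrinsic statements about the curve $\Gsh$ and the function $\phi_\ee$ on $\overline\Omega$: the inequality $\phi_\ee(E_3)<\phi_\ee(E_1)$ is pointwise, and being a local minimum relative to $\overline\Omega$ is a property of the point $E_3$ itself, independent of how $\Gsh$ is parametrized. At the level of the proof of the lemma, the geometric workhorse is Lemma~\ref{distToSonicCenter-lemma}, whose hypotheses and conclusion involve only tangent lines, distances to the sonic center $O_0$, and values of $\phi_\ee$, all of which are invariant under $T\mapsto -T$.

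I do not expect any genuine obstacle: the corollary is essentially a relabelling of the lemma. If one preferred to avoid reparametrization, the alternative is to copy the two-case analysis in the proof of Lemma~\ref{propertiesOfC2}, namely the minimum of $F$ being attained in the open interval versus at the endpoint $E_2$, with the roles of ``left'' and ``right'' of $T_{E_1}$ interchanged and the signs of $F'$ flipped accordingly; the applications of Lemma~\ref{distToSonicCenter-lemma} then go through unchanged because that lemma is symmetric in its two input points.
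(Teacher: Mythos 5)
Your proposal is correct and follows essentially the same route as the paper, which proves the corollary either by repeating the argument of Lemma \ref{propertiesOfC2} with obvious sign changes or, as you do, by re-parameterizing the shock via $\tilde f_{\ee}(T)=f_\ee(-T)$ so that the hypotheses of Lemma \ref{propertiesOfC2} hold in the new parameterization. Your verification of how the hypotheses and the conclusion transform under $T\mapsto -T$ is accurate, so nothing further is needed.
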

	
	\begin{proof}
		We prove this by directly repeating the argument in the proof of Lemma \ref{propertiesOfC2}
		with some obvious
		changes. Alternatively, by re-parameterizing the shock curve by
		$$
		\Gsh=\{(\tilde f_{\ee}(T), T)\;:\: -T_B\le T\le -T_A\}
		$$
		so that $\tilde f_{\ee}(T)=f_\ee(-T)$, and $T_A$ and $T_B$ are the
		$T$--coordinates of $A$ and $B$ with respect to the original parameterization,
		then we are under the
		conditions of Lemma \ref{propertiesOfC2} in the new parameterization.
	\end{proof}
	
	\medskip
	\begin{proof}[Proof of \eqref{propertiesOfC2-Eq}]
	Using (\ref{FixLocation-C-C1})--(\ref{defA2-ineq}),
	(\ref{2ndDerivFinC1A2})--(\ref{1stDerivFinC1A2}),  and (\ref{DefC2-Eq}),
	we can
	apply Lemma \ref{propertiesOfC2} with $E_1=C_1$, $E_2=A_2$, and $E_3=C_2$
	to obtain (\ref{propertiesOfC2-Eq}).
	\end{proof}
	
	\medskip
	Let $r_1$ be the constant from Lemma \ref{lem:Case 3 in Step 3},
	and $r_2\in (0, r_1)$.
	Since $C_2$ is not a local minimum point by (\ref{propertiesOfC2-Eq}),
	we use Lemma \ref{lem:ExistMinMaxChain} to obtain the existence
	of  a minimal chain $\{C_2^j\}_{j=0}^{k_2}$ with radius $r_2$;
	see Fig. \ref{figure:proof of main theorem}.
	Next, we restrict $r_2$ to be smaller than $r_2^*$
	from Lemma \ref {lem:chainsDoNotIntersect-MinMin}
	defined by $r_1$ fixed above.
	Then, recalling that there is
	a minimal chain of radius $r_1$ which starts at $C$ and ends at $C_1$,
	and noting that
	$\phi_\ee(C_2)<\phi_\ee(C_1)$
	by (\ref{DefC2-Eq})--(\ref{propertiesOfC2-Eq}),
	we obtain that $C_2^{k_2}$ lies on $\Gsh$
	between $C$ and $C_1$.
	Now, using (\ref{DefC2-Eq}) and
	noting that $\phi_\ee(C_2^{k_2})<\phi_\ee(C_2^0)=\phi_\ee(C_2)$,
	we conclude that $C_2^{k_2}$ lies on the part $[T_{A_2},T_C]$ of $\Gsh$;
	see
	Fig. \ref{figure:proof of main theorem}.
	Denote $C_3:=C_2^{k_2}$ and notice that $C_3$ is a local minimum point
	of $\phi_\ee$ relative to $\overline\Omega$.
	
	From this construction, point $A_2$ (defined by equation (\ref{defA2-eq-0})
	so that (\ref{defA2-eq}) holds)
	satisfies $T_{A_2}\in (T_{C_2}, T_{C_3})\subset(T_{C_1},T_{C})$.
	Then
	$$
	\phi_\ee(A_2)=\max_{T\in[T_{C_1},T_{C}]}\phi_\ee(f_\ee(T),T)
	=\max_{T\in[T_{C_2},T_{C_3}]}\phi_\ee(f_\ee(T),T).
	$$
	Also, from (\ref{defA2-ineq}), (\ref{DefC2-Eq}),
	and the definition of $C_3$ as the endpoint of the minimal chain
	from $C_2$, we have
	$$
	\phi_\ee(A_2)>\phi_\ee(C_2)>\phi_\ee(C_3),\quad f''_\ee(C_3)\le 0,
	$$
	where the last property holds
	by Lemmas \ref{lem:sign of second derivative}--\ref{lem:shock graph},
	since $C_3$ is a local minimum point of $\phi_\ee$
	with respect to $\overline\Omega$.
	Moreover, from (\ref{1stDerivFinC1A2-pr}),
	$$
	f_\ee'(T_{C_3})\geq f_\ee'(T_{A_2}).
	$$
	Choosing $T_{C_4}\in[T_{A_2},T_{C_3}]$ such that
	\begin{equation}\label{DefC4-Eq}
	\phi_\ee(C_4)=\min_{T\in[T_{A_2},T_{C_3}]}\phi_\ee(f_\ee(T),T),
	\end{equation}
	we can apply Corollary \ref{cor:propertiesOfC2}
	with $E_1=C_3$, $E_2=A_2$, and $E_3=C_4$ to
	show that
	$\phi_\ee(C_4)<\phi_\ee(C_3)$
	and $C_4$ cannot
	be a local minimum point.
	
	Then we repeat the same argument as
	those for the minimal chain starting from $C_2$.
	Specifically, for any $r_3\in (0, r_2]$,
	we use Lemma \ref{lem:ExistMinMaxChain} to obtain the existence
	of  a minimal chain $\{C_4^m\}_{m=0}^{k_3}$ with radius $r_3$
	starting from $C_4$, {\it i.e.}, $C_4^0=C_4$;
	see
	Fig. \ref{figure:proof of main theorem}.
	Next, we restrict $r_3$ to be smaller than $r_2^*(r_2)$ from
	Lemma \ref {lem:chainsDoNotIntersect-MinMin}, {\it i.e.}, $r_2$ fixed above
	is used as $r_1$ in Lemma \ref {lem:chainsDoNotIntersect-MinMin}
	to determine $r_2^*(r_2)$.
	Then, recalling that there is
	a minimal chain of radius $r_2$ which starts at  $C_2$ and ends at $C_3$,
	and noting that
	$\phi_\ee(C_4)<\phi_\ee(C_3)$
	as we have shown above,
	we obtain by  Lemma \ref {lem:chainsDoNotIntersect-MinMin} that
	\begin{equation}\label{locPtC4end}
	\mbox{$C_4^{k_3}\,\,$ lies on $\Gsh$ between $C_2$ and $C_3$.}
	\end{equation}
	
	However, combining the properties shown above, we have
	\begin{align*}
	\phi_\ee(C_4)&=\min_{T\in[T_{A_2},T_{C_3}]}\phi_\ee(f_\ee(T),T)<\phi_\ee(C_3)\\
	& <\phi_\ee(C_2)=\min_{T\in[T_{C_1},T_{A_2}]}\phi_\ee(f_\ee(T),T),
	\end{align*}
	so that
	$$
	\phi_\ee(C_4)=\min_{T\in[T_{C_1},T_{C_3}]}\phi_\ee(f_\ee(T),T).
	$$
	Then the property that
	$\phi_\ee(C_4^{k_3})<\phi_\ee(C_4)$ implies that
	$C^{k_3}_4$ cannot lie on $[T_{C_2},T_{C_3}]\subset [T_{C_1},T_{C_3}]$.
	This contradicts (\ref{locPtC4end}).
	
	This contradiction shows that \eqref{non-convexity-Assumption} cannot hold if $\ee$ is the
	vector from condition (A6) of
	Theorem \ref{thm:main theorem}.
	Therefore, in the $(S,T)$--coordinates from Lemma \ref{lem:shock graph} for this vector $\ee$,
    we conclude that
	$$
	f_\ee''(T)\leq0\qquad\mbox{for all $T\in(T_A,T_B)$}.
	$$
	We thus completed the proof of the following fact:
	
	\begin{prop}\label{shockConvex}
		Suppose that conditions {\rm (A1)}--{\rm (A6)} of Theorem {\rm \ref{thm:main theorem}} hold.
		Then the free boundary $\Gsh$ is a convex graph
		as described
		in Theorem {\rm \ref{thm:main theorem}}.
	\end{prop}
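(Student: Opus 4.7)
The plan is to argue by contradiction, using the lemma machinery built up in \S\ref{generalPropertiesFB-sect}: assume that, for the distinguished vector $\ee\in Con$ provided by (A6), there exists $\hat P\in \Gsh^0$ with $f_\ee''(T_{\hat P})>0$, and construct a descending sequence of values of $w:=\phi_\ee$ along $\Gsh$ that eventually violates its own construction. The argument naturally splits into three stages: localize a distinguished minimum $C$ of $w$ on the ``convex lobe'' of $\Gsh$; use a minimal chain to upgrade $C$ to a strictly lower $\overline\Omega$-local minimum $C_1\in\Gsh^0$; then iterate to collide with the very minimality that produced $C_1$.

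For the first stage, I would take $I^+=(T_{A^+},T_{B^+})$ to be the maximal interval containing $T_{\hat P}$ on which $f_\ee''\ge 0$, and extend to an open $J^+\supset\overline{I^+}$ on whose collar $f_\ee''<0$ with tangent slopes pinched between those at the endpoints of $I^+$; here Lemma \ref{lem:analyticity} together with Lemma \ref{noZeroesInfiniteOrderLemma} rules out infinite-order flatness that would obstruct such an extension. Lemma \ref{distToSonicCenter-lemma}, applied to each collar point $P$ paired with the $P_1\in\overline{I^+}$ having $f_\ee'(T_{P_1})=f_\ee'(T_P)$, then gives $w(P)>w(P_1)$, so that $\min_{\overline{J^+}}w$ is realized at some $C\in\overline{I^+}\subset\Gsh^0$ with $f_\ee''(T_C)\ge 0$. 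Lemmas \ref{lem:shock graph}(v) and \ref{lem:sign of second derivative} then show $C$ is \emph{not} a local minimum of $w$ relative to $\overline\Omega$, so Lemma \ref{lem:ExistMinMaxChain} yields a minimal chain of small radius $r_1$ from $C$ to some $C_1\in\partial\Omega$ with $w(C_1)<w(C)$.

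The heart of stage two is forcing $C_1\in\Gsh^0$. Choosing maxima $A_1,B_1$ of $w$ on $\Gsh[A,C]$ and $\Gsh[C,B]$, Proposition \ref{C-on-convex-Lemma}(iii) together with the definitions of $\hat A,\hat B$ produces an $r_1$-independent gap $\delta>0$ with $w(A_1),w(B_1)\ge w(C)+\delta$. For $r_1\le r_1^*(\delta)$, Lemma \ref{lem:chainsDoNotIntersect-MinMax} prevents any maximal chain starting at $A_1$ or $B_1$ from intersecting the $C$-to-$C_1$ chain. Its endpoint would then have to be an $\overline\Omega$-local maximum on an admissible piece of $\partial\Omega$; a case analysis on (A6)(i)/(ii)/(iii) — using, as in stage one, that an interior shock maximum cannot be $\overline\Omega$-local by Lemmas \ref{lem:sign of second derivative}--\ref{lem:shock graph} combined with Lemma \ref{distToSonicCenter-lemma} — rules out every possibility $C_1\in\overline{\Gamma_1\cup\Gamma_2}$.

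Stage three is the iteration. Let $A_2$ realize $\max_{\Gsh[C,C_1]}w$; Lemma \ref{C-A2-separated} gives $w(A_2)\ge w(C)+\delta$, and a second appeal to Lemma \ref{lem:chainsDoNotIntersect-MinMax} makes $A_2$ an $\overline\Omega$-local max, while Lemma \ref{distToSonicCenter-lemma} forbids any other point on $\Gsh[C,C_1]$ with tangent parallel to $A_2$'s. Picking $C_2$ to minimize $w$ on $\Gsh[C_1,A_2]$, Lemma \ref{propertiesOfC2} gives $w(C_2)<w(C_1)$ and $C_2$ non-$\overline\Omega$-minimal; a minimal chain of radius $r_2\ll r_1$, tuned so that Lemma \ref{lem:chainsDoNotIntersect-MinMin} applies against the $C$-to-$C_1$ chain, must terminate at some $C_3\in\Gsh[A_2,C]$. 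Taking $C_4$ to minimize $w$ on $\Gsh[A_2,C_3]$, Corollary \ref{cor:propertiesOfC2} shows $w(C_4)<w(C_3)$ and $C_4$ non-$\overline\Omega$-minimal, so a minimal chain of radius $r_3\ll r_2$ from $C_4$ must land on $\Gsh[C_2,C_3]$; but by construction $w(C_4)=\min_{\Gsh[C_1,C_3]}w$, so no point of $\Gsh[C_2,C_3]$ can lie strictly below $w(C_4)$. The hard part is the simultaneous bookkeeping of the nested radii $r_1\gg r_2\gg r_3$ with the successive gaps feeding each $r_i^*$, and verifying in each branch of (A6) that the maximal chains in stage two cannot slip past the minimal chain; Lemma \ref{distToSonicCenter-lemma} is the geometric engine behind every ``not an $\overline\Omega$-extremum'' and every tangent-uniqueness assertion used along the way.
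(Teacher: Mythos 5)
Your proposal is correct and follows essentially the same route as the paper's own proof: Step 1 locates the minimum point $C$ on the convex lobe of $\Gsh$ via the tangent-line comparison (Lemma \ref{distToSonicCenter-lemma}) after building $I^+$ and $J^+$ with Lemmas \ref{lem:analyticity} and \ref{noZeroesInfiniteOrderLemma}; Step 2 forces the minimal-chain endpoint $C_1$ into $\Gsh^0$ using (A6) together with Lemma \ref{lem:chainsDoNotIntersect-MinMax}; and Step 3 is the same $A_2, C_2, C_3, C_4$ iteration with nested radii, ending in the identical contradiction with Lemma \ref{lem:chainsDoNotIntersect-MinMin}. One small caution: your shorthand that an interior shock maximum of $\phi_\ee$ ``cannot be $\overline\Omega$-local'' is not literally true (indeed $A_2$ is exactly such a point); what the argument actually shows, and what you correctly attribute to Lemmas \ref{lem:sign of second derivative}, \ref{lem:shock graph}, and \ref{distToSonicCenter-lemma}, is that the maximizer of $\phi_\ee$ over the shock segment reaching the endpoint $A$ (or $B$) cannot be a local maximum relative to $\overline\Omega$.
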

	
	\subsection{\, Step 4: Strict convexity of $\Gsh$}
	\label{Step4-sect}
	In this step, we show the strict convexity in the sense that, for  any fixed $\ee\in Con$,
	using the coordinates and function $f_\ee$ from
	Lemma \ref{lem:shock graph}(\ref{lem:shock graph-i1}),
	for every $P\in \Gsh^0$,
	either $f_\ee''(T_P)<0$ or there exists an even integer $k>2$
	such that $f_\ee^{(i)}(T_P)=0$ for all $i=2,\dots, k-1$, and $f_\ee^{(k)}(T_P)<0$.

	Note that $f_\ee''\le 0$ on $(T_A, T_B)$ by Proposition \ref{shockConvex}.

	Let $T_P\in(T_A, T_B)$ be such that $f_\ee''(T_P)=0$.
	By Lemma \ref{noZeroesInfiniteOrderLemma}, there exists an integer $k$ such that
	$$
	f_\ee^{(i)}(T_P)=0 \,\,\,\,\mbox{for $i=2,\,.\,.\,.\,,\,k-1$},
	\qquad\,  f^{(k)}_\ee(T_P)\,\,\,\mbox{is nonzero}.
	$$
	The convexity of the shock in Proposition \ref{shockConvex} implies that $k$ must be even
	and $f^{(k)}_\ee(T_P)<0$. This shows (\ref{strictConvexity-degenerate-graph})
	in the coordinate system with basis $\{\ee, \ee^{\perp}\}$. Moreover, using
	Remark \ref{FB-levelSet-rmk}, we have
	
	\begin{prop}\label{shockConvex-strict}
		Suppose that conditions {\rm (A1)}--{\rm (A6)} of Theorem {\rm \ref{thm:main theorem}} hold.
		Then the free boundary $\Gsh$ is strictly convex in the sense
		that \eqref{strictConvexity-degenerate-graph} holds at every $T\in(T_A, T_B)$ with $f''(T)=0$.
		Moreover,
		\eqref{strictConvexity-degenerate} holds at every point  of $\Gsh^0$,
		at which $\phi_{\bt\bt}=0$.
	\end{prop}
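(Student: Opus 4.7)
The plan is to upgrade the non-strict convexity already established in Proposition \ref{shockConvex} to the strict convexity asserted in \eqref{strictConvexity-degenerate-graph} by exploiting the real analyticity of the shock function $f_\ee$ on $(T_A, T_B)$ coming from Lemma \ref{lem:analyticity}, together with the key non-degeneracy fact in Lemma \ref{noZeroesInfiniteOrderLemma} that rules out the simultaneous vanishing of all derivatives of $f_\ee$ of order $\ge 2$ at any interior shock point. Fix any $\ee\in Con$ and a point $T_P\in(T_A, T_B)$ with $f_\ee''(T_P)=0$. Since $\phi$ is not a constant state in $\Omega$, Lemma \ref{noZeroesInfiniteOrderLemma} yields a smallest integer $k\ge 2$ with $f_\ee^{(k)}(T_P)\ne 0$.

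The parity and sign of $f_\ee^{(k)}(T_P)$ are then forced by concavity. Since $f_\ee^{(i)}(T_P)=0$ for $i=2,\dots,k-1$, a Taylor expansion gives
$$
f_\ee''(T)=\frac{f_\ee^{(k)}(T_P)}{(k-2)!}(T-T_P)^{k-2}+o(|T-T_P|^{k-2})\qquad \text{as }T\to T_P.
$$
If $k$ were odd, $f_\ee''$ would change sign across $T_P$, contradicting $f_\ee''\le 0$ from Proposition \ref{shockConvex}; if $k$ were even but $f_\ee^{(k)}(T_P)>0$, then $f_\ee''>0$ in a punctured neighborhood of $T_P$, again contradicting $f_\ee''\le 0$. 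Hence $k$ must be even with $f_\ee^{(k)}(T_P)<0$, which is exactly \eqref{strictConvexity-degenerate-graph} in the $(S,T)$--coordinate system associated with $\ee$.

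For the second assertion and the coordinate-independence of $k$, the plan is to invoke Remark \ref{FB-levelSet-rmk}(i), which identifies $f_\ee''(T_P)=0$ with $\phi_{\bt\bt}(P)=0$ via the formula \eqref{tangDerivOnShock} (noting $\phi_\ee\ne 0$ and $\phi_{\bn}\ne 0$ on $\Gsh^0$ from Lemma \ref{lem:shock graph}), and matches $f_\ee^{(n)}(T_P)=0$ with $\partial_{\bt}^{n}\phi(P)=0$ for $n=2,\dots,2k-1$, with $\partial_{\bt}^{2k}\phi(P)>0$ corresponding to $f_\ee^{(2k)}(T_P)<0$. Because the vanishing conditions on $\partial_{\bt}^{n}\phi(P)$ depend only on the intrinsic tangential direction $\bt$ to $\Gsh$ at $P$ and not on the ambient basis $\{\ee,\ee^\perp\}$, the integer $k$ is an intrinsic geometric invariant of the shock at $P$, giving both the independence of $k$ in \eqref{strictConvexity-degenerate-graph} and \eqref{strictConvexity-degenerate} simultaneously. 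The only mildly technical point is verifying inductively that the tangential-derivative identification carries past order $2$; this is essentially the inductive step already exploited in the proof of Lemma \ref{noZeroesInfiniteOrderLemma}, where repeated differentiation of the shock identity $\phi(f_\ee(T),T)\equiv 0$ and of the Rankine--Hugoniot derivative condition \eqref{equ:boundary for phi e} links the orders of vanishing of $f_\ee^{(\cdot)}$ and $\partial_{\bt}^{(\cdot)}\phi$. No genuinely new analytic input is needed beyond what has been established in \S\ref{generalPropertiesFB-sect}; the final statement is essentially a packaging of Proposition \ref{shockConvex}, Lemma \ref{noZeroesInfiniteOrderLemma}, and Remark \ref{FB-levelSet-rmk}.
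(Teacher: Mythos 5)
Your proposal is correct and follows essentially the same route as the paper: non-strict convexity from Proposition \ref{shockConvex} combined with Lemma \ref{noZeroesInfiniteOrderLemma} to produce the first non-vanishing derivative, whose parity and sign are forced by $f_\ee''\le 0$, with Remark \ref{FB-levelSet-rmk} supplying the reformulation in terms of $\phi_{\bt\bt}$ and the coordinate-independence of $k$. The only difference is that you spell out the Taylor-expansion argument that the paper leaves as a one-line assertion, which is a harmless elaboration.
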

	
	Furthermore, we note the following fact:
	
	\begin{lem}\label{shockConvex-strict-finite-degen}
		Suppose that conditions {\rm (A1)}--{\rm (A6)} of Theorem {\rm \ref{thm:main theorem}} hold.
		Then,  for any  $\eps>0$, there is no more than a finite set of
		points $P=(f(T), T)\in \Gsh$ with $T\in [T_A+\eps, T_B-\eps]$
		such that $f''(T)=0$ {\rm (}or equivalently, $\phi_{\bt\bt}(P)=0${\rm )}.
	\end{lem}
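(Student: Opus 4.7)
The plan is to deduce the lemma as a short corollary of Proposition \ref{shockConvex-strict} together with a standard compactness argument. The key point is that, once we know $f''\le 0$ on $(T_A, T_B)$ (convexity) together with the finite-order non-degeneracy provided by \eqref{strictConvexity-degenerate-graph}, every zero of $f''$ in $(T_A, T_B)$ is \emph{isolated}, and isolated points can only form a finite subset of a compact interval.

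More precisely, I would fix $\ee\in Con$ and work with the graph function $f=f_\ee\in C^\infty((T_A, T_B))$ furnished by Lemma \ref{lem:shock graph}(\ref{lem:shock graph-i1}) and Lemma \ref{lem:analyticity}. Suppose $T_P\in(T_A, T_B)$ satisfies $f''(T_P)=0$. By Proposition \ref{shockConvex-strict}, there is an even integer $k\ge 4$ such that
\begin{equation*}
f^{(i)}(T_P)=0 \quad (i=2,\dots, k-1), \qquad f^{(k)}(T_P)<0.
\end{equation*}
A Taylor expansion of $f''$ around $T_P$ (valid since $f$ is $C^\infty$ near $T_P$ by Lemma \ref{lem:analyticity}) gives
\begin{equation*}
f''(T) = \frac{f^{(k)}(T_P)}{(k-2)!}(T-T_P)^{k-2}+ o\bigl((T-T_P)^{k-2}\bigr) \qquad\text{as }T\to T_P.
\end{equation*}
Since $f^{(k)}(T_P)<0$ and $k-2$ is a positive even integer, there exists $\delta>0$ with $f''(T)<0$ for all $T\in(T_P-\delta, T_P+\delta)\setminus\{T_P\}$. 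In particular, $T_P$ is an isolated zero of $f''$.

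Now fix $\eps>0$ and set $Z_\eps:=\{T\in[T_A+\eps, T_B-\eps]:\ f''(T)=0\}$. Since $f''$ is continuous on the compact interval $[T_A+\eps, T_B-\eps]$, the set $Z_\eps$ is closed and hence compact. If $Z_\eps$ were infinite, by Bolzano--Weierstrass it would possess an accumulation point $T^*\in[T_A+\eps, T_B-\eps]\subset (T_A, T_B)$; continuity of $f''$ would then force $f''(T^*)=0$, so $T^*\in Z_\eps$. But $T^*$ being an accumulation point of $Z_\eps$ contradicts the isolatedness established above. Hence $Z_\eps$ is finite, which is the stated conclusion (the equivalence with $\phi_{\bt\bt}(P)=0$ comes from Lemma \ref{lem:shock graph}(\ref{lem:shock graph-i3}) or Remark \ref{FB-levelSet-rmk}\eqref{FB-levelSet-rmk-i1}). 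There is no real obstacle here once Proposition \ref{shockConvex-strict} is in hand; the only subtlety is to invoke the correct regularity of $f$ (smoothness near any interior point, provided by Lemma \ref{lem:analyticity}) to justify the Taylor expansion.
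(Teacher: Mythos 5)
Your proof is correct and follows essentially the same route as the paper: the paper also argues via compactness of $[T_A+\eps, T_B-\eps]$, extracting an accumulation point of the zeros of $f''$ at which all derivatives $f^{(n)}$, $n\ge 2$, would vanish, contradicting the finite-order nondegeneracy \eqref{strictConvexity-degenerate}; your variant simply packages the same nondegeneracy (Proposition \ref{shockConvex-strict}) as isolatedness of each zero via a Taylor expansion before invoking Bolzano--Weierstrass.
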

	
	\begin{proof}
		Suppose that $T_i\in [T_A+\eps, T_B-\eps]$ for $i=1,2,\dots$, are such that $f''(T_i)=0$.
		Then a subsequence of $T_i$ converges to $T^*\in [T_A+\eps, T_B-\eps]$, and
		$f^{(n)}(T^*)=0$ for each $n=2,3,\dots$, and $P^*=(f(T^*), T^*)\in\Gsh^0$.
		It follows that $\partial^n_{\bt}\phi (P^*)=0$  for each $n=2,3,\dots$. This
		contradicts (\ref{strictConvexity-degenerate}).
	\end{proof}
	
\smallskip
By Propositions \ref{shockConvex}--\ref{shockConvex-strict} and Lemma \ref{shockConvex-strict-finite-degen},
the proof of  Theorem \ref{thm:main theorem} is completed.
	
	\section{\, Proof of Theorem \ref{thm:main theorem-strictUnif}: Uniform Convexity of Transonic Shocks}
	\label{sec:proof of the main theorem-1}
	In this section, we show the uniform convexity  of $\Gsh^0$ in the sense that $f''(T_P)<0$
	for every $P\in \Gsh^0$ for $f(\cdot)$ in (\ref{shock-graph-inMainThm}), or equivalently,
	$f''_\ee(T)<0$ on $(T_A, T_B)$ for any $\ee\in Con$.

	The outline of the proof is the following:
	By Theorem \ref{thm:main theorem} and Remark \ref{FB-levelSet-rmk},
	$\phi_{\bt\bt}\ge 0$ on $\Gsh^0$.
	Thus, we need to show that $\phi_{\bt\bt}>0$ on $\Gsh^0$.
	Assume that $\phi_{\bt\bt}=0$ at $\Pd\in \Gsh^0$.
	Then we obtain a contradiction by proving
	that
	there exists a unit vector $\ee\in\mathbb{R}^2$ such that $\Pd$ is a
	local minimum point of $\phi_\ee$ along $\Gamma^{0}_{\rm shock}$, but $\Pd$
	is not a local minimum point of $\phi_\ee$ relative to $\overline\Omega$.
	Then we can construct a minimal chain for $\phi_\ee$ connecting $P_{\rm d}$ to $C^{k_1}\in\partial\Omega$.
	We show that
	\begin{itemize}
		\item $C^{k_1}\notin\Gamma_0\cup\Gamma_3$,
		
		\smallskip
		\item $C^{k_1}\notin\Gamma_1\cup\Gamma_2$,
		
		\smallskip
		\item $C^{k_1}\notin\Gsh$.
	\end{itemize}
	This implies that $\phi_{\bt\bt}>0$ on $\Gsh^0$ so that $f''(T)<0$ on $(T_A, T_B)$;
	see Remark \ref{FB-levelSet-rmk}.
	
	Now we follow the procedure outlined above to prove Theorem \ref{thm:main theorem-strictUnif}.
	In the proof, we use the $(S,T)$--coordinates in (\ref{shock-graph-inMainThm}).
	Then we have
	\begin{equation}\label{CoordShock-StrictConv}
	\begin{split}
	&\Gsh=\{S=f(T)\,:\, T_A<T<T_B\}, \quad\, \Omega\subset\{S<f(T) :\,T\in\mr\}, \\
	&\bt=\frac{(f'(T),1)}{\sqrt{(f'(T))^2+1}},\,\,\,
	\bn=\frac{(-1, f'(T))}{\sqrt{(f'(T))^2+1}}, \,\,\,
	f''(T)\le 0
	\qquad \mbox{on $(T_A,T_B)$},
	\end{split}
	\end{equation}
	where we have used the convexity of $\Gsh$ proved in Theorem \ref{thm:main theorem}.
	Note that the orientation of the tangent vector $\bt(P)$ at $P\in\Gsh$ has been chosen
	to be towards endpoint $B$.
	
	\medskip
	First, from the convexity and Lemma \ref{lem:sign of second derivative},
	we have
	
	\begin{lem}\label{lem:cannot be maximum}
		Let $\phi$ be a solution as in Theorem {\rm \ref{thm:main theorem}}.
		For any unit vector $\ee\in\mr^2$, if $\ee\cdot\bn<0$ {\rm (}resp. $\ee\cdot\bn>0${\rm )}
		at $P\in\Gsh^0$,
		then $\phi_\ee$ cannot attain its
		local maximum {\rm (}resp. minimum{\rm )} with respect to $\overline\Omega$ at this point.
	\end{lem}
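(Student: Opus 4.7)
The plan is to read this lemma as a direct consequence of the convexity already established in Theorem \ref{thm:main theorem} combined with the second-derivative trichotomy at shock extrema recorded in Lemma \ref{lem:sign of second derivative}. The key observation is that, by Lemma \ref{lem:shock graph}(\ref{lem:shock graph-i3}) applied in the coordinate system of \eqref{CoordShock-StrictConv} (or equivalently by Remark \ref{FB-levelSet-rmk}(\ref{FB-levelSet-rmk-i1})), the already-proved convexity $f''\le 0$ on $(T_A,T_B)$ translates into the intrinsic inequality
\begin{equation}\label{phi-tautau-nonneg-plan}
\phi_{\bt\bt}\ge 0 \qquad \mbox{on } \Gsh^0.
\end{equation}
The statement will then follow by putting \eqref{phi-tautau-nonneg-plan} into direct conflict with the strict sign of $\phi_{\bt\bt}(P)$ that Hopf's boundary-point lemma forces at any candidate extremum of $\phi_\ee$.

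First I would treat the case $\ee\cdot\bn(P)<0$. Arguing by contradiction, I suppose that $\phi_\ee$ attains a local maximum relative to $\overline\Omega$ at $P\in\Gsh^0$. I then verify that all hypotheses of Lemma \ref{lem:sign of second derivative} are in force at $P$: the point lies in the relative interior of $\Gsh$, conditions (A1)--(A3) of Theorem \ref{thm:main theorem} hold, $\phi$ is not a constant state, and $\bn\cdot\ee<0$ by assumption. The lemma then delivers $\phi_{\bt\bt}(P)<0$, which contradicts \eqref{phi-tautau-nonneg-plan}.

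Next I would handle the case $\ee\cdot\bn(P)>0$ by reducing it to the first. Set $\tilde\ee:=-\ee$, which is a unit vector with $\tilde\ee\cdot\bn(P)<0$ and satisfies $\phi_{\tilde\ee}=-\phi_\ee$. Any local minimum of $\phi_\ee$ at $P$ relative to $\overline\Omega$ is therefore a local maximum of $\phi_{\tilde\ee}$ at $P$ relative to $\overline\Omega$, and the previous paragraph applied to $\tilde\ee$ rules this out.

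I do not anticipate any substantial obstacle here, since the whole argument is essentially a one-line combination of two results that are already proved in the paper. The only point that deserves care is the translation step: Theorem \ref{thm:main theorem} phrases convexity through the graph function $f$ in a specific coordinate system, whereas Lemma \ref{lem:sign of second derivative} reports its conclusion as an inequality on $\phi_{\bt\bt}$; this translation is precisely what Lemma \ref{lem:shock graph}(\ref{lem:shock graph-i3}) and Remark \ref{FB-levelSet-rmk}(\ref{FB-levelSet-rmk-i1}) provide. Once \eqref{phi-tautau-nonneg-plan} is in hand, the strict inequality produced by the Hopf argument at any would-be extremum is the decisive clash, and the lemma follows immediately.
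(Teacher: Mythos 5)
Your proposal is correct and follows essentially the same route as the paper, which deduces the lemma in one line from the established convexity (i.e. $\phi_{\bt\bt}\ge 0$ on $\Gsh^0$, via Remark \ref{FB-levelSet-rmk}) combined with Lemma \ref{lem:sign of second derivative}. Your explicit reduction of the minimum case to the maximum case via $\tilde\ee=-\ee$ is exactly the intended use of that lemma, and the hypotheses you check (conditions (A1)--(A3), $P\in\Gsh^0$, $\phi$ not a constant state) are the right ones.
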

	
	We now prove the uniform convexity by a contradiction argument.
	From
	Theorem \ref{thm:main theorem}
	and Remark \ref{FB-levelSet-rmk},
	we know that (\ref{strictConvexity-degenerate}) holds so that,
	if $f''(T_{\Pd})=0$
	at some interior point  $\Pd$ of $\Gsh$,  then
	\begin{equation}\label{AtNonstrictConvexPt}
	\begin{split}
    &\phi_{\bt\bt}(\Pd)=0,\\
	&\phi_{\bt\bt}(P)>0\quad\mbox{ for all $P\in\Gsh\cap{\mathcal N}_\eps(\Pd)$ with
		$P\ne \Pd$},
    \end{split}
	\end{equation}
	for some $\eps>0$.
	First we choose a unit vector
	$\ee\in \mathbb{R}^2$ via the following lemma.
	
	\begin{lem}\label{choseVecEE-strictConv}
		There exists a unit vector $\ee\in\mathbb{R}^2$ such that, for any local minimum point $\Pd$
		of $\phi_\ee$ along $\Gamma^{0}_{\rm shock}$, $\ee\cdot\bn(\Pd)<0$.
		In addition,  $\Pd$ is a strict local minimum point  along $\Gamma^{0}_{\rm shock}$ in the following sense{\rm :}
		For the unit tangent vector $\bt=\bt(P)$ to $\Gsh$ at $P$ defined by \eqref{CoordShock-StrictConv},
		$\phi_{\ee\bt}$
		is strictly positive on $\Gsh$ near $\Pd$ in the direction of $\bt$, and
		$\phi_{\ee\bt}$ is
		strictly  negative on $\Gsh$ near $\Pd$ in the direction opposite to $\bt$.
		More precisely, in the coordinates from \eqref{CoordShock-StrictConv}, there exists $\eps>0$ such that
		$T_A<T_{\Pd}-\eps< T_{\Pd}+\eps<T_B$ and
		\begin{equation}\label{choseVecEE-strictConv-Eq}
		\begin{split}
        &\phi_{\ee\bt}(f(T), T)<0\quad\,\,\,\mbox{on $(T_{\Pd}-\varepsilon,\, T_{\Pd})$},\\
		&\phi_{\ee\bt}(f(T), T)>0\quad\,\,\,\mbox{on $(T_{\Pd},\, T_{\Pd}+\varepsilon)$}.
		\end{split}
       \end{equation}
	\end{lem}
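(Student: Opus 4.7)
The natural choice is $\ee:=\bh(\Pd)/|\bh(\Pd)|$, where $\bh$ is the vector from \eqref{Expression-h} evaluated at $\Pd$. Then $\ee\cdot\bn(\Pd)=h_\bn(\Pd)/|\bh(\Pd)|<0$ follows immediately from \eqref{equ:27 oblique of h}. The heart of the argument is an exact pointwise formula on $\Gsh^0$ derived from the RH condition \eqref{equ:boundary for phi e}: decomposing $\ee$ in the local basis $\{\bt(P),\bn(P)\}$ and eliminating $\phi_{\bn\bt}$ via $h_\bn\phi_{\bn\bt}+h_\bt\phi_{\bt\bt}=0$, one obtains
$$
\phi_{\ee\bt}(P)=\frac{\ee\cdot\bh^\perp(P)}{h_\bn(P)}\,\phi_{\bt\bt}(P),\qquad P\in\Gsh^0,
$$
where $\bh^\perp:=h_\bn\bt-h_\bt\bn$ is the clockwise $\pi/2$-rotation of $\bh$ in the local basis. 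Since $h_\bn<0$ on $\Gsh^0$ by \eqref{equ:27 oblique of h} and $\phi_{\bt\bt}>0$ on a punctured neighborhood of $\Pd$ in $\Gsh^0$ by \eqref{AtNonstrictConvexPt}, the sign of $\phi_{\ee\bt}$ near $\Pd$ along $\Gsh^0$ is opposite to that of $\ee\cdot\bh^\perp$, and $\ee\cdot\bh^\perp(\Pd)=0$ since $\ee\parallel\bh(\Pd)$.

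To verify the sign-changing behavior \eqref{choseVecEE-strictConv-Eq}, I would analyze the tangential derivative $\partial_\bt(\ee\cdot\bh^\perp)$ at $\Pd$. Using the Frenet relations $\bt'=\kappa\bn$, $\bn'=-\kappa\bt$ along $\Gsh$ together with $\kappa(\Pd)=0$ (a consequence of $f''(T_\Pd)=0$), this derivative collapses to $|\bh(\Pd)|^{-1}\bigl(h_\bt(\Pd)\,\partial_\bt h_\bn(\Pd)-h_\bn(\Pd)\,\partial_\bt h_\bt(\Pd)\bigr)$, i.e.\ the scaled angular rotation rate of $\bh$ along $\Gsh$ at $\Pd$. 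The main obstacle is showing this quantity is nonzero with the correct sign. Nonvanishing follows by contradiction in the spirit of Lemma \ref{noZeroesInfiniteOrderLemma}: if the derivative vanished, one could iterate the pointwise formula and use analyticity of $\phi$ up to $\Gsh^0$ (Lemma \ref{lem:analyticity}) to force $\phi_{\bt\bt}\equiv 0$ on an arc of $\Gsh^0$, contradicting \eqref{AtNonstrictConvexPt}. Correctness of the sign is then verified by tracing the orientation of $(\bt,\bn)$ along the convex graph $\Gsh$ and the uniform sign $h_\bn<0$, so that the sign of $\ee\cdot\bh^\perp(P)$ flips from positive on the $A$-side of $\Pd$ to negative on the $B$-side, giving \eqref{choseVecEE-strictConv-Eq} through the pointwise formula.

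For the ``for any local minimum'' clause, let $P'\in\Gsh^0$ be a local minimum of $\phi_\ee$ along $\Gsh^0$. By Lemma \ref{shockConvex-strict-finite-degen}, the set $\{\phi_{\bt\bt}=0\}\cap\Gsh^0$ is discrete, so either $P'$ is one of those isolated degenerate points (handled by the construction applied at $P'$), or $\phi_{\bt\bt}(P')>0$. In the latter case, criticality $\phi_{\ee\bt}(P')=0$ combined with the key identity forces $\ee\cdot\bh^\perp(P')=0$, so $\bh(P')\parallel\bh(\Pd)$; a second-order analysis of $\phi_\ee$ along $\Gsh^0$ at $P'$, carried out again through the pointwise identity and the local-minimum property, then forces $\bh(P')$ to be a \emph{positive} multiple of $\bh(\Pd)$, which yields $\ee\cdot\bn(P')=h_\bn(P')/|\bh(P')|<0$. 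This completes the verification of both clauses of the lemma.
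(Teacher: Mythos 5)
Your opening is exactly the paper's route: the vector $\ee=\bh(\Pd)/|\bh(\Pd)|$ is precisely the paper's choice (it is the unit vector with $\ee\cdot\bn(\Pd)<0$ and $g(\ee)(\Pd)=0$ for $g$ in \eqref{functionGTauE}, since $g(\ee)$ is a positive multiple of $-\ee\cdot\bh^\perp$), and your pointwise identity $\phi_{\ee\bt}=\frac{\phi_{\bt\bt}}{h_\bn}\,\ee\cdot\bh^\perp$ is the same as \eqref{eq:phietau}. The genuine gap is in the step that produces \eqref{choseVecEE-strictConv-Eq}: you must show $\partial_{\bt}(\ee\cdot\bh^\perp)(\Pd)$ is nonzero \emph{with a definite sign}, and neither of your two devices does this. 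The nonvanishing-by-contradiction sketch fails: the vanishing of a single first-order tangential derivative of $\ee\cdot\bh^\perp$ at the one point $\Pd$ sets up no iteration and does not propagate to $\phi_{\bt\bt}\equiv 0$ on an arc; the analogy with Lemma \ref{noZeroesInfiniteOrderLemma} is superficial, since there one has \emph{all} derivatives vanishing at a point of a real-analytic function. And the sign cannot be obtained by ``tracing the orientation of $(\bt,\bn)$ along the convex graph together with $h_\bn<0$'': at $\Pd$ the curvature of $\Gsh$ vanishes, so convexity gives no first-order information there, and a wrong sign (which would make $\Pd$ a strict local \emph{maximum} of $\phi_\ee$ along $\Gsh$) is not excluded by any soft geometric consideration available at this stage.

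What closes the step in the paper is an upgrade of the degeneracy that your proposal never uses: from $\phi_{\bt\bt}(\Pd)=0$, the differentiated Rankine--Hugoniot relation \eqref{DerivRH-nu-tau} (with $h_\bn\ne 0$ by \eqref{equ:27 oblique of h}) gives $\phi_{\bn\bt}(\Pd)=0$, and the strictly elliptic equation \eqref{equ:in ST coordinate} then gives $\phi_{\bn\bn}(\Pd)=0$; hence the full Hessian of $\phi$ vanishes at $\Pd$, so $D\rho=Dc^2=0$ there and $\varphi_{\bt\bt}=\varphi_{\bn\bn}=-1$, $\varphi_{\bn\bt}=0$ (see \eqref{equ:Sec Derivatives are zero}--\eqref{equ:Sec Derivatives are zero-varphi}). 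With this, the quantities you leave undetermined become explicit: $\partial_{\bt}h_\bn(\Pd)=0$ and $\partial_{\bt}h_\bt(\Pd)=-\frac{\rho-\rho_0}{\rho_0 c^2}(\rho\varphi_{\bn}^2+\rho_0c^2)<0$, whence $\partial_{\bt}(\ee\cdot\bh^\perp)(\Pd)=-|\bh(\Pd)|^{-1}h_\bn\,\partial_{\bt}h_\bt<0$, equivalently $g_{\bt}(\Pd)=-(\ee\cdot\bn)(\rho\varphi_{\bn}^2+\rho_0c^2)>0$; then \eqref{choseVecEE-strictConv-Eq} follows from \eqref{eq:phietau} and \eqref{AtNonstrictConvexPt}. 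The sign thus comes from the explicit value $\varphi_{\bt\bt}(\Pd)=-1$ (i.e.\ from the background $\varphi_0=-\frac12|\xxi|^2$), not from convexity, and without this full-Hessian computation your argument is incomplete. Your closing paragraph about arbitrary local minima $P'$ is likewise only asserted (the claimed second-order analysis forcing $\bh(P')$ to be a positive multiple of $\bh(\Pd)$ is not carried out), though the content actually used later concerns the fixed degenerate point $\Pd$, which the computation above settles.
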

	
	\begin{proof}
		Recall that $\phi_{\bt\bt}(\Pd)=0$. Now
		we first use \eqref{DerivRH-nu-tau} at $\Pd$ with $h_{\bn}\ne 0$ by \eqref{equ:27 oblique of h},
		and then use the strictly elliptic equation \eqref{equ:in ST coordinate}
		at $\Pd$ in the $(S,T)$--coordinates with basis $\{\bn(\Pd), \bt(\Pd)\}$ to obtain
		\begin{equation}\label{equ:Sec Derivatives are zero}
		\phi_{\bn\bn}(\Pd)=\phi_{\bn\bt}(\Pd)=\phi_{\bt\bt}(\Pd)=0.
		\end{equation}

		For any unit  vector $\ee\in\mathbb{R}^2$, define a function
		$g(\cdot)\equiv g(\ee)(\cdot)$ on $\Gsh^0$ by
		\begin{equation}\label{functionGTauE}
		g(\ee)(\xxi):=\big(\rho(c^2-\varphi_{\bn}^2)\varphi_{\bn}(\ee\cdot\bt)+
		(\rho\varphi^2_{\bn}+\rho_0c^2)\varphi_{\bt}(\ee\cdot\bn)\big)(\xxi).
		\end{equation}
		Then, at any point of $\Gsh^0$, we see from  \eqref{DerivRH-nu-tau}
		with \eqref{Expression-h} that, for any unit vector $\ee\in\mathbb{R}^2$,
		\begin{equation}\label{eq:phietau}
		\phi_{\ee\bt}=\phi_{\bt\bt}(\ee\cdot\bt)+\phi_{\bt\bn}(\ee\cdot\bn)
		=\frac{\phi_{\bt\bt}g(\ee)}{\rho(c^2-\varphi^2_{\bn})\varphi_{\bn}}.
		\end{equation}
		
		Notice that, from the expression of $g(\ee)(\cdot)$  and assumption (A3) of Theorem \ref{thm:main theorem},
		\begin{equation}\label{sign:gtt}
		g(\bt)>0,\quad  g(-\bt)<0 \,\, \qquad\text{ on $\Gsh^0$}.
		\end{equation}
		Then we can choose a unit vector $\ee$ such that $\ee\cdot\bn<0$ and $g(\ee)=0$ at $\Pd$.
		We fix this vector $\ee$ for the rest of this proof.
		From \eqref{equ:Sec Derivatives are zero}, we have
		\begin{equation}\label{equ:Sec Derivatives are zero-varphi}
		\varphi_{\bt\bt}=\varphi_{\bn\bn}=-1,\quad\varphi_{\bn\bt}=0
		\qquad\,\,\mbox{at $\Pd$}.
		\end{equation}
		Below we use the $(S,T)$--coordinates from \eqref{CoordShock-StrictConv}.
		From (\ref{equ:boundary}) and (\ref{CoordShock-StrictConv}),
		we use  condition (A1) in Theorem \ref{thm:main theorem} to obtain that
		$\phi_S>0$ on $\Gsh$ so that
		$$
		\bt=\frac{(-\phi_{T},\phi_{S})}{|D\phi|},\quad\bn=-\frac{D\phi}{|D\phi|}.
		$$
		Then we can use these expressions to define $\bt$ and $\bn$ in $\Omega$ near $\Gsh$, which
		allows to extend function $g(\ee)(\cdot)$ defined by expression (\ref{functionGTauE}) into this region.
		Since $\phi\in  C^{2}(\Omega\cup\Gsh^0)$,  the extended
		$\bt$, $\bn$, and $g(\ee)(\cdot)$ are $C^1$ up to $\Gsh^0$.
		Then, from \eqref{equ:Sec Derivatives are zero}, $D_{(S,T)}\bt=0$ and $D_{(S,T)}\bn=0$ at
		point $\Pd$. Moreover, differentiating (\ref{1.2}) and
		(\ref{c-through-density-function}), and using \eqref{equ:Sec Derivatives are zero}
		yield that $D_{(S,T)}\rho=0$ and $D_{(S,T)}c^2=0$ at point $\Pd$.
		Therefore, differentiating \eqref{functionGTauE},
		using \eqref{equ:Sec Derivatives are zero-varphi},
		and writing $g(\cdot)$ for $g(\ee)(\cdot)$, we have
		$$
		g_{\bt}(\Pd)
		=-(\ee\cdot\bn)(\rho\varphi_{\bn}^2+\rho_0c^2)\Big|_{\Pd}>0.
		$$
		Then, by \eqref{CoordShock-StrictConv},
		$$
           \frac{\md g(f(T),T)}{\md T}\Big|_{T=T_{\Pd}}=\sqrt{(f'(T_{\Pd}))^2+1}\,g_{\bt}(\Pd)>0.
        $$
		Thus,
        $g(f(T),T)<0$ on $(T_{\Pd}-\varepsilon,\, T_{\Pd})$ and
		$g(f(T),T)>0$ on $(T_{\Pd},\, T_{\Pd}+\varepsilon)$ for some $\varepsilon>0$.
		By (\ref{AtNonstrictConvexPt}) and (\ref{eq:phietau}), the same is true for $\phi_{\ee\bt}$.
		
		Then $\Pd$ is a local minimum point of $\phi_\ee$ along $\Gamma_{\rm shock}$,
		and $\phi_{\ee\bt}$ has the properties asserted.
	\end{proof}
	
	\begin{rem}
		The unit vector $\ee$ is not necessarily in the cone introduced in
		condition {\rm (A5)} of Theorem {\rm \ref{thm:main theorem}}.
	\end{rem}
	
	\begin{lem}\label{PdNotLocMin}
		$\Pd$ is not a local minimum point of $\phi_\ee$ with respect to $\overline\Omega$.
	\end{lem}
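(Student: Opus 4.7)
The plan is to derive the conclusion as a direct contradiction to Lemma \ref{lem:sign of second derivative}, exploiting the vanishing of $\phi_{\bt\bt}$ at $\Pd$ together with the sign condition $\ee\cdot\bn(\Pd)<0$ that has been built into the construction of $\ee$ in Lemma \ref{choseVecEE-strictConv}.

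The main step is a single hypothetical-then-apply move. First I would suppose, for a contradiction, that $w := \phi_\ee$ attains a local minimum relative to $\overline{\Omega}$ at the boundary point $\Pd \in \Gsh^0$. All the hypotheses of Lemma \ref{lem:sign of second derivative} are then in force at $\Pd$: conditions (A1)--(A3) of Theorem \ref{thm:main theorem} are part of the standing assumptions of Theorem \ref{thm:main theorem-strictUnif}, the solution has the required regularity up to $\Gsh^0$ and is not a constant state, and the unit vector $\ee$ supplied by Lemma \ref{choseVecEE-strictConv} was chosen precisely so that $\ee\cdot\bn(\Pd)<0$. Invoking Lemma \ref{lem:sign of second derivative} at $\Pd$ with this $\ee$ therefore forces the strict inequality $\phi_{\bt\bt}(\Pd)>0$. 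But this immediately contradicts the starting assumption \eqref{AtNonstrictConvexPt}, which says $\phi_{\bt\bt}(\Pd)=0$. Hence $\Pd$ cannot be a local minimum of $\phi_\ee$ relative to $\overline{\Omega}$, which is exactly the statement of the lemma.

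The one mildly delicate point, which I expect to be the main thing to check carefully, is that Lemma \ref{lem:sign of second derivative} ultimately applies the Hopf boundary-point lemma to the strictly elliptic equation \eqref{equ:phi e} satisfied by $w$, and this requires $w$ to be non-constant in a neighbourhood of $\Pd$ in $\Omega$. This is not a real obstacle in the present setting: if $w$ were constant on a neighbourhood of $\Pd$ in $\Omega$, then by the real-analyticity of $\phi$ up to $\Gsh^0$ from Lemma \ref{lem:analyticity}, together with unique continuation for the strictly elliptic equation governing $w$, the function $w=\phi_\ee$ would be constant throughout $\Omega$; combined with \eqref{equ:study} and the boundary condition \eqref{equ:boundary}, this would then force $\phi$ to be a constant state, contradicting the standing hypothesis. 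This non-degeneracy is already implicit in the formulation and proof of Lemma \ref{lem:sign of second derivative}, so no additional work beyond the single invocation above is needed, and I do not foresee any further obstacle.
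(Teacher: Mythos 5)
Your proposal is correct and is essentially identical to the paper's own proof: assume $\Pd$ is a local minimum, apply Lemma \ref{lem:sign of second derivative} with the vector $\ee$ from Lemma \ref{choseVecEE-strictConv} (using $\ee\cdot\bn(\Pd)<0$) to get $\phi_{\bt\bt}(\Pd)>0$, and contradict $\phi_{\bt\bt}(\Pd)=0$. Your extra remark about non-constancy of $w$ is a fine precaution but is already covered by the hypotheses of Lemma \ref{lem:sign of second derivative} (the solution is assumed not to be a constant state), so no additional argument is needed.
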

	
	\begin{proof}
		If $\Pd$ is a local minimum point, it
		follows from Lemma \ref{lem:sign of second derivative} and
		$\ee\cdot\bn(\Pd)<0$
		that $\phi_{\bt\bt}(\Pd)>0$, which contradicts to the fact that
		$\phi_{\bt\bt}(\Pd)=0$.
	\end{proof}
	
	Now
	we consider a minimal chain
	starting at $\Pd$. In the following argument, we use the $(S,T)$--coordinates
	in (\ref{CoordShock-StrictConv}).
	
	To choose the radius for this chain, we note the following:
	
	\begin{lem}\label{lem:IntAroundPd}
		There exist points $\Pd^\pm\in \Gsh^0$ such that
		\begin{enumerate}[\rm (i)]
			\item\label{lem:IntAroundPd-i1}
			$\Pd$ lies on $\Gsh$ strictly between $\Pd^+$ and $\Pd^-${\rm :}
			$$
			T_A<T_{\Pd^-}<T_{\Pd}<T_{\Pd^+}<T_B;
			$$
			
			\item\label{lem:IntAroundPd-i2}
			Denoting by $\Gsh[P, Q]$ the
			segment of $\Gsh$ with endpoints
			$P$ and $Q$, then
			\begin{equation}\label{minInterv-Pd-strictConv}
			\begin{split}
			&\phi_\ee(\Pd)<\phi_\ee(P)<\phi_\ee(\Pd^-)
			\quad \;\mbox{if $P\in\Gsh[\Pd^-, \Pd]\setminus\{ \Pd^-, \Pd\}$},\\
			&\phi_\ee(\Pd)<\phi_\ee(P)<\phi_\ee(\Pd^+)
			\quad \;\mbox{if $P\in\Gsh[\Pd^+, \Pd]\setminus\{\Pd^+, \Pd\}$};
			\end{split}
			\end{equation}
			
			\item\label{lem:IntAroundPd-i3}
			$\ee\cdot \bn(P) <0$ for all $P\in \Gsh[\Pd^-, \Pd^+]$.
		\end{enumerate}
	\end{lem}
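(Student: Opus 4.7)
The plan is to choose $\Pd^\pm$ as small tangential perturbations of $\Pd$ along $\Gsh$, and verify the three requirements using: the strict sign information for $\phi_{\ee\bt}$ from \eqref{choseVecEE-strictConv-Eq}, the continuity of $\bn$ on $\Gsh^0$ (which is $C^2$ by assumption (A4)), and the inequality $\ee\cdot\bn(\Pd)<0$ from Lemma~\ref{choseVecEE-strictConv}.

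First, I would work in the $(S,T)$--coordinates of \eqref{CoordShock-StrictConv} and consider the restriction of $\phi_\ee$ to $\Gsh$:
\[
F(T) := \phi_\ee(f(T),T),\qquad T\in(T_A,T_B).
\]
By the chain rule and the expression for $\bt$ in \eqref{CoordShock-StrictConv},
\[
F'(T) = f'(T)\,\phi_{\ee S}(f(T),T) + \phi_{\ee T}(f(T),T) = \sqrt{1+(f'(T))^2}\;\phi_{\ee\bt}(f(T),T).
\]
Combining this with \eqref{choseVecEE-strictConv-Eq} yields $F'<0$ on $(T_\Pd-\varepsilon, T_\Pd)$ and $F'>0$ on $(T_\Pd, T_\Pd+\varepsilon)$ for the same $\varepsilon$ as in Lemma~\ref{choseVecEE-strictConv}, so $F$ is strictly decreasing on $[T_\Pd-\varepsilon, T_\Pd]$ and strictly increasing on $[T_\Pd, T_\Pd+\varepsilon]$. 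In particular, $\Pd$ is a strict local minimum of $\phi_\ee$ along $\Gsh$.

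Second, since $\Gsh$ is $C^2$ on $\Gsh^0$ by (A4), the unit normal $\bn$ depends continuously on $P\in\Gsh^0$; combined with $\ee\cdot\bn(\Pd)<0$, this produces $\varepsilon'>0$ such that $\ee\cdot\bn(f(T),T)<0$ whenever $|T-T_\Pd|\leq \varepsilon'$. I then fix $\varepsilon_0:=\min(\varepsilon,\varepsilon')$, shrunk further if needed so that $T_A<T_\Pd-\varepsilon_0$ and $T_\Pd+\varepsilon_0<T_B$, and define $\Pd^\pm:=(f(T_\Pd\pm\varepsilon_0),T_\Pd\pm\varepsilon_0)\in\Gsh^0$. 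Property (i) is then immediate, and (iii) holds by the choice of $\varepsilon'$. For (ii), if $P=(f(T),T)$ with $T\in(T_\Pd-\varepsilon_0, T_\Pd)$, the strict monotonicity of $F$ on $[T_\Pd-\varepsilon_0,T_\Pd]$ gives
\[
F(T_\Pd)<F(T)<F(T_\Pd-\varepsilon_0),
\]
i.e.\ $\phi_\ee(\Pd)<\phi_\ee(P)<\phi_\ee(\Pd^-)$; the case $T\in(T_\Pd,T_\Pd+\varepsilon_0)$ is symmetric.

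There is no genuine obstacle in this lemma: once Lemma~\ref{choseVecEE-strictConv} is in hand, everything reduces to two continuity-type choices of a small parameter and an application of the one-variable mean value theorem. The only point requiring some care is matching the sign convention for $\bt$ fixed in \eqref{CoordShock-StrictConv} with the one used to derive \eqref{choseVecEE-strictConv-Eq}, so that $F'$ and $\phi_{\ee\bt}$ have the same sign; this is bookkeeping but must be checked to avoid swapping the roles of $\Pd^+$ and $\Pd^-$.
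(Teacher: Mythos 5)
Your proposal is correct and follows essentially the same route as the paper: both deduce strict monotonicity of $T\mapsto\phi_\ee(f(T),T)$ on either side of $T_{\Pd}$ from the sign of $\phi_{\ee\bt}$ in \eqref{choseVecEE-strictConv-Eq} (via the same chain-rule identity with the factor $\sqrt{1+(f'(T))^2}$), then take $\Pd^\pm$ at distance $\eps$ in $T$ and shrink $\eps$ using continuity of $\bn$ and $\ee\cdot\bn(\Pd)<0$ to get (iii). The sign-convention check you flag is the only bookkeeping point, and you handle it consistently with \eqref{CoordShock-StrictConv}.
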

	
	\medskip
	\begin{proof}
		Recall the definition of $\bt$ in
		(\ref{CoordShock-StrictConv}). Then
		we use (\ref{choseVecEE-strictConv-Eq}) in Lemma \ref{choseVecEE-strictConv} to find that,
		for $\varepsilon>0$ defined there,
		\begin{align*}
		\frac{\md\phi_\ee(f(T),T)}{\md T}
        \begin{cases} < 0\quad\;
		\mbox{ if $T\in (T_{\Pd}-\eps, \; T_{\Pd})$},\\[1mm]
        > 0\;
		\quad \mbox{ if $T\in (T_{\Pd}, \; T_{\Pd}+\eps)$}.
        \end{cases}
		\end{align*}
		Thus, for points $\Pd^\pm:=(f(T_{\Pd}\pm \eps), \,T_{\Pd}\pm\eps)$,
		assertions (\ref{lem:IntAroundPd-i1})--(\ref{lem:IntAroundPd-i2}) hold.
		Furthermore, since $\ee\cdot \bn<0$ at $\Pd$,
		then, reducing $\eps$ if necessary, we obtain property (\ref{lem:IntAroundPd-i3}).
	\end{proof}
	
	Denote
	\begin{equation}\label{deltaFor-minInterv-Pd-strictConv}
	\delta =\min\big\{ \max_{P\in\Gsh[\Pd^-,\,\Pd]}\phi_\ee(P),\;
	\max_{P\in\Gsh[\Pd,\,\Pd^+]}\phi_\ee(P)\big\} - \phi_\ee(\Pd).
	\end{equation}
	Note that $\delta>0$ by (\ref{minInterv-Pd-strictConv}).
	Now let $r_1$ be constant $r_1^*$ from Lemma \ref{lem:chainsDoNotIntersect-MinMax}
	determined by $\delta$ from (\ref{deltaFor-minInterv-Pd-strictConv}).
	
	By Lemmas \ref{lem:ExistMinMaxChain} and \ref{PdNotLocMin},
	there exists a minimal chain with radius $r_1$
	which starts at $\Pd$. Denote its endpoint by $C^k$. Then
	\begin{equation}\label{Ck-on-bdry-strictConv}
	C^{k}\in\partial\Omega,
	\end{equation}
	and $C^k$ is a local minimum point of $\phi_\ee$ relative to $\overline\Omega$. Moreover,
	\begin{equation}\label{Ck-Pd-strictConv}
	\phi_\ee(\Pd)>\phi_\ee(C^k).
	\end{equation}
	
	Now we consider case by case all parts of the decomposition:
	$$
    \displaystyle \partial \Omega=\Gsh\bigcup\big(\bigcup_{i=0}^3 \hat{\Gamma}_i \big)
    $$
	defined in Framework (A)(iii) and assumption
	(A7) of Theorem \ref{thm:main theorem-strictUnif},
	and show that $C^k$ cannot lie on the corresponding
	part.
	Eventually, we reach a contradiction by showing
	that $C^k$ cannot lie anywhere on $\partial\Omega$.
	
	In the proof below, we note the following:
	
	\begin{rem}\label{symmetry-H9-rmk}
		We use condition {\rm (A10)}
		of Theorem {\rm \ref{thm:main theorem-strictUnif}} only in the proof of
		Lemma {\rm \ref{NotInGam2-2-cornerPt-Lem}}.
		The other conditions of Theorem {\rm \ref{thm:main theorem-strictUnif}} to be used in the proof
		below include Framework {\rm (A)},
		conditions {\rm (A1)}--{\rm (A6)} of
		Theorem {\rm \ref{thm:main theorem}},
		and {\rm (A7)}--{\rm (A9)} of Theorem {\rm \ref{thm:main theorem-strictUnif}}.
		These conditions are symmetric for
		$\hat \Gamma_0$ and $\hat \Gamma_3$,
		for $\hat\Gamma_1$  and $\hat \Gamma_2$, and for points $A$ and $B$. Also,
		$\delta$ in
		\eqref{deltaFor-minInterv-Pd-strictConv}
		is defined in a symmetric way with respect to the change of direction of $T$
		in \eqref{CoordShock-StrictConv}.
		This allows without loss of generality to make a
		particular choice between points $A$ and $B$, and the corresponding boundary segments
		in order to fix the notations, as detailed in several places below.
	\end{rem}

	\smallskip
	Now we consider all the cases
	for the location of $C^k$ on $\partial\Omega$.
	
	\begin{lem}\label{lem:strict convexity 16}
		$C^k\notin\overline{\hat\Gamma_0}\cup\overline{\hat\Gamma_3}$.
	\end{lem}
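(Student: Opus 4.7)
The plan is to argue by contradiction, assuming $C^k\in\overline{\hat\Gamma_0}$; the case of $\overline{\hat\Gamma_3}$ is symmetric by Remark \ref{symmetry-H9-rmk}. Since $\overline{\hat\Gamma_0}\neq\emptyset$, Framework~(A)(iv) together with (A7) forces $\hat\Gamma_0$ to share the endpoint $A$ with $\Gsh$. The first observation is that, by (A8), $\phi_\ee$ is constant on $\hat\Gamma_0$, so continuity of $\phi_\ee$ up to $A$ yields $\phi_\ee(A)=\phi_\ee(C^k)$. Combined with \eqref{Ck-Pd-strictConv} this gives the key inequality $\phi_\ee(A)<\phi_\ee(\Pd)$.

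Next I would invoke Lemma \ref{lem:chainsDoNotIntersect-MinMax} with ${\mathcal C}\subset\partial\Omega$ the connected boundary arc with endpoints $E_1:=\Pd$ and $E_2:=C^k$ that passes through $A$ (that is, ${\mathcal C}$ follows $\Gsh$ from $\Pd$ to $A$ and then $\hat\Gamma_0$ from $A$ to $C^k$). The minimal chain $\{C^0,\dots,C^k\}$ of radius $r_1$ plays the role of the minimal chain required by the lemma. Taking $H_1:=\Pd^-$ from Lemma \ref{lem:IntAroundPd}, the inequality $\phi_\ee(\Pd^-)>\phi_\ee(\Pd)$ from Lemma \ref{lem:IntAroundPd}(ii) gives a positive gap $\delta:=\phi_\ee(\Pd^-)-\phi_\ee(\Pd)$; after possibly shrinking $r_1$ to the $r_1^*$ supplied by Lemma \ref{lem:chainsDoNotIntersect-MinMax} for this $\delta$, all hypotheses are met. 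Since $\ee\cdot\bn(\Pd^-)<0$ by Lemma \ref{lem:IntAroundPd}(iii), Lemma \ref{lem:cannot be maximum} asserts that $\Pd^-$ is not a local maximum of $\phi_\ee$ relative to $\overline\Omega$, so Lemma \ref{lem:ExistMinMaxChain} produces a maximal chain for $\phi_\ee$ of radius $r_2\in(0,r_1]$ starting at $\Pd^-$. Lemma \ref{lem:chainsDoNotIntersect-MinMax} then forces its endpoint $Q$ to lie in ${\mathcal C}^0$; moreover $Q$ is a local maximum of $\phi_\ee$ relative to $\overline\Omega$ with $\phi_\ee(Q)>\phi_\ee(\Pd^-)$.

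Because $\phi_\ee\equiv\phi_\ee(C^k)$ on $\hat\Gamma_0$ and $\phi_\ee(Q)>\phi_\ee(\Pd^-)>\phi_\ee(C^k)=\phi_\ee(A)$, the point $Q$ cannot lie in $\overline{\hat\Gamma_0}$, so $Q$ lies strictly in $\Gsh^0$, on the arc of $\Gsh$ between $\Pd$ and $A$. The main obstacle is now to extract the contradiction at this local maximum on $\Gsh^0$: by Lemma \ref{lem:cannot be maximum} the case $\ee\cdot\bn(Q)<0$ is immediately excluded (using $\phi_{\bt\bt}\geq 0$ from the already-proved convexity of $\Gsh$, Theorem \ref{thm:main theorem} and Remark \ref{FB-levelSet-rmk}(i)), so it remains to handle $\ee\cdot\bn(Q)\geq 0$. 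The delicate point is that the direction of $\bn$ along $\Gsh$ is not globally controlled outside a neighborhood of $\Pd$, so I would complete the argument by iterating: shrink ${\mathcal C}$ so that its shock portion lies inside the $\eps$-neighborhood of $\Pd$ supplied by Lemma \ref{lem:IntAroundPd}(iii), use the constancy of $\phi_\ee$ on $\hat\Gamma_0$ to append auxiliary points to the tail of the minimal chain (thereby preserving the conclusion of Lemma \ref{lem:chainsDoNotIntersect-MinMax} for the shrunk ${\mathcal C}$), and re-run the maximal-chain construction to drive $Q$ into the region where $\ee\cdot\bn<0$. Then Lemma \ref{lem:cannot be maximum} yields the contradiction, completing the proof that $C^k\notin\overline{\hat\Gamma_0}\cup\overline{\hat\Gamma_3}$.
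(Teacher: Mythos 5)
Your reduction is sound as far as it goes, and it parallels the first half of the paper's argument: by (A8) and continuity, $\phi_\ee(A)=\phi_\ee(C^k)<\phi_\ee(\Pd)$, and Lemma \ref{lem:chainsDoNotIntersect-MinMax} (with the minimal chain from $\Pd$ to $C^k$ and a point near $\Pd$ with a definite gap $\delta$) confines the endpoint $Q$ of a maximal chain to the boundary arc, and the constancy of $\phi_\ee$ on $\hat\Gamma_0$ then pushes $Q$ onto $\Gsh^0$ between $\Pd$ and $A$. Two remarks on this part. First, the paper starts the maximal chain not at $\Pd^-$ but at the point $P_1$ maximizing $\phi_\ee$ over the shock arc, so that the conclusion $\phi_\ee(Q)>\phi_\ee(P_1)$ is itself the contradiction; with your choice of $\Pd^-$, $\phi_\ee(Q)>\phi_\ee(\Pd^-)$ proves nothing and everything hinges on the sign of $\ee\cdot\bn(Q)$. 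Second, that sign is controlled in the paper not by the local $\eps$-neighborhood of Lemma \ref{lem:IntAroundPd}(\ref{lem:IntAroundPd-i3}), but globally: using $f''\le 0$ and Lemma \ref{shockConvex-strict-finite-degen}, $f'$ is strictly decreasing, so $\ee\cdot\bn<0$ propagates from $\Pd$ along the whole arc \emph{provided} $\ee\cdot\bn\le 0$ at the shock endpoint adjacent to the sonic-type boundary piece; that is exactly the first of the two claims in the paper's proof, and it is the only case your scheme can close. (A small additional slip: $r_1$ is the radius of the already-constructed chain ending at $C^k$, so you cannot ``shrink $r_1$'' afterwards; in the paper $r_1$ is fixed in advance via the $\delta$ of \eqref{deltaFor-minInterv-Pd-strictConv}, which is no larger than your $\delta$, so this is repairable but must be set up beforehand.)

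The genuine gap is the complementary case, when $\ee\cdot\bn>0$ at that shock endpoint so that $\ee\cdot\bn$ changes sign at a (unique) point $P_0$ of the arc --- precisely your unresolved case $\ee\cdot\bn(Q)\ge 0$ --- and your proposed repair does not work. You cannot ``append auxiliary points along $\hat\Gamma_0$'' to the minimal chain: Definition \ref{def:minimal chain} requires $\phi_\ee(C^{i+1})=\min_{\overline{B_r(C^i)\cap\Omega}}\phi_\ee<\phi_\ee(C^i)$ strictly, while $\phi_\ee$ is constant on $\hat\Gamma_0$ and the minimum over such balls is in general attained inside $\Omega$, so no such extension is a chain. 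Nor can you shrink ${\mathcal C}$: in Lemma \ref{lem:chainsDoNotIntersect-MinMax} the arc ${\mathcal C}$ is dictated by the endpoints of the minimal chain, hence necessarily contains all of $\Gsh[\Pd,A]$, and the lemma only localizes $Q$ to ${\mathcal C}^0$; there is no mechanism forcing successive re-runs to land in the small neighborhood where $\ee\cdot\bn<0$. In the paper this case consumes most of the proof: one shows $\ee=-\bt(P_0)$, extracts points $P_0^{\pm}$ and a uniform gap $\hat\delta$ from the sign of $\phi_{\ee\bt}$, and runs an infinite alternation of maximal and minimal chains (with radii calibrated through Lemmas \ref{lem:chainsDoNotIntersect-MinMax}--\ref{lem:chainsDoNotIntersect-MinMin}), producing a strictly monotone sequence of local minimum points of $\phi_\ee$ on $\Gsh$; at the accumulation point all $T$-derivatives of $\phi_\ee(f(T),T)$ vanish, and real analyticity (Lemma \ref{lem:analyticity}) forces $\phi_\ee\equiv\mathrm{const}$ on $\Gsh$, which yields $D\phi(\Pd)=0$ and contradicts the entropy condition (A1). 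Nothing in your sketch substitutes for this step, so the proof is incomplete.
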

	
	\begin{proof}
		On the contrary, if $C^k\in\overline{\hat\Gamma_0}\cup\overline{\hat\Gamma_3}$,
		we now show in the next four steps that it leads to a contradiction.
		
		\smallskip
		{\bf 1}. We first fix the notations.
		In this proof, we do not use condition (A10)
		of Theorem \ref{thm:main theorem-strictUnif}.
		Thus, as discussed
		in Remark \ref{symmetry-H9-rmk},  we can assume
		without loss of generality that $C^k\in\overline{\hat\Gamma_3}$ and
		$B=\Gsh\cap\overline{\hat\Gamma_3}$.
		
		From \eqref{Ck-Pd-strictConv} and condition (A8) of
		Theorem \ref{thm:main theorem-strictUnif},
		\begin{equation}\label{lem:strict convexity 16-1}
		\phi_\ee(\Pd)>\phi_\ee(C^k)=\phi_\ee(B).
		\end{equation}
		
		We now prove
		Lemma \ref{lem:strict convexity 16} by showing the two claims below: Claims 5.7.1--5.7.2.
		
		\medskip
		{\bf 2}. {\bf Claim 5.7.1.} It is impossible that $\ee\cdot \bn(B)\leq0$ at $B$;
		see Fig. \ref{figure:proof of claim-fst}
		for the illustration of the argument below.
		
		\begin{figure}[!ht]
			\centering
			\includegraphics[width=0.5\textwidth]{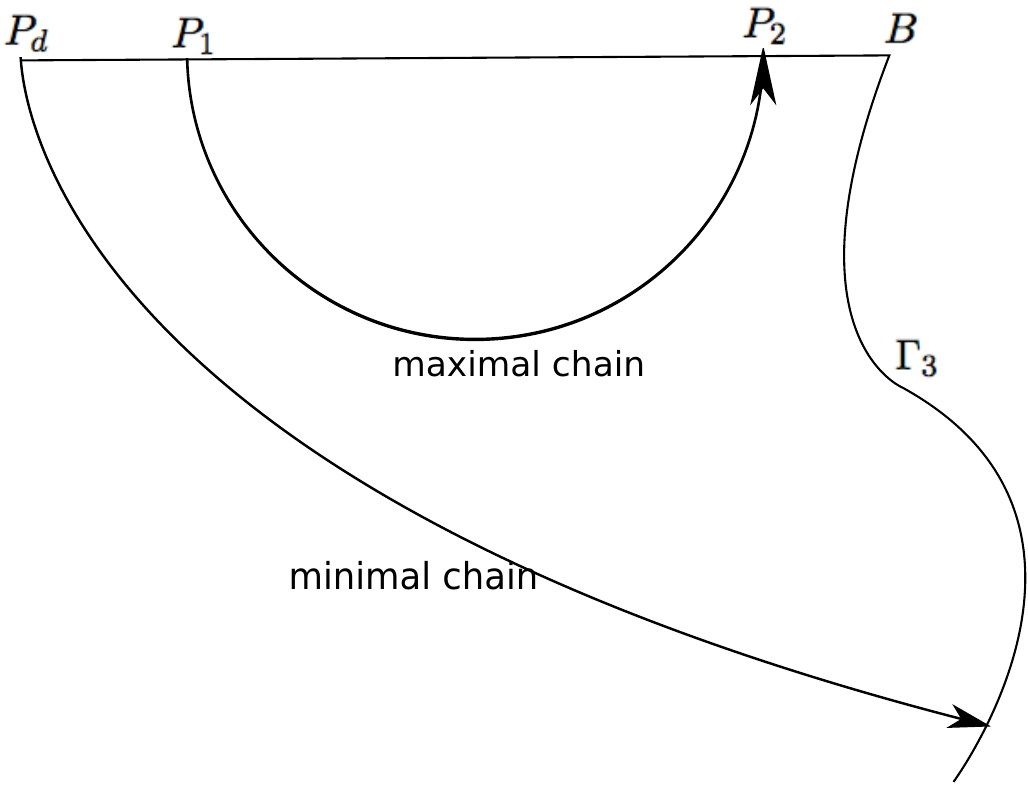}
			\caption{Proof of Claim {\rm 5.7.1}}
			\label{figure:proof of claim-fst}
		\end{figure}
		
		We first show that, if $\ee\cdot \bn(B) \leq0$, then, since  $\ee\cdot \bn(\Pd)<0$, the
		strict convexity of $\Gsh$ (as in Lemma \ref{shockConvex-strict-finite-degen})
		and the graph structure (\ref{CoordShock-StrictConv})
		imply that
		$\bn\cdot \ee<0$ at any point lying strictly between
		$\Pd$ and $B$ along $\Gsh$.
		Indeed, using (\ref{CoordShock-StrictConv})
		and writing $\ee=(e_1, e_2)$ in the $(S,T)$--coordinates, we have
		\begin{equation}\label{NdotE-expression}
		\bn(P)\cdot \ee=\frac{f'(T)e_2-e_1}{\sqrt{(f'(T))^2+1}}\qquad\,\,
		\mbox{ for $P=(f(T), T)$}.
		\end{equation}
		Thus,
		$$
		f'(T_{\Pd})e_2-e_1<0,\qquad f'(T_{B})e_2-e_1\le 0.
		$$
		Using $f''(T)\le 0$ and
		Lemma \ref{shockConvex-strict-finite-degen}, we have
		$$
		f'(T_{\Pd})<f'(T)<f'(T_B) \qquad\,\, \mbox{for all $T\in(T_{\Pd}, T_B)$}.
		$$
		Then
		it follows that
		$$
		f'(T)e_2-e_1<0 \qquad\,\,  \mbox{if $T\in [T_{\Pd}, T_B)$}.
		$$
		Therefore, we have
		\begin{equation}\label{NdotE-neg-inside}
		\bn(f(T), T)\cdot \ee<0 \qquad\; \mbox{ for all $T\in [T_{\Pd}, T_B)$}.
		\end{equation}
		
		\medskip
		Now we show that (\ref{NdotE-neg-inside}) leads to a contradiction.
		Let $P_1\in \Gsh[\Pd,\,B]$ be such that
		\begin{equation}\label{Claim571ineq1}
		\phi_\ee(P_1)=\max_{P\in\Gsh[\Pd,\,B]}\phi_\ee(P).
		\end{equation}
		Since $\Gsh[\Pd,\,\Pd^+]\subset \Gsh[\Pd,\,B]$ by
		Lemma \ref{lem:IntAroundPd}(\ref{lem:IntAroundPd-i1}),
		we obtain from (\ref{deltaFor-minInterv-Pd-strictConv}) that
		\begin{equation}\label{P1-Pd-delta}
		\phi_\ee(P_1)\ge \phi_\ee(\Pd)+\delta,
		\end{equation}
		so that $P_1\ne \Pd$.
		Also, by (\ref{lem:strict convexity 16-1}) and (\ref{P1-Pd-delta}),
		we see that
		$P_1\ne B$. Thus, $\bn(P_1)\cdot\ee<0$ by (\ref{NdotE-neg-inside}).
		Now, by Lemma \ref{lem:cannot be maximum},
		$P_1$ cannot be a local maximum point of $\phi_\ee$ relative to
		$\overline\Omega$.
		Therefore, by Lemma \ref{lem:ExistMinMaxChain}, there
		exists a maximal chain of radius $r_1$, starting from $P_1$ and ending at some point
		$P_2\in\partial\Omega$ which is a local maximum point relative to
		$\overline\Omega$, and $\phi_\ee(P_1)< \phi_\ee(P_2)$.
		
		Next, we show that
		\begin{equation}\label{P2-location-strictConv}
		\mbox{$P_2$ $\,\,$ lies
			on $\Gsh^0$ strictly between $\Pd$ and $B$.}
		\end{equation}
		Indeed, recall that there exists a minimal chain of radius $r_1$
		from $\Pd$ to $C^k\in\hat\Gamma_3$.
		Also, $P_1$ lies on $\Gsh^0$ strictly between $\Pd$ and $B$.
		Then, from (\ref{P1-Pd-delta}) and
		the choice of $r_1$ (see the lines after (\ref{deltaFor-minInterv-Pd-strictConv})),
		we obtain from
		Lemma \ref{lem:chainsDoNotIntersect-MinMax} that
		either (\ref{P2-location-strictConv}) holds or
		$P_2$
		lies on $\overline{\hat\Gamma_3}$ between $B$ and $C^k$ (possibly including $B$).
		However, we use
		condition (A8) of
		Theorem \ref{thm:main theorem-strictUnif},
		(\ref{lem:strict convexity 16-1}), and (\ref{P1-Pd-delta}) to obtain that,
		for any $P\in \overline{\hat\Gamma_3}$,
		$$
		\phi_\ee(P)=\phi_\ee(B)<\phi_\ee(\Pd)<\phi_\ee(P_1)<\phi_\ee(P_2),
		$$
		which implies that $P_2\ne P$.  This proves  (\ref{P2-location-strictConv}).
		
		However, (\ref{P2-location-strictConv}) contradicts
		(\ref{Claim571ineq1}) since $\phi_\ee(P_1)< \phi_\ee(P_2)$. Now Claim 5.7.1
		is proved.
		
		\medskip
		{\bf 3}. {\bf Claim 5.7.2.}
		It is impossible that $\ee\cdot\bn(B)>0$;
		see Figs. \ref{figure:proof of claim-scnd-init}--\ref{figure:proof of claim-scnd-k}
		for the illustration
		of the argument below.
		
		\smallskip
		If  $\ee\cdot\bn(B)>0$, then, using $\ee\cdot\bn(\Pd)<0$, there exists a point
		$P_0\in\Gsh[\Pd, B]$  so that  $\ee\cdot\bn(P_0)=0$.
		
		Then, from (\ref{NdotE-expression}),
		$$
		-e_1+ f'(T)e_2=0 \qquad\,\, \mbox{at $T=T_{P_0}$}.
		$$
		Now, since $f''(T)\le 0$ by the convexity of $\Gsh$,
		we use Lemma \ref{shockConvex-strict-finite-degen} to
		find that the function: $T\to -e_1+ f'(T)e_2$ is strictly monotone on $(T_A, T_B)$, which implies that
		point $P_0$ is unique.
		
		\begin{figure}[!ht]
			\centering
			\includegraphics[width=0.5\textwidth]{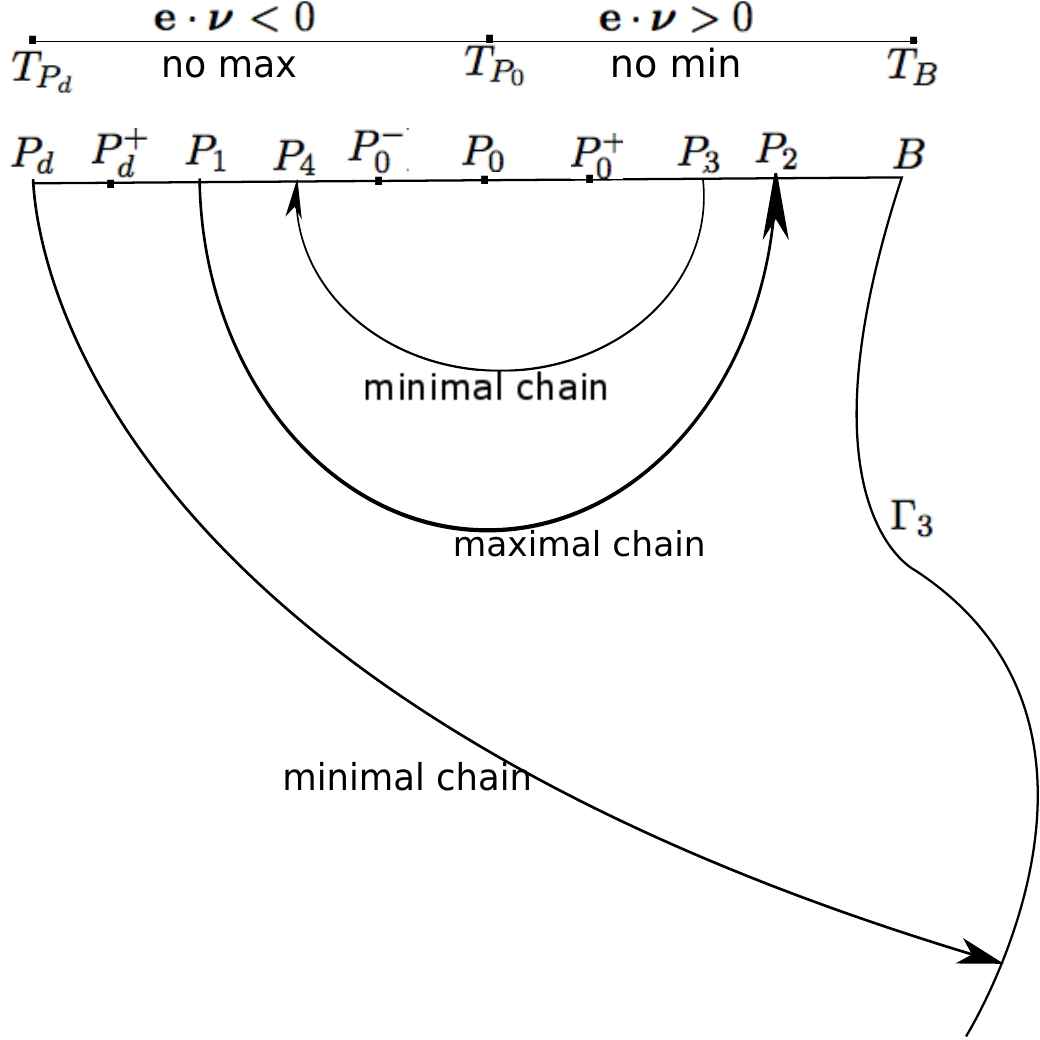}
			\caption{Proof of Claim {\rm 5.7.2}:
				The initial step of the iteration procedure}
			\label{figure:proof of claim-scnd-init}
		\end{figure}
		
		Recall that $\ee\cdot\bn(\Pd)<0$ and $\ee\cdot\bn(P_0)= 0$. Then,
		following
		the proof of (\ref{NdotE-neg-inside}), we have
		\begin{equation}\label{NdotE-neg-inside-P0}
		\bn(f(T), T)\cdot \ee<0 \qquad\; \mbox{ for all $T\in [T_{\Pd}, T_{P_0})$}.
		\end{equation}
		Similarly, using $\ee\cdot\bn(P_0)= 0$ and $\ee\cdot\bn(B) > 0$,
		and
		arguing similar to
		the proof of (\ref{NdotE-neg-inside}), we have
		\begin{equation}\label{NdotE-pos-P0B}
		\bn(f(T), T)\cdot \ee>0 \qquad\; \mbox{ for all $T\in (T_{P_0}, T_B]$}.
		\end{equation}
		From (\ref{NdotE-neg-inside-P0})--(\ref{NdotE-pos-P0B})
		and Lemma \ref{lem:cannot be maximum}, we conclude that
		\begin{equation}\label{NdotE-pos-neg-MaxMin}
		\begin{split}
		&\mbox{If $P\in\partial\Omega$ is a
			local maximum (resp. minimum) point of $\phi_\ee$ relative}\\
		&\mbox{to $\overline\Omega$, then $P\notin(\Gsh[\Pd, P_0])^0$
          (resp. $P\notin (\Gsh[P_0, B])^0$)}.
		\end{split}
		\end{equation}
		
		\smallskip
		Next, since $\ee\cdot\bn(P_0)=0$, then $\ee=\pm \bt(P_0)$.
		Moreover, by (\ref{CoordShock-StrictConv}),
		we have
		$$
		\bn(\Pd)\cdot \bt(P_0)=
		\frac{f'(T_{\Pd})-f'(T_{P_0})}
		{\sqrt{\big((f'(T_{\Pd}))^2+1\big)\big((f'(T_{P_0}))^2+1\big)}}\ge 0,
		$$
		because $f''(T)\le 0$ and $T_{\Pd}\le T_{P_0} \le T_B$.
		Then, since $\bn(\Pd)\cdot \ee<0$, we conclude
		\begin{equation}\label{EequalsMinTau}
		\ee=-\bt(P_0).
		\end{equation}
		With this, recalling that $\phi_{\bt\bt}\ge 0$
		on $\Gsh$, we use  \eqref{eq:phietau}--\eqref{sign:gtt}
		and Lemma
		\ref{shockConvex-strict-finite-degen} to obtain the existence of two points $P_0^-$ and $P_0^+$ such that
		$P_0^\pm=(f(T_{P_0^\pm}), \,T_{P_0^\pm})\in\Gsh([\Pd, B])^0$ and
		\begin{align}
		\label{ine:P0-coord}
		&T_{\Pd}<T_{P_0^-}<T_{P_0}<T_{P_0^+}<T_B, \\
		\label{ine:P0}
		&\phi_{\ee\bt}(P)<0 \quad \;\;\mbox{ for all $P\in\Gsh[P_0^-, P_0^+]$ and
			$P\ne P_0$}.
		\end{align}
		Then there exists $\hat\delta>0$ such that
		\begin{equation}\label{ine:P0-pr}
		\phi_\ee(P_0^-)-\hat\delta\ge
		\phi_\ee(P_0)\ge \phi_\ee(P_0^+)+\hat\delta.
		\end{equation}
		Moreover,  combining (\ref{NdotE-pos-neg-MaxMin}) with (\ref{ine:P0}), we conclude
		\begin{equation}\label{NdotE-pos-neg-MaxMin-Up}
		\begin{split}
		&\mbox{If $P\in\partial\Omega$ is a
			local maximum (resp. minimum) point of $\phi_\ee$ relative}\\
		&\mbox{to $\overline\Omega$, then $P\notin\Gsh[\Pd, P_0^+]\setminus\{\Pd\}$
          (resp. $P\notin \Gsh[P_0^-, B]\setminus\{B\}$)}.
		\end{split}
		\end{equation}
		Note that (\ref{NdotE-pos-neg-MaxMin-Up}) improves
		(\ref{NdotE-pos-neg-MaxMin}), which follows from (\ref{ine:P0-coord}).
		
		\smallskip
		Let $P_1\in\Gsh[\Pd,\, P_0]$ such that
		$$
		\phi_\ee(P_1)=\max_{P\in\Gsh[\Pd,\, P_0]}\phi_\ee(P).
		$$
		By Lemma \ref{lem:IntAroundPd}(\ref{lem:IntAroundPd-i1})--(\ref{lem:IntAroundPd-i2})
		and (\ref{deltaFor-minInterv-Pd-strictConv}),
		$$
		T_{\Pd}<T_{\Pd^+}\le T_{P_1}, \qquad \phi_\ee(P_1)\ge \phi_\ee(\Pd)+\delta.
		$$
		Moreover, from \eqref{ine:P0-coord} and \eqref{ine:P0-pr}, we obtain
		$$
		\phi_\ee(P_1)\ge\phi_\ee(P_0^-)\ge \phi_\ee(P_0)+\hat\delta.
		$$
		Also, by (\ref{ine:P0}), $T_{P_1}\le T_{P_0^-}$.
		Combining all these facts, we have
		\begin{align}\label{restrictP1}
		&T_{\Pd}<T_{\Pd^+}\le T_{P_1}\le T_{P_0^-},\\
		\label{ineqP1}
		&\phi_\ee(P_1)\ge \phi_\ee(\Pd)+\delta,\quad
		\phi_\ee(P_1)\ge \phi_\ee(P_0)+\hat\delta.
		\end{align}
		From (\ref{NdotE-pos-neg-MaxMin-Up}) with (\ref{ine:P0-coord}) and (\ref{restrictP1}),
		$P_1$ cannot be a local maximum point of $\phi_\ee$ with respect to $\overline\Omega$.
		
		\smallskip
		Therefore,  by Lemma \ref{lem:ExistMinMaxChain}, we can
		construct a maximal chain   of any radius $r_2\in(0, r_1]$ starting from $P_1$.
		We choose $r_2$ so that it works in the argument below. For this,
		we use constant $\hat\delta$ from \eqref{ine:P0-pr},
		choose $\tilde r_2$ the smaller constant $r_1^*$ from
		Lemmas \ref{lem:chainsDoNotIntersect-MinMax}--\ref{lem:chainsDoNotIntersect-MaxMin}
		determined by $\hat\delta$, and then define
		$$
		r_2:=\min\{r_1, \tilde r_2\}.
		$$
		
		Fix a maximal chain of radius $r_2$ starting from $P_1$.
		It ends at some point
		$P_2\in\partial\Omega$ that is a local maximum point of
		$\phi_\ee$ relative to $\overline\Omega$.
		Moreover, by \eqref{ineqP1},
		$\phi_\ee(P_1)\ge \phi_\ee(\Pd)+\delta$; that is,  (\ref{P1-Pd-delta}) holds in the
		present case.
		Since $r_2\le r_1$, then the proof of (\ref{P2-location-strictConv})
		works in the present case so that $P_2$
		lies on $\Gsh^0$ strictly between $\Pd$ and $B$.
		Since $P_2$ is a local maximum point of $\phi_\ee$ relative to $\overline\Omega$,
		we obtain from (\ref{NdotE-pos-neg-MaxMin-Up})  with (\ref{ine:P0-coord})
		that
		$P_2$ lies strictly between $P_0^+$ and $B$  on $\Gsh$.
		Combining with (\ref{ineqP1}),
		we have
		\begin{equation}\label{strictConv-P0-P1-P2}
		T_{P_2}\in(T_{P_0^+},\,T_B)\subset (T_{P_0},\,T_B),
		\qquad \phi_\ee(P_2)>\phi_\ee(P_1)>\phi_\ee(P_0).
		\end{equation}
		
		Let $P_3$ be such that
		$$
		T_{P_3}\in [T_{P_0},T_{P_2}], \qquad \phi_\ee(P_3)=\min_{T_P\in[T_{P_0},T_{P_2}]}\phi_\ee(P).
		$$
		By (\ref{ine:P0})--(\ref{ine:P0-pr})
		and (\ref{ineqP1})--(\ref{strictConv-P0-P1-P2}),
		\begin{align}\label{restricP3}
		&T_{P_3}\in (T_{P_0^+},T_{P_2}],\\
		\label{restricP3-pr}
		&
		\phi_\ee(P_3)<\phi_\ee(P_0^+)\le\phi_\ee(P_0)-\hat\delta<\phi_\ee(P_1)-\hat\delta
		<\phi_\ee(P_2)-\hat\delta.
		\end{align}
		Then, from (\ref{NdotE-pos-neg-MaxMin-Up}) combined with (\ref{ine:P0-coord})
		and (\ref{strictConv-P0-P1-P2}),
		$P_3$ cannot be a local minimum point of $\phi_\ee$
		relative to $\overline\Omega$.
		
		Therefore,  there exists a minimal chain of radius $r_2$ starting from $P_3$ and ending at
		$P_4\in \partial\Omega$. Recall that there exists a maximal chain
		of radius $r_2$ from $P_1$ to $P_2$. Also, it follows from
		(\ref{restricP3-pr})
		that $P_3\ne P_2$ so that $P_3$ lies in $(\Gsh[P_1, P_2])^0$.
		Moreover,
		$\phi_\ee(P_3)\le\phi_\ee(P_1)-\hat\delta$ by (\ref{restricP3-pr}).
		Using the choice of $r_2$ and
		Lemma \ref{lem:chainsDoNotIntersect-MaxMin}, we conclude that
		$P_4\in (\Gsh[P_1, P_2])^0$ and is a local minimum point
		of $\phi_\ee$ relative to $\overline\Omega$. Then,
		from (\ref{NdotE-pos-neg-MaxMin-Up}) combined with (\ref{ine:P0-coord}),
		(\ref{restrictP1}),
		and (\ref{strictConv-P0-P1-P2}), we obtain
		\begin{equation}\label{locationOfP4}
		P_4\in (\Gsh[P_1, P_0^-])^0.
		\end{equation}
		Moreover, combining the facts about the locations of points discussed above together, we have
		\begin{equation}\label{locations-TP1-TP4}
		T_{\Pd}<T_{\Pd^+}\le T_{P_1}<T_{P_4}<T_{P_0^-}<T_{P_0^+}<T_{P_3}<T_{P_2}<T_{B}.
		\end{equation}
		
		\medskip
		Now we follow the previous argument for defining points
		$P_1$, \dots, $P_4$ inductively to
		construct points $P_{4k+1}$, \dots, $P_{4k+4}$ for $k=1, 2, \dots$, as follows
		({\it cf}. Fig. \ref{figure:proof of claim-scnd-k}):
		
		\begin{figure}[!ht]
			\centering
			\includegraphics[width=0.5\textwidth]{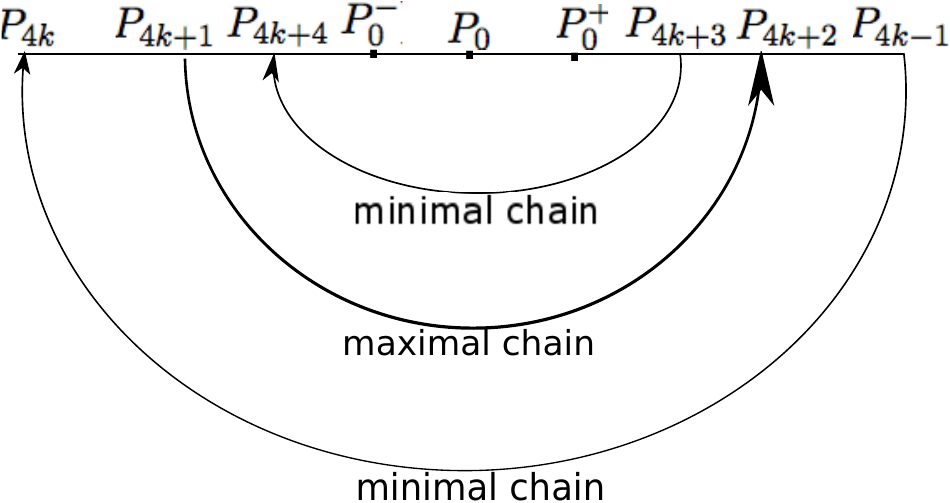}
			\caption{Proof of Claim {\rm 5.7.2}:
				The k-th step of the iteration procedure}
			\label{figure:proof of claim-scnd-k}
		\end{figure}
		
		Fix integer $k\ge 1$ and assume that points
		$P_{4k-1}$ and $P_{4k}$ have been constructed
		with the following properties:
		\begin{align}
		\label{inductAssumptK-1}
		&P_{4k-1}\in(\Gsh[P_0^+, B])^0, \;\;
		P_{4k}\in(\Gsh[\Pd, P_0^-])^0, \\[1mm]
		\label{inductAssumptK-3}
		&\phi_\ee(P_{4k-1})\le \phi_\ee(P_0)-\hat\delta,\\[1mm]
		\label{inductAssumptK-4}
		&\mbox{There exists a minimal chain of radius $r_2$ from $P_{4k-1}$ to $P_{4k}$}.
		\end{align}
		From (\ref{ine:P0-coord}), it follows
		that (\ref{inductAssumptK-1}) can be written as
		\begin{equation}\label{inductAssumptK-1-pr}
		T_{\Pd}< T_{P_{4k}}<T_{P_0^-}<T_{P_0}<T_{P_0^+}<T_{P_{4k-1}}<T_B.
		\end{equation}
		
		We first notice that, for $k=1$,  points $P_3=P_{4k-1}$ and $P_4=P_{4k}$
		satisfy conditions
		(\ref{inductAssumptK-1})--(\ref{inductAssumptK-4}).
		Indeed, for (\ref{inductAssumptK-1}), the first inclusion follows from (\ref{restricP3}) combined with
		(\ref{strictConv-P0-P1-P2}), while the second inclusion follows from
		(\ref{locationOfP4}) combined with (\ref{restrictP1}).
		Property
		(\ref{inductAssumptK-4}) for $P_3$ and $P_4$ follows
		directly from the definition of these points above,
		and (\ref{inductAssumptK-3}) for $P_3$ follows from
		(\ref{restricP3-pr}). Thus, we have the starting point for the induction.
		
		Now, for $k=1,2,\dots$, given $P_{4k-1}$ and $P_{4k}$, we construct
		$P_{4k+1}$, \dots, $P_{4k+4}$. Choose
		$$
		P_{4k+1}\in \Gsh[P_{4k}, P_0]\;
         \mbox{ so that $\phi_\ee(P_{4k+1})=\max_{P\in \Gsh[P_{4k}, P_0]} \phi_\ee(P)$}.
		$$
		Combining (\ref{ine:P0-pr}) with
		(\ref{inductAssumptK-3})--(\ref{inductAssumptK-1-pr}),
		we obtain
		\begin{equation}\label{P-4kp1-estimates}
		\phi_\ee(P_{4k+1})\ge\phi_\ee(P_0^-)\ge\phi_\ee(P_0)+\hat\delta\ge
		\phi_\ee(P_{4k-1})+2\hat\delta>\phi_\ee(P_{4k})+2\hat\delta.
		\end{equation}
		In particular, $P_{4k+1}\ne P_{4k}$.
		Then, from (\ref{ine:P0}) and (\ref{inductAssumptK-1-pr}),
		\begin{align}
		\label{P-4kp1-locat}
		&T_{P_{4k+1}}\in (T_{P_{4k}}, T_{P_0^-}).
		\end{align}
		From
		(\ref{NdotE-pos-neg-MaxMin-Up}),
		\begin{align}
		\label{P-4kp1-NlocMax}
		&\mbox{$P_{4k+1}$ is not a local maximum point of $\phi_\ee$
			relative to $\overline\Omega$.}
		\end{align}
		Thus, there exists a maximal chain of radius $r_2$ starting at $P_{4k+1}$
		and ending at some point
		$P_{4k+2}\in\partial\Omega$, which is a local maximum point of
		$\phi_\ee$ relative to $\overline\Omega$. Moreover,
		\begin{equation}\label{endOfChainIneq-4kPl2}
		\phi_\ee(P_{4k+2})>\phi_\ee(P_{4k+1}).
		\end{equation}
		
		By (\ref{P-4kp1-estimates}), $\phi_\ee(P_{4k+1})\ge\phi_\ee(P_{4k-1})+2\hat\delta$.
		With this,
		using (\ref{inductAssumptK-4})--(\ref{inductAssumptK-1-pr}), (\ref{P-4kp1-locat}),
		the choice of $r_2$,
		and Lemma \ref{lem:chainsDoNotIntersect-MinMax}, we obtain
		$$
		P_{4k+2}\in (\Gsh[P_{4k}, P_{4k-1}])^0.
		$$
		Since $P_{4k+2}$ is a local maximum point of
		$\phi_\ee$ relative to $\overline\Omega$, we use
		(\ref{NdotE-pos-neg-MaxMin-Up}) and (\ref{inductAssumptK-1-pr})
		to obtain
		\begin{equation}\label{P-4kp2-locat}
		T_{P_{4k+2}}\in (T_{P_0^+}, T_{P_{4k-1}}).
		\end{equation}
		Now choose
		$$
		P_{4k+3}\in \Gsh[ P_0, P_{4k+2}]\quad
        \mbox{ so that $\phi_\ee(P_{4k+3})=\min_{P\in \Gsh[P_0, P_{4k+2}]} \phi_\ee(P)$}.
		$$
		Note that $T_{P_0^+}\in (T_{P_0}, T_{P_{4k+2}})$ by (\ref{inductAssumptK-1-pr})
		and (\ref{P-4kp2-locat}).
		Then, from  the definition of $P_{4k+3}$, (\ref{ine:P0-pr}), and (\ref{P-4kp1-estimates}),
		\begin{equation}\label{P-4kp3-estimates}
		\phi_\ee(P_{4k+3})\le\phi_\ee(P_0^+)\le\phi_\ee(P_0)-\hat\delta\le
		\phi_\ee(P_{4k+1})-2\hat\delta.
		\end{equation}
		By (\ref{endOfChainIneq-4kPl2}) and (\ref{P-4kp3-estimates}),
		$\phi_\ee(P_{4k+3})<\phi_\ee(P_{4k+2})$ so that $P_{4k+3}\ne P_{4k+2}$.
		Also, by (\ref{ine:P0-coord})--(\ref{ine:P0}),
		$P_{4k+3}\notin \Gsh[P_0, P_0^+]$.
		Then, using (\ref{P-4kp1-locat}), we have
		\begin{equation}\label{P-4kp3-locat}
		T_{P_{4k+3}}\in ( T_{P_0^+}, T_{P_{4k+2}})\subset (T_{P_{4k+1}}, T_{P_{4k+2}}).
		\end{equation}
		
		In particular, $T_{P_{4k+3}}\in ( T_{P_0}, T_B)$.
		Thus, by (\ref{NdotE-pos-neg-MaxMin-Up}) and (\ref{inductAssumptK-1-pr}),
		$T_{P_{4k+3}}$ is not  a local minimum point of
		$\phi_\ee$ relative to $\overline\Omega$.
		Then there exists a minimal chain of radius $r_2$ starting at $P_{4k+3}$
		and ending at some point
		$P_{4k+4}\in\partial\Omega$ that is a local minimum point of
		$\phi_\ee$ relative to $\overline\Omega$.
		Since there exists a maximal chain of radius $r_2$ from $P_{4k+1}$
		to $P_{4k+2}$, we use (\ref{P-4kp3-estimates})--(\ref{P-4kp3-locat}) and
		Lemma \ref{lem:chainsDoNotIntersect-MaxMin} to conclude that
		$P_{4k+4}\in (\Gsh[P_{4k+1},P_{4k+2}])^0$.
		Since $P_{4k+4}$ is a local minimum point of
		$\phi_\ee$ relative to $\overline\Omega$, we use
		(\ref{NdotE-pos-neg-MaxMin-Up}), (\ref{inductAssumptK-1-pr}), and (\ref{P-4kp1-locat}) to obtain
		\begin{equation}\label{P-4kp4-locat}
		T_{P_{4k+4}}\in (T_{P_{4k+1}}, T_{P_0^-})\subset (T_{\Pd}, T_{P_0^-}).
		\end{equation}
		From (\ref{P-4kp3-locat}) combined with \eqref{inductAssumptK-1-pr} and \eqref{P-4kp2-locat},
		$T_{P_{4k+3}}\in ( T_{P_0^+}, T_B)$. From this
		and
		(\ref{P-4kp4-locat}), we see that
		points $P_{4k+3}$ and $P_{4k+4}$ satisfy
		(\ref{inductAssumptK-1})
		with
		$k+1$ instead of $k$.
		Also, from (\ref{inductAssumptK-1-pr}), (\ref{P-4kp1-locat}), and
		(\ref{P-4kp4-locat}),
		\begin{equation}\label{P-4kp4-locat-next}
		T_{\Pd}<T_{4k}<T_{P_{4k+4}} < T_{P_0^-}.
		\end{equation}
		
		Therefore, we obtain local minimum points  $P_{4k}\in\Gsh^0$,  $k=1,2,\dots$,
		of $\phi_\ee$ which satisfy (\ref{P-4kp4-locat-next}) for each $k$.
		Then there exists a limit $P^*=\lim_{k\to\infty}P_{4k}$ with
		$T_{P^*}\in [T_{\Pd}, T_{P_0^-}]$,
		which implies
		$$
		P^*\in \Gsh^0.
		$$
		Since $P_{4k}\in \Gsh$ is a local minimum point of $\phi_\ee$,
		$\partial_{\bt} \phi_\ee(P_{4k})=0$, so that
		\begin{equation*}
		\frac{\md\phi_\ee(f(T),T)}{\md T}\Big|_{T=T_{P_{4k}}}=0\qquad\,\,
		\mbox{ for $k=1,2,\dots$}.
		\end{equation*}
		From this, since $\{T_{P_{4k}}\}$ is a strictly increasing sequence by (\ref{P-4kp4-locat-next}), we obtain
		\begin{equation}\label{equ:df-param}
		\frac{\md^{n} \phi_\ee(f(T),T)}{\md T^n}\Big|_{T=T_{P^*}}=0
		\qquad\,\, \mbox{ for $n=1,2,\dots$}.
		\end{equation}
		The analyticity of functions $\phi_\ee$ and $f(T)$, shown in
		Lemma \ref{lem:analyticity}, implies that the function:
		$T\mapsto \phi_\ee(f(T),T)$ is real analytic on $(T_A,T_B)$. Then
		we conclude from (\ref{equ:df-param}) that
		$\phi_\ee(f(T),T)\equiv const.$ on $(T_A,T_B)$.
		By (\ref{EequalsMinTau}), we see that
		$\ee=-\bt(P_0)$, so that $\phi_\ee(P_0)=\phi_{\bt}(P_0)=0$,
		where the last equality holds by the first condition
		in \eqref{equ:boundary}.
		That is,
		$$
		\phi_\ee\equiv 0\qquad\, \mbox{on $\Gsh$}.
		$$
		Then, using that $\phi_{\bt}\equiv 0$
		along $\Gsh$
		by the first condition in \eqref{equ:boundary} and that
		$\ee\cdot\bn<0$ at $\Pd$ by Lemma \ref{choseVecEE-strictConv},
		we obtain that
		$D\phi=0$ at $\Pd$.
		This, combined with (\ref{state0-zero}) and the first condition in \eqref{equ:boundary},
		implies that $\rho=\rho_0$ at $\Pd$, which
		contradicts condition (A1)
		of Theorem \ref{thm:main theorem}.
		Therefore, Claim 5.7.2
		is proved.
		
		\medskip
		{\bf 4.}
		Combining Claim 5.7.1 with Claim 5.7.2, we finally conclude Lemma \ref{lem:strict convexity 16}.
	\end{proof}
	
	\begin{lem}\label{NotInGam2-2-inter-Lem}
		$C^k\notin\hat\Gamma_i^0$ for $i=1,2$.
	\end{lem}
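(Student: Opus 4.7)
My plan is to reduce this lemma to the argument of Lemma~\ref{lem:strict convexity 16} by propagating the constancy of $\phi_\ee$ from $C^k$ out to an endpoint of $\Gsh$, and then re-running that argument on an enlarged constant-value boundary arc. Suppose, for contradiction, that $C^k\in\hat\Gamma_i^0$ for some $i\in\{1,2\}$. By the symmetry recorded in Remark~\ref{symmetry-H9-rmk} (reverse the direction of $T$ in \eqref{CoordShock-StrictConv} and swap $A\leftrightarrow B$, $\hat\Gamma_0\leftrightarrow\hat\Gamma_3$, $\hat\Gamma_1\leftrightarrow\hat\Gamma_2$), it suffices to treat the case $i=2$.

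Since $C^k$ is a local minimum of $\phi_\ee$ relative to $\overline\Omega$ lying in $\hat\Gamma_2^0$, condition (A9) forces $\phi_\ee\equiv\phi_\ee(C^k)$ along the whole of $\hat\Gamma_2$. If $\hat\Gamma_3=\emptyset$, then $B$ is an endpoint of $\hat\Gamma_2$, so $\phi_\ee(B)=\phi_\ee(C^k)$. If $\hat\Gamma_3\ne\emptyset$, the common endpoint $Q_B$ of $\hat\Gamma_2$ and $\hat\Gamma_3$ satisfies $\phi_\ee(Q_B)=\phi_\ee(C^k)$, and condition (A8) then propagates this value across $\hat\Gamma_3$, yielding again $\phi_\ee(B)=\phi_\ee(C^k)$. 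In either case I obtain a connected boundary arc $\mathcal C\subset\partial\Omega$ joining $B$ to $C^k$ on which $\phi_\ee\equiv\phi_\ee(C^k)$, together with the strict inequality $\phi_\ee(B)=\phi_\ee(C^k)<\phi_\ee(\Pd)$ from \eqref{Ck-Pd-strictConv}.

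With this reduction in place, the argument of Lemma~\ref{lem:strict convexity 16} applies essentially verbatim, with $\mathcal C$ taking over the role played there by $\hat\Gamma_3$. Indeed, the only properties of $\hat\Gamma_3$ used in that proof were: (a) together with $\Gsh[\Pd,B]$ it forms a single connected arc of $\partial\Omega$ from $\Pd$ to $C^k$, to which Lemmas~\ref{lem:chainsDoNotIntersect-MinMax} and \ref{lem:chainsDoNotIntersect-MinMin} can be applied; (b) $\phi_\ee$ is identically $\phi_\ee(B)$ on it, which is what rules out that the endpoints of the auxiliary maximal/minimal chains constructed in Claims~5.7.1 and~5.7.2 can lie off $\Gsh^0$. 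Both (a) and (b) hold with $\mathcal C$ in place of $\hat\Gamma_3$, so splitting into the sub-cases $\ee\cdot\bn(B)\le 0$ and $\ee\cdot\bn(B)>0$ and repeating the same chain constructions yields the same contradictions as in Lemma~\ref{lem:strict convexity 16}.

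The main obstacle will be carefully verifying that every value comparison in the long iteration of Claim~5.7.2 survives the substitution of $\mathcal C$ for $\hat\Gamma_3$. Each step there compares $\phi_\ee$ at an auxiliary extremum $P_{4k+j}$ with $\phi_\ee(B)$ on $\hat\Gamma_3$, and the analogous step here compares with $\phi_\ee\equiv\phi_\ee(C^k)$ on all of $\mathcal C$; the required strict inequality follows from the bound $\phi_\ee(P_{4k+j})\ge\phi_\ee(\Pd)+\delta$ (or with $\hat\delta$ for the points produced in the iteration) combined with $\phi_\ee(C^k)<\phi_\ee(\Pd)$, but each application of Lemmas~\ref{lem:chainsDoNotIntersect-MinMax}--\ref{lem:chainsDoNotIntersect-MinMin} must be reinspected to confirm that the excluded region $\mathcal C^0$ is correctly handled. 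Once this bookkeeping is done, the contradiction is identical to the one in Lemma~\ref{lem:strict convexity 16}, completing the proof.
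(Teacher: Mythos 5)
Your proposal is correct and follows essentially the same route as the paper: the paper likewise uses (A9) together with $\phi\in C^{1,\alpha}(\overline\Omega)$ and (A7)--(A8) to get $\phi_\ee\equiv\phi_\ee(B)$ on $\overline{\hat\Gamma_2\cup\hat\Gamma_3}$ (after reducing to $i=2$ by the symmetry of Remark \ref{symmetry-H9-rmk}), deduces \eqref{lem:strict convexity 16-1} from \eqref{Ck-Pd-strictConv}, and then invokes the proof of Lemma \ref{lem:strict convexity 16} verbatim. Your extra bookkeeping about which properties of $\hat\Gamma_3$ are actually used is accurate and matches what that proof needs.
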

	
	\begin{proof}
		Since $C^k$ is a local minimum point of $\phi_\ee$, then condition (A9)
		of Theorem \ref{thm:main theorem-strictUnif}
		and the regularity property $\phi\in C^{1,\alpha}(\overline\Omega)$ imply that
		$\phi_\ee=const.$ on $\overline{\hat\Gamma_i}$.
		Combining this with  (A7)--(A8),
		we obtain that $\phi_\ee=const.$ on $\overline{\hat\Gamma_0\cup\hat\Gamma_1}$
		(resp. on  $\overline{\hat\Gamma_2\cup\hat\Gamma_3}$) if $i=1$ (resp. $i=2$),
		where one or both of $\hat\Gamma_0$ and $\hat\Gamma_3$ may be empty.
		Then, following Remark \ref{symmetry-H9-rmk},  we can assume
		without loss of generality that $C^k\in\overline{\hat\Gamma_2}$ ({\it i.e.}, $i=2$).
		In this case, $B\in \overline{\hat\Gamma_2\cup\hat\Gamma_3}$ so that
		$\phi_\ee(P)=\phi_\ee(B)$ for any $P\in \overline{\hat\Gamma_2\cup\hat\Gamma_3}$.
		From this and
		\eqref{Ck-Pd-strictConv}, we obtain that
		\eqref{lem:strict convexity 16-1} holds in the present case.
		
		Then we are in the same situation as in Lemma \ref{lem:strict convexity 16}.
		Therefore, the proof of  Lemma \ref{lem:strict convexity 16} applies, which yields a
		contradiction.
	\end{proof}
	
	\begin{rem}\label{strictConvRmk1}
		Combining Lemmas {\rm \ref{lem:strict convexity 16}}--{\rm \ref{NotInGam2-2-inter-Lem}},
		we obtain that, if condition {\rm (i)} of assumption {\rm (A10)} holds,
		the only remaining possible location of $C^k$ is on $\Gsh$.
		On the other hand,
		if condition {\rm (ii)} of assumption {\rm (A10)} holds, then
		the remaining possible locations of $C^k$ are either on $\Gsh$ or at the common endpoint $Q^*$
		of $\hat\Gamma_1$ and $\hat\Gamma_2$.
	\end{rem}
	
	\begin{lem}\label{NotInGam2-2-cornerPt-Lem}
		Assume that condition {\rm (ii)}
		of assumption {\rm (A10)} holds,
		and let $Q^*$ be the point defined there. Then $C^k\ne Q^*$.
	\end{lem}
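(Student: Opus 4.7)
The plan is to assume toward contradiction that $C^k = Q^*$ and, following the pattern of Lemma \ref{lem:strict convexity 16}, construct a maximal chain from a well-chosen point $P_1 \in \Gsh[\Pd, B]$ whose endpoint cannot be placed anywhere on the relevant boundary. By Remark \ref{symmetry-H9-rmk} I may fix notations so that $B = \Gsh \cap \overline{\hat\Gamma_2}$, and take ${\mathcal C} := \Gsh[\Pd, B] \cup \hat\Gamma_2$ as the connected arc of $\partial\Omega$ joining $E_1 = \Pd$ to $E_2 = Q^* = C^k$. The existing minimal chain of radius $r_1$ from $\Pd$ to $Q^*$, together with the fact that $r_1$ was chosen according to $\delta$ from \eqref{deltaFor-minInterv-Pd-strictConv}, sets up Lemma \ref{lem:chainsDoNotIntersect-MinMax} to apply to this ${\mathcal C}$.

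A preliminary step verifies that $\bn(P)\cdot \ee <0$ along $\Gsh[\Pd, B)\cap\Gsh^0$. Writing $\ee=(e_1,e_2)$ in the $(S,T)$--coordinates of \eqref{CoordShock-StrictConv}, the map $T\mapsto f'(T)e_2-e_1$ in \eqref{NdotE-expression} is monotone since $f''\le 0$, and Lemma \ref{shockConvex-strict-finite-degen} forbids any flat interval. Combining $\bn(\Pd)\cdot \ee<0$ with the two sub-cases of the $B$--assumption in (A10)(ii) gives the desired strict negativity on $\Gsh[\Pd, B)\cap \Gsh^0$ (and also at $B$ when $\bn_{\rm sh}(B)\cdot\ee<0$); in the borderline sub-case $\bn_{\rm sh}(B)\cdot \ee=0$, a short sign analysis shows $e_2<0$ and rules out vanishing of $\bn\cdot\ee$ at interior points of $\Gsh[\Pd,B]$.

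Now let $P_1\in \Gsh[\Pd,B]$ realize $\max_{\Gsh[\Pd,B]}\phi_\ee$. By Lemma \ref{lem:IntAroundPd}\eqref{lem:IntAroundPd-i2} and \eqref{deltaFor-minInterv-Pd-strictConv}, $\phi_\ee(P_1)\ge \phi_\ee(\Pd)+\delta>\phi_\ee(C^k)=\phi_\ee(Q^*)$, so $P_1\ne \Pd,Q^*$ and $P_1\in{\mathcal C}^0$. I rule out $P_1$ being a local maximum of $\phi_\ee$ relative to $\overline\Omega$ case by case: if $P_1\in\Gsh^0$, apply Lemma \ref{lem:cannot be maximum} together with $\bn(P_1)\cdot\ee<0$; if $P_1=B$ with $\bn_{\rm sh}(B)\cdot\ee<0$, invoke the first bullet of (A10)(ii); if $P_1=B$ with $\bn_{\rm sh}(B)\cdot\ee=0$, the second bullet yields $\phi_\ee(P_1)=\phi_\ee(Q^*)<\phi_\ee(\Pd)+\delta\le \phi_\ee(P_1)$, which is contradictory and therefore excludes $P_1=B$ altogether in this sub-case. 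Lemma \ref{lem:ExistMinMaxChain} then produces a maximal chain of radius $r_1$ from $P_1$ ending at a local maximum $P_2$ with $\phi_\ee(P_2)>\phi_\ee(P_1)$, and Lemma \ref{lem:chainsDoNotIntersect-MinMax} places $P_2\in {\mathcal C}^0=\Gsh[\Pd,B]^0\cup\{B\}\cup \hat\Gamma_2^0$.

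Each possible location of $P_2$ will lead to a contradiction: if $P_2\in\Gsh[\Pd,B]^0$, it contradicts the maximality defining $P_1$; if $P_2\in\hat\Gamma_2^0$, condition (A9) forces $\phi_\ee\equiv\phi_\ee(Q^*)=\phi_\ee(C^k)<\phi_\ee(\Pd)<\phi_\ee(P_1)$ along $\hat\Gamma_2$, contradicting $\phi_\ee(P_2)>\phi_\ee(P_1)$; and $P_2=B$ is excluded by the first bullet of (A10)(ii) when $\bn_{\rm sh}(B)\cdot\ee<0$ and by the chain of inequalities $\phi_\ee(P_2)=\phi_\ee(B)=\phi_\ee(Q^*)<\phi_\ee(P_1)<\phi_\ee(P_2)$ when $\bn_{\rm sh}(B)\cdot\ee=0$. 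The main delicacy I foresee lies precisely in the borderline sub-case $\bn_{\rm sh}(B)\cdot\ee=0$: the tangential degeneracy at $B$ means that standard Hopf-type arguments do not directly suffice, and both steps of the argument (excluding $P_1=B$ and $P_2=B$) rely essentially on the compatibility identity $\phi_\ee(B)=\phi_\ee(Q^*)$ furnished by the second bullet of (A10)(ii), combined with the strict monotonicity of $\bn\cdot\ee$ up to $B$ supplied by Lemma \ref{shockConvex-strict-finite-degen}.
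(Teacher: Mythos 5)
Your argument covers only two of the three possible configurations at $B$, and the omitted one is precisely the case where the simple chain argument breaks down. You treat the two bullets of condition (ii) of (A10) as if they exhausted the possibilities for the sign of $\ee\cdot\bn_{\rm sh}(B)$, but (A10)(ii) merely imposes hypotheses \emph{when} $\bn_{\rm sh}(B)\cdot\ee<0$ or $\bn_{\rm sh}(B)\cdot\ee=0$; nothing prevents the vector $\ee$ chosen in Lemma \ref{choseVecEE-strictConv} (which is only normalized by $\ee\cdot\bn(\Pd)<0$ and $g(\ee)(\Pd)=0$) from satisfying $\ee\cdot\bn_{\rm sh}(B)>0$. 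In that case your preliminary step fails: since $T\mapsto f'(T)e_2-e_1$ changes sign at a unique point $P_0\in(\Gsh[\Pd,B])^0$, the inequality $\bn\cdot\ee<0$ holds only on $\Gsh[\Pd,P_0)$, and on $\Gsh(P_0,B]$ one has $\bn\cdot\ee>0$, where Lemma \ref{lem:cannot be maximum} no longer forbids a local maximum of $\phi_\ee$ relative to $\overline\Omega$. Consequently your maximizer $P_1$ of $\phi_\ee$ over $\Gsh[\Pd,B]$ may well be a genuine local maximum, no maximal chain can be launched from it, and the contradiction with the definition of $P_1$ never materializes. This is why the paper's proof splits into three cases and, when $\ee\cdot\bn(B)>0$, abandons the Claim 5.7.1-type argument entirely in favor of the iterative construction of Claim 5.7.2 (alternating maximal and minimal chains producing points $P_{4k+1},\dots,P_{4k+4}$, with locations controlled by \eqref{NdotE-pos-neg-MaxMin-Up}, and a final contradiction via the real analyticity of $T\mapsto\phi_\ee(f(T),T)$ from Lemma \ref{lem:analyticity}), suitably modified so that $P_2=B$ is allowed, i.e. \eqref{strictConv-P0-P1-P2-pr} replaces \eqref{strictConv-P0-P1-P2}, with \eqref{phi-minmax-onGamma2hat-strict} ruling out $P_2\in\hat\Gamma_2^0$.

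For the two sub-cases you do treat, your reasoning is essentially the paper's: the sign analysis showing $e_2<0$ and $\bn\cdot\ee<0$ on $\Gsh[\Pd,B)$ when $\bn_{\rm sh}(B)\cdot\ee=0$ is correct, your use of (A9) to dispose of $P_2\in\hat\Gamma_2^0$ corresponds to the paper's reduction via \eqref{phi-minmax-onGamma2hat-strict} (note the paper handles the constancy case by reducing to the proof of Lemma \ref{NotInGam2-2-inter-Lem} rather than inline, but the logic is equivalent), and the exclusion of $P_1=B$ and $P_2=B$ via the two bullets of (A10)(ii) matches the paper's first two cases. One further caution: in the sub-case $\bn_{\rm sh}(B)\cdot\ee=0$ you exclude $P_1=B$ outright, whereas the paper instead derives \eqref{lem:strict convexity 16-1} from $\phi_\ee(B)=\phi_\ee(Q^*)=\phi_\ee(C^k)<\phi_\ee(\Pd)$ and then reruns Claim 5.7.1; your shortcut is fine, but it does not repair the missing third case. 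To complete the proof you must add the case $\ee\cdot\bn_{\rm sh}(B)>0$ and run the full Claim 5.7.2 machinery there.
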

	
	\begin{proof}
		Assume $C^k=Q^*$.
		If  $\phi_\ee$ attains  a local minimum or maximum  relative to $\overline\Omega$
		on $\hat\Gamma_2^0$,
		then condition (A9)
		of Theorem \ref{thm:main theorem-strictUnif}
		and the regularity property $\phi\in C^{1,\alpha}(\overline\Omega)$ imply that
		$\phi_\ee=const.$ on $\overline{\hat\Gamma_2}$. Since $B\in\overline{\hat\Gamma_2}$
		by condition (ii) of assumption (A10), we obtain that $\phi_\ee(P)=\phi_\ee(B)$ for all
		$P\in\overline{\hat\Gamma_2}$.
		Because of $C^k=Q^*\in \overline{\hat\Gamma_2}$, we can complete the proof as in
		Lemma \ref{NotInGam2-2-inter-Lem} above.
		
		Thus, we can assume that
		\begin{equation}\label{phi-minmax-onGamma2hat-strict}
		\mbox{$\phi_\ee$ does not attain its local minimum or maximum
           relative to $\overline\Omega$ on $\hat\Gamma_2^0$}.
		\end{equation}
		Then we consider three cases, depending on whether $\ee\cdot\bn_{\rm sh}(B)$
		is positive, negative, or zero.
		In the argument, we take into account that
		$\hat\Gamma_3=\emptyset$  by condition (ii) of (A10)
		so that $\hat\Gamma_2$ has endpoints
		$Q^*$ and $B$.
		
		If $\ee\cdot\bn_{\rm sh}(B)<0$, then we argue similar to the proof of Claim {5.7.1},
		replacing $\hat\Gamma_3$ by $\hat\Gamma_2$,
		with the differences described below. First, we show (\ref{NdotE-neg-inside})
		without changes in the argument. Next, we choose $P_1\in \Gsh[\Pd,\,B]$ satisfying
		(\ref{Claim571ineq1}) so that the proof of (\ref{P1-Pd-delta}) holds without changes in the
		present case, which implies that $P_1\ne \Pd$.
		However, since (\ref{lem:strict convexity 16-1}) is not available
		in the present case,
		we cannot conclude that $P_1\ne B$. That is, we now obtain that  $P_1\in \Gsh[\Pd, B]\setminus\{\Pd\}$.
		If  $P_1\in (\Gsh[\Pd, B])^0$, then,
		by  (\ref{NdotE-neg-inside}) and Lemma \ref{lem:cannot be maximum},
		$P_1$ cannot be a local maximum point of $\phi_\ee$ relative to
		$\overline\Omega$. If $P_1=B$, then the same conclusion follows from  condition (ii) of (A10)
		since  $\ee\cdot\bn_{\rm sh}(B)<0$.
		Thus, there exists a maximal chain of
		radius $r_2$, starting from $P_1$ and ending at some point
		$P_2\in\partial\Omega$ which is a local maximum point relative to
		$\overline\Omega$, and $\phi_\ee(P_1)< \phi_\ee(P_2)$.
		Now, instead of (\ref{P2-location-strictConv}),
		we show a weaker statement,
		\begin{equation}\label{P2-location-strictConv-pr}
		P_2\in\Gsh[\Pd,B].
		\end{equation}
		To prove (\ref{P2-location-strictConv-pr}), recall that there exists a minimal chain of radius $r_1$
		from $\Pd$ to $C^k=Q^*\in\overline{\hat\Gamma_2}$. Also, $P_1\in \Gsh[\Pd,B]\setminus\{\Pd\}$.
		Then, from (\ref{P1-Pd-delta}) and
		the choice of $r_2$,
		we obtain from
		Lemma \ref{lem:chainsDoNotIntersect-MinMax} that
		either (\ref{P2-location-strictConv-pr}) holds or
		$P_2$
		lies on $\Gamma_2^0$ between $B$ and $C^k$.
		On the other hand, the last case is ruled out by (\ref{phi-minmax-onGamma2hat-strict})
		since  $P_2$ is a local maximum point of $\phi_\ee$ relative to
		$\overline\Omega$.
		Thus, (\ref{P2-location-strictConv-pr}) holds. However, (\ref{P2-location-strictConv-pr}) contradicts
		(\ref{Claim571ineq1}) since $\phi_\ee(P_1)< \phi_\ee(P_2)$.
		Therefore, we reach a contradiction
		in the case that
		$\ee\cdot\bn(B)<0$.
		
		\smallskip
		If $\ee\cdot\bn(B)=0$, we use condition (ii) of (A10)
		and the fact that $C^k=Q^*$ to conclude
		$$
		\phi_\ee(\Pd)>\phi_\ee(C^k)=\phi_\ee(Q^*)=\phi_\ee(B),
		$$
		which implies (\ref{lem:strict convexity 16-1}).
		Now we follow the argument of the proof of Claim 5.7.1
		via replacing $\hat\Gamma_3$ by $\hat\Gamma_2$, up to (\ref{P2-location-strictConv}).
		Instead of (\ref{P2-location-strictConv}),
		we can show (\ref{P2-location-strictConv-pr}) whose
		proof, given above, still works in the present case without changes. Then, as shown above, (\ref{P2-location-strictConv-pr}) contradicts
		(\ref{Claim571ineq1}).  Therefore, we reach a contradiction
		in the case that
		$\ee\cdot\bn(B)=0$.
		
		\smallskip
		If $\ee\cdot\bn(B)>0$, then we argue as in Claim 5.7.2, via replacing $\hat\Gamma_3$
		by $\hat\Gamma_2$, and
		with modifications similar to the ones described above. Specifically,
		(\ref{phi-minmax-onGamma2hat-strict}) is used to conclude that
		$P_2\notin\hat\Gamma_2^0$.
		From this, we conclude that  $P_2$ lies on
		$\Gsh$ between $P_0^+$ and $B$, possibly including $B$.
		However,
		we now cannot rule out the possibility that
		$P_2= B$ as in the proof of  Claim 5.7.2
		(again,
		since  (\ref{lem:strict convexity 16-1}) is not available).
		Thus, instead of
		(\ref{strictConv-P0-P1-P2}), we have
		\begin{equation}\label{strictConv-P0-P1-P2-pr}
		T_{P_2}\in(T_{P_0^+},\,T_B]\subset (T_{P_0},\,T_B],
		\qquad \phi_\ee(P_2)>\phi_\ee(P_1)>\phi_\ee(P_0).
		\end{equation}
		From this, using (\ref{ine:P0})--(\ref{ine:P0-pr}) and (\ref{ineqP1}),
		it follows that
		(\ref{restricP3})--(\ref{restricP3-pr}) hold.
		From
		(\ref{restricP3-pr}), $P_3\ne P_2$, and then
		(\ref{restricP3}) implies
		$$
		T_{P_3}\in (T_{P_0^+},T_{P_2})\subset (T_{P_0},\,T_B).
		$$
		Then, from (\ref{NdotE-pos-neg-MaxMin-Up}) combined with (\ref{ine:P0-coord}),
		$P_3$ cannot be a local minimum point of $\phi_\ee$
		relative to $\overline\Omega$.
		Thus, there exists a minimal chain
		of radius $r_2$  starting from $P_3$. The rest of the proof of
		Claim 5.7.2
		applies without changes.
		Therefore, we obtain a contradiction in the case that
		$\ee\cdot\bn(B)>0$. This completes the proof.
	\end{proof}
	
	\begin{rem}\label{strictConvRmk2}
		Combining Lemmas {\rm \ref{NotInGam2-2-inter-Lem}} and {\rm \ref{NotInGam2-2-cornerPt-Lem}},
		we obtain that, if condition {\rm (ii)} of assumption {\rm (A10)} holds,
		then $C^k$ cannot lie within $\hat\Gamma_1^0\cup\{Q^*\}\cup \hat\Gamma_2^0$.
		Combining this with  Lemma {\rm \ref{lem:strict convexity 16}}, we see that,
		if condition {\rm (ii)} of assumption {\rm (A10)} holds,
		the only remaining possible location of $C^k$ is at $\Gsh$.
	\end{rem}
	
	From Remarks \ref{strictConvRmk1} and \ref{strictConvRmk2},
	in order to complete the proof
	of Theorem \ref{thm:main theorem-strictUnif}, it remains to show
	\begin{lem}\label{NotInShock-Lem}
		$C^k\notin\Gsh$.
	\end{lem}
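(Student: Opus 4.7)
The plan is to suppose for contradiction that $C^k\in\Gsh$ — by Remarks \ref{strictConvRmk1} and \ref{strictConvRmk2} this is the only remaining possibility — and then mimic the two-case structure of the proof of Lemma \ref{lem:strict convexity 16} (Claims 5.7.1 and 5.7.2), with $C^k$ now playing the role previously played by $B$, and with the arc $\mathcal C=\Gsh[\Pd,C^k]$ replacing the pieces of $\partial\Omega$ used there. By Remark \ref{symmetry-H9-rmk} I may assume without loss of generality that $T_\Pd<T_{C^k}$ in the $(S,T)$-coordinates of \eqref{CoordShock-StrictConv}. From the formula $\bn\cdot\ee=(f'(T)e_2-e_1)/\sqrt{1+(f'(T))^2}$ for $\ee=(e_1,e_2)$, together with $f''\le 0$ on $(T_A,T_B)$ given by Theorem \ref{thm:main theorem} and the strict convexity (isolated degeneracies) from Lemma \ref{shockConvex-strict-finite-degen}, the map $T\mapsto\ee\cdot\bn(f(T),T)$ is strictly monotone and so changes sign at most once on $(T_A,T_B)$.

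I would first set $P_1\in\Gsh[\Pd,C^k]$ so that $\phi_\ee(P_1)=\max_{\Gsh[\Pd,C^k]}\phi_\ee$. The strict one-sided monotonicity \eqref{choseVecEE-strictConv-Eq} of Lemma \ref{choseVecEE-strictConv} rules out $P_1=\Pd$; combining Lemma \ref{lem:IntAroundPd} (which gives $\phi_\ee>\phi_\ee(\Pd)$ on $\Gsh[\Pd,\Pd^+]\setminus\{\Pd\}$) with the fact that $\phi_\ee(C^k)<\phi_\ee(\Pd)$ from \eqref{Ck-Pd-strictConv} forces $T_{C^k}>T_{\Pd^+}$, so $\Pd^+\in(\Gsh[\Pd,C^k])^0$ and hence $P_1\in(\Gsh[\Pd,C^k])^0$ with $\phi_\ee(P_1)\ge\phi_\ee(\Pd^+)\ge\phi_\ee(\Pd)+\delta$, for $\delta$ as in \eqref{deltaFor-minInterv-Pd-strictConv}. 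Consequently, the original radius $r_1$ of the minimal chain $\{C^i\}$ is still admissible for Lemma \ref{lem:chainsDoNotIntersect-MinMax} when applied with the new separation constant $\phi_\ee(P_1)-\phi_\ee(\Pd)\ge\delta$.

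The argument then splits according to the sign of $\ee\cdot\bn$ on $\mathcal C$. In Case 1, when $\ee\cdot\bn\le 0$ on $\Gsh[\Pd,C^k]$, strict interior monotonicity gives $\ee\cdot\bn(P_1)<0$, so $P_1$ is not a local maximum relative to $\overline\Omega$ by Lemma \ref{lem:cannot be maximum}; Lemma \ref{lem:ExistMinMaxChain} then provides a maximal chain of radius $r_2\le r_1$ from $P_1$, and Lemma \ref{lem:chainsDoNotIntersect-MinMax} applied with $\mathcal C=\Gsh[\Pd,C^k]$ (whose endpoints coincide with those of $\{C^i\}$) places its endpoint $P_2$ in $(\Gsh[\Pd,C^k])^0$ with $\phi_\ee(P_2)>\phi_\ee(P_1)$, contradicting the choice of $P_1$. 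In Case 2, when $\ee\cdot\bn(C^k)>0$, monotonicity yields a unique $P_0\in(\Gsh[\Pd,C^k])^0$ with $\ee\cdot\bn(P_0)=0$, and I would transcribe the entire iteration of Claim 5.7.2 with $B\rightsquigarrow C^k$: at every step Lemma \ref{lem:chainsDoNotIntersect-MinMax}, Lemma \ref{lem:chainsDoNotIntersect-MaxMin}, and Lemma \ref{lem:chainsDoNotIntersect-MinMin} are applied with $\mathcal C=\Gsh[\Pd,C^k]$ itself, so the boundary segment $\hat\Gamma_3$ that appeared in Claim 5.7.2 is simply absent here. This produces a strictly ordered sequence of local minima $\{P_{4k}\}_{k\ge 1}$ of $\phi_\ee$ relative to $\overline\Omega$ lying in $(\Gsh[\Pd^+,P_0^-])^0$, accumulating at some $P^*\in\Gsh^0$. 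The real analyticity of $\phi$ and of $f$ from Lemma \ref{lem:analyticity} then forces $\phi_\ee(f(T),T)\equiv\mathrm{const}$ on $(T_A,T_B)$; since $\ee=-\bt(P_0)$ and $\phi_{\bt}\equiv 0$ on $\Gsh$ by \eqref{equ:boundary}, the constant is $0$, so $D\phi(\Pd)=0$, which combined with \eqref{state0-zero} and $\phi(\Pd)=0$ gives $\rho(\Pd)=\rho_0$, contradicting condition {\rm (A1)} of Theorem \ref{thm:main theorem}.

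The main obstacle is the bookkeeping of chain radii through the infinite iteration in Case 2. This was already the core delicate point of Claim 5.7.2, and the substitution $B\rightsquigarrow C^k$ only simplifies matters: every application of the non-intersection lemmas uses a connected arc of $\partial\Omega$ whose endpoints coincide with those of a previously constructed chain, and this structural feature is preserved (indeed trivialized) when one chooses $\mathcal C=\Gsh[\Pd,C^k]$, because then the inequalities in Claim 5.7.2 that ruled out a chain endpoint lying on $\hat\Gamma_3$ become vacuous and $C^k$ itself supplies the required endpoint of $\mathcal C$.
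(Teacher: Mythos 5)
Your proposal is correct and takes essentially the same route as the paper: assume $C^k\in\Gsh$, use ${\mathcal C}=\Gsh[\Pd,C^k]$, whose endpoints coincide with those of the minimal chain, and rerun the arguments of Claims 5.7.1--5.7.2 with $B\rightsquigarrow C^k$. The only organizational difference is that the paper splits into $C^k\in\{A,B\}$ versus $C^k\in\Gsh^0$ and, in the interior case, invokes Lemma \ref{lem:cannot be maximum} at $C^k$ (a local minimum point) to force $\ee\cdot\bn(C^k)\le 0$, so the heavy Claim-5.7.2 iteration is needed only at an endpoint, whereas you split by the sign of $\ee\cdot\bn(C^k)$ and run the full iteration in that case --- more work, but still valid.
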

	
	\begin{proof}
		The proof consists of two steps.
		
		\smallskip
		{\bf 1.} Recall that $\Gsh$ includes its endpoints $A$ and $B$.
		Thus, we first consider the case that $C^k$ is either $A$ or $B$.
		Note that Lemma \ref{lem:strict convexity 16} does not cover this case
		if either $\hat \Gamma_0$ or $\hat \Gamma_3$, or both, are empty.
		
		The argument below does not use condition (A10)
		of Theorem \ref{thm:main theorem-strictUnif}.
		Thus, as discussed
		in Remark \ref{symmetry-H9-rmk}, we can assume
		without loss of generality that  $C^k=B$.
		Then, since there is a minimal chain from
		$\Pd$ to $C^k=B$, we conclude that
		(\ref{lem:strict convexity 16-1}) holds.
		Now the proofs of Claims 5.7.1--5.7.2
		apply, with the following simplification:
		From Lemma \ref{lem:chainsDoNotIntersect-MinMax}
		and the definition of point $P_2$ in each of these claims,
		we conclude that  (\ref{P2-location-strictConv}) holds.
		The rest of the proofs of Claims 5.7.1--5.7.2
		work without changes. Therefore, we reach a contradiction, which
		shows  that $C^k$ is neither $A$ nor $B$.

		\smallskip
		{\bf 2.} It remains to consider the case that  $C^k\in \Gsh^0$.
		Notice that $C^k$ is a local minimum point of $\phi_\ee$.
		Then, from Lemma \ref{lem:cannot be maximum}, we see that $\ee\cdot\bn\leq 0$ at $C_1$.
		Now the argument as in Claim 5.7.1,
		with point $B$ replaced by point $C^k$, works without change.
		This yields a contradiction. Therefore, $C^k\notin \Gsh^0$.
	\end{proof}
	
	\smallskip
	\begin{proof}[Proof of Theorem {\rm \ref{thm:main theorem-strictUnif}}]
		Combining Lemmas \ref{lem:strict convexity 16}--\ref{NotInGam2-2-inter-Lem}
		with Lemma \ref{NotInShock-Lem}, we obtain that
		$C^k$ cannot lie on the set:
		$$
		G:=\overline{\hat\Gamma_0}\cup\Gamma_1^0\cup\Gamma_2^0\cup\overline{\hat\Gamma_3}
		\cup\Gsh.
		$$
		Since $\Gsh$ includes its endpoints,  $G$ covers all
		$\partial\Omega$ except
		point $Q^*$ defined in Case (ii) of (A10), if $Q^*$ exists.
		In Case (i)  of (A10), point $Q^*$ does not exist,
		so that $G=\partial\Omega$, which implies that $C^k\notin\partial\Omega$.
		In Case (ii) of (A10), point  $Q^*$ exists,
		and Lemma \ref{NotInGam2-2-cornerPt-Lem} implies that $C^k\ne Q^*$,
		so that $C^k\notin\partial\Omega$ in this case as well.
		However, the fact that $C^k\notin\partial\Omega$ contradicts
		(\ref{Ck-on-bdry-strictConv}).
		This completes the proof of Theorem \ref{thm:main theorem-strictUnif}.
	\end{proof}
	
	\section{\, Proof of Theorem \ref{theorem:1x}:
		Equivalence between the Strict Convexity and the Monotonicity}
	\label{EquivalenceSect}
	
\begin{proof}[Proof of Theorem {\rm \ref{theorem:1x}}]
	By the boundary condition \eqref{equ:boundary},  $\phi_{\bt}=0$ on $\Gsh$. Also, by assumption (A1),
	$\phi_{\bn}<0$ on $\Gsh$ for the interior normal vector $\bn$.  Then the monotonicity
	property $\phi_{\mathbf{e}}>0$ in $\Gsh^{0}$ for any unit vector
	$\mathbf{e}\in \overline{Con}$ implies that assumption $(A5)$ in
	Theorem \ref{thm:main theorem} holds.
	Now it follows from Theorem \ref{thm:main theorem} that, under the assumptions of
	Theorem \ref{theorem:1x}, the monotonicity
	property is the sufficient condition for the strict convexity of the free boundary
	$\Gsh$ in the sense of
	\eqref{shock-graph-inMainThm}--\eqref{strictConvexity-degenerate-graph}.
	
	On the other hand, if the shock graph is strictly convex in the sense of
	Theorem \ref{thm:main theorem}, then, at any point on $\Gsh^0$, the tangent vector
$\bt$ is not in $\overline{Con}$,
	where we have used the strict convexity in the sense of \eqref{strictConvexity-degenerate-graph}
	to have this property for the boundary directions of the cone.
	Then, using again that  $\phi_{\bt}=0$ and $\phi_{\bn}<0$ on $\Gsh$ in
	\eqref{equ:boundary} and condition $(A1)$ in Theorem \ref{thm:main theorem},
	it follows that
	$\phi_\be> 0$ on $\Gsh^0$
	for any unit vector $\mathbf{e}\in \overline{Con}$; that is, the monotonicity property holds.
	This completes the proof of Theorem \ref{theorem:1x}.
	\end{proof}
	
	\medskip
	\begin{proof}[Proof of the Assertion in Remark {\rm \ref{theorem:1xRem}}]
	By equation \eqref{equ:phi e}
	and condition $(A3)$ in
	Theorem \ref{thm:main theorem}, $\phi_\be$ satisfies the strong minimum principle
	in $\Omega$. This implies
	$$
	\phi_\be>\min\{\min_{\Gsh}\phi_\be,\;\min_{\overline{\Gamma_1\cup\Gamma_2}}\phi_\be\}
	\qquad \mbox{in $\Omega$},
	$$
	where we have used the assumption in Theorem 2.1 that $\phi$ is not a constant state.
	Note that, by the assumption of Theorem \ref{theorem:1x}, $\phi_\ee>0$ on $\Gsh$,
	and $\phi_\be$  on
	$\Gamma_1\cup\Gamma_2$  satisfies that either $\phi_\be\geq0$ or
	$\phi_\be$ cannot attain its local minimum with respect to $\Omega$. Thus, $\phi_\be>0$ in
	$\Omega\cup\Gsh^0$.
	\end{proof}

	\section{\, Applications to Multidimensional Transonic Shock Problems}
	\label{sec:application}
	
	In this section, we apply Theorem \ref{thm:main theorem} to prove the convexity
	of multidimensional transonic shocks for two longstanding shock problems.
	
	\subsection{\, Shock reflection-diffraction problem}
	
	When a plane incident shock hits a two-dimensional wedge, shock reflection-diffraction configurations
    take shape; also see  Chen-Feldman \cite{cf-book2014shockreflection}.

    The wedge is of the shape: $\{|x_2| < x_1\tan \theta_{\rm w}\}$ with $\theta_{\rm w} \in (0, \frac{\pi}{2})$.
    Then the positive $x_1$--axis is the symmetry axis of the wedge,
    the wedge vertex is at the origin, and $\theta_{\rm w}$ is the (half) angle of the wedge.
    The incident shock $S_0$ separates two constant states: state $(0)$
	with velocity $\vv_0=(0,0)$ and density $\rho_0$ ahead of $S_0$, and
	state (1) with velocity $\vv_1=(u_1,0)$ and density $\rho_1$
	behind $S_0$, where $\rho_1>\rho_0$, and $u_1>0$ is
	determined by $(\rho_0, \rho_1, \gamma)$ through the Rankine-Hugoniot conditions on $S_0$.
	The shock, $S_0$, moves in the direction of the $x_1$--axis and hits the wedge vertex at the initial time.
	Also, the slip boundary
	condition: ${\bf v}\cdot\bn=0$ is prescribed on the solid wedge boundary,
	where ${\bf v}$ is the velocity of gas.
	Since state (1) does not satisfy the slip boundary condition,
	the shock reflection-diffraction configurations
	form at later time, which are self-similar so that the problem can be reformulated
	in the self-similar coordinates $\xxi=(\xi_1,\xi_2)=(\frac{x_1}{t}, \frac{x_2}{t})$.
	Depending on the
	flow parameters and wedge angle, there may be various patterns of shock reflection-diffraction
	configurations,
	including
	Regular Reflection and Mach Reflection.
	Because of the symmetry of the problem with respect to the $\xi_1$--axis,
	it suffices to consider the problem only on the upper half-plane $\{\xi_2>0\}$.
	
	The regular reflection configuration is characterized by the fact that
	the reflection occurs at the intersection point $P_0$
	of the incident shock with the wedge boundary.
	Figs.  \ref{figure:shock refelction}--\ref{figure:shock refelction-subs}
	show the structure of regular reflection configurations in self-similar coordinates.
	The regular reflection solutions are piecewise-smooth; that is,
	they are smooth away from the incident and reflected-diffracted shocks,
	as well as the sonic circle $P_1P_4$ for the supersonic regular reflection case across which
	${\bf v}$ is only Lipschitz.
	
	From the description of state (1) above, its pseudo-potential is
	$$
	\varphi_1(\xxi)=-\frac{|\xxi|^2}2+ u_1\xi_1 +C_1.
	$$
	A necessary condition for the existence of piecewise-smooth regular reflection
	configurations is the existence of
	the constant state $(2)$  with pseudo-potential $\varphi_2$ that satisfies both
	the slip boundary condition on the wedge boundary and the Rankine-Hugoniot conditions
	with state $(1)$ across the reflected shock
	$S_1:=\{\varphi_1=\varphi_2\}$.
	Owing to the constant state
	structure (\ref{constantStatesForm}),
	it suffices to require these conditions at $P_0$.
	Thus, the  conditions at $P_0$ are
	\begin{equation}\label{condState2}
	\begin{split}
	&D\varphi_2\cdot\bn_{\rm w}=0,\\
	&\varphi_2=\varphi_1,\\
	&\r(|D\varphi_2|^2,\varphi_2)D\varphi_2\cdot\bn_{S_1}=\rho_1D\varphi_1\cdot\bn_{S_1},\;\;
	\end{split}
	\end{equation}
	where  $\bn_{\rm w}$ is the outward
	(with respect to the wedge) normal vector to the wedge boundary,
	$\theta_{\rm w}$ is the wedge angle in the upper half-plane,
	and $\bn_{S_1}=\frac{D(\varphi_1-\varphi_2)}{|D(\varphi_1-\varphi_2)|}$.
	Therefore, we have three algebraic equations for parameters $(u_2, v_2, C_2)$
	in  expression  (\ref{constantStatesForm}) for $\varphi_2$.
	Since the piecewise-smooth regular reflection solution
	must satisfy (\ref{condState2}) at $P_0$ with $\varphi$ replaced by $\varphi_2$, then
	$(\varphi, D\varphi)=(\varphi_2, D\varphi_2)$ at $P_0$, if $\varphi_2$ exists.
	
	It is well-known (see {\it e.g.} \cite[Chapter 7]{cf-book2014shockreflection}) that,
	given the parameters of states $(0)$ and $(1)$, there
	exists a detachment angle $\theta_{\rm w}^{\rm d}\in (0, \frac{\pi}{2})$ such
	that equations (\ref{condState2}) have two solutions for each wedge angle
	$\theta_{\rm w}\in (\theta_{\rm w}^{\rm d},\frac{\pi}{2})$, which become
	equal when $\theta_{\rm w}=\theta_{\rm w}^{\rm d}$.
	Thus, two types of two-shock configurations occur at $P_0$ in the wedge interval
	$\theta_{\rm w}\in (\theta_{\rm w}^{\rm d}, \frac\pi 2)$.
	For each such $\theta_{\rm w}$,
	state $(2)$ with the smaller density is called a weak state $(2)$.
	The global existence
	of regular reflection solutions for
	all $\theta_{\rm w}\in (\theta_{\rm w}^{\rm d}, \frac{\pi}{2})$
	with $(\varphi, D\varphi)$ at $P_0$ determined by the weak states $(2)$
	has been established in
	\cite{cf-annofmath20101067regularreflection,cf-book2014shockreflection}.
	Below, state $(2)$ always refers to the weak state $(2)$.
	
	If state (2) exists, its pseudo-potential is
	$$
	\varphi_2(\xxi)=-\frac{|\xxi|^2}2+u_2\xi_1+v_2\xi_2+C_2,
	$$
	where $v_2=u_2\tan\theta_{\rm w}$.
	In particular, state (2) satisfies the first condition in \eqref{condState2}
	on the whole wedge boundary (in the upper half-plane $\{\xi_2>0\}$):
	\begin{equation}\label{bcWedgeState2}
	D\varphi_2\cdot\bn_{\rm w}=0 \qquad\;\mbox{ on $\{\xi_2=\xi_1\tan \theta_{\rm w},\; \xi_1>0\}$}.
	\end{equation}
	Depending on the wedge angle,  state $(2)$ can be either supersonic or
	subsonic at $P_0$. Moreover, for $\theta_{\rm w}$ near $\frac\pi 2$
	(resp. for $\theta_{\rm w}$ near $\theta_{\rm w}^{\rm d}$),
	state $(2)$ is supersonic
	(resp. subsonic) at $P_0$; see \cite[Chapter 7]{cf-book2014shockreflection}.
	The type of state $(2)$ at $P_0$ for a given wedge
	angle
	$\theta_{\rm w}$  determines the type of reflection, supersonic or subsonic,
	as shown  in Fig. \ref{figure:shock refelction} or Fig. \ref{figure:shock refelction-subs}
   respectively,
	when $u_1<c_1$.

	\begin{figure}[!ht]
		\centering
		\begin{minipage}{0.45\textwidth}
           \centering
			\includegraphics[width=0.51\textwidth]{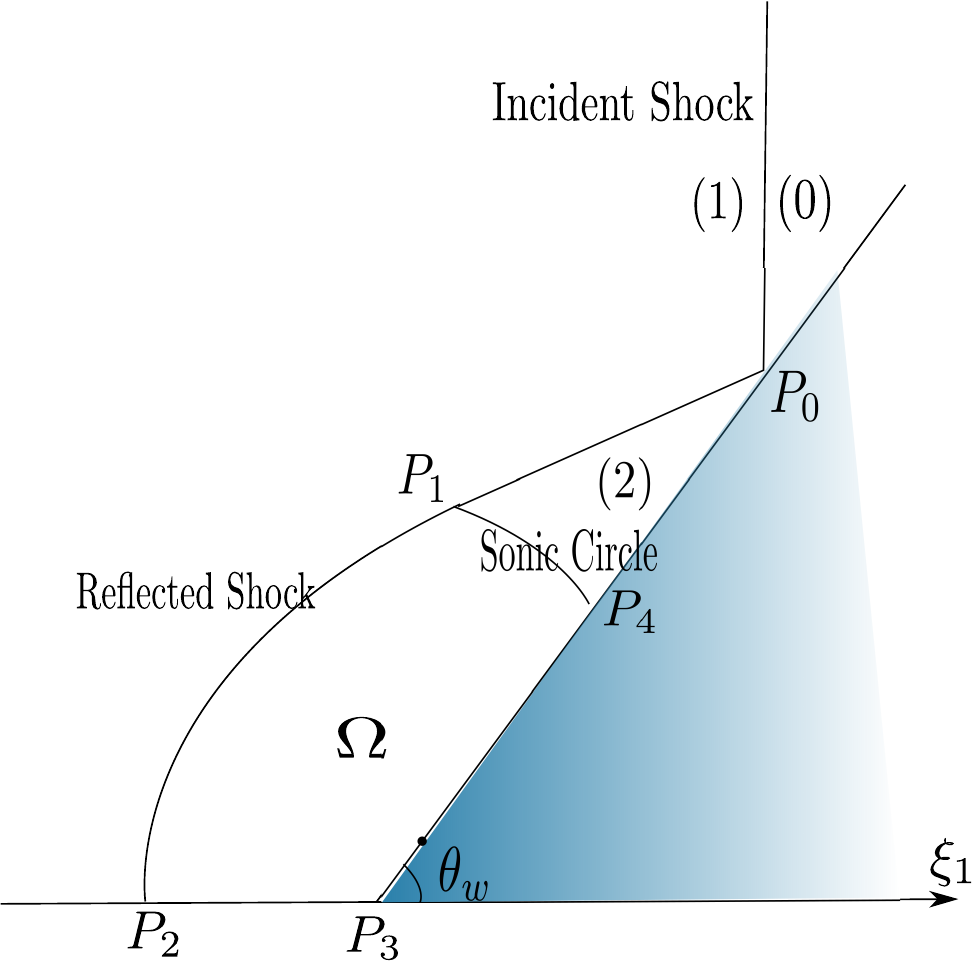}
			\caption{Supersonic regular reflection}
			\label{figure:shock refelction}
		\end{minipage}
		\hspace{0.1in}
		\begin{minipage}{0.43\textwidth}
			\centering
			\includegraphics[width=0.50\textwidth]{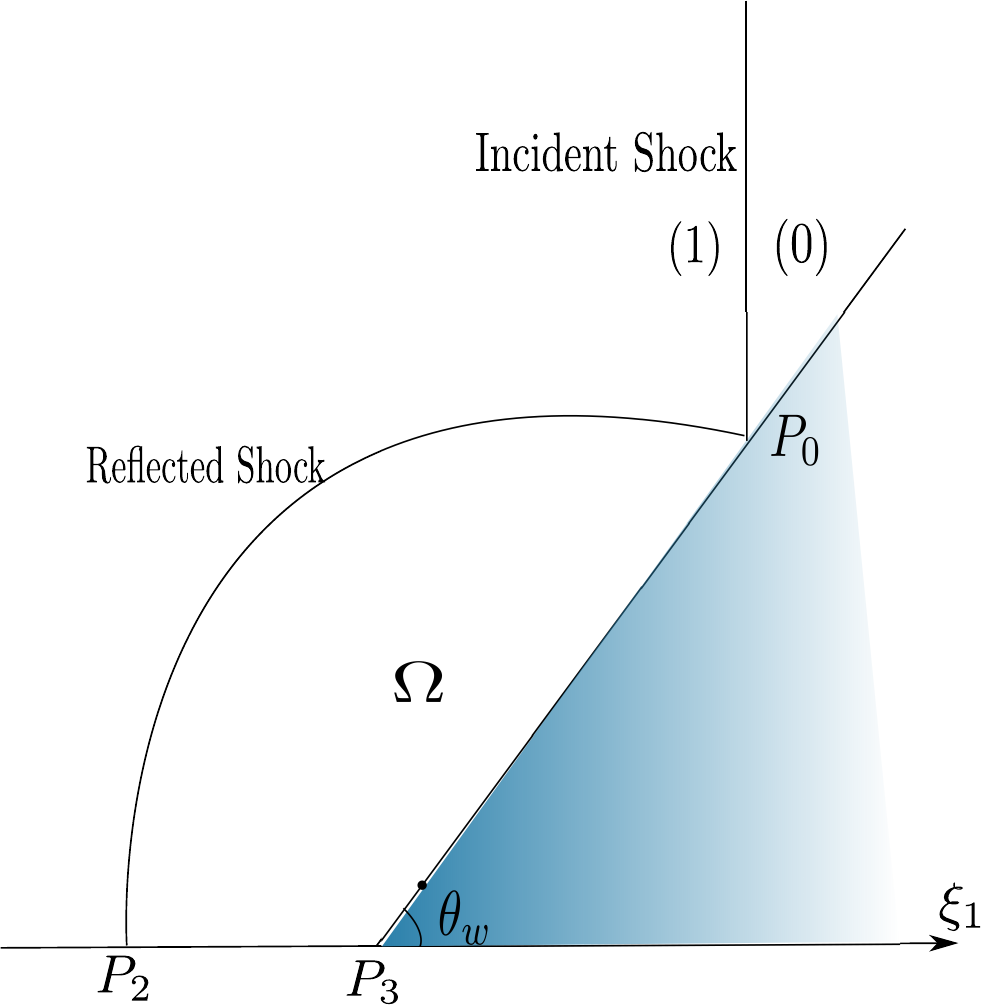}
			\vspace{0.025in}
			\caption{Subsonic regular reflection}
			\label{figure:shock refelction-subs}
		\end{minipage}
	\end{figure}
	
	When $u_1>c_1$, besides the
	configurations shown in  Figs. \ref{figure:shock refelction}--\ref{figure:shock refelction-subs},
	there is an additional possibility that the reflected-diffracted shock is attached to the wedge vertex $P_3$, {\it i.e.}, $P_2=P_3$; see
	Figs. \ref{figure:shock refelction-b}--\ref{figure:shock refelction-subs-b}.

	\begin{figure}[!ht]
		\centering
		\begin{minipage}{0.44\textwidth}
			\centering
			\includegraphics[width=0.51\textwidth]{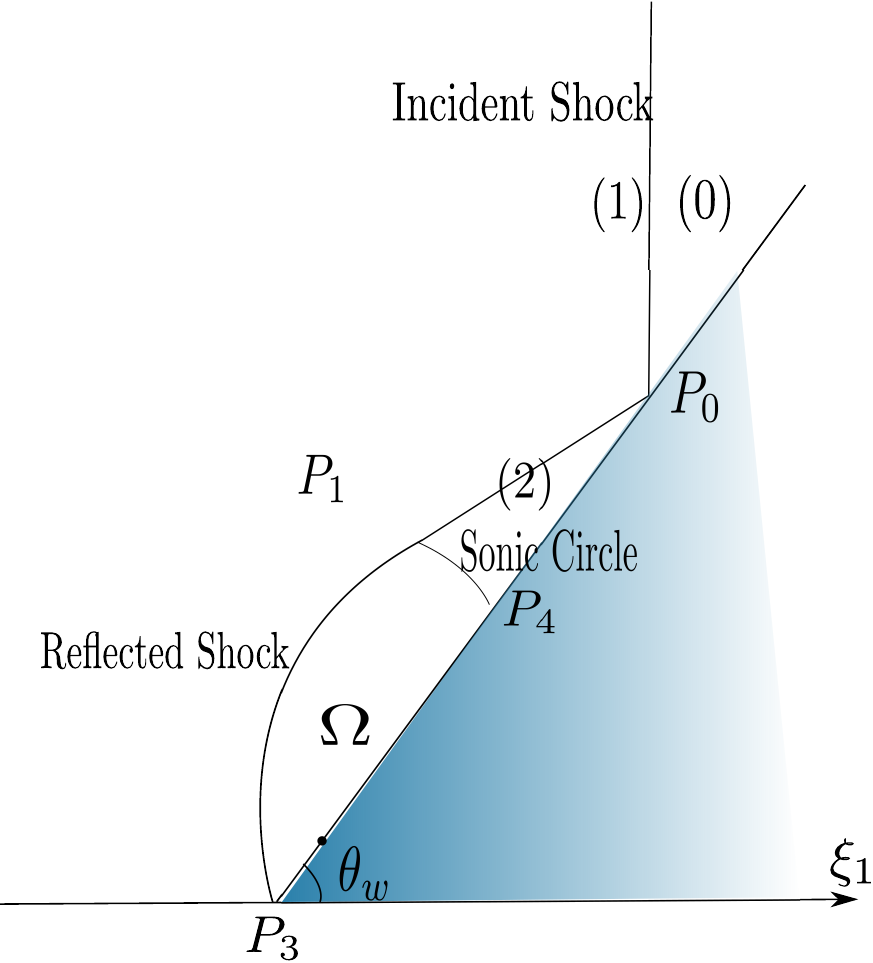}
            \caption{Attached supersonic regular reflection}
			\label{figure:shock refelction-b}
		\end{minipage}
		\hspace{0.1in}
		\begin{minipage}{0.42\textwidth}
			\centering
			\includegraphics[width=0.51\textwidth]{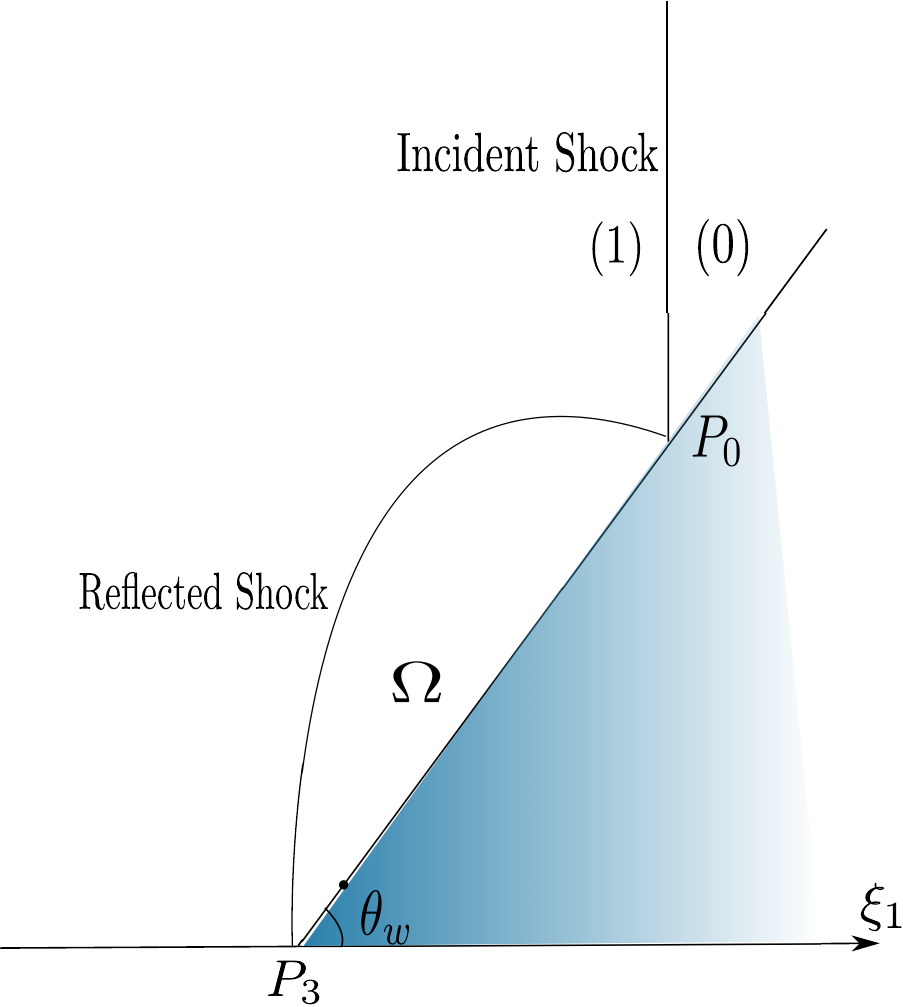}
			\vspace{0.05in}
			\caption{Attached subsonic regular reflection}
			\label{figure:shock refelction-subs-b}
		\end{minipage}
	\end{figure}

	The regular reflection problem is posed in the region:
	$$
	\Lambda=\mR^2_+\setminus\{\xxi\; : \; \xi_1>0, 0<\xi_2<\xi_1\tan\theta_{\rm w}\},
	$$
	where $\mR^2_+:=\mR^2\cap\{\xi_1>0\}$.
	
	\begin{defi}\label{def:weak solutionRegRefl}
		$\varphi\in C^{0,1}(\overline\Lambda)$ is a weak solution of the
		shock reflection-diffraction problem if $\varphi$ satisfies
		equation \eqref{equ:potential flow equation} in $\Lambda$, the boundary conditions{\rm :}
		\begin{equation}\label{BCfor RegRefl}
		\partial_{\bn} \varphi=0\qquad \mbox{on $\partial\Lambda$}
		\end{equation}
		in the weak sense {\rm (}defined below{\rm )},
		and the asymptotic conditions{\rm :}
		\begin{equation}\label{2.13a}
		\lim_{R\to\infty}\|\varphi-\overline{\varphi}\|_{0, \Lambda\setminus
			B_R(\mathbf{0})}=0,
		\end{equation}
		where
		\begin{equation*}
		\bar{\varphi}=
		\begin{cases} \varphi_0 \qquad\mbox{for $\xi_1>\xi_1^0,\,\,\, \xi_2>\xi_1 \tan\theta_{\rm w}$},\\[1mm]
		\varphi_1 \qquad \mbox{for $\xi_1<\xi_1^0, \,\,\, \xi_2>0$},
		\end{cases}
		\end{equation*}
		and $\xi_1^0>0$ is the location of the incident shock $S_0$.
	\end{defi}
	
	In Definition \ref{def:weak solutionRegRefl}, the solution
	is understood in the following weak sense:
	We consider solutions
	with a positive lower bound for the density, so that  (\ref{BCfor RegRefl})
	is equivalent to the conormal condition{\rm :}
	$$
	\r(|D\varphi|^2,\varphi)\partial_{\bn} \varphi=0.
	$$
	Thus, a weak solution
	of problem (\ref{equ:potential flow equation})
	and (\ref{BCfor RegRefl}) is given by
	Definition \ref{def:weak solution} in region $\Lambda$,
	with the following change:
	(\ref{def:weak solution-i3-Eqn}) is satisfied for any
	$\z\in C^{\infty}_{c}(\mR^2)$ (whose support does not have to
	be in $\Lambda$).
	
	Next, we define the points and lines on
	Figs. \ref{figure:shock refelction}--\ref{figure:shock refelction-subs}.
	The incident shock is line $S_0:=\{\xi_1=\xi_1^0\}$ with
	$\xi_1^0=\frac{\rho_1u_1}{\rho_1-\rho_0}>0$.
	The center,  $O_2=(u_2, v_2)$, of the sonic circle
	$B_{c_2}(O_2)$ of state $(2)$
	lies on the wedge boundary between the reflection point $P_0$
	and the wedge vertex $P_3$
	for both the supersonic and subsonic cases.
	
	Then,  for the supersonic case, {\it i.e.},
	when $|D\varphi_2(P_0)|=|P_0 O_2|>c_2$ with
	$P_0\notin \overline{B_{c_2}(O_2)}$,
	we denote by
	$P_4$ the {\it upper} point of intersection of
	$\partial B_{c_2}(O_2)$ with the wedge boundary
	so that  $O_2\in P_3P_4$.
	Also, $\partial B_{c_2}(O_2)$ of state $(2)$ intersects
	line $S_1$,
	and one of the points of intersection, $P_1\in\Lambda$, is such that
	segment $P_0P_1$ is outside $B_{c_2}(O_2)$.
	We denote  the arc of $\partial B_{c_2}(O_2)$ by $\Gso=P_1P_4$.
	The curved part of the reflected-diffracted shock is $\Gsh=P_1P_2$,
	where $P_2\in\{\xi_2=0\}$.
	Denote the line segments $\Gamma_{\rm sym}:=P_2P_3$ and $\Gw:=P_3P_4$.
	The lines and curves $\Gsh$, $\Gso$, $\Gamma_{\rm sym}$, and $\Gw$ do not have
	common points, except their endpoints $P_1$, \dots, $P_4$.
	Thus, $\Gsh\cup\Gso\cup\Gamma_{\rm sym}\cup\Gw$ is a closed curve without
	self-intersection.
	Denote by $\Omega$ the domain bounded by this curve.

	For the subsonic/sonic case, {\it i.e.},
	when $|D\varphi_2(P_0)|=|P_0 O_2|\le c_2$ so that $P_0\in \overline{B_{c_2}(O_2)}$,
	the curved reflected-diffracted shock
	is $\Gsh=P_0P_2$ that does not have common interior points with
	the line segments $\Gamma_{\rm sym}=P_2P_3$ and $\Gw=P_0P_3$.
	Then $\Gsh\cup\Gamma_{\rm sym}\cup\Gw$ is a closed curve without
	self-intersection, and $\Omega$ is the domain bounded
	by this curve.
	
	Furthermore, in some parts of the argument below, it is convenient to
	extend problem (\ref{equ:potential flow equation}) and (\ref{BCfor RegRefl}),
	given in $\Lambda$ by even reflection about the $\xi_1$--axis, {\it i.e.},
	defining $\varphi^{\rm ext}(-\xi_1, \xi_2):=\varphi(\xi_1, \xi_2)$ for any
	$\xxi=(\xi_1,\xi_2)\in\Lambda$.
	Then $\varphi^{\rm ext}$ is defined in region
	$\Lambda^{\rm ext}$ obtained from $\Lambda$ by adding the reflected region $\Lambda^-$,
	{\it i.e.}, $\Lambda^{\rm ext}=\Lambda\cup\{(\xi_1, 0)\; :\;\xi_1<0\}\cup\Lambda^-$.
	In a similar way, region $\Omega$ and curve
	$\Gsh\subset\partial\Omega$ can be extended
	into the corresponding
	region $\Omega^{\rm ext}$ and curve
	$\Gsh^{\rm ext}\subset\partial\Omega^{\rm ext}$.
	
	Now we define a class of solutions, with structure as shown
	on Figs. \ref{figure:shock refelction}--\ref{figure:shock refelction-subs}.
	\begin{defi}\label{admisSolnDef}
		Let $\theta_{\rm w}\in (\theta_{\rm w}^{\rm d},\frac{\pi}2)$.
		A function $\varphi\in C^{0,1}(\overline\Lambda)$ is an admissible solution of the regular reflection
		problem \eqref{equ:potential flow equation} and \eqref{BCfor RegRefl}--\eqref{2.13a}
		if $\varphi$ is a solution in the sense of Definition {\rm \ref{def:weak solutionRegRefl}}
		and satisfies the following properties{\rm :}
		\begin{enumerate}[\rm (i)]
			\item\label{RegReflSol-Prop0}
			The structure of solutions is as follows{\rm :}
			
			\smallskip
			\begin{itemize}
				\item
				If $|D\varphi_2(P_0)|>c_2$,	then $\varphi$ is of the {\em supersonic} regular shock
				reflection-diffraction configuration shown on
				Fig. {\rm \ref{figure:shock refelction}}			
				and satisfies{\rm :}
				
				The reflected-diffracted shock $\Gsh$ is $C^{2}$ in its relative interior.
				Curves $\Gsh$, $\Gso$, $\Gw$, and $\Gamma_{\rm sym}$ do not
				have common points except their endpoints.

				$\varphi$ satisfies the following properties{\rm :}
				\begin{align}
				& \varphi\in C^{0,1}(\Lambda)\cap C^1(\Lambda\setminus (S_0\cup \overline{P_0P_1P_2})),\notag\\[2mm]
				&\varphi\in C^{1}(\overline{\Omega})\cap C^{3}(\overline\Omega\setminus(\overline\Gso\cup\{P_2, \,P_3\})), \notag\\[2mm]
				&\varphi=\left\{\begin{array}{ll}
				\varphi_0 \qquad\mbox{for $\xi_1>\xi_1^0$ and $\xi_2>\xi_1\tan\theta_{\rm w}$},\\[2mm]
				\varphi_1 \qquad\mbox{for $\xi_1<\xi_1^0$ and above curve $P_0P_1P_2$},\\[2mm]
				\varphi_2 \qquad \mbox{in region $P_0P_1P_4$}.
				\end{array}\right. \label{phi-states-0-1-2-MainThm}
				\end{align}

				\smallskip
				\item If $|D\varphi_2(P_0)|\le c_2$,	then $\varphi$ is of the {\em subsonic} regular shock
				reflection-diffraction configuration
				shown on Fig. {\rm \ref{figure:shock refelction-subs}}			
				and satisfies{\rm :}
				
				The reflected-diffracted shock $\Gsh$ is $C^{2}$ in its relative interior.
				Curves $\Gsh$,  $\Gw$, and $\Gamma_{\rm sym}$ do not
				have common points except their endpoints.
				
				$\varphi$ satisfies the following properties{\rm :}
				\begin{align}
				& \varphi\in C^{0,1}(\Lambda)\cap C^1(\Lambda\setminus \overline\Gsh),\notag\\[2mm]
				&\varphi\in  C^{1}(\overline{\Omega})\cap
				C^3(\overline\Omega\setminus\{P_0, P_2, P_3\}), \notag\\[2mm]
				&\varphi=\left\{\begin{array}{ll}
				\varphi_0 \quad&\mbox{for $\xi_1>\xi_1^0$ and $\xi_2>\xi_1\tan\theta_{\rm w}$},\\[2mm]
				\varphi_1 \quad&\mbox{for $\xi_1<\xi_1^0$ and above curve $P_0P_2$},\\[2mm]
				\varphi_2(P_0) \quad &\mbox{at $P_0$},
				\end{array}\right. \label{phi-states-0-1-2-MainThm-Subs}\\[2mm]
				&D\varphi(P_0)=D\varphi_2(P_0). \notag
				\end{align}
			\end{itemize}
			Furthermore, in both supersonic and subsonic cases,
			\begin{equation}\label{regularityOfRegReflSupers}
			\Gsh^{\rm ext}\,\,\,\,\mbox{is $C^1$ in its relative interior}.
			\end{equation}
			
			\smallskip		
			\item\label{RegReflSol-Prop1}
			Equation \eqref{equ:potential flow equation} is strictly elliptic in
			$\overline\Omega\setminus\,\overline{\Gso}${\rm :}
			$$
			|D\varphi|<c(|D\varphi|^2, \varphi)
			\qquad\mbox{ in $\overline\Omega\setminus\,\overline{\Gso}$},
			$$
			where, for the subsonic and sonic cases,  we have used notation $\overline{\Gso}=\{P_0\}$.
			\smallskip
			\item\label{RegReflSol-Prop1-1}
			$\partial_{\bn}\varphi_1>\partial_{\bn}\varphi>0$ on $\Gsh$, where $\bn$ is the normal vector
			to $\Gsh$ pointing into $\Omega$.
			
			\smallskip
			\item \label{RegReflSol-Prop1-1-1}
			$\varphi_2\le\varphi\le\varphi_1$ in $\Omega$.
			
			\smallskip
			\item\label{RegReflSol-Prop2}
			Let $\mathbf{e}_{S_1}$ be the unit vector parallel to $S_1:=\{\varphi_1=\varphi_2\}$, oriented so that
			$\mathbf{e}_{S_1}\cdot D\varphi_2(P_0)>0${\rm :}
			\begin{align}\label{defEs1a}
			&\mathbf{e}_{S_1}=
			-\frac {(v_2,\, u_1-u_2)}{\sqrt{(u_1-u_2)^2+v_2^2}}.
			\end{align}
			Let $\mathbf{e}_{\xi_2}=(0,1)$. Then
			\begin{equation}\label{nonstritConeMonot}
			\partial_{\mathbf{e}_{S_1}}(\varphi_1-\varphi)\leq0, \quad\,\,
			\partial_{\xi_2}(\varphi_1-\varphi)\leq0 \qquad\;\;\mbox{ on $\Gsh$}.
			\end{equation}	
		\end{enumerate}
	\end{defi}
	
	Below we continue to use the notational convention:
	\begin{equation}\label{subsonicConvenct}
	\mbox{$\overline{\Gso}:=\{P_0\}$, $\;P_1:=P_0$, $P_4:=P_0$ $\quad$ for the subsonic and sonic cases}.
	\end{equation}
	
	\begin{rem}\label{eqnInOmega-rmk}
		Since the admissible solution $\varphi$ in Definition {\rm \ref{admisSolnDef}} is a weak solution in the
		sense of
		Definition {\rm \ref{def:weak solutionRegRefl}} and has the regularity as in
		Definition {\rm \ref{admisSolnDef}(\ref{RegReflSol-Prop0})}, it satisfies
		\eqref{equ:study} classically in $\Omega$ with $\phi=\varphi-\varphi_1$,
		the Rankine-Hugoniot conditions{\rm :}
		\begin{equation}\label{equ:boundary-RH-RegRefl}
		\varphi=\varphi_1,\quad \r(|D\varphi|^2,\varphi)D\varphi\cdot\bn=\rho_1D\varphi_1\cdot\bn
		\qquad
		\mbox{ on $\Gsh$},
		\end{equation}
		and the boundary conditions{\rm :}
		\begin{equation}\label{equ:boundary-ofOmega-RegRefl}
		\partial_{\bn}\varphi=0\qquad\;\mbox{ on $\Gw\cup\Gamma_{\rm sym}$}.
		\end{equation}
		Note also that, rewriting \eqref{equ:boundary-ofOmega-RegRefl} in terms of $\phi=\varphi-\varphi_1$, we have
		\begin{equation}\label{equ:boundary-ofOmega-RegRefl-phi}
		\begin{split}
		&\partial_{\bn}\phi=-u_1\sin\theta_{\rm w}\qquad\; \mbox{ on $\Gw$}, \\
		&\partial_{\bn}\phi=0\qquad\qquad\qquad\, \mbox{ on $\Gamma_{\rm sym}$}.
		\end{split}
		\end{equation}
	\end{rem}
	
	\begin{rem}\label{nonconsta-rmk}
		An admissible solution $\varphi$ is not a constant state in $\Omega$ $($recall that $\theta_{\rm w}<\frac\pi 2${\rm )}.
		Indeed, if
		$\varphi$ is a constant state in $\Omega$, then $\varphi=\varphi_2$ in $\Omega${\rm :}
		This follows from \eqref{phi-states-0-1-2-MainThm} for the supersonic case since
		$\varphi$ is $C^{1}$ across $\Gso$, and  from
		the property that
		$(\varphi, D\varphi)=(\varphi_2, D\varphi_2)$ at $P_0$ for the subsonic case.
		However, $\varphi_2$
		does not satisfy \eqref{equ:boundary-ofOmega-RegRefl} on $\Gamma_{\rm sym}$
		since $\vv_2=(u_2, v_2)=(u_2, u_2\tan\theta_{\rm w})$ with $u_2>0$ and $\theta_{\rm w}\in(0, \frac\pi 2)$.
	\end{rem}
	
	\begin{rem}\label{BConStraightBdrySegm}
		Let $\varphi$ be an admissible solution and $\phi:=\varphi-\varphi_1$.
		For a unit vector $\ee\in\mr^2$,
		denote
		$$
        w=\phi_\ee.
        $$
		Then, from the regularity in Definition {\rm \ref{admisSolnDef}(\ref{RegReflSol-Prop0})},
		$$
         w\in C(\overline{\Omega})\cap C^{2}(\overline\Omega\setminus(\overline\Gso\cup\{ P_3\})),
        $$
		where we have used \eqref{subsonicConvenct} for the subsonic and sonic cases.
		
		We first notice that $w$
		satisfies equation \eqref{equ:phi e} in the $(S,T)$--coordinates with basis $\{\ee, \ee^\perp\}$.
		Equation \eqref{equ:phi e} has the same coefficients of the second-order terms as equation
		\eqref{nondivMainEq}, so that \eqref{equ:phi e} is strictly elliptic
		in $\overline\Omega\setminus\,\overline{\Gso}$ by
		Definition {\rm \ref{admisSolnDef}}\eqref{RegReflSol-Prop1}.
		
		Furthermore, by {\rm \cite[{\it Lemma} 5.1.3]{cf-book2014shockreflection}}, $w$ satisfies the
		following boundary conditions on the straight segments $\Gw$ and
		$\Gamma_{\rm sym}${\rm :} If $\ee\cdot\bt\ne 0$ for a unit tangent vector $\bt$ on
		$\Gw$ {\rm (}resp. $\Gamma_{\rm sym}${\rm )}, then
		\begin{equation}\label{equ:boundary-ofOmega-RegRefl-w}
		 w_{\bn}+\frac{(\ee\cdot\bn)(c^2-\varphi_{\bt}^2)}{(\ee\cdot\bt)(c^2-\varphi_{\bn}^2)}
		w_{\bt}=0 \qquad \mbox{on $\Gw^0$ {\rm (}resp. $\Gamma_{\rm sym}^0${\rm )}}.
		\end{equation}
		The coefficients are continuous and hence locally bounded,
		which implies that these boundary conditions are
		oblique on $\Gw^0$ {\rm (}resp. $\Gamma_{\rm sym}^0${\rm )}.
	\end{rem}
	
	\begin{lem}\label{admisEquiv1-2}
		Definition {\rm \ref{admisSolnDef}} is equivalent to the definition of admissible
		solutions in {\rm \cite{cf-book2014shockreflection}}{\rm ;} see Definitions {\rm 15.1.1}--{\rm 15.1.2} there.
	\end{lem}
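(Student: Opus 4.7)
The plan is to verify that the list of properties (i)--(v) in Definition \ref{admisSolnDef} corresponds, item by item, to the requirements of Definitions 15.1.1--15.1.2 in \cite{cf-book2014shockreflection}, treating the supersonic and subsonic/sonic cases in parallel. First, I would compare the geometric and regularity statement (i): the labeling of the vertices $P_0, \ldots, P_4$, the decomposition $\partial\Omega = \Gsh \cup \Gso \cup \Gw \cup \Gamma_{\rm sym}$ (with the convention \eqref{subsonicConvenct} in the subsonic/sonic case), the one-sided regularity of $\varphi$ on each piece, and the identification of $\varphi$ with the background constant states in the relevant regions agree in both formulations. The ellipticity of \eqref{equ:potential flow equation} in $\overline\Omega\setminus\overline{\Gso}$ in (ii), the entropy condition (iii) in the form $0 < \partial_{\bn}\varphi < \partial_{\bn}\varphi_1$ on $\Gsh$, and the $L^\infty$ bounds (iv) are then matched directly with the corresponding items in the book's definition.

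The only substantive point to check is the monotonicity condition (v). In Definition \ref{admisSolnDef} it is stated as the non-strict inequalities $\partial_{\ee_{S_1}}(\varphi_1 - \varphi) \le 0$ and $\partial_{\xi_2}(\varphi_1 - \varphi) \le 0$ \emph{along $\Gsh$}, whereas in \cite[Definitions 15.1.1--15.1.2]{cf-book2014shockreflection} the same directional derivatives are required (non-strictly) \emph{throughout $\overline{\Omega}$} for every direction in the closed cone spanned by $\ee_{S_1}$ and $\ee_{\xi_2}$. The book's version implies (v) by restriction to $\Gsh$, so the real content lies in proving the converse implication under the remaining assumptions (i)--(iv).

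To upgrade the monotonicity from $\Gsh$ to all of $\overline{\Omega}$, I would fix a unit vector $\ee$ in the closed cone spanned by $\ee_{S_1}$ and $\ee_{\xi_2}$ and set $w := \partial_\ee(\varphi_1 - \varphi)$. By Remark \ref{eqnInOmega-rmk}, $\phi = \varphi - \varphi_1$ satisfies \eqref{equ:study} classically in $\Omega$ together with \eqref{equ:boundary-ofOmega-RegRefl-phi} on $\Gw \cup \Gamma_{\rm sym}$, so $w$ satisfies the linearized equation \eqref{equ:phi e}, which is strictly elliptic on compact subsets of $\Omega \cup \Gsh^0$ by (ii). The oblique boundary condition \eqref{equ:boundary-ofOmega-RegRefl-w} of Remark \ref{BConStraightBdrySegm} holds on $\Gw^0 \cup \Gamma_{\rm sym}^0$, while on $\overline{\Gso}$ (which is a single point in the subsonic/sonic case) one reads off $w\le 0$ directly from $\varphi \equiv \varphi_2$ on $\overline{\Gso}$. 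On $\Gsh$ the inequality $w \le 0$ is exactly hypothesis (v). A standard application of the strong maximum principle together with Hopf's lemma against the oblique derivative condition then rules out both an interior positive maximum and a boundary positive maximum on the relatively open parts $\Gw^0 \cup \Gamma_{\rm sym}^0$; the corner points $P_2, P_3, P_4$ (and $P_0$ in the subsonic case) are handled by the one-sided regularity in (i) together with the boundary conditions on the adjacent arcs, exactly as in the proofs of the analogous monotonicity propositions in \cite[Chapters 12 and 15]{cf-book2014shockreflection}.

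The main obstacle is the careful treatment at the corner points and along the degenerate sonic arc $\Gso$, where the standard strong maximum principle and Hopf lemma do not apply verbatim and must be combined with the explicit one-sided $C^{1,\alpha}$ regularity of $\varphi$ assumed in (i); however, these arguments are identical to those already developed in the book and can therefore be cited rather than reproved. Once $w \le 0$ in $\overline{\Omega}$ is established for $\ee \in \{\ee_{S_1}, \ee_{\xi_2}\}$, linearity gives the inequality for every $\ee$ in the closed cone, yielding the book's formulation of the cone monotonicity. Combining this with the direct identifications of items (i)--(iv) completes the proof of equivalence.
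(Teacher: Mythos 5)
Your treatment of the cone-monotonicity upgrade (from $\Gsh$ to $\overline\Omega$) is essentially the paper's argument: the paper also runs a strong maximum principle for $\phi_\ee=\partial_\ee(\varphi-\varphi_1)$ using the strictly elliptic equation \eqref{equ:phi e}, the oblique condition \eqref{equ:boundary-ofOmega-RegRefl-w} on the straight segments, the identity $D\varphi=D\varphi_2$ on $\overline\Gso$, and the sign on $\Gsh$, exactly as in the proof of Remark \ref{theorem:1xRem}. Two small repairs are needed in your version: for $\ee=\ee_{\xi_2}$ the condition \eqref{equ:boundary-ofOmega-RegRefl-w} is \emph{not} oblique on $\Gamma_{\rm sym}$ (there $\ee\cdot\bt=0$), and one must instead use $\phi_{\xi_2}=0$ on $\Gamma_{\rm sym}$ directly from \eqref{equ:boundary-ofOmega-RegRefl-phi}; and for $\ee=\ee_{S_1}$ the non-tangency $\ee_{S_1}\cdot\bt\ne 0$ on $\Gw$ and $\Gamma_{\rm sym}$ is not automatic but is a cited fact (Lemma 7.5.12 of \cite{cf-book2014shockreflection}).

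The genuine gap is that you treat the monotonicity as ``the only substantive point.'' Definitions 15.1.1--15.1.2 of \cite{cf-book2014shockreflection} also impose shock-location conditions that do not appear among (i)--(v) of Definition \ref{admisSolnDef} and therefore must be \emph{derived}, namely $\xi_{1P_2}\le\xi_{1P_1}$ together with $\overline{\Gsh}\subset\bigl(\Lambda\setminus\overline{B_{c_1}(O_1)}\bigr)\cap\{\xi_{1P_2}\le\xi_1\le\xi_{1P_1}\}$. The paper obtains the exclusion of $\overline{B_{c_1}(O_1)}$ from the Rankine--Hugoniot conditions \eqref{equ:boundary-RH-RegRefl} combined with the entropy condition (iii) (so that $\varphi_1$ is supersonic on $\Gsh$), and obtains the strip condition and the ordering $\xi_{1P_2}\le\xi_{1P_1}$ from the strict inequality $\phi_{\xi_2}>0$ in $\Omega$ (which your maximum-principle step does yield), the facts $\phi=0$ on $\Gsh$, $\phi\le 0$ in $\Omega$, and a contradiction argument: if $\xi_{1P_2}>\xi_{1P_1}$, the resulting location property forces $\overline\Gsh\subset\{\xi_2>0\}$, contradicting $P_2\in\{\xi_2=0\}$. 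Without this block your proof does not establish one direction of the equivalence. Finally, in the converse direction the entropy condition (iii) of Definition \ref{admisSolnDef} does not ``match directly'' an item of the book's definition; it has to be imported from the book's Lemma 8.1.7 and Proposition 15.2.1 --- citable, but it should be stated as such rather than folded into the item-by-item identification.
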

	\begin{proof}
		In order to show that the solutions in Definition {\rm \ref{admisSolnDef}} satisfy all the properties in
		Definitions {\rm 15.1.1}--{\rm 15.1.2} of Chen-Feldman {\rm \cite{cf-book2014shockreflection}}, it requires to show
		that they satisfy:
		\begin{align}
		\label{eqDefAdm-1}
		&\xi_{1 P_2}\le \xi_{1 P_1}, \quad\,\,
		\overline{\Gsh}\subset(\Lambda\setminus \overline{B_{c_1}(O_1)})\cap
		\{\xi_{1 P_2}\le\xi_1\le \xi_{1 P_1}\}, \\
		\label{eqDefAdm-2}
		&\partial_{\mathbf{e}_{S_1}}(\varphi_1-\varphi)\leq0, \quad
		\partial_{\xi_2}(\varphi_1-\varphi)\leq0 \qquad\;\; \mbox{in $\overline\Omega$},
		\end{align}
		where $O_1=(u_1, 0)$ is the center of sonic circle of state (1) and, in the subsonic
		reflection case (see Fig. {\rm \ref{figure:shock refelction-subs}}), we have used the notational
		convention (\ref{subsonicConvenct}).
		Moreover, note that the inequalities in (\ref{nonstritConeMonot}) hold
		on $\Gsh$, while these inequalities in \eqref{eqDefAdm-2} hold in the larger domain $\overline\Omega$.
		
		We first show both (\ref{eqDefAdm-2}) and
		the stronger property:
		\begin{equation}\label{strictConeMonot-OmegaLem}
		\partial_{\mathbf{e}_{S_1}}(\varphi_1-\varphi)<0, \quad\,\,
		\partial_{\xi_2}(\varphi_1-\varphi)<0 \qquad\;\;\mbox{ in $\Omega$}.
		\end{equation}
		The argument is the same as the one in the proof of Remark \ref{theorem:1xRem} (see \S \ref{EquivalenceSect})
		for $\phi=\varphi-\varphi_1$ in the present case.
		We only need to check
		for $\mathbf{e}=\mathbf{e}_{S_1}$ and $\mathbf{e}=\mathbf{e}_{\xi_2}$
		that, for any point $\xxi \in \partial\Omega\setminus\Gsh^0$, $\phi_{\mathbf{e}}$ satisfies that
		\begin{equation}\label{fixedBdEquivCond}
		\mbox{ either $\phi_{\mathbf{e}}(\xxi)\geq0$ or
			$\phi_{\mathbf{e}}$ cannot attain its local minimum at $\xxi$.}
		\end{equation}
		Note that $\partial\Omega\setminus\Gsh^0=\overline{\Gso}\cup\Gw\cup\Gamma_{\rm symm}\cup\{P_3\}$.

		Consider first $\ee=\ee_{S_1}$.  Since $D\varphi(P_3)=(0,0)$ by (\ref{equ:boundary-ofOmega-RegRefl}) and
		$\varphi\in C^1(\overline\Omega)$, we conclude that $w(P_3)=0$.
		Next,  $\ee_{S_1}\cdot\bt\ne 0$ on $\Gw\cup \Gamma_{\rm sym}$ by \cite[Lemma 7.5.12]{cf-book2014shockreflection}.
		Then, by Remark \ref{BConStraightBdrySegm}, $\phi_{\mathbf{e}}$ satisfies a homogeneous elliptic equation
		in $\Omega$ and
		the oblique boundary conditions (\ref{equ:boundary-ofOmega-RegRefl-w}) on $\Gw^0\cup\Gamma_{\rm sym}^0$, so that $w$
		cannot attain its local minimum on $\Gw^0\cup \Gamma_{\rm sym}^0$, unless $w$ is constant in $\Omega$ in which case
		$w\equiv w(P_3)=0$ in $\overline\Omega$.
		On $\overline\Gso$,
		$(\varphi, D\varphi)=(\varphi_2, D\varphi_2)$
		as shown in Remark \ref{nonconsta-rmk},
		where we have used notation (\ref{subsonicConvenct}).
		Also, $\mathbf{e}_{S_1}\cdot D(\varphi_2-\varphi_1)=0$ by \eqref{defEs1a}.
		Thus, $\phi_{\mathbf{e}_{S_1}}=\mathbf{e}_{S_1}\cdot D(\varphi_2-\varphi_1)=0$ on $\overline\Gso$,
		which implies (\ref{fixedBdEquivCond}) for $\ee=\mathbf{e}_{S_1}$.
		
		Now we show (\ref{fixedBdEquivCond}) for $\ee=\mathbf{e}_{\xi_2}$, {\it i.e.}, $w=\phi_{\xi_2}$.
		The argument is similar to the previous case, with the following
		differences:
		First, $\mathbf{e}_{\xi_2}\cdot\bt =0$ on $\Gamma_{\rm sym}$ so that, instead of
		(\ref{equ:boundary-ofOmega-RegRefl-w}), we obtain that $w=0$ on $\Gamma_{\rm sym}$ by
		\eqref{equ:boundary-ofOmega-RegRefl-phi}.
		Also, on $\Gso$, we use again that $D\varphi=D\varphi_2$
		to obtain that $w=\phi_{\xi_2}=(\varphi_2-\varphi_1)_{\xi_2}=v_2\ge 0$.
		The rest of the argument is the same as above, which leads to
		(\ref{fixedBdEquivCond}) for $\ee=\mathbf{e}_{\xi_2}$.
		
		Repeating the proof of Remark \ref{theorem:1xRem} (see \S 6), in which
		$\phi$ is not a constant state by Remark \ref{nonconsta-rmk},
		we obtain (\ref{strictConeMonot-OmegaLem}). With this, (\ref{eqDefAdm-2}) is proved.
		
		Next we show \eqref{eqDefAdm-1}. Since $\overline{\Gsh}\subset \Lambda\setminus\overline{B_{c_1}(O_1)}$,
		then $\varphi_1$ is supersonic on $\Gsh$. This is a standard consequence
		of the Rankine-Hugoniot conditions \eqref{equ:boundary-RH-RegRefl}, combined with the entropy
		condition of Definition \ref{admisSolnDef}(\ref{RegReflSol-Prop1-1}).
		
		It remains to show that
		$\xi_{1 P_2}\le \xi_{1 P_1}$ and
		$\overline{\Gsh}\subset\{\xi_{1 P_2}\le\xi_1\le \xi_{1 P_1}\}$.
		From (\ref{strictConeMonot-OmegaLem}), $\phi_{\xi_2}>0$ in $\Omega$.
		Also, $\phi=0$ on $\Gsh$ and $\phi\le 0$ in $\Omega$ by
		(\ref{RegReflSol-Prop1-1-1}).  From these properties and
		the regularity of curve $\Gsh$,
		it follows that any vertical line that has a non-empty intersection with
		$\Gsh$ intersects $\Gsh$ either at one point or on a closed interval.
		Moreover,
		\begin{equation}\label{locationOfShockRegRefl}
		\mbox{
			If $(\xi_1^*, \xi_2^*)\in \Gsh$, then
			$\Omega\cap\{(\xi_1, \xi_2)\;:\;\xi_1=\xi_1^*\}\subset \{(\xi_1, \xi_2)\;:\;\xi_2<\xi_2^*\}$.
		}
		\end{equation}
		From
		these properties, we conclude that
		$$
         \overline\Gsh\subset \{\min(\xi_{1 P_1},\,\xi_{1 P_2})\le\xi_1\le \max(\xi_{1 P_1},\,\xi_{1 P_2})\}.
        $$
		It remains to show that $\xi_{1 P_2}\le \xi_{1 P_1}$.
		Assume that $\xi_{1 P_2}> \xi_{1 P_1}$.
		Then, from (\ref{locationOfShockRegRefl}) and
		the structure of $\Omega$ described in Definition \ref{admisSolnDef}(\ref{RegReflSol-Prop0}),
		we conclude that $\overline\Gsh$ is contained within the following subregion of
		$\{\xi_{1 P_1}\le \xi_1 \le \xi_{1 P_2}\}$:
		Above $\overline\Gso$ on $\{\xi_{1 P_1}\le \xi_1 \le \min(\xi_{1 P_2}, \xi_{1 P_4})\}$,
		and above  $\Gw$ on $\{\xi_{1 P_4}\le \xi_1 \le \xi_{1 P_2}\}$ if $\xi_{1 P_2}\ge \xi_{1 P_4}$.
		This implies that $\overline\Gsh\subset \{\xi_2>0\}$. This contradicts the fact that endpoint $P_2$
		of $\Gsh$ lies on $\{\xi_2=0\}$.
		Now \eqref{eqDefAdm-1} is proved.
		
		Therefore, we have shown that the solutions in Definition {\rm \ref{admisSolnDef}}
		satisfy all the properties in
		Definitions {\rm 15.1.1}--{\rm 15.1.2} of {\rm \cite{cf-book2014shockreflection}}.
		
		Now we show that the admissible solutions defined in Definitions {\rm 15.1.1}--{\rm 15.1.2} of
		{\rm \cite{cf-book2014shockreflection}} satisfy all the properties of Definition {\rm \ref{admisSolnDef}}.
		For that, we need to show that the admissible solutions  in Definitions {\rm 15.1.1}--{\rm 15.1.2}
		of {\rm \cite{cf-book2014shockreflection}} satisfy property
		(\ref{RegReflSol-Prop1-1}) of Definition \ref{admisSolnDef}.
		This is proved in  \cite[Lemma 8.1.7 and Proposition 15.2.1]{cf-book2014shockreflection}.
	\end{proof}
	
	From Lemma \ref{admisEquiv1-2}, all the estimates and properties of admissible solutions shown in {\rm \cite{cf-book2014shockreflection}}
	hold for the admissible solutions defined above.
	We list some of these properties in the following theorem.
	
	Below we use the notation that, for two unit vectors $\ee, {\mathbf f}\in\mR^2$ with
	$\ee\ne \pm {\mathbf f}$,
	\begin{equation}\label{3.5a}
	Con(\ee, \mathbf{f}):=\{a\mathbf{e}+b\mathbf{f}\; : \;a,b>0\}.
	\end{equation}
	
	\begin{thm}[Properties of admissible solutions]\label{Regularity-RegRefl-Th1}
		$\,\,$	There exits a constant $\alpha=\alpha(\rho_0,\rho_1,\gamma)\in (0, \frac{1}{2})$
		such that any admissible solution in the sense of Definition {\rm \ref{admisSolnDef}}
		with wedge angle $\theta_{\rm w}\in (\theta_{\rm w}^{\rm d},\frac{\pi}2)$ has the following properties{\rm :}
		\begin{enumerate}[\rm (i)]
			\item\label{RegReflSol-Prop0-th}
			Additional regularity{\rm :}
			\smallskip
			\begin{itemize}
				\item
				If $|D\varphi_2(P_0)|>c_2$,	i.e., when $\varphi$ is of the {\em supersonic} regular shock
				reflection-diffraction configuration
				as in Fig. {\rm \ref{figure:shock refelction}}, 		
				it satisfies
				$$
				\varphi\in C^\infty(\overline{\Omega}\backslash(\overline{\Gso}\cup\{P_3\}))
				\cap C^{1,1}(\overline{\Omega}\backslash\{P_3\})\cap C^{1,\alpha}(\overline{\Omega}).
				$$
				The reflected-diffracted shock $P_0P_1P_2$ $($where $P_0P_1$ is the straight segment and $P_1P_2=\Gsh${\rm )} is $C^{2,\beta}$
				up to its endpoints for any $\beta\in [0, \frac{1}{2})$
				and	$C^\infty$ except $P_1$.
				
				\smallskip
				\item If $|D\varphi_2(P_0)|\le c_2$,
				i.e., when $\varphi$ is of the {\em subsonic} regular shock reflection-diffraction configuration
				as in Fig. {\rm \ref{figure:shock refelction-subs}}, it satisfies
				$$
				\varphi\in  C^{1,\beta}(\overline{\Omega})\cap
				C^{1,\alpha}(\overline\Omega\setminus \{P_0\})
				\cap
				C^{\infty}(\overline\Omega\setminus \{P_0,  P_3\})
				$$
				for some $\beta=\beta(\rho_0,\rho_1,\gamma, \theta_{\rm w})\in (0, \alpha]$ which
				is non-decreasing with respect to $\theta_{\rm w}$,
				and the reflected-diffracted shock $\Gsh$ is $C^{1,\beta}$
				up to its endpoints
				and $C^\infty$ except $P_0$.			
			\end{itemize}
			
			Furthermore, in both supersonic and subsonic cases,
			$$
			\varphi^{\rm ext}\in C^\infty(\Omega^{\rm ext}\cup(\Gsh^{\rm ext})^0).
			$$
			
			\smallskip
			\item\label{RegReflSol-Prop2-th}
			For each $\mathbf{e}\in Con(\mathbf{e}_{S_1}, \mathbf{e}_{\xi_2})$,
			\begin{equation}\label{strictCone-regrefl}
			\partial_{\mathbf{e}}(\varphi_1-\varphi)< 0 \qquad\mbox{in $\overline\Omega$},
			\end{equation}
			where vectors  $\mathbf{e}_{S_1}$ and $\mathbf{e}_{\xi_2}$ are
			introduced in  Definition {\rm \ref{admisSolnDef}}\eqref{RegReflSol-Prop2}.
			
			\smallskip
			\item \label{RegReflSol-Prop3-th}
			Denote by $\bn_{\rm w}$ the
			unit interior normal vector to $\Gw$ {\rm (}pointing into $\Omega${\rm )}, {\it i.e.},
			$\bn_{\rm w}=(-\sin\theta_{\rm w}, \cos\theta_{\rm w})$.
			Then
			$\partial_{\bn_{\rm w}}(\varphi-\varphi_2)\leq0$ in $\overline\Omega$.
		\end{enumerate}
	\end{thm}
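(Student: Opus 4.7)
The plan is to exploit the equivalence established in Lemma \ref{admisEquiv1-2} between Definition \ref{admisSolnDef} and the notion of admissible solution in \cite{cf-book2014shockreflection}, and then to deduce each assertion either from the lemmas already proved in \S\S3--6 of the present paper or from the corresponding results in \cite{cf-book2014shockreflection}. Since the statement is a compilation of properties of admissible solutions, the proof will consist of three essentially independent arguments, one for each assertion.

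For the regularity assertion (i), I would localize near three kinds of points. In the interior of $\Omega$, equation \eqref{equ:potential flow equation} is quasilinear, strictly elliptic by Definition \ref{admisSolnDef}(\ref{RegReflSol-Prop1}), and analytic in its arguments, so standard Schauder estimates give $C^\infty$ regularity and in fact real-analyticity of $\varphi$. Near the relative interior of $\Gsh$, Lemma \ref{lem:analyticity} already gives real-analyticity of $\varphi$ and of the free boundary up to $\Gsh^0$, and the reflected shock extension across $\Gamma_{\rm sym}$ in \eqref{regularityOfRegReflSupers} propagates this analyticity to $\Gsh^{\rm ext}$. Near the relative interiors of the straight fixed boundaries $\Gw$ and $\Gamma_{\rm sym}$, the oblique condition \eqref{equ:boundary-ofOmega-RegRefl} combined with strict ellipticity yields smoothness up to these arcs by the classical theory of oblique derivative problems for quasilinear uniformly elliptic equations. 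The main obstacle, and the place where I would directly invoke \cite{cf-book2014shockreflection}, is the regularity at the corner points $P_1, P_2, P_3, P_4$ (respectively $P_0, P_2, P_3$ in the subsonic case) and the uniform $C^{1,\alpha}$--regularity up to the degenerate sonic arc $\overline{\Gso}$; these require the weighted Hölder and degenerate elliptic estimates developed in Chapters 10--17 of \cite{cf-book2014shockreflection}, and the constant $\alpha$ is produced there.

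For the strict monotonicity assertion (ii), I would apply the scheme of Remark \ref{theorem:1xRem} to $\phi := \varphi - \varphi_1$ for each unit vector $\ee \in Con(\ee_{S_1}, \ee_{\xi_2})$. By Remark \ref{BConStraightBdrySegm}, $w := \phi_\ee$ satisfies the strictly elliptic equation \eqref{equ:phi e} in $\Omega$ and the oblique boundary condition \eqref{equ:boundary-ofOmega-RegRefl-w} along $\Gw^0$ and $\Gamma_{\rm sym}^0$ (whenever $\ee \cdot \bt \ne 0$, with the case $\ee \cdot \bt = 0$ giving $w = 0$ directly from \eqref{equ:boundary-ofOmega-RegRefl-phi}). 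The non-strict inequality \eqref{nonstritConeMonot} on $\Gsh$ gives $w \le 0$ there; on $\overline{\Gso}$ (when present) the identity $\varphi \equiv \varphi_2$ and the definition of $\ee_{S_1}$ give $\phi_{\ee_{S_1}} = 0$ and $\phi_{\xi_2} = v_2 > 0$, hence $w \le 0$ after the sign convention is reconciled; at the corner $P_3$, the condition $D\varphi(P_3)=0$ implied by \eqref{equ:boundary-ofOmega-RegRefl} reduces $w(P_3)$ to an explicit nonnegative expression. The Hopf lemma on $\Gw^0 \cup \Gamma_{\rm sym}^0$ prevents an interior minimum there unless $w \equiv $ const, which is excluded by Remark \ref{nonconsta-rmk}; the strong maximum principle then upgrades $w \le 0$ in $\overline\Omega$ to the strict inequality \eqref{strictCone-regrefl}. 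The subtle point, and the one requiring care, is verifying the sign of $w$ at the degenerate corners $P_1$ and $P_4$ in the supersonic case and at the sonic degenerate endpoint $P_0$ in the subsonic case; this is handled in \cite{cf-book2014shockreflection} through the matching with state $(2)$ across $\Gso$.

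For assertion (iii), the plan is to set $v := \partial_{\bn_{\rm w}}(\varphi - \varphi_2)$ and to run the same kind of strong maximum principle argument, but now using that $v$ satisfies a strictly elliptic linear equation obtained by differentiating \eqref{equ:potential flow equation} in the constant direction $\bn_{\rm w}$. On $\Gw$, the slip condition for both $\varphi$ and $\varphi_2$ (see \eqref{bcWedgeState2} and \eqref{equ:boundary-ofOmega-RegRefl}) gives $v = 0$; on $\overline{\Gso}$ one has $\varphi \equiv \varphi_2$ and hence $v = 0$; on $\Gamma_{\rm sym}$, combining \eqref{equ:boundary-ofOmega-RegRefl-phi} written for $\ee = \bn_{\rm w}$ with the explicit form of $\varphi_2$ gives a sign that can be made definite. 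The delicate part is the sign along $\Gsh$: here I would differentiate the Rankine-Hugoniot relations \eqref{equ:boundary-RH-RegRefl} in a tangential direction and combine with the obliqueness formula analogous to \eqref{equ:27 oblique of h}, now with $\varphi_2$ playing the role of the reference constant state, to obtain $v \le 0$ on $\Gsh$. With these boundary signs in place and the strong maximum principle applied to $v$, the conclusion $v \le 0$ in $\overline\Omega$ follows. The main obstacle in this part is, again, the degenerate endpoints where the argument must be supplemented with the weighted regularity of (i), as carried out in \cite{cf-book2014shockreflection}.
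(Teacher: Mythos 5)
Your overall strategy (pass through the equivalence of Lemma \ref{admisEquiv1-2} and then invoke \cite{cf-book2014shockreflection} for the hard estimates) is the same as the paper's: in fact the paper proves this theorem purely by citation, deducing (i) from Corollary 11.4.7, Proposition 11.5.1 and Corollary 16.6.12, (ii) from Corollary 8.2.10 and Proposition 15.2.1, and (iii) from Lemma 8.2.19 and Proposition 15.2.1 of \cite{cf-book2014shockreflection}. Your treatment of (i) is consistent with this (interior/oblique-boundary smoothness plus Lemma \ref{lem:analyticity}, with the corner and sonic-arc exponents delegated to the book). The problems are in your attempted re-derivations of (ii) and (iii). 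For (ii), first a sign issue: with $\phi=\varphi-\varphi_1$ and $w=\phi_\ee$, condition \eqref{nonstritConeMonot} gives $w\ge 0$ on $\Gsh$, not $w\le 0$, and the goal \eqref{strictCone-regrefl} is $w>0$; as written your inequalities cannot be ``upgraded'' into the stated conclusion. More substantively, the minimum-principle scheme of Remark \ref{theorem:1xRem} only yields strict positivity of $w$ in the \emph{open} domain (this is exactly \eqref{strictConeMonot-OmegaLem} in the proof of Lemma \ref{admisEquiv1-2}), whereas the theorem asserts strictness on $\overline\Omega$. Since $\phi_{\bt}=0$ on $\Gsh$, strictness on $\overline\Gsh$ is equivalent to $\bn\cdot\ee<0$ there, which at this stage cannot be read off from convexity (that is proved only later in this paper) and does not follow from Hopf's lemma alone without using the derivative condition \eqref{equ:boundary for phi e}; this, together with the corners (in particular $P_2$), is precisely the content of the cited Corollary 8.2.10, and you neither supply the argument nor delegate it (you only delegate $P_0,P_1,P_4$).

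For (iii), the step ``differentiate the Rankine--Hugoniot relations tangentially \dots to obtain $v\le 0$ on $\Gsh$'' does not work: tangential differentiation of \eqref{equ:boundary-RH-RegRefl} produces the second-order relation \eqref{equ:boundary for phi e}, which constrains $D^2\phi$ and carries no sign information about the first-order quantity $v=\partial_{\bn_{\rm w}}(\varphi-\varphi_2)$. Indeed, on $\Gsh$ one computes, using $\phi_{\bt}=0$ and $D(\varphi_1-\varphi_2)=(u_1-u_2,-v_2)$ with $v_2=u_2\tan\theta_{\rm w}$,
\begin{equation*}
\partial_{\bn_{\rm w}}(\varphi-\varphi_2)
=\phi_{\bn}\,(\bn\cdot\bn_{\rm w})-u_1\sin\theta_{\rm w},
\end{equation*}
where $\phi_{\bn}(\bn\cdot\bn_{\rm w})>0$ (entropy condition and $\bn_{\rm w}\in Con$), so the claimed sign on $\Gsh$ is a difference of a positive and a negative term and is a genuinely nonlocal fact; establishing it is the substance of \cite[Lemma 8.2.19]{cf-book2014shockreflection}, which the paper cites rather than reproves. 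So as a blind proof your parts (ii) and (iii) have real gaps; if you intend a citation-level proof as the paper gives, you should cite these statements of the book directly rather than sketch arguments that do not close.
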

	\begin{proof}
		Below we use the equivalence shown in Lemma \ref{admisEquiv1-2}.
		
		Assertion (\ref{RegReflSol-Prop0-th})
		follows from Definition \ref{admisSolnDef}(\ref{RegReflSol-Prop0})
		and
		\cite[Corollary 11.4.7, Proposition 11.5.1, Corollary 16.6.12]{cf-book2014shockreflection}.
		Assertion (\ref{RegReflSol-Prop2-th}) is obtained in
		\cite[Corollary 8.2.10, Proposition 15.2.1]{cf-book2014shockreflection}.
		Assertion (\ref{RegReflSol-Prop3-th}) follows from
		\cite[Lemma 8.2.19, Proposition 15.2.1]{cf-book2014shockreflection},
		where ${\mathbf n}_{\rm w}=-\bn_{\rm w}$.
	\end{proof}
	
	\begin{rem}\label{nuWinCone}
		We note that $\bn_{\rm w}\in Con(\mathbf{e}_{S_1}, \mathbf{e}_{\xi_2})$ for
		any wedge angle $\theta_{\rm w}\in (\theta_{\rm w}^{\rm d},\frac{\pi}2)$,
		which is proved in {\rm \cite[{\it Lemma} 8.2.11]{cf-book2014shockreflection}}.
	\end{rem}
	
	\smallskip
	Now we state the results on the existence of admissible solutions.

	\begin{thm}[Global solutions up to the detachment angle for the case: $u_1\le c_1$]\label{ExistRegRefl-Th1}
		Let the initial data $(\rho_0, \rho_1, \gamma)$ satisfy that $u_1\le c_1$.
		Then, for each $\theta_{\rm w}\in (\theta_{\rm w}^{\rm d},\frac{\pi}2)$, there
		exists an admissible solution of the regular reflection problem
		in the sense of Definition {\rm \ref{admisSolnDef}}, which
		satisfies the properties stated in Theorem {\rm \ref{Regularity-RegRefl-Th1}}.
	\end{thm}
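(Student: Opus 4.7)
\smallskip

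The plan is to obtain this theorem as a direct consequence of the existence theory developed in Chen--Feldman \cite{cf-annofmath20101067regularreflection,cf-book2014shockreflection}, combined with the equivalence established in Lemma \ref{admisEquiv1-2}. No new construction is needed at the level of this paper; the content of Theorem \ref{ExistRegRefl-Th1} is the translation of the monograph's existence result into the notation and admissibility class of Definition \ref{admisSolnDef}.

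First, I would invoke the main existence theorem of \cite{cf-book2014shockreflection} (the culminating result of Chapters 4--17 therein), which asserts that, under the hypothesis $u_1 \le c_1$, for every $\theta_{\rm w} \in (\theta_{\rm w}^{\rm d},\tfrac{\pi}{2})$ there exists a global regular reflection-diffraction solution $\varphi$ satisfying the full list of conditions in Definitions 15.1.1--15.1.2 of that monograph. These conditions include the piecewise structure and regularity of $\varphi$ across $\Gso$, the Rankine--Hugoniot conditions \eqref{equ:boundary-RH-RegRefl} and the wedge/symmetry boundary conditions \eqref{equ:boundary-ofOmega-RegRefl}, the strict ellipticity of \eqref{equ:potential flow equation} in $\overline\Omega\setminus\overline{\Gso}$, the entropy inequality $0<\partial_{\bn}\varphi<\partial_{\bn}\varphi_1$ on $\Gsh$, the ordering $\varphi_2\le\varphi\le\varphi_1$ in $\Omega$, and the two monotonicity inequalities \eqref{nonstritConeMonot} along $\Gsh$ in the directions $\mathbf{e}_{S_1}$ and $\mathbf{e}_{\xi_2}$.

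Second, I would apply Lemma \ref{admisEquiv1-2}, which identifies the book's notion of admissibility with Definition \ref{admisSolnDef} of the present paper; consequently, the solution produced in the previous step is admissible in the sense used here. The additional regularity and cone-monotonicity conclusions gathered in Theorem \ref{Regularity-RegRefl-Th1}(\ref{RegReflSol-Prop0-th})--(\ref{RegReflSol-Prop3-th}) are then automatic, since they were already proved in \cite{cf-book2014shockreflection} (specifically Corollary 11.4.7, Proposition 11.5.1, Corollary 16.6.12 for the regularity; Corollary 8.2.10 for the strict cone monotonicity \eqref{strictCone-regrefl}; and Lemma 8.2.19 for the monotonicity $\partial_{\bn_{\rm w}}(\varphi-\varphi_2)\le 0$), and they persist under the equivalence of admissibility classes.

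The hard part — which lies entirely inside the monograph, not this paper — is the continuation of solutions down to the detachment angle $\theta_{\rm w}^{\rm d}$. This is carried out via a Leray--Schauder degree argument in which $\theta_{\rm w}$ serves as the continuation parameter, and it demands uniform a priori estimates on the pseudo-potential and the free boundary that remain valid as $\theta_{\rm w}\to\theta_{\rm w}^{\rm d}$. The principal obstacles there are the degeneracy of the equation on the sonic arc $\Gso$ in the supersonic regime (requiring weighted H\"older estimates and parabolic-type scaling near $\Gso$), the passage through the sonic--subsonic transition wedge angle where the geometry of $\Omega$ changes, and the prevention of the reflected shock from either attaching prematurely to the wedge or detaching from $P_0$. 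Within the present paper, however, all of this is treated as a black box, and Theorem \ref{ExistRegRefl-Th1} reduces to the two-line citation plus application of Lemma \ref{admisEquiv1-2}.
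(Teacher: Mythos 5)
Your proposal matches the paper's own proof: Theorem \ref{ExistRegRefl-Th1} is indeed established there in two lines, by citing the global existence result of \cite{cf-book2014shockreflection} (Theorem 2.6.7 and Remark 2.6.8) together with the equivalence of admissibility classes in Lemma \ref{admisEquiv1-2}, with the properties of Theorem \ref{Regularity-RegRefl-Th1} imported from the same monograph exactly as you indicate. Your additional remarks on the continuation argument inside the monograph are accurate context but not part of the proof given here.
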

	\begin{proof}
		The existence of admissible solutions directly follows from Lemma \ref{admisEquiv1-2}
		and \cite[Theorem 2.6.7 and Remark 2.6.8]{cf-book2014shockreflection}.
	\end{proof}
	
	When $u_1> c_1$, the results of Theorem \ref{ExistRegRefl-Th1} hold
	for any wedge angle $\theta_{\rm w}$ from $\frac{\pi}{2}$ until either
	$\theta_{\rm w}^{\rm d}$ or
	$\theta_{\rm w}^{\rm c}\in (\theta_{\rm w}^{\rm d}, \frac{\pi}2)$
	when the shock hits the wedge vertex $\PtLwR$.
	
	\smallskip
	\begin{thm}[Global solutions up to the detachment angle for the case: $u_1> c_1$]\label{ExistRegRefl-Th2}
		Let the initial data  $(\rho_0, \rho_1, \gamma)$ satisfy that $u_1>c_1$.	
		Then there is
		$\theta_{\rm w}^{\rm c}\in[\theta^{\rm d}_{\rm w},\frac{\pi}{2})$ such
		that, for each $\theta_{\rm w}\in (\theta_{\rm w}^{\rm c},\frac{\pi}2)$,
		there
		exists an admissible solution of the regular reflection problem
		in the sense of Definition {\rm \ref{admisSolnDef}}, which satisfies
		the properties stated in Theorem {\rm \ref{Regularity-RegRefl-Th1}}.
		
		If $\theta_{\rm w}^{\rm c}>\theta^{\rm d}_{\rm w}$, then, for the wedge angle $\theta_{\rm w}=\theta_{\rm w}^{\rm c}$, there
		exists an attached shock solution $\varphi$ with all the properties listed in
		Definition {\rm \ref{admisSolnDef}}
		and Theorem {\rm \ref{Regularity-RegRefl-Th1}(\ref{RegReflSol-Prop2-th})--(\ref{RegReflSol-Prop3-th})}
		except that $P_3=P_2$ $($we denote $P_3$ for that point below$)$.
		In addition, for the regularity of solution $\varphi$, we have
		\begin{itemize}
			\item For the supersonic case with $\theta_{\rm w}=\theta^{\rm c}_{\rm w}$,
			$$
			\varphi\in C^\infty(\overline{\Omega}\backslash(\overline{\Gso}\cup\{P_3\}))\cap C^{1,1}(\overline{\Omega}\backslash\{P_3\})\cap C^{0,1}(\overline{\Omega}),
			$$
			and the reflected shock $P_0P_1P_3$ is Lipschitz up to the endpoints,
			$C^{2,\beta}$ for any $\beta\in[0,\frac{1}{2})$ except point $P_3$,
			and $C^{\infty}$ except points $P_1$ and $P_3$.
			
			\smallskip
			\item For the subsonic case with $\theta_{\rm w}=\theta^{\rm c}_{\rm w}$,
			$$
			\varphi\in C^\infty(\overline{\Omega}\backslash\{P_0,\,P_3\})
			\cap C^{1,\beta}(\overline{\Omega}\backslash\{P_3\})\cap C^{0,1}(\overline{\Omega})
			$$
			for $\beta$ as in Theorem  {\rm \ref{Regularity-RegRefl-Th1}},
			and the reflected shock $P_0P_3$ is Lipschitz up to the endpoints, $C^{1,\beta}$ except
			point $P_3$, and $C^{\infty}$ except points $P_0$ and $P_3$.
		\end{itemize}
	\end{thm}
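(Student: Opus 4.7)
The plan is to invoke the equivalence established in Lemma \ref{admisEquiv1-2} and deploy the existence machinery developed in Chen-Feldman \cite{cf-book2014shockreflection}, which proceeds by an iteration/degree argument on the subsonic (elliptic) side with a Rankine--Hugoniot free-boundary formulation for the reflected shock. First I would define
$$
\theta_{\rm w}^{\rm c} := \inf\Big\{\theta \in \big[\theta_{\rm w}^{\rm d},\tfrac{\pi}{2}\big) : \text{an admissible solution exists for every } \theta_{\rm w}\in \big(\theta,\tfrac{\pi}{2}\big)\Big\},
$$
using as starting point the fact that, for $\theta_{\rm w}$ sufficiently close to $\pi/2$, an admissible solution exists as a small perturbation of the normal reflection. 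For $\theta_{\rm w} \in (\theta_{\rm w}^{\rm c}, \tfrac{\pi}{2})$, existence proceeds exactly as in the sonic/detachment analysis of \cite{cf-book2014shockreflection}: set up the nonlinear fixed-point map whose fixed points solve the subsonic mixed boundary value problem in $\Omega$ with $\Gsh$ reconstructed from the Rankine--Hugoniot condition, then apply the continuity method using uniform a priori estimates ($C^{1,\alpha}$ up to the free boundary, uniform strict ellipticity away from the sonic arc, non-degeneracy of obliqueness on $\Gsh$) inherited from Lemma \ref{admisEquiv1-2}, Theorem \ref{Regularity-RegRefl-Th1}, and the monotonicity cone \eqref{strictCone-regrefl}.

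The critical addition compared to Theorem \ref{ExistRegRefl-Th1} is the role of the wedge vertex $\PtLwR$. When $u_1 > c_1$, the sonic circle of state $(1)$ does not contain $\PtLwR$, so geometrically it is possible that, as $\theta_{\rm w}$ decreases, the endpoint $\PtLwL$ of the free boundary $\Gsh$ reaches $\PtLwR$ strictly before $\theta_{\rm w}$ reaches the detachment angle $\theta_{\rm w}^{\rm d}$. The quantity $\theta_{\rm w}^{\rm c}$ captures exactly this dichotomy: either $\theta_{\rm w}^{\rm c} = \theta_{\rm w}^{\rm d}$ (no attachment occurs before detachment) or $\theta_{\rm w}^{\rm c} > \theta_{\rm w}^{\rm d}$ (the shock attaches first). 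In the attached case, I would take a sequence $\theta_{\rm w}^{(k)} \downarrow \theta_{\rm w}^{\rm c}$ with corresponding admissible solutions $\varphi^{(k)}$, domains $\Omega^{(k)}$, and endpoints $\PtLwL^{(k)}$. The uniform a priori bounds yield, along a subsequence, convergence of $\varphi^{(k)}$ in $C^{1,\alpha/2}_{\rm loc}$ on the interior of the limiting domain and convergence of $\Gsh^{(k)}$ in Hausdorff distance; by the defining property of $\theta_{\rm w}^{\rm c}$, the limit cannot be admissible in the non-attached sense, so necessarily $\PtLwL^{(k)} \to \PtLwR$.

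The hardest part will be establishing the precise regularity of the limiting attached solution at $\PtLwR$. Away from $\PtLwR$, the limit inherits $C^\infty$ smoothness and the monotonicity/ellipticity properties by stability under $C^{1,\alpha}$--convergence (using Theorem \ref{Regularity-RegRefl-Th1}(i)--(iii)). At $\PtLwR$ itself, the reflected shock meets the wedge boundary at a corner with generically distinct tangents on the two sides, so one can only expect Lipschitz regularity of $\varphi$ and of the shock curve up to $\PtLwR$. Uniform Lipschitz bounds near $\PtLwR$ come from estimating $|D\varphi^{(k)}|$ in terms of $|D\varphi_1|$ and $|D\varphi_2|$ across the Rankine--Hugoniot jump combined with the slip condition on $\Gw$, and both of these structures pass to the limit. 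The higher interior/near-sonic regularity ($C^{2,\beta}$ in the supersonic case for $\beta\in[0,1/2)$, $C^{1,\beta}$ in the subsonic case away from $\PtLwR$) is stable under the limit because the estimates in \cite{cf-book2014shockreflection} are uniform as $\theta_{\rm w}\to\theta_{\rm w}^{\rm c}$ on compact subsets avoiding the corner. Finally, convexity and strict convexity of the limiting $\Gsh$ are obtained by applying Theorems \ref{thm:main theorem}--\ref{thm:main theorem-strictUnif} directly to the limit, whose hypotheses (A1)--(A10) survive the passage to the limit by the preceding stability argument.
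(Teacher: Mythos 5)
Your opening sentence is in fact the entirety of the paper's own proof: the paper establishes Theorem \ref{ExistRegRefl-Th2} by combining the equivalence of definitions in Lemma \ref{admisEquiv1-2} with the existence theorem for $u_1>c_1$ already proved in \cite[Theorem 2.6.9 and Remark 2.6.8]{cf-book2014shockreflection} (noting only that the remark applies to $u_1>c_1$ although not stated explicitly there). Had you stopped at "deploy the existence machinery of \cite{cf-book2014shockreflection}", you would match the paper. The difficulty is that you then replace that machinery by your own sketch, and the sketch has a genuine gap precisely at the point that distinguishes this theorem from Theorem \ref{ExistRegRefl-Th1}: the attachment dichotomy. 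With your definition of $\theta_{\rm w}^{\rm c}$ as an infimum of angles for which admissible solutions exist on the interval above, the assertion ``by the defining property of $\theta_{\rm w}^{\rm c}$, the limit cannot be admissible in the non-attached sense, so necessarily $P_2^{(k)}\to P_3$'' does not follow. Nothing in the infimum definition prevents the limit at $\theta_{\rm w}=\theta_{\rm w}^{\rm c}$ from being a perfectly good non-attached admissible solution; to derive a contradiction you would need an openness/continuation statement (existence of a non-attached admissible solution at $\theta_{\rm w}^{\rm c}$ with ${\rm dist}(P_2,P_3)>0$ implies existence on $(\theta_{\rm w}^{\rm c}-\delta,\theta_{\rm w}^{\rm c}]$). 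That is not a soft perturbation fact: admissible solutions are not produced by local continuation from a given solution, and their uniqueness/stability is not available here (it is the subject of \cite{CFX2017}). In \cite{cf-book2014shockreflection} this is exactly what the degree-theoretic iteration over the whole angle interval achieves, with uniform estimates valid precisely while ${\rm dist}(P_2,P_3)$ stays bounded below; that content cannot be ``inherited'' from Lemma \ref{admisEquiv1-2} or Theorem \ref{Regularity-RegRefl-Th1}, since those only record a priori properties of solutions already assumed to exist.

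A second, related gap concerns the regularity of the attached solution at $P_3=P_2$. Your compactness argument needs estimates that remain uniform as ${\rm dist}(P_2^{(k)},P_3)\to 0$, and the stated conclusions (shock Lipschitz up to $P_3$, $C^{2,\beta}$ away from $P_3$ in the supersonic case, $C^{1,\beta}$ in the subsonic case, $\varphi\in C^{0,1}(\overline\Omega)$ only) are exactly the place where the corner degenerates: the boundary part $\Gamma_{\rm sym}$ shrinks to a point and the interior angle at $P_2$ is no longer controlled as in Framework (A). Estimating $|D\varphi^{(k)}|$ ``across the Rankine--Hugoniot jump combined with the slip condition'' gives a global Lipschitz bound, but it does not by itself yield convergence of the shocks up to the corner, nor the stated $C^{2,\beta}$/$C^{1,\beta}$ regularity away from $P_3$ with constants uniform in $k$; these are again the uniform corner estimates of the monograph. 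Finally, the appeal to Theorems \ref{thm:main theorem}--\ref{thm:main theorem-strictUnif} for the limit is extraneous (convexity is not part of the statement) and would anyway require re-verifying (A5)--(A10) for the attached configuration, which your argument does not do. In short: either cite \cite[Theorem 2.6.9, Remark 2.6.8]{cf-book2014shockreflection} together with Lemma \ref{admisEquiv1-2}, as the paper does, or supply the continuation/degree argument and the uniform-up-to-attachment estimates; the limiting sketch as written proves neither the dichotomy nor the regularity at $P_3$.
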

	\begin{proof}
		The existence of admissible solutions directly follows from Lemma \ref{admisEquiv1-2}
		and \cite[Theorem 2.6.9 and Remark 2.6.8]{cf-book2014shockreflection}, where we note
		that \cite[Remark 2.6.8]{cf-book2014shockreflection} applies to the case: $u_1> c_1$ as well,
		although this is not stated explicitly.
	\end{proof}

	Now we show that the admissible solutions
	satisfy the conditions of
	Theorems \ref{thm:main theorem}--\ref{thm:main theorem-strictUnif}.
	
	\begin{lem}\label{RegReflSatisfisCond-Lemma}
     The following statements hold{\rm :}
		\begin{enumerate}[\rm (i)]
			\item\label{RegReflSatisfisCond-Lemma-i1}
			Any admissible solution in the sense of Definition {\rm \ref{admisSolnDef}} satisfies the conditions
			of
			Theorems {\rm \ref{thm:main theorem}} and
			{\rm \ref{thm:main theorem-strictUnif}}.
			\item\label{RegReflSatisfisCond-Lemma-i2}
			Any regular reflection-diffraction solution in the sense of
			Definition {\rm \ref{def:weak solutionRegRefl}} with properties
			{\rm (\ref{RegReflSol-Prop0})}--\eqref{RegReflSol-Prop1-1-1} of
			Definition {\rm \ref{admisSolnDef}} and with shock $\Gsh$ being
			a strictly convex graph
			in the sense of \eqref{shock-graph-inMainThm}--\eqref{strictConvexity-degenerate-graph}
			satisfies
			property
			{\rm (\ref{RegReflSol-Prop2})} of
			Definition {\rm \ref{admisSolnDef}}.
		\end{enumerate}
	\end{lem}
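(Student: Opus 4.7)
For part (i), set $\phi := \varphi-\varphi_1$, so that the upstream state $(0)$ of the general framework corresponds to the incident-shock state $(1)$, and identify the decomposition $\partial\Omega = \Gsh \cup \Gamma_1 \cup \Gamma_2$ of Framework (A) with the natural splitting of the fixed boundary: in the supersonic case $\Gamma_1 = (\Gso \cup \Gw)\setminus\{P_1\}$ and $\Gamma_2 = \Gamma_{\rm sym}\setminus\{P_2, P_3\}$, while in the subsonic case $\Gamma_1 = \Gw\setminus\{P_0\}$ and $\Gamma_2 = \Gamma_{\rm sym}\setminus\{P_2, P_3\}$. The corner-angle condition $\theta\in(0,\pi)$ follows from $\theta_w\in(\theta_w^{\rm d},\pi/2)$ and the transverse meeting of the sonic arc, the wedge, and $\{\xi_2=0\}$. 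Conditions (A1)--(A4) are then immediate: (A1) from Definition \ref{admisSolnDef}(iii) combined with \eqref{equ:boundary-RH-RegRefl}; the uniform $C^{1,\alpha}$-bound (A2) and the $C^2$-regularity (A4) from Theorem \ref{Regularity-RegRefl-Th1}(i); and (A3) from Definition \ref{admisSolnDef}(ii) since $\Omega \cup \Gsh^0 \subset \overline\Omega \setminus \overline\Gso$.

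The crux is condition (A5). I would exploit the strict monotonicity of Theorem \ref{Regularity-RegRefl-Th1}(ii): for every $\ee \in Con(\ee_{S_1}, \ee_{\xi_2})$, $\phi_\ee > 0$ throughout $\overline\Omega$. Combined with $\phi = 0$ on $\Gsh$ and $\phi \le 0$ in $\Omega$ from Definition \ref{admisSolnDef}(iv), this rules out that any ray from $P\in\overline\Gsh$ in a direction of $\overline{Con(\ee_{S_1}, \ee_{\xi_2})}$ can re-enter $\overline\Omega$. A geometric computation then identifies the inward-pointing tangents at the endpoints of $\Gsh$: reflection symmetry across $\{\xi_2=0\}$ forces $\bt_B = \ee_{\xi_2}$ at $B=P_2$; at $A$, the tangent to the level set $\Gsh = \{\varphi=\varphi_1\}$ is perpendicular to $D(\varphi-\varphi_1)(A) = D(\varphi_2-\varphi_1) = (u_2-u_1, v_2)$, using $D\varphi(P_0)=D\varphi_2(P_0)$ in the subsonic case and the $C^{1,1}$-matching $D\varphi(P_1)=D\varphi_2(P_1)$ across $\Gso$ in the supersonic case, and after fixing orientation via $\ee_{S_1}\cdot D\varphi_2(P_0)>0$ one obtains $\bt_A = \ee_{S_1}$. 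Hence $Con = Con(\ee_{S_1}, \ee_{\xi_2})$, $\bt_A \ne \pm\bt_B$, and the cone property $\{P+Con\}\cap\Omega = \emptyset$ follows from the ray-expulsion argument above. Carrying out this tangent identification with the correct orientations is the main technical obstacle.

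For (A6), taking $\ee \in Con$ yields $\phi_\ee > 0$ in $\overline\Omega$; one of the three cases is verified using Hopf's lemma for the elliptic equation \eqref{equ:phi e} together with the oblique boundary condition \eqref{equ:boundary-ofOmega-RegRefl-w} on $\Gw^0$ and $\Gamma_{\rm sym}^0$ (applicable since $\ee\cdot\bt \ne 0$ on these straight segments by \cite[Lemma 7.5.12]{cf-book2014shockreflection}), the constancy of $\phi_\ee$ on the sonic arc, and a corner-Hopf argument at $P_3$ with $\theta\in(0,\pi)$; non-constancy of $\phi$ in $\Omega$ comes from Remark \ref{nonconsta-rmk}. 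For Theorem \ref{thm:main theorem-strictUnif}, I decompose $\hat\Gamma_0 = \Gso$ (empty in the subsonic case), $\hat\Gamma_1 = \Gw$, $\hat\Gamma_2 = \Gamma_{\rm sym}$, $\hat\Gamma_3 = \emptyset$: (A7)--(A8) hold because $\phi_\ee = \partial_\ee(\varphi_2-\varphi_1)$ is constant on $\Gso$ (being the derivative of a linear function); (A9) follows from Hopf on the straight segments as above; and (A10) reduces to Case (i) since $\hat\Gamma_3 = \emptyset$.

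For part (ii), hypotheses (A1)--(A4) and (A6) of Theorem \ref{thm:main theorem} are verified from Definition \ref{admisSolnDef}(i)--(iv) exactly as in part (i), noting that none of these verifications invokes property (v). Theorem \ref{theorem:1x} then converts the assumed strict convexity of $\Gsh$ into the monotonicity $\phi_\ee > 0$ on $\Gsh^0$ for every unit vector $\ee \in \overline{Con}$. Since the tangent identification gives $\ee_{S_1}, \ee_{\xi_2} \in \overline{Con}$, continuity up to $\overline\Gsh$ via Theorem \ref{Regularity-RegRefl-Th1}(i) yields $\partial_{\ee_{S_1}}(\varphi_1-\varphi) \le 0$ and $\partial_{\xi_2}(\varphi_1-\varphi) \le 0$ on $\Gsh$, which is exactly property (v) of Definition \ref{admisSolnDef}.
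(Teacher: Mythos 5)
Your verification of (A10) is wrong, not merely incomplete. With your decomposition $\hat\Gamma_1=\Gw$, $\hat\Gamma_2=\Gamma_{\rm sym}$, both are non-empty, so Case (i) of (A10) (which requires $\hat\Gamma_1=\emptyset$ or $\hat\Gamma_2=\emptyset$, not $\hat\Gamma_3=\emptyset$) does not apply; you are forced into Case (ii), which imposes nontrivial conditions at $B=P_2$: if $\bn_{\rm sh}(P_2)\cdot\ee<0$ then $\phi_\ee$ cannot attain a local maximum at $P_2$, and if $\bn_{\rm sh}(P_2)\cdot\ee=0$ then $\phi_\ee(P_2)=\phi_\ee(P_3)$. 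Establishing the first of these is exactly why the paper introduces the even extension $\varphi^{\rm ext}$ across $\{\xi_2=0\}$ at the start of its proof: $P_2$ becomes an interior point of $\Gsh^{\rm ext}$ (using \eqref{regularityOfRegReflSupers}), Lemma \ref{lem:sign of second derivative} is applied to the extended problem, and $\phi_{\bt\bt}(P_2)\ge 0$ from the already-proved convexity yields the conclusion; the case $\bn_{\rm sh}(P_2)\cdot\ee=0$ is handled via $\phi_\ee=0$ on $\overline{\Gamma_{\rm sym}}$. None of this is in your proposal. A second gap is your verification of (A6): Hopf's lemma is unavailable on $\Gso$ (the equation degenerates there), and ``constancy of $\phi_\ee$ along the sonic arc'' does not preclude a local minimum of $\phi_\ee$ relative to $\overline\Omega$ at a point of $\Gso$; moreover you keep $\ee\in Con$ arbitrary, whereas the argument needs the specific choice $\ee=\bn_{\rm w}$ so that Theorem \ref{Regularity-RegRefl-Th1}(\ref{RegReflSol-Prop3-th}) makes $\partial_\ee(\varphi-\varphi_2)=0$ on $\overline\Gw\cup\overline\Gso$ a global maximum; a local minimum there then forces $\phi_\ee\equiv const$, and one must still show (as the paper does, via the boundary condition on $\Gamma_{\rm sym}$ and analyticity) that $\phi_\ee\equiv const$ makes $\varphi$ a constant state, contradicting Remark \ref{nonconsta-rmk}. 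That chain of steps is missing from your sketch.

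For part (ii) your route is circular as stated. You verify (A1)--(A4) and (A6) ``exactly as in part (i)'' and then invoke Theorem \ref{theorem:1x}; but the part (i) verification of (A6) rests on Theorem \ref{Regularity-RegRefl-Th1}(\ref{RegReflSol-Prop3-th}) and Remark \ref{nuWinCone}, which are properties of \emph{admissible} solutions (proved in \cite{cf-book2014shockreflection} using the full admissibility, including the very property (\ref{RegReflSol-Prop2}) you are trying to establish), and (A2) requires a $C^{1,\alpha}(\overline\Omega)$ bound that Definition \ref{admisSolnDef}(\ref{RegReflSol-Prop0})--\eqref{RegReflSol-Prop1-1-1} alone does not provide (it is part of Theorem \ref{Regularity-RegRefl-Th1}(\ref{RegReflSol-Prop0-th}), again only for admissible solutions). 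The paper avoids all of this with a short direct argument: strict concavity of $f$ gives $f'(T_A)>f'(T)>f'(T_B)$ and hence $\{P+Con\}\cap\Omega=\emptyset$, and then $\varphi\le\varphi_1$ in $\Omega$ together with $\varphi=\varphi_1$ on $\Gsh$ gives $\partial_\ee(\varphi_1-\varphi)\le 0$ on $\Gsh$ for $\ee\in Con$, which is \eqref{nonstritConeMonot}. Your underlying idea (tangents at the endpoints are $\ee_{S_1}$ and $\ee_{\xi_2}$, strict convexity pushes the cone out of $\Omega$) is the right one, but it should be run directly on the level-set/one-sided difference-quotient argument rather than through Theorem \ref{theorem:1x}, whose hypotheses you cannot legitimately verify without admissibility.
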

	
	\begin{proof}
		We divide the proof into seven steps: Assertion (\ref{RegReflSatisfisCond-Lemma-i1}) is proved in Steps 1--6,
		while assertion \eqref{RegReflSatisfisCond-Lemma-i2} is proved in Step 7.
		
		\smallskip
		{\bf 1.}
		We use $\Lambda^{\rm ext}$, $\Gsh^{\rm ext}$, and $\varphi^{\rm ext}$
		defined before Definition \ref{admisSolnDef}.
		Combining the structure of equation (\ref{equ:potential flow equation})
		with the boundary conditions (\ref{BCfor RegRefl})
		on the negative $\xi_1$--axis yields that the reflected/extended function
		$\varphi^{\rm ext}$ is a weak solution of
		equation (\ref{equ:potential flow equation}) in $\Lambda^{\rm ext}$.
		By the boundary conditions (\ref{BCfor RegRefl}),
		state $(1)$ satisfies $\partial_{\bn}\varphi_1=0$ on the $\xi_1$--axis.
		Then the structure
		of the constant state (see \S \ref{subsec:potential flow rh condition})
		implies that $\varphi_1(-\xi_1, \xi_2)=\varphi_1(\xi_1, \xi_2)$
		in $\mR^2$ so that $\varphi_1^{\rm ext}=\varphi_1$.
		We also note the regularity of $\varphi^{\rm ext}$
		in Theorem \ref{Regularity-RegRefl-Th1}\eqref{RegReflSol-Prop0-th}.
		Thus, the extended shock $\Gsh^{\rm ext}$ separates the constant state
		$\varphi_1$ from the smooth solution $\varphi^{\rm ext}$ of
		equation (\ref{equ:potential flow equation})
		in $\Omega^{\rm ext}$, and the Rankine-Hugoniot conditions  (\ref{equ:boundary-RH-RegRefl})
		are satisfied for $\varphi^{\rm ext}$ and $\varphi_1$ on $\Gsh^{\rm ext}$.

		\smallskip
		{\bf 2.} Region $\Omega$ satisfies the conditions in Framework (A).
		Indeed,
		for the supersonic reflection case (see Fig. \ref{figure:shock refelction}),
		the required piecewise-regularity holds, since  $\Gw$ and $\Gamma_{\rm sym}$
		are straight segments, $\Gso$ is an arc of circle,
		and $\Gsh$ has the
		regularity stated in Theorem \ref{Regularity-RegRefl-Th1}(\ref{RegReflSol-Prop0-th}).
		The fact that all the angles of the corners of $\Omega$
		are less than $\pi$ is verified as follows:
		
		Consider first the supersonic case. Since curve $P_0P_1P_2$
		is $C^2$ at $P_1$, and $P_0P_1$ is a straight segment,
		we use that the center of sonic circle of state $(2)$ is on $\Gamma^0_{\rm wedge}$ and $P_0$ is outside that circle
		to conclude that the angle at $P_1$ is between $(\frac{\pi}{2}, \pi)$, and the angle
		at $P_4$ is $\frac{\pi}{2}$.
		Also, since \eqref{regularityOfRegReflSupers} shows that $\Gsh^{\rm ext}$ is smooth near $P_2$,
		it follows that the interior angle to $\Omega$ at $P_2$ is $\frac{\pi}{2}$.
		Finally, the angle at $P_3$ is $\pi-\theta\in (\frac{\pi}{2}, \pi)$.
		
		For the subsonic reflection case, the angles at $P_2$ and $P_3$ are handled similarly.
		The angle at $P_0$ is in $(0, \frac{\pi}{2})$ for the following reason:
		By \cite[Lemma 8.2.11, Proposition 15.2.1]{cf-book2014shockreflection},
		for any $\theta_{\rm w}\in(\theta_{\rm w}^{\rm d}, \frac\pi 2)$,
		$\bn_{\rm w}\in  Con(\ee_{S_1}, \ee_{\xi_2})$ so that,
		using the regularity of $\Gsh$ in Theorem \ref{ExistRegRefl-Th1}(\ref{RegReflSol-Prop0}),
		property (\ref{RegReflSol-Prop1-1}) in Theorem \ref{ExistRegRefl-Th1},
		and $\varphi=\varphi_1$ on $\Gsh$, we conclude that
		$\Gsh$ is a graph:
		$$
		\Gsh=\{(f(T), T)\; : \; T_{P_2}\le T\le T_{P_0}\}
		$$
		of a function $f(T)\in C^2([T_{P_2}, T_{P_0}))\cap C^{1,\beta}([T_{P_2}, T_{P_0}])$,
		where the $(S,T)$--coordinates are along
		the normal and tangent directions to $\Gw$.
		
		\smallskip
		{\bf 3.}
		The entropy condition (A1) of Theorem \ref{thm:main theorem}
		follows directly from
		property (\ref{RegReflSol-Prop1-1}) of Definition \ref{admisSolnDef}, where
		state $(0)$ in Theorem \ref{thm:main theorem} is state $(1)$ in the
		regular shock reflection problem.
		
		From the regularity of $\varphi$ and $\Gsh$ in
		Theorem \ref{Regularity-RegRefl-Th1}(\ref{RegReflSol-Prop0}),
		we see that conditions (A2) and (A4) of Theorem \ref{thm:main theorem} hold.
		
		Property (\ref{RegReflSol-Prop1}) of Definition \ref{admisSolnDef}
		implies that condition (A3) of Theorem \ref{thm:main theorem} holds.
		
		\smallskip
		{\bf 4.}
		Using the notations of the endpoints
		of $\Gsh$ as in Framework (A)
		by $A:={P_1}$ and $B:=P_2$,
		we see from the properties of Definition \ref{admisSolnDef}\eqref{RegReflSol-Prop0} that
		$$
		\bt_A=\ee_{S_1}, \qquad \bt_{B}=\ee_{\xi_2}.
		$$
		As we discussed in Step 2,  $\Gsh$ is orthogonal to the $\xi_1$--axis at $P_2$. From this
		and \cite[Lemma 7.5.12]{cf-book2014shockreflection}, $\ee_{\xi_1}\ne \pm \ee_{S_1}$.
		Also, combining
		property (\ref{RegReflSol-Prop2-th}) of Theorem \ref{Regularity-RegRefl-Th1} with the fact
		that
		$\Gsh$ is the level
		set $\varphi-\varphi_1=0$,
		we obtain that $\{P+Con\}\cap\Omega=\emptyset$ for all $P\in\Gsh$.
		Thus, condition (A5) of Theorem \ref{thm:main theorem} is satisfied.
		
		\smallskip
		{\bf 5.}
		Next, we discuss condition (A6) of Theorem \ref{thm:main theorem}.
		We
		recall that $\phi:=\varphi-\varphi_1$.
		All the local minima and maxima discussed below are
		relative to $\overline\Omega$.
		Also, we discuss the supersonic and subsonic/sonic cases
		together below, and use notations \eqref{subsonicConvenct} for the subsonic/sonic case.
		Furthermore, since conditions (A1)--(A5) have been verified, we can use Lemma \ref{lem:shock graph} in the argument below.
		
		Fix  $\ee=\bn_{\rm w}$, where $\bn_{\rm w}$ is
		defined in   Theorem \ref{Regularity-RegRefl-Th1}\eqref{RegReflSol-Prop3-th}.
		By Remark \ref{nuWinCone},
		$\ee\in Con$.  We first notice that,
		by Remark \ref{BConStraightBdrySegm},
		$w=\phi_\ee$
		satisfies equation (\ref{equ:phi e}), which is strictly elliptic in
		$\Omega\cup\Gsh^0\cup\overline{\Gamma_{\rm sym}}\cup\Gw^0$. Furthermore,
		since $\bt=\ee_{\xi_1}$ on $\Gamma_{\rm sym}$ so that
		$\ee\cdot\bt=-\sin\theta_{\rm w}\ne 0$  on $\Gamma_{\rm sym}$,
		then
		$w$
		satisfies \eqref{equ:boundary-ofOmega-RegRefl-w}
		on $\Gamma_{\rm sym}^0$, and this
		boundary condition is
		oblique.
		Thus, by Hopf's lemma,
		the local maximum and minimum of $\phi_\ee$
		relative to $\overline\Omega$ cannot be attained
		on $\Gamma_{\rm sym}^0$, unless $\phi_\ee$ is constant.
		
		We now show the similar property
		on $\overline\Gw\cup\overline{\Gso}$.
		From \eqref{bcWedgeState2} and (\ref{equ:boundary-ofOmega-RegRefl}),
		$\partial_{\ee}(\varphi-\varphi_2)=\partial_{\bn}(\varphi-\varphi_2)=0$ on $\overline\Gw$.
		Also,
		$D\varphi=D\varphi_2$ on
		$\overline{\Gso}$ by
		Definition \ref{admisSolnDef}(\ref{RegReflSol-Prop0}).
		Thus, $\partial_\ee(\varphi-\varphi_2)=0$ on
		$\overline\Gw\cup\overline{\Gso}$,
		which is the global maximum over
		$\overline\Omega$ by Theorem \ref{Regularity-RegRefl-Th1}(\ref{RegReflSol-Prop0-th}).
		Then
		$\partial_\ee(\varphi-\varphi_2)$ cannot attain its local minimum at some
		$P\in\overline\Gw\cup\overline{\Gso}$ unless $\partial_\ee(\varphi-\varphi_2)\equiv 0$ in $\Omega$.
		Indeed, if $P\in \overline\Gw\cup\overline{\Gso}$ is a point of local minimum of
		$\partial_\ee(\varphi-\varphi_2)$, then, since $P$ is
		also a point of
		global maximum and $\partial_\ee(\varphi-\varphi_2)(P)= 0$ as shown above,
		we obtain that
		$\partial_\ee(\varphi-\varphi_2)\equiv 0$ in $B_r(P)\cap\Omega$
		for some  $r>0$.
		Since $\partial_\ee(\varphi-\varphi_2)
		=\partial_\ee(\varphi-\varphi_1)+\partial_\ee(\varphi_1-\varphi_2)=\partial_\ee\phi+u_1\sin\theta_{\rm w}$
		(where we have used that $D\varphi_2\cdot\bn_{\rm w}=0$)
		so that $\partial_\ee(\varphi-\varphi_2)$ satisfies the strictly elliptic equation  (\ref{equ:phi e})
		in $\Omega$, the strong maximum principle implies that $\partial_\ee(\varphi-\varphi_2)\equiv 0$
		in
		$\Omega$. Recalling that $\partial_\ee\phi=\partial_\ee(\varphi-\varphi_2)-u_1\sin\theta_{\rm w}$,
		we conclude that $\phi_\ee\equiv -u_1\sin\theta_{\rm w}$ in $\overline\Omega$ if $\phi_\ee$
		attains its local minimum at some
		$P\in\overline\Gw\cup\overline{\Gso}$.
		
		Combining the two cases discussed above, we conclude that, if
		$\phi_\ee$ attains its local minimum at some point
		$P\in\overline\Gw\cup\overline{\Gso}\cup \Gamma_{\rm sym}^0$,
		then $\phi_\ee$  is constant in $\Omega$,
		specifically $\phi_\ee\equiv -u_1\sin\theta_{\rm w}=:a$.
		
		Now we show that, if $\phi_\ee\equiv a$ in $\Omega$ for an admissible solution $\varphi$, then $\varphi$
		is a uniform state in $\Omega$.
		To fix notations, we consider first the
		supersonic case. We use the $(S,T)$--coordinates with basis $\{\ee, \ee^\perp\}$ and the origin at $P_3$
		for $\ee^\perp$ determined as in Lemma \ref{lem:shock graph} for $\ee=\bn_{\rm w}$, {\it i.e.},
		$\ee^\perp=-(\cos\theta_{\rm w}, \sin\theta_{\rm w})$.
		We recall that $A=P_1$ and $B=P_2$; see Step 4.
		Then
		$T_B=T_{P_2}>T_{P_3}=0>T_{P_1}=T_A>T_{P_4}$. Also,
		$$
		\Gso=\{S=f_{\rm so}(T), \;T\in (T_{P_4},\,T_{P_1})\},\,
		\Gamma_{\rm sym}^0=\{S=T\tan\theta_{\rm w}, \;T\in (T_{P_3},\,T_{P_2})\},
		$$
		where $f_{\rm so}\in C^\infty((T_{P_4},\,T_{P_1}))$ and $f_{\rm so}>0$ on $(T_{P_4},\,T_{P_1})$.
		The function, $f_\ee$, from Lemma \ref{lem:shock graph}\eqref{lem:shock graph-i1}
		for $\ee=\bn_{\rm w}$ satisfies that $f_\ee(T)>\max(T\tan\theta_{\rm w}, 0)$ on $(T_{P_1},\,T_{P_2})$.  Also,
		\begin{align*}
		\Omega=\{(S, T): T\in (T_{P_4},\,T_{P_2}), \;\;
		\max(T\tan\theta_{\rm w}, \; 0)< S< \hat f(T)  \},
		\end{align*}
		where $\hat f\in C(T_{P_4},\,T_{P_2})$ satisfies
		$$
		\hat f=f_{\rm so}\;\;\mbox{on $(T_{P_4},\,T_{P_1})$}, \qquad
		\hat f=f_{\ee}\;\mbox{on $(T_{P_1},\,T_{P_2})$}.
		$$
		Let $\phi_\ee\equiv a$ in $\Omega$. Then, from the structure of $\Omega$ described above,
		$\phi(S,T)=aS+g(T)$ in $\Omega$ for some $g\in C^1(\mR)$.
		Since $\phi_{\xi_2}=0$ on
		$\Gamma_{\rm sym}$ by \eqref{equ:boundary-ofOmega-RegRefl-phi}, we see that
		$a\,\ee\cdot\ee_{\xi_2}+g'(T)\,\ee^\perp\cdot\ee_{\xi_2}=0$ for all
		$T\in (T_{P_3}, \;T_{P_2})$,
		where we have used the expression of $\Gamma_{\rm sym}^0$ in the $(S,T)$--coordinates
		given above.
		Note that
		$\ee^\perp\cdot\ee_{\xi_2}=-\sin\theta_{\rm w}\ne 0$.
		Thus, $g'(T)$ is constant
		on $(T_{P_3}, \;T_{P_2})$, which implies that
		$\phi(S,T)=aS+bT+c$ in $\hat\Omega$ for some $b, c\in\mR$, where
		$$
		\hat\Omega:=\{(S, T): T\in (T_{P_3},\,T_{P_2}), \;\;
		\max(T\tan\theta_{\rm w}, \; 0)< S< \hat f(T) \} \subset \Omega.
		$$
		Since $\phi$ is real analytic in $\Omega$ by Lemma \ref{lem:analyticity},
		it follows that
		$\phi(S,T)=aS+bT+c$ in $\Omega$. That is,
		$\varphi=\varphi_2+\phi$ is a constant state in $\Omega$,
		which contradicts Remark \ref{nonconsta-rmk}.
		
		For the subsonic/sonic case, the argument is the same,
		except that the structure of $\Omega$ now becomes
		\begin{align*}
		\Omega=\{(S, T): T\in (T_{P_0},\,T_{P_2}), \;\;
		\max(T\tan\theta_{\rm w}, \; 0)< S< f_\ee(T)  \}.
		\end{align*}
		Therefore, we have shown that $\phi$
		cannot attain its local minimum
		on $\overline\Gw\cup\overline{\Gso}\cup\Gamma_{\rm sym}^0$.
		
		Then we define $\Gamma_1:=(\overline \Gw\cup\overline\Gso\cup\Gamma_{\rm sym}^0)\setminus\{P_1\}=
		\partial\Omega\setminus\overline\Gsh$,
		and $\Gamma_2:=\emptyset$ in both the supersonic and subsonic/sonic cases.
		Clearly, $\Gamma_1$ is connected.
		Now Case (iii) of condition (A6) of Theorem \ref{thm:main theorem}
		holds in both the supersonic and
		subsonic/sonic cases.
		
		\smallskip
		{\bf 6.}
		We now check the conditions of Theorem  \ref{thm:main theorem-strictUnif}.
		Since the conditions of  Theorem \ref{thm:main theorem}
		have been checked, the conclusions of that theorem hold; in particular,
		$\varphi_{\bt\bt}\ge 0$ on $\Gsh$.
		
		Let $\hat\Gamma_0:=\Gso^0\cup\{P_4\}$ in the supersonic case, and
		$\hat\Gamma_0:=\emptyset$
		in the subsonic case.
		Let $\hat{\Gamma}_1:=\Gw^0\cup\{P_3\}$,
		$\hat{\Gamma}_2:=\Gamma_{\rm sym}^0$, and $\hat\Gamma_3:=\emptyset$.
		In the supersonic case, for any nonzero $\ee\in \mR^2$, $\phi_\ee =D(\varphi_2-\varphi_1)\cdot\ee$ on $\Gso$,
		{\it i.e.}, $\phi_\ee$ is constant on $\hat\Gamma_0$.
		Then (A7)--(A8) hold.
		
		Let $\ee\in \mR^2$ be a unit vector. We have shown in Step 5 that $\phi_\ee$ is not a constant in $\Omega$.
		Then,
		by (\ref{equ:boundary-ofOmega-RegRefl-w}),
		$\phi_\ee$ can attain a local minimum or maximum on $\Gw^0$ only if
		$\ee\cdot\bt_{\rm w}=0$, {\it i.e.}, $\ee=\pm\bn_{\rm w}$.
		In that case, by (\ref{equ:boundary-ofOmega-RegRefl-phi}),
		$\phi_\ee$ is constant on $\overline{\Gw}$.
		This verifies (A9) on $\hat{\Gamma}_1:=\Gw^0\cup\{P_3\}$.
		On $\hat{\Gamma}_2:=\Gamma_{\rm sym}^0$, (A9) is checked similarly.
		On $\hat\Gamma_0:=\Gso^0\cup\{P_4\}$ in the supersonic case, $D\varphi=D\varphi_2$ so that
		$\phi_\ee=(D\varphi_2-D\varphi_1)\cdot \ee =const$.
        Now (A9) is proved.
		
		To check condition (ii) of (A10) at point $B=P_2$,
		we note that, by Step 1, $\phi^{\rm ext}:=\varphi^{\rm ext}-\varphi_1$
		satisfies equation \eqref{equ:study} in $\Omega^{\rm ext}$ and
		conditions \eqref{equ:boundary} on $\Gsh^{\rm ext}$.
		Also, we have shown above that the original problem in $\Omega$ satisfies hypotheses
		(A1)--(A3) of Theorem \ref{thm:main theorem}. It follows that
		the problem for $\phi^{\rm ext}$ in $\Omega^{\rm ext}$ satisfies (A1)--(A3) of
		Theorem \ref{thm:main theorem}.
		
		Now it follows that the extended
		problem in $\Omega^{\rm ext}$ satisfies the
		conditions of
		Lemma \ref{lem:sign of second derivative}.
		Also, $P_2$ is an interior point of the extended
		shock $\Gsh^{\rm ext}$.
		Furthermore, using (\ref{regularityOfRegReflSupers}),
		we have
		$$
		\bn^{\rm ext}(P_2)=\bn_{\rm sh}(P_2),
		$$
		where $\bn_{\rm sh}(P_2)$ is defined in (A10).
		Since $\phi_{\bt\bt}\ge 0$ on $\Gsh$ as noted above, which implies that $\phi_{\bt\bt}(P_2)\ge 0$
		from the regularity of $\varphi$ in Theorem \ref{ExistRegRefl-Th1}, we apply
		Lemma \ref{lem:sign of second derivative} for the extended problem to conclude that,
		if $\bn_{\rm sh}(P_2)\cdot \ee<0$,
		then $\phi_\ee$ cannot attain its local maximum at $P_2$.
		If $\bn_{\rm sh}(P_2)\cdot \ee=0$, we use
		that $\bn_{\rm sh}(P_2)=\ee_{\xi_1}$
		by (\ref{regularityOfRegReflSupers}) to conclude
		that $\ee=\pm\ee_{\xi_2}$ in that case. Then we use the $C^1(\overline\Omega)$--regularity
		of $\phi$ to conclude that $\phi_\ee=0$ on $\overline\Gamma_{\rm sym}$
		by (\ref{equ:boundary-ofOmega-RegRefl-phi}).
		Thus, $\phi_\ee(P_2)=\phi_\ee(P_3)$ if $\bn(P_2)\cdot \ee=0$, so that condition (ii) of (A10) holds.
		
		\smallskip
		{\bf 7.}
		Now we show assertion \eqref{RegReflSatisfisCond-Lemma-i2}.
		Any admissible solution has a strictly convex shock by Theorems {\rm \ref{thm:main theorem}} and
		{\rm \ref{thm:main theorem-strictUnif}}, since we have verified the conditions of these theorems in Steps 1--6 of this proof.
		Then it remains to show
		that any regular reflection-diffraction solution in the sense of
		Definition \ref{def:weak solutionRegRefl}, with properties
		{\rm (\ref{RegReflSol-Prop0})}--\eqref{RegReflSol-Prop1-1-1} of
		Definition \ref{admisSolnDef} and with shock $\Gsh$ being
		a strictly convex graph
		in the sense of \eqref{shock-graph-inMainThm}--\eqref{strictConvexity-degenerate-graph},
		satisfies property
		{\rm (\ref{RegReflSol-Prop2})} of
		Definition \ref{admisSolnDef}.
		
		Recall that \eqref{shock-graph-inMainThm} holds in the present case with $A=P_1$ and $B=P_2$ as discussed
		in Steps 2 and 4. Then, using the properties of Definition \ref{admisSolnDef}\eqref{RegReflSol-Prop0},
		we find that, in the coordinates of \eqref{shock-graph-inMainThm},
		$$
		\ee_{\rm S_1}=\frac{(1, f'(T_A))}{|(1, f'(T_A))|}, \qquad
		\ee_{\xi_2}=-\frac{(1, f'(T_B))}{|(1, f'(T_B))|}.
		$$
		Also, from the strict concavity of $f$ in the sense of
		\eqref{strictConvexity-degenerate-graph}, we obtain that $f'(T_A)>f'(T)>f'(T_B)$
		and $f(T)<f(T_1)+f'(T_1)(T-T_1)$ for all
		$T, T_1\in (T_A, T_B)$. From this, we see that $\{P+Con\}\cap\Omega=\emptyset$
		for any $P\in\Gsh$. Then, since $\varphi\le\varphi_1$ in $\Omega$ from
		Definition \ref{admisSolnDef}\eqref{RegReflSol-Prop1-1-1}
		and $\varphi=\varphi_1$ on $\Gsh$ by \eqref{equ:boundary-RH-RegRefl}, we obtain that
		$\partial_\ee\varphi\ge\partial_\ee\varphi_1$ for any $\ee\in Con$,  which implies
		\eqref{nonstritConeMonot}.
	\end{proof}
	
	From Lemma \ref{RegReflSatisfisCond-Lemma}
	and Theorems \ref{thm:main theorem}--\ref{thm:main theorem-strictUnif},
	we have
	
	\begin{thm}
		If $\varphi$ is an admissible solution of the shock reflection-diffraction problem,
		then its shock curve $\Gsh$ is uniformly convex
		in the sense described in
		Theorem {\rm \ref{thm:main theorem-strictUnif}}.
		
		Furthermore, if a weak solution in the sense of Definition {\rm \ref{def:weak solutionRegRefl}} satisfies
		properties {\rm (\ref{RegReflSol-Prop0})}--\eqref{RegReflSol-Prop1-1-1} of
		Definition {\rm \ref{admisSolnDef}},
		then the transonic shock $\Gsh$ is
		a strictly convex graph
		in the sense of \eqref{shock-graph-inMainThm}--\eqref{strictConvexity-degenerate-graph} if and only if property \eqref{RegReflSol-Prop2} of
		Definition {\rm \ref{admisSolnDef}} holds.
	\end{thm}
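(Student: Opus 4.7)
The plan is to invoke Lemma~\ref{RegReflSatisfisCond-Lemma} together with the general convexity theorems of Sections~4 and 5, so the work has already been done in verifying the abstract framework.

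For the first assertion, I would apply Lemma~\ref{RegReflSatisfisCond-Lemma}(\ref{RegReflSatisfisCond-Lemma-i1}), which verifies that any admissible solution in the sense of Definition~\ref{admisSolnDef} fulfills Framework~(A) together with all hypotheses (A1)--(A10) of Theorems~\ref{thm:main theorem} and~\ref{thm:main theorem-strictUnif}, where one takes $\phi := \varphi-\varphi_1$ and lets state~$(1)$ of the shock reflection problem play the role of state~$(0)$ in the general framework. With every hypothesis in place, Theorem~\ref{thm:main theorem-strictUnif} applied to this $\phi$ yields $f''(T)<0$ on $(T_A,T_B)$, which by Remark~\ref{FB-levelSet-rmk}(\ref{FB-levelSet-rmk-i2}) is equivalent to $\phi_{\bt\bt}>0$ along $\Gsh^0$; this is precisely the uniform convexity of $\Gsh$ on compact subsets of its relative interior as described in Theorem~\ref{thm:main theorem-strictUnif}.

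For the second assertion, I would argue the two directions separately. For the $(\Leftarrow)$ direction, suppose the weak solution satisfies properties~(\ref{RegReflSol-Prop0})--(\ref{RegReflSol-Prop1-1-1}) of Definition~\ref{admisSolnDef} together with the cone monotonicity~(\ref{RegReflSol-Prop2}); then it is admissible by definition, so the first assertion applies and $\Gsh$ is uniformly convex, which in particular is a strictly convex graph in the sense of \eqref{shock-graph-inMainThm}--\eqref{strictConvexity-degenerate-graph}. For the $(\Rightarrow)$ direction, suppose the weak solution satisfies (\ref{RegReflSol-Prop0})--(\ref{RegReflSol-Prop1-1-1}) and $\Gsh$ is a strictly convex graph as described. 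This is precisely the hypothesis of Lemma~\ref{RegReflSatisfisCond-Lemma}(\ref{RegReflSatisfisCond-Lemma-i2}), which yields the two monotonicity inequalities of~(\ref{RegReflSol-Prop2}) on $\Gsh$.

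The only nontrivial content -- already carried out in Lemma~\ref{RegReflSatisfisCond-Lemma} -- is the matching of the abstract framework to the specific reflection configuration: identifying the tangents $\bt_A, \bt_B$ at the endpoints of $\Gsh$ with $\ee_{S_1}$ and $\ee_{\xi_2}$ so that $Con$ coincides with $Con(\ee_{S_1},\ee_{\xi_2})$ in the notation of~\eqref{3.5a}; decomposing $\partial\Omega\setminus\Gsh$ as $\hat\Gamma_0=\Gso^0\cup\{P_4\}$ (empty in the subsonic/sonic case), $\hat\Gamma_1=\Gw^0\cup\{P_3\}$, $\hat\Gamma_2=\Gamma_{\rm sym}^0$, $\hat\Gamma_3=\emptyset$; and using the even reflection across the $\xi_1$--axis together with \eqref{regularityOfRegReflSupers} to treat the endpoint $B=P_2$ as an interior point of the extended shock $\Gsh^{\rm ext}$, which verifies condition~(ii) of~(A10) there. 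Once this dictionary is set up -- together with the elementary observation that $\phi_\ee$ is constant along the sonic arc $\Gso$ and satisfies oblique boundary conditions on the straight segments $\Gw^0$ and $\Gamma_{\rm sym}^0$ -- both assertions of the theorem reduce to direct citations of Theorems~\ref{thm:main theorem},~\ref{thm:main theorem-strictUnif}, and Lemma~\ref{RegReflSatisfisCond-Lemma}.
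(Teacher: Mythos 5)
Your proposal is correct and follows essentially the same route as the paper: both assertions are reduced to Lemma \ref{RegReflSatisfisCond-Lemma} together with Theorems \ref{thm:main theorem} and \ref{thm:main theorem-strictUnif}, with the dictionary between the reflection configuration and Framework (A) (state $(1)$ as state $(0)$, $\bt_A=\ee_{S_1}$, $\bt_B=\ee_{\xi_2}$, the $\hat\Gamma_i$ decomposition, and the even extension to handle (A10) at $P_2$) being exactly the content of that lemma. Your explicit splitting of the biconditional into the two directions — admissibility plus the first assertion for $(\Leftarrow)$, and Lemma \ref{RegReflSatisfisCond-Lemma}(\ref{RegReflSatisfisCond-Lemma-i2}) for $(\Rightarrow)$ — is just a slightly more spelled-out version of the paper's argument.
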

	
	\begin{proof}
		The uniform convexity of $\Gsh$ for admissible solutions follows from
		Lemma \ref{RegReflSatisfisCond-Lemma} and Theorems {\rm \ref{thm:main theorem}} and
		{\rm \ref{thm:main theorem-strictUnif}}.
		
		Moreover, if a shock solution in the sense of Definition {\rm \ref{def:weak solutionRegRefl}}
		satisfies
		properties {\rm (\ref{RegReflSol-Prop0})}--\eqref{RegReflSol-Prop1-1-1} of
		Definition {\rm \ref{admisSolnDef}}, and its
		shock is a strictly convex graph, then, by Lemma \ref{RegReflSatisfisCond-Lemma}\eqref{RegReflSatisfisCond-Lemma-i2},
		the solution satisfies
		property
		{\rm (\ref{RegReflSol-Prop2})} of
		Definition \ref{admisSolnDef}.
	\end{proof}

	\subsection{\, Reflection problem for supersonic flows past a solid ramp}
	
	The second example is the Prandtl-Meyer reflection problem.
	This is a self-similar reflection that
	occurs when a two-dimensional supersonic flow with velocity $\vv_\infty=(u_\infty, 0)$, $u_\infty>0$,
	in the direction along the wedge axis hits the wedge at $t=0$.
	The slip boundary condition on the wedge boundary
	yields a self-similar reflection pattern;
	see Figs.
	\ref{figure:Prandtl Meyer Reflection}--\ref{figure:Prandtl Meyer Reflection-subs}.
	Also see Bae-Chen-Feldman \cite{baechenfeldman-prandtlmayerReflection,Bae-Chen-Feldman-2}.
	
	\begin{figure}[!ht]
		\centering
		\begin{minipage}{0.43\textwidth}
			\centering
			\includegraphics[width=0.61\textwidth]{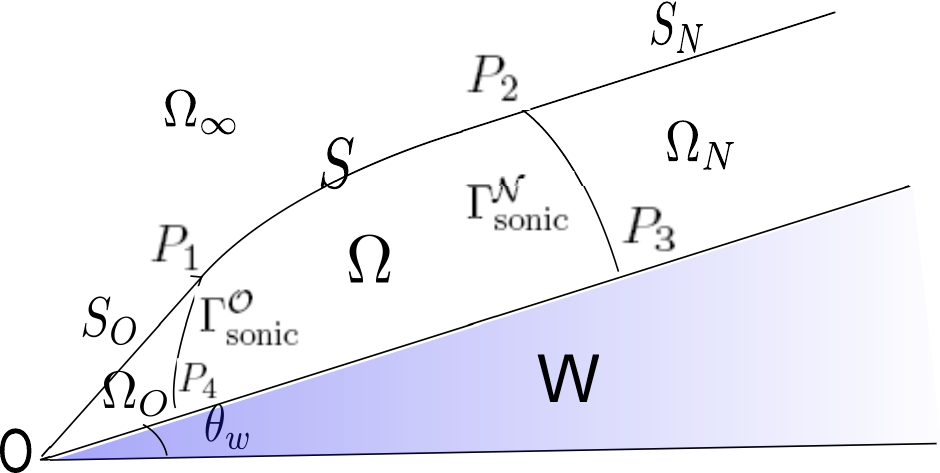}
			\caption{Supersonic Prandtl-Meyer $\quad$ reflection}
			\label{figure:Prandtl Meyer Reflection}
		\end{minipage}
		\hspace{0.1in}
		\begin{minipage}{0.43\textwidth}
			\vspace{-0.06in}
			\centering
			\includegraphics[width=0.61\textwidth]{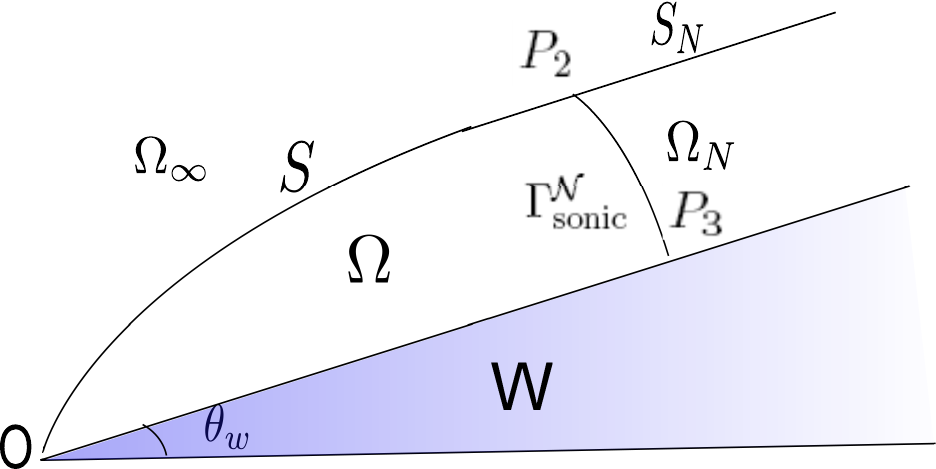}
			\vspace{0.04in}
			\caption{Subsonic Prandtl-Meyer $\quad$ reflection}
			\label{figure:Prandtl Meyer Reflection-subs}
		\end{minipage}
	\end{figure}
	
	We consider this problem in the self-similar coordinates.
	Using the symmetry with respect to the $\xi_1$--axis,
	the problem can be posed in the region:
	$$
	\Lambda=\mR^2_+\setminus\{\xxi \; :\;  \xi_2>\max(0, \xi_1\tan\theta_{\rm w})\}.
	$$
	Denote by $\varphi_\infty$ the pseudo-potential of the incoming state.
	
	\begin{defi}\label{def:weak solutionPrandtlRefl}
		$\varphi\in C^{0,1}(\overline\Lambda)$ is a weak solution
		of the Prandtl-Meyer reflection problem if $\varphi$ satisfies
		equation \eqref{equ:potential flow equation} in $\Lambda$,
		the boundary conditions \eqref{BCfor RegRefl} on $\partial\Lambda$,
		and the asymptotic conditions{\rm :}
		$$
		\displaystyle
		\lim_{R\to\infty}\|\varphi-\varphi_\infty\|_{C(L_\theta\setminus B_R(\mathbf{0}))}=0
		$$
		along ray $L_\theta:=\{\xi_1=\xi_2\cot\theta, \xi_2>0\}$ for each
		$\theta\in (\theta_{\rm w}, \pi)$ in the weak sense
		{\rm (}as in Definition {\rm \ref{def:weak solutionRegRefl}}{\rm )}.
	\end{defi}
	
	We consider the solutions with the structure shown in Figs.
	\ref{figure:Prandtl Meyer Reflection}--\ref{figure:Prandtl Meyer Reflection-subs}.
	These solutions are piecewise-smooth and
	equal to the constant states outside region $\Omega$ described below.
	
	The constant states are defined as follows
	(see \cite[\S 2]{Bae-Chen-Feldman-2} for the details and proofs): Given the constant self-similar state
	with velocity $\vv_\infty=(u_\infty, 0)$ and density $\rho_\infty$ which is supersonic
	at the origin (the wedge vertex), there exist the
	detachment wedge angle $\theta_{\rm w}^{\rm d}\in (0, \frac\pi 2)$ and
	the sonic wedge angle
	$\theta_{\rm w}^{\rm s}\in (0, \theta_{\rm w}^{\rm d})$,
	which depend only on $(\rho_\infty, u_\infty)$,
	such that, for any wedge angle $\theta_{\rm w}\in (0,\theta_{\rm w}^{\rm d})$,
	
	\smallskip
	\begin{enumerate}[\rm (i)]
		\item There exists a unique constant state $\varphi_\mN$, which determines
		the normal reflection state of $\varphi_\infty$
		from the wedge boundary $\partial W:=\{\xi_1>0, \xi_2=\xi_1\tan\theta_{\rm w}\}$; that is,
		$\varphi_\mN$ satisfies that $\partial_{\bn} \varphi_\mN=0$
		on $\partial W$, half-line
		$S_\mN:=\{\xxi\; :\;\varphi_\mN=\varphi_\infty\}\cap\{\xi_2>0\}$
		lies in $\Lambda$ and is parallel to $\partial W$,
		and the Rankine-Hugoniot condition holds on $S_\mN$:
		$$
		\rho_\infty\partial_{\bn}\varphi_\infty=\rho_\mN\partial_{\bn}\varphi_\mN
		\qquad\,\, \mbox{on $S_\mN$}.
		$$
		
		\item There exists a constant state $\varphi_\mO$ such that
		$\partial_{\bn} \varphi_\mO=0$ on $\partial W$,
		half-line $S_\mO:=\{\xxi\; :\;\varphi_\mO=\varphi_\infty\}\cap\{\xi_2>0\}$
		lies in $\Lambda$, the wedge vertex is on $\overline{S_\mO}$ ({\it i.e.},
		$\mathbf{0}\in \overline{S_\mO}$),
		and the Rankine-Hugoniot condition holds on $S_\mO$:
		$$
		\rho_\infty\partial_{\bn}\varphi_\infty=\rho_\mO\partial_{\bn}\varphi_\mO\qquad\,\,
		\mbox{on $S_\mO$}.
		$$
		In fact, there exist two states for $\varphi_\mO$, weak and strong,
		and we always choose the weak one with the smaller density
		{\rm (}so that the unique state $\varphi_\mO$ is often referred{\rm )}.
		
		\smallskip
		\item
		$\varphi_\mO$ is supersonic (resp. subsonic) at the origin for all
		$\theta_{\rm w}\in (0, \theta_{\rm w}^{\rm s})$
		(resp. $\theta_{\rm w}\in (\theta_{\rm w}^{\rm s}, \theta_{\rm w}^{\rm d}))$.
		This determines the supersonic and subsonic Prandtl-Meyer reflection configurations below.
	\end{enumerate}

	Next, we define the points, lines, and regions in Figs.
	\ref{figure:Prandtl Meyer Reflection}--\ref{figure:Prandtl Meyer Reflection-subs}
	for a given wedge angle $\theta_{\rm w}\in (0,\theta_{\rm w}^{\rm d})$
	as follows:
	\begin{enumerate}[\rm (a)]
		\item The sonic arcs $\Gso^\mN$ and $\Gso^\mO$ are the arcs (defined below)
		of the sonic circles of the constant states
		$\varphi_\mN$ and $\varphi_\mO$, respectively, with the centers on $\partial W$,
		since these states satisfy the slip boundary condition on $\partial W$:
		\begin{itemize}
			
			\smallskip
			\item
			$\Gso^\mN$ is the {\it upper} arc of
			$\partial B_{c_\mN}(O_\mN)$ between lines $\partial W$ and $S_\mN$. It follows
			that $\Gso^\mN\subset\Lambda$, since
			$\partial B_{c_\mN}(O_\mN)$
			intersects the full line $S_\mN$ at two points.
			Denote the endpoints of
			$\Gso^\mN$ by $P_2$ and $P_3$,
			which lie on $S_\mN$ and $\partial W$, respectively.
			
			\smallskip
			\item Arc $\Gso^\mO$ is defined only for the supersonic reflection configurations,
			{\it i.e.}, for $\theta_{\rm w}\in (0,\theta_{\rm w}^{\rm s})$.
			In this case,
			$\partial B_{c_\mO}(O_\mO)$
			intersects half-line $S_\mO$ at two points within $\Lambda$,
			and $\Gso^\mO$ is the {\it lower} arc of
			$\partial B_{c_\mO}(O_\mO)$ between lines $\partial W$ and $S_\mO$.
			Then $\Gso^\mO\subset\Lambda$.
			Denote  the endpoints of
			$\Gso^\mO$ by $P_1$ and $P_4$,
			which lie on $S_\mO$ and $\partial W$, respectively.
			\item For the supersonic configurations, $S_{\mO, {\rm seg}}$ is segment $OP_1$.
			Note that
			$S_{\mO, {\rm seg}} \subset S_\mO$.
			\item $S_{\mN, {\rm seg}}$ is the portion of $S_\mN$ with the left endpoint $P_2$,
			{\it i.e.},
			$S_{\mN, {\rm seg}} = S_\mN\cap\{\xi_1>\xi_{1, P_2}\}$.
		\end{itemize}
		
		\smallskip
		\item $\Gw$ is the segment of $\partial W$ between points $P_3$ and $P_4$
		for the supersonic case
		(resp. between $\boldsymbol{0}$ and $P_3$ for the subsonic case).
		
		\smallskip
		\item
		There exists a smooth shock curve $\Gsh$ with the following properties:
		\begin{itemize}
			
			\smallskip
			\item For the supersonic reflection configurations, $\Gsh$ has endpoints
			$P_1$ and $P_2$;
			
			\smallskip
			\item For the subsonic reflection configurations, $\Gsh$ has endpoints
			$P_2$ and $O$;
			
			\smallskip
			\item $\Gsh$, $\Gso^\mN$,  $\Gw$, and $\Gso^\mO$ do not have common
			points except at their end points.
		\end{itemize}
		
		\smallskip
		\item $\Omega$ is the domain bounded by the curve formed by
		$\overline{\Gsh}$, $\overline{\Gso^\mN}$,   $\overline{\Gw}$, and $\overline{\Gso^\mO}$.
		
		\smallskip
		\item For the supersonic reflection configurations,
		$\Omega_\mO$ is the region bounded by arc
		$\overline{\Gso^\mO}$ and the straight segments $\overline{OP_1}$ and $\overline{OP_4}$.
		
		\smallskip
		\item $\Omega_\mN$ is the unbounded region with the boundary consisting of
		arc $\overline{\Gso^\mN}$, and the straight half-lines $\partial W\cap\{\xi_1\ge \xi_{1P_3}\}$
		and $S_\mN\cap \{\xi_1\ge \xi_{1P_2}\}$.
		
		\smallskip
		\item $\Omega_\infty:=\Lambda\setminus\overline{\Omega_\mO\cup\Omega\cup\Omega_\mN}$
		for the supersonic case, and
		$\Omega_\infty:=\Lambda\setminus\overline{\Omega\cup\Omega_\mN}$
		for the subsonic case.
	\end{enumerate}
	
	Now we define a class of solutions of the Prandtl-Meyer reflection problem
	with the structure as in Figs.
	\ref{figure:Prandtl Meyer Reflection}--\ref{figure:Prandtl Meyer Reflection-subs}.
	
	\begin{defi}\label{Prandtl-admisSolnDef}
		Let $(\rho_{\infty}, (u_{\infty},0))$ be a supersonic state
		in $\Omega_{\infty}$, and let $\theta_{\rm w}^{\rm d}$ and $\theta_{\rm w}^{\rm s}$ be the corresponding
		detachment and sonic angles.
		Let $\theta_{\rm w}\in (\theta_{\rm w}^{\rm d},\frac{\pi}2)$.
		A function $\varphi\in C^{0,1}(\overline\Lambda)$ is an admissible solution of the Prandtl-Meyer reflection problem
		if $\varphi$ is a solution in the sense of Definition {\rm \ref{def:weak solutionPrandtlRefl}}
		and satisfies the following properties{\rm :}
		\begin{enumerate}[\rm (i)]
			\item \label{PrandtlAdmSolnProperties-2} The structure of solution is the following{\rm :}
			
			\smallskip
			\begin{itemize}
				\item
				If $\theta_{\rm w}\in(0, \theta_{\rm w}^{\rm s})$, then the solution
				is of supersonic reflection configuration as in
				Fig. {\rm \ref{figure:Prandtl Meyer Reflection}}
				and satisfies
				\begin{align}
				& \varphi\in C^{1}(\overline\Lambda\setminus\overline{S_{\mO, {\rm seg}}\cup\Gsh\cup S_{\mN, {\rm seg}}}),\label{states123-supers-Prandtl-a}\\
				&\varphi\in C^3(\Omega)\cap C^2(\overline{\Omega}\setminus\overline{\Gso^\mO\cup\Gso^\mN})
				\cap C^1(\overline{\Omega}),\label{states123-supers-Prandtl-b}\\
				\label{states123-supers-Prandtl}
				&\varphi=\begin{cases}
				\,\varphi_\infty \qquad&\mbox{in $\Omega_\infty$},\\[0.3mm]
				\,\varphi_\mO &\mbox{in $\Omega_\mO$},\\[0.3mm]
				\,\varphi_\mN &\mbox{in $\Omega_\mN$}{\rm ;}
				\end{cases}
				\end{align}
				
				\smallskip
				\item
				If $\theta_{\rm w}\in[\theta_{\rm w}^{\rm s}, \theta_{\rm w}^{\rm d})$, then the solution
				is of subsonic reflection configuration as in Fig. {\rm \ref{figure:Prandtl Meyer Reflection-subs}}
				and satisfies
				\begin{align}
				& \varphi\in C^{1}(\overline\Lambda\setminus\overline{\Gsh\cup S_{\mN, {\rm seg}}}),\label{states123-subs-Prandtl-a}\\
				&\varphi\in C^3(\Omega)\cap C^2(\overline{\Omega}\setminus(\{O\}\cup\overline{\Gso^\mN}))
				\cap C^1(\overline{\Omega}), \label{states123-subs-Prandtl}\\
				&\varphi=\begin{cases}
				\,\varphi_\infty \qquad&\mbox{in $\Omega_\infty$},\\[0.3mm]
				\,\varphi_\mO(O) &\mbox{at $O$},\\[0.3mm]
				\,\varphi_\mN &\mbox{in $\Omega_\mN$}{\rm ,}
				\end{cases}
				\label{states123-subsOO-Prandtl}\\[1mm]
				&D\varphi(O)=D\varphi_\mO(O).
				\end{align}
			\end{itemize}
			
			\smallskip
			\item\label{PrandtlAdmSolnProperties-1}
			The shock curve $\Gsh$ is $C^2$ in its relative interior{\rm .}
			
			\smallskip
			\item\label{PrandtlSolnProperties-3}
			Equation \eqref{equ:potential flow equation} is strictly elliptic
			in $\overline{\Omega}\backslash(\overline{\Gso^\mN}\cup\overline{\Gso^\mO})$ for the supersonic case
			and in $\overline{\Omega}\backslash(\overline{\Gso^\mN}\cup\{O\})$ for the subsonic case{\rm .}

			\smallskip
			\item\label{PrandtlSolnProperties-3-1}
			$\partial_{\bn}\varphi_\infty>\partial_{\bn}\varphi>0$ on $\Gsh$, where $\bn$ is the normal vector
			to $\Gsh$ pointing into $\Omega$.
			
			\smallskip
			\item\label{PrandtlAdmSolnProperties-4}
			$\partial_{\ee_{S_\mO}}(\varphi_{\infty}-\varphi)\leq0$ and
			$\partial_{\ee_{S_\mN}}(\varphi_{\infty}-\varphi)\leq0$ in $\Omega$, where
			vectors $\ee_{S_\mO}$ and $\ee_{S_\mN}$ are parallel to lines $S_\mO$ and $S_\mN$, respectively,
			oriented towards the interior of $\Gsh$ from points $P_1$ and $P_2$, respectively{\rm ;}
		\end{enumerate}
	\end{defi}

	\begin{rem}\label{eqnInOmegaPrantl-rmk}
		A version of Remark {\rm \ref{eqnInOmega-rmk}}
		holds in the present case, with the only difference that
		the potential function of the incoming state is $\varphi_\infty$ here,
		instead of $\varphi_1$.
	\end{rem}
	
	\begin{rem}\label{nonconsta-Prandtl-rmk}
		$\varphi$ in $\Omega$ is not a constant state. Indeed,
		if $\varphi$ is a constant state in $\Omega$,
		then $\varphi=\varphi_\mN$ in $\Omega$,
		which follows from
		\eqref{states123-supers-Prandtl} and \eqref{states123-subsOO-Prandtl}
		in the supersonic and subsonic cases, respectively.
		On the other hand, we obtain
		that $\varphi=\varphi_\mO$ in $\Omega$,
		which follows from both \eqref{states123-supers-Prandtl}
		for the supersonic case {\rm (}since
		$\varphi$ is $C^{1}$ across $\Gso^\mO${\rm )}
		and the property that
		$(\varphi, D\varphi)=(\varphi_2, D\varphi_2)$ at $O$
		for the subsonic case.
		However, $\varphi_\mO$ and $\varphi_\mN$ are
		two different states,
		which can be seen from their definitions, since
		line $S_\mN$ is parallel to $\partial W$ {\rm (}so that
		these lines do not coincide{\rm )}, while
		$S_\mO$ intersects $\partial W$ at point $O$.
	\end{rem}

	\begin{lem}\label{admisEquiv1-2-Prandtl}
		Definition {\rm \ref{Prandtl-admisSolnDef}} is equivalent to the definition of admissible
		solutions in {\rm \cite{Bae-Chen-Feldman-2}}{\rm ;} see Definition {\rm 2.14} there.
	\end{lem}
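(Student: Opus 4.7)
The plan is to mirror closely the structure of the proof of Lemma \ref{admisEquiv1-2} given above, replacing the role of $\varphi_1$ by $\varphi_\infty$ and the role of the weak state $(2)$ by the weak state $\mO$. The main task is to bridge the gap between the properties assumed in Definition \ref{Prandtl-admisSolnDef} and the (slightly richer) list of properties in Definition 2.14 of \cite{Bae-Chen-Feldman-2}. The missing properties are essentially of two kinds: (a) strict monotonicity of $\phi:=\varphi-\varphi_\infty$ in cone directions holding throughout $\overline\Omega$ rather than only on $\Gsh$, and (b) geometric localization properties of the shock curve $\Gsh$ (that it is a graph in an appropriate direction and lies outside the sonic circle of state $\infty$). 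The reverse implication, that the admissible solutions of \cite{Bae-Chen-Feldman-2} satisfy properties (\ref{PrandtlAdmSolnProperties-2})--(\ref{PrandtlAdmSolnProperties-4}) of Definition \ref{Prandtl-admisSolnDef}, will follow directly from the formulation in \cite{Bae-Chen-Feldman-2}, together with the entropy-condition analogue of \cite[Lemma 8.1.7]{cf-book2014shockreflection} applied in the Prandtl-Meyer geometry.

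For the forward direction, I would first carry out the analogue of Step 1 in the proof of Lemma \ref{admisEquiv1-2}. Fix a unit vector $\ee\in\overline{Con(\ee_{S_\mO},\,\ee_{S_\mN})}$ and set $w=\phi_\ee$. By Remark \ref{eqnInOmegaPrantl-rmk}, $w$ satisfies the strictly elliptic equation \eqref{equ:phi e} in $\Omega$; moreover $w\le 0$ on $\Gsh$ by assumption (\ref{PrandtlAdmSolnProperties-4}). The strategy is to invoke the strong maximum principle, coupled with Hopf's lemma, to upgrade this to the strict inequality $w<0$ in $\Omega$, provided $w$ cannot attain a local interior maximum on the remaining parts of $\partial\Omega$. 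I will verify this case by case: on $\overline{\Gso^\mO}$ (supersonic case) we have $D\varphi=D\varphi_\mO$, so $w=\ee\cdot D(\varphi_\mO-\varphi_\infty)$ is a constant; on $\overline{\Gso^\mN}$, $D\varphi=D\varphi_\mN$, so $w$ is again constant; on $\Gw$, the slip boundary condition \eqref{equ:boundary-ofOmega-RegRefl} yields an oblique derivative condition for $w$ in the form \eqref{equ:boundary-ofOmega-RegRefl-w} whenever $\ee\cdot\bt_{\rm w}\ne 0$, so Hopf's lemma rules out a local maximum on $\Gw^0$ unless $w$ is constant. At the corner/vertex $O$ (subsonic case), continuity $\phi\in C^1(\overline\Omega)$ together with $D\varphi(O)=D\varphi_\mO(O)$ gives the needed value. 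After excluding the case where $w$ is constant in $\Omega$ (which by Remark \ref{nonconsta-Prandtl-rmk} would force $\varphi$ to coincide with a constant state, contradicting the two-state structure along $\Gsh$), the strong maximum principle yields $w<0$ in $\Omega\cup\Gsh^0$, which is precisely the strict monotonicity statement needed.

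Next I will carry out the analogue of the geometric part of the argument (the counterpart of the derivation of \eqref{eqDefAdm-1} in Lemma \ref{admisEquiv1-2}). Using the strict monotonicity just established in two linearly independent directions $\ee_{S_\mO}$ and $\ee_{S_\mN}$, the fact that $\phi=0$ on $\Gsh$ and $\phi\le 0$ in $\Omega$, together with the Rankine–Hugoniot condition and the entropy condition from (\ref{PrandtlSolnProperties-3-1}), one shows that $\Gsh$ is a graph of a function in the coordinate system with basis $\{\ee,\ee^\perp\}$ for any $\ee\in Con(\ee_{S_\mO},\ee_{S_\mN})$, and that $\overline{\Gsh}$ lies outside the sonic disk of state $\infty$; the latter is a standard consequence of \eqref{equ:boundary-RH-RegRefl} with the entropy condition. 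Any auxiliary containment conditions in $\{\xi_{1,P_2}\le\xi_1\le\xi_{1,P_1}\}$ (or the subsonic analogue) are read off from the graph structure and the location of the endpoints of $\Gsh$, exactly as in the argument following \eqref{locationOfShockRegRefl}.

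The main obstacle, as I see it, is the bookkeeping associated with the two distinct geometric configurations (supersonic with two sonic arcs $\Gso^\mO$ and $\Gso^\mN$, versus subsonic with only $\Gso^\mN$ and the corner at $O$). In the subsonic configuration the vertex $O$ is not a smooth point of $\partial\Omega$, so the Hopf-lemma argument must be supplemented by the $C^1(\overline\Omega)$ regularity in Definition \ref{Prandtl-admisSolnDef}(\ref{PrandtlAdmSolnProperties-2}) and the known value of $D\varphi(O)$ to prevent $O$ from being a local extremum of $\phi_\ee$. A secondary but related difficulty is verifying that the cone $Con(\ee_{S_\mO},\ee_{S_\mN})$ defined here and the cone used in \cite{Bae-Chen-Feldman-2} are the same (up to orientation convention); this is a routine matching-of-conventions exercise using \eqref{defEs1a} adapted to the Prandtl-Meyer geometry. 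Once these details are verified, the equivalence of the two definitions follows by assembling the above pieces in the same format as Lemma \ref{admisEquiv1-2}.
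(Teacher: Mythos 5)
Your proposal is correct and follows essentially the same route as the paper: the paper's own proof of Lemma \ref{admisEquiv1-2-Prandtl} is just the remark that the argument of Lemma \ref{admisEquiv1-2} carries over with notational changes ($\varphi_1\to\varphi_\infty$, state $(2)\to$ state $\mO$), which is precisely what you have spelled out, including the strong maximum principle/Hopf step on the fixed boundary, the graph-and-sonic-circle geometry, and the citation of the entropy-condition analogue for the reverse direction. One minor sign correction: with $\phi=\varphi-\varphi_\infty$, condition (\ref{PrandtlAdmSolnProperties-4}) gives $\phi_\ee\ge 0$ in $\Omega$ (not $\le 0$), so the maximum-principle step should exclude local \emph{minima} of $\phi_\ee$ on the fixed boundary and upgrade to $\phi_\ee>0$ in $\Omega\cup\Gsh^0$, exactly as in \eqref{strictConeMonot-OmegaLem}; this does not affect the structure of your argument.
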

	The proof of Lemma \ref{admisEquiv1-2-Prandtl} follows closely the proof of Lemma \ref{admisEquiv1-2} with mostly notational changes, so we skip this proof  here.
	
	From Lemma \ref{admisEquiv1-2-Prandtl}, the results of \cite{baechenfeldman-prandtlmayerReflection,Bae-Chen-Feldman-2}
	for the existence and properties of
	admissible solutions apply to the solutions in the sense of Definition {\rm \ref{admisSolnDef}}.
	We list some of these properties in the following theorem.
	
	\begin{thm}\label{PrandtlReflPropertiesTh}
		Let $(\rho_{\infty}, (u_{\infty},0))$ be a supersonic state
		in $\Omega_{\infty}$, and let $\theta_{\rm w}^{\rm d}$ and $\theta_{\rm w}^{\rm s}$ be the corresponding
		detachment and sonic angles.
		Then  any admissible solution of the Prandtl-Meyer reflection problem
		with wedge angle $\theta_{\rm w}\in (0, \theta_{\rm w}^{\rm d})$ has the following properties{\rm :}
		\begin{enumerate}[\rm (i)]
			\item \label{PrandtlSolnProperties-2} Additional regularity{\rm :}
			
			\smallskip
			\begin{itemize}
				\item
				If $\theta_{\rm w}\in(0, \theta_{\rm w}^{\rm s})$, i.e., when the solution
				is of supersonic reflection configuration as in
				Fig. {\rm \ref{figure:Prandtl Meyer Reflection}},
				then
				$\varphi\in C^{1,1}(\overline{\Omega_\mO\cup\Omega\cup\Omega_\mN})$ and
				$\varphi\in C^\infty(\overline{\Omega}\setminus\overline{\Gso^\mO\cup\Gso^\mN})${\rm ;}
				
				\smallskip
				\item
				If $\theta_{\rm w}\in[\theta_{\rm w}^{\rm s}, \theta_{\rm w}^{\rm d})$, i.e., when the solution
				is of subsonic reflection configuration as in Fig. {\rm \ref{figure:Prandtl Meyer Reflection-subs}},
				then $\varphi\in C^{1,\alpha}(\overline{\Omega\cup\Omega_\mN})\cap C^{1,1}(\overline{\Omega\cup\Omega_\mN}\setminus\{O\})$ and
				$\varphi\in C^\infty(\overline{\Omega}\setminus(\{O\}\cup\overline{\Gso^\mN}))
				$ for some $\alpha\in (0, 1)$, depending on $(\rho_\infty, u_\infty, \theta_{\rm w})$
				and non-increasing with respect to $\theta_{\rm w}$.
			\end{itemize}
			
			\smallskip
			\item\label{PrandtlSolnProperties-1}
			The shock curve $\Gsh$ is $C^\infty$ in its relative interior.
			
			\item\label{PrandtlSolnProperties-1-1}
			$\Gsh$ has the following regularity up to the endpoints{\rm :}
			In the supersonic case, the whole shock curve $\overline{S_{\mO, {\rm seg}}\cup\Gsh\cup S_{\mN, {\rm seg}}}$ is
			$C^{2,\beta}$ for any $\beta\in(0, 1)$. In the subsonic case,
			curve
			$\overline{\Gsh\cup S_{\mN, {\rm seg}}}$
			is  $C^{1, \alpha}$ with $\alpha$ as in \eqref{PrandtlSolnProperties-2}{\rm .}

			\smallskip
			\item\label{PrandtlSolnProperties-4}
			For each $\mathbf{e}\in Con(\ee_{S_\mO}, \ee_{S_\mN})$,
			\begin{equation}\label{strictCone-regrefl-Prandtl}
			\partial_{\mathbf{e}}(\varphi_\infty-\varphi)< 0 \qquad\,\,\mbox{in $\overline\Omega$},
			\end{equation}
			where vectors   $\ee_{S_\mO}$ and $\ee_{S_\mN}$
			are
			introduced in  Definition {\rm \ref{Prandtl-admisSolnDef}}\eqref{PrandtlAdmSolnProperties-4}, and
			notation \eqref{3.5a} has been used.
			
			\smallskip
			\item\label{PrandtlSolnProperties-5}
			Denote by $\bn_{\rm w}$ the interior
			unit  normal vector to $\Gw$ pointing into $\Omega$, {\it i.e.},
			$\bn_{\rm w}=(-\sin\theta_{\rm w}, \cos\theta_{\rm w})$.
			Then
			$$
            \partial_{\bn_{\rm w}}(\varphi-\varphi_{\mathcal O})\leq0, \quad
			\partial_{\bn_{\rm w}}(\varphi-\varphi_{\mathcal N})\leq0 \qquad\mbox{in $\overline\Omega$}.
            $$
		\end{enumerate}
	\end{thm}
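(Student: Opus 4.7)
The plan is to exploit the equivalence established in Lemma \ref{admisEquiv1-2-Prandtl} to reduce each of the five assertions to a corresponding statement about admissible solutions in the sense of Bae--Chen--Feldman \cite{baechenfeldman-prandtlmayerReflection,Bae-Chen-Feldman-2}, and then assemble these into the unified form of Theorem \ref{PrandtlReflPropertiesTh}. This mirrors how Theorem \ref{Regularity-RegRefl-Th1} was derived from Definition \ref{admisSolnDef} via Lemma \ref{admisEquiv1-2}. Once the equivalence is in hand, most claims become direct consequences of existing technical estimates proved in those papers.

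More precisely, I would proceed as follows. First, for assertions (\ref{PrandtlSolnProperties-2}) and (\ref{PrandtlSolnProperties-1-1}), I would invoke the optimal regularity results proved in \cite{baechenfeldman-prandtlmayerReflection,Bae-Chen-Feldman-2} separately for the two cases: the supersonic regime $\theta_{\rm w}\in(0,\theta_{\rm w}^{\rm s})$, where the sonic arcs $\Gso^\mO$ and $\Gso^\mN$ give the optimal $C^{1,1}$ regularity across them and $C^{2,\beta}$ regularity of the shock for every $\beta\in(0,1)$, and the subsonic regime $\theta_{\rm w}\in[\theta_{\rm w}^{\rm s},\theta_{\rm w}^{\rm d})$, where regularity degrades near $O$ but $C^{1,\alpha}$ regularity holds globally, with $\alpha$ non-increasing in $\theta_{\rm w}$. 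Second, for assertion (\ref{PrandtlSolnProperties-1}), I would argue in two steps: in the interior of $\Gsh$ the higher-order regularity beyond $C^2$ already follows from Lemma \ref{lem:analyticity}, since the conditions of Lemma \ref{lem:shock graph} are verified in essentially the same way as in Step 1 of the proof of Lemma \ref{RegReflSatisfisCond-Lemma}; this is in fact stronger than required, yielding real analyticity. Third, for the cone monotonicity (\ref{PrandtlSolnProperties-4}), I would cite the corresponding monotonicity estimates from \cite{Bae-Chen-Feldman-2}, where the strictness inside $\overline\Omega$ follows exactly as in the derivation of \eqref{strictCone-regrefl}, by applying the strong maximum principle together with Hopf's lemma to $\partial_{\mathbf{e}}(\varphi_\infty-\varphi)$ on the appropriate boundary pieces and using the analogues of property (\ref{RegReflSol-Prop2}) of Definition \ref{admisSolnDef}. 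Finally, (\ref{PrandtlSolnProperties-5}) follows from the wedge-boundary monotonicity proved in \cite{Bae-Chen-Feldman-2}, whose proof proceeds by comparing $\varphi$ with the reference states $\varphi_\mO$ and $\varphi_\mN$ and invoking the strong maximum principle together with the homogeneous Neumann condition on $\Gw$.

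The main obstacle is a bookkeeping issue rather than a substantive analytic one: I must check carefully that each cited statement in \cite{baechenfeldman-prandtlmayerReflection,Bae-Chen-Feldman-2} has been proved uniformly across the whole detachment interval $(0,\theta_{\rm w}^{\rm d})$, and in both the supersonic and subsonic reflection geometries, so that no gap remains at the critical angle $\theta_{\rm w}=\theta_{\rm w}^{\rm s}$. In particular, the Hölder exponent $\alpha$ in (\ref{PrandtlSolnProperties-2}) and (\ref{PrandtlSolnProperties-1-1}) must be verified to satisfy the asserted monotonicity in $\theta_{\rm w}$, which requires tracing through the dependence of the estimates near $O$ on the ellipticity degeneracy; here the subsonic analysis in \cite{Bae-Chen-Feldman-2} is delicate and close to the detachment angle the constants can deteriorate. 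Once this bookkeeping is completed, the theorem follows directly from Lemma \ref{admisEquiv1-2-Prandtl} and the compilation described above.
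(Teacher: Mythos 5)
Your proposal is correct and takes essentially the same route as the paper: after the equivalence of definitions (Lemma \ref{admisEquiv1-2-Prandtl}), the paper simply cites the monograph \cite{Bae-Chen-Feldman-2} -- Theorem 2.16 there for properties (\ref{PrandtlSolnProperties-2})--(\ref{PrandtlSolnProperties-1-1}) and Lemmas 3.2 and 3.6 there for (\ref{PrandtlSolnProperties-4})--(\ref{PrandtlSolnProperties-5}), noting only that those results are stated in coordinates rotated so that the $\xi_2$--direction is $\bn_{\rm w}$. Your extra remark that interior $C^\infty$ (indeed analyticity) of $\Gsh$ could alternatively be obtained from Lemma \ref{lem:analyticity} is a harmless variant, provided the cone condition (A5) is taken from the cited monotonicity of \cite{Bae-Chen-Feldman-2} rather than from this theorem itself.
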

	
	\begin{proof}
		Properties (\ref{PrandtlSolnProperties-2})--(\ref{PrandtlSolnProperties-1-1})
		are from \cite[Theorem 2.16]{Bae-Chen-Feldman-2}.
		Properties (\ref{PrandtlSolnProperties-4})  and
		(\ref{PrandtlSolnProperties-5})
		are shown in \cite[Lemmas 3.2 and 3.6]{Bae-Chen-Feldman-2},
		where the results are stated in a rotated coordinate system, in which
		the $\xi_2$--variable is in the direction of $\bn_{\rm w}$.
	\end{proof}
	
	\begin{thm}\label{PrandtlReflExistTh}
		Let $(\rho_{\infty}, (u_{\infty},0))$ be a supersonic state
		in $\Omega_{\infty}$, and let $\theta_{\rm w}^{\rm d}$ be the corresponding
		detachment angle. Then,  for any $\theta_{\rm w}\in(0,\theta_{\rm w}^{\rm d})$,
		there exists an admissible solution of the Prandtl-Meyer reflection problem.
	\end{thm}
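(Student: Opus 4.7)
The plan is to deduce this existence statement from the results of Bae-Chen-Feldman \cite{baechenfeldman-prandtlmayerReflection, Bae-Chen-Feldman-2} via the equivalence established in Lemma \ref{admisEquiv1-2-Prandtl}. Specifically, those works construct admissible solutions in their own sense for every $\theta_{\rm w} \in (0, \theta_{\rm w}^{\rm d})$; applying Lemma \ref{admisEquiv1-2-Prandtl}, these are admissible in the sense of Definition \ref{Prandtl-admisSolnDef} as well, and this immediately proves Theorem \ref{PrandtlReflExistTh}. Accordingly, the task of this proof is really to reference the construction and note how its output matches our structural requirements.

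First I would record the strategy of the existence proof in \cite{Bae-Chen-Feldman-2}: it is a continuity method in the wedge-angle parameter $\theta_{\rm w}$. One fixes $(\rho_\infty, u_\infty)$, hence the angles $\theta_{\rm w}^{\rm s}$ and $\theta_{\rm w}^{\rm d}$, and considers the open set $K$ of those $\theta_{\rm w} \in (0, \theta_{\rm w}^{\rm d})$ for which an admissible solution exists. Near $\theta_{\rm w} = 0$ one constructs a solution as a small perturbation of the normal reflection configuration (so $K\neq\emptyset$); then one shows both that $K$ is open in $(0, \theta_{\rm w}^{\rm d})$ (local existence by implicit function / degree arguments) and that $K$ is closed there (compactness of the family of admissible solutions via uniform a priori estimates). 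The transition at the sonic angle $\theta_{\rm w}^{\rm s}$ is handled by matching the supersonic configuration (Fig.~\ref{figure:Prandtl Meyer Reflection}) to the subsonic one (Fig.~\ref{figure:Prandtl Meyer Reflection-subs}) through the degenerate sonic arc $\Gso^\mO$ collapsing to the point $O$.

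The key ingredients I would emphasize are the uniform a priori estimates that guarantee closedness of $K$: strict ellipticity of \eqref{equ:potential flow equation} in $\overline\Omega\setminus(\overline{\Gso^\mN}\cup \overline{\Gso^\mO})$ with quantitative bounds away from the detachment angle; $C^{2,\beta}$--regularity of $\Gsh$ in the supersonic regime and the weighted $C^{1,\alpha}$--regularity up to $O$ in the subsonic regime; the cone-monotonicity \eqref{strictCone-regrefl-Prandtl} along with the wedge-normal monotonicity in Theorem \ref{PrandtlReflPropertiesTh}(\ref{PrandtlSolnProperties-5}); and the entropy/jump conditions on $\Gsh$. These estimates are established in \cite{Bae-Chen-Feldman-2} by adapting the iteration scheme of Chen-Feldman \cite{cf-book2014shockreflection} to the Prandtl-Meyer geometry, replacing $\varphi_1$ by $\varphi_\infty$ and the wedge-reflection point $P_0$ by the wedge vertex $O$ or by the upper sonic point $P_1$ as appropriate. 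Once a solution is obtained in the sense of \cite{Bae-Chen-Feldman-2}, Lemma \ref{admisEquiv1-2-Prandtl} delivers all properties (i)--(v) in Definition \ref{Prandtl-admisSolnDef}, finishing the proof.

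The hard part of the underlying construction, which I would not re-do here, is to maintain uniform ellipticity constants and obliqueness of the free-boundary condition as $\theta_{\rm w} \uparrow \theta_{\rm w}^{\rm d}$: near detachment the states $\mO$ and $\mN$ coalesce and the Rankine-Hugoniot condition on $\Gsh$ loses its strict obliqueness at the endpoints. Control of this degeneracy is what forces the careful iteration design in \cite{Bae-Chen-Feldman-2}, and it is the reason the continuity method works all the way up to $\theta_{\rm w}^{\rm d}$ rather than terminating strictly before it. Everything else, including the $C^{1,1}$ matching across $\overline{\Gso^\mO \cup \Gso^\mN}$ and the corner behavior at $O$ in the subsonic case, then follows from standard elliptic regularity once the iteration converges to an admissible limit.
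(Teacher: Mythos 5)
Your proposal is correct and follows essentially the same route as the paper: the paper's proof is simply the citation of the existence theorem in \cite{Bae-Chen-Feldman-2} (Theorem 2.15 there), combined with the equivalence of definitions in Lemma \ref{admisEquiv1-2-Prandtl}. Your additional sketch of the continuity method and a priori estimates describes the internals of the cited construction rather than adding anything needed here, so the argument matches the paper's.
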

	The existence of solutions follows from \cite[Theorem 2.15]{Bae-Chen-Feldman-2}.
	
	Now, similar to Lemma \ref{RegReflSatisfisCond-Lemma}, we have
	
	\begin{lem}\label{RegReflSatisfisCond-Lemma-Prandtl}
	 The following statements hold{\rm :}	
     \begin{enumerate}[\rm (i)]
			\item\label{RegReflSatisfisCond-Lemma-Prandtl-i1}
			Any admissible solution in the sense of Definition {\rm \ref{Prandtl-admisSolnDef}} satisfies the conditions
			of
			Theorems {\rm \ref{thm:main theorem}} and
			{\rm \ref{thm:main theorem-strictUnif}}.

			\item\label{RegReflSatisfisCond-Lemma-Prandtl-i2}
			Any global weak solution
			of the Prandtl-Meyer reflection problem in the sense of
			Definition {\rm \ref{def:weak solutionPrandtlRefl}} with properties
			{\rm (\ref{PrandtlAdmSolnProperties-2})}--\eqref{PrandtlSolnProperties-3-1} of
			Definition {\rm \ref{Prandtl-admisSolnDef}} and with shock $\Gsh$ being
			a strictly convex graph
			in the sense of \eqref{shock-graph-inMainThm}--\eqref{strictConvexity-degenerate-graph}
			satisfies
			property
			{\rm (\ref{PrandtlAdmSolnProperties-4})} of
			Definition {\rm \ref{Prandtl-admisSolnDef}}.
		\end{enumerate}
	\end{lem}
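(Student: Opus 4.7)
The proof would proceed in direct parallel with that of Lemma \ref{RegReflSatisfisCond-Lemma} for the regular reflection problem, verifying the hypotheses of Theorems \ref{thm:main theorem} and \ref{thm:main theorem-strictUnif} one by one for the admissible Prandtl-Meyer solution. I would split assertion \eqref{RegReflSatisfisCond-Lemma-Prandtl-i1} into seven steps, treating the supersonic configuration ($\theta_{\rm w}\in(0,\theta_{\rm w}^{\rm s})$) and the subsonic/sonic configuration ($\theta_{\rm w}\in[\theta_{\rm w}^{\rm s},\theta_{\rm w}^{\rm d})$) together with the notational convention that $\overline{\Gso^\mO}:=\{O\}$ and $P_1:=P_4:=O$ in the subsonic case, just as \eqref{subsonicConvenct} was used before.

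The first step is to verify Framework (A) for $\Omega$: piecewise-$C^{1,\alpha}$ regularity of $\partial\Omega$ follows from Theorem \ref{PrandtlReflPropertiesTh}\eqref{PrandtlSolnProperties-1-1} (with $\Gw$ a straight segment and $\Gso^\mN,\Gso^\mO$ circular arcs); the corner angles at $P_2,P_3,P_4$ (and at $P_1$ in the supersonic case) all lie in $(0,\pi)$ because, in each case, the sonic circle in question has its center on $\overline{\Gw}$ and the relevant shock endpoint lies on or outside that circle; at the subsonic endpoint $O$, the angle between $\Gsh$ and $\Gw$ is strictly less than $\pi$ by the monotonicity $\bn_{\rm w}\in Con(\ee_{S_\mO},\ee_{S_\mN})$ together with $\varphi=\varphi_\infty$ on $\Gsh$ and Theorem \ref{PrandtlReflPropertiesTh}\eqref{PrandtlSolnProperties-1-1}. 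Hypotheses (A1)--(A4) are then immediate from Definition \ref{Prandtl-admisSolnDef}\eqref{PrandtlSolnProperties-3}--\eqref{PrandtlSolnProperties-3-1}, Theorem \ref{PrandtlReflPropertiesTh}\eqref{PrandtlSolnProperties-2}--\eqref{PrandtlSolnProperties-1}, with the r\^ole of $\varphi_1$ now played by $\varphi_\infty$. For (A5), the Rankine-Hugoniot conditions at the sonic points yield $\bt_A=\ee_{S_\mO}$ and $\bt_B=\ee_{S_\mN}$, which are distinct and non-opposite, and \eqref{strictCone-regrefl-Prandtl} combined with $\Gsh=\{\varphi_\infty-\varphi=0\}$ forces $\{P+Con\}\cap\Omega=\emptyset$ for each $P\in\overline{\Gsh}$ with $Con=Con(\ee_{S_\mO},\ee_{S_\mN})$.

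The core step is (A6). I would select $\ee=\bn_{\rm w}=(-\sin\theta_{\rm w},\cos\theta_{\rm w})$, which is in $Con$ by the Prandtl-Meyer analogue of Remark \ref{nuWinCone} (a direct check from the explicit parametrisation of $\ee_{S_\mO},\ee_{S_\mN}$). The function $w=\phi_\ee$ with $\phi=\varphi-\varphi_\infty$ satisfies the strictly elliptic equation \eqref{equ:phi e} in $\overline\Omega\setminus(\overline{\Gso^\mN}\cup\overline{\Gso^\mO})$. On $\Gw^0$, writing $w$ in the normal/tangent frame, the boundary condition \eqref{equ:boundary-ofOmega-RegRefl-w} (derived in \cite{cf-book2014shockreflection}) is oblique since $\ee\cdot\bt_{\rm w}=0$ is excluded (indeed $\ee=\bn_{\rm w}$, and the BC $\partial_{\bn_{\rm w}}\phi\equiv -u_\infty\sin\theta_{\rm w}$ makes $w$ constant on $\Gw$). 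On the sonic arcs, $D\varphi=D\varphi_\mO$ on $\overline{\Gso^\mO}$ and $D\varphi=D\varphi_\mN$ on $\overline{\Gso^\mN}$, so $w=\ee\cdot D(\varphi_\mO-\varphi_\infty)$ and $w=\ee\cdot D(\varphi_\mN-\varphi_\infty)$ are constants there; moreover, by Theorem \ref{PrandtlReflPropertiesTh}\eqref{PrandtlSolnProperties-5} the quantity $\partial_\ee(\varphi-\varphi_\mO)$ attains its global maximum $0$ on $\overline{\Gso^\mO}\cup\overline{\Gw}$, and likewise $\partial_\ee(\varphi-\varphi_\mN)$ on $\overline{\Gso^\mN}\cup\overline{\Gw}$. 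Combining Hopf's lemma with the strong maximum principle, a local minimum of $w$ on the fixed boundary would force $\phi$ to be affine in $\Omega$, which, via $D\varphi(O)=D\varphi_\mO(O)$ (subsonic) or $C^1$-matching across $\Gso^\mO$ (supersonic), together with the slip condition on $\Gw$, would make $\varphi$ a constant state in $\Omega$---contradicting the Prandtl-Meyer analogue of Remark \ref{nonconsta-rmk}. Therefore Case (iii) of (A6) holds with $\Gamma_1=\partial\Omega\setminus\overline{\Gsh}$ and $\Gamma_2=\emptyset$.

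For the hypotheses of Theorem \ref{thm:main theorem-strictUnif}, I would decompose $\hat\Gamma_0=\Gso^\mO$ (resp.\ $\emptyset$) in the supersonic (resp.\ subsonic) case, $\hat\Gamma_1=\Gw$, $\hat\Gamma_2=\emptyset$, $\hat\Gamma_3=\Gso^\mN$. Then (A7)--(A8) hold because $\phi_\ee$ is constant on each sonic arc for every unit $\ee\in\mr^2$; (A9) on $\Gw$ follows exactly as in Step 6 of Lemma \ref{RegReflSatisfisCond-Lemma}: if $\phi_\ee$ attains a local extremum on $\Gw^0$, the oblique boundary condition \eqref{equ:boundary-ofOmega-RegRefl-w} forces $\ee\cdot\bt_{\rm w}=0$, and then the BC reduces to $w\equiv$ constant on $\overline{\Gw}$. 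Condition (A10)(i) is automatic since $\hat\Gamma_2=\emptyset$. Finally, assertion \eqref{RegReflSatisfisCond-Lemma-Prandtl-i2} is carried out exactly as Step 7 of Lemma \ref{RegReflSatisfisCond-Lemma}: strict concavity of the graph $f$ forces $\{P+Con\}\cap\Omega=\emptyset$ on $\Gsh$, and then $\varphi\le\varphi_\infty$ in $\Omega$ together with $\varphi=\varphi_\infty$ on $\Gsh$ yields \eqref{PrandtlAdmSolnProperties-4}. The main obstacle is the verification at the subsonic endpoint $O$: there the wedge vertex itself is a shock endpoint, so both the corner angle estimate for Framework (A) and the strong-maximum-principle argument for (A6) require invoking the $C^{1,\alpha}$ regularity of $\varphi$ up to $O$ from Theorem \ref{PrandtlReflPropertiesTh}\eqref{PrandtlSolnProperties-2} together with $D\varphi(O)=D\varphi_\mO(O)$ to recover the constancy of $\phi_\ee$ there; all other steps are straightforward translations of the corresponding steps for regular reflection.
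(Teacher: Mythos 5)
Your route is essentially the paper's: (A1)--(A5) are transferred from Lemma \ref{RegReflSatisfisCond-Lemma} with $\varphi_\infty$ replacing $\varphi_1$ and $Con=Con(\ee_{S_\mO},\ee_{S_\mN})$; (A6) is verified in Case (iii) with $\ee=\bn_{\rm w}$ via Theorem \ref{PrandtlReflPropertiesTh}(\ref{PrandtlSolnProperties-5}); (A7)--(A10) follow from the decomposition in which the sonic arcs play the roles of $\hat\Gamma_0$ and $\hat\Gamma_3$, $\Gw^0$ plays $\hat\Gamma_1$, and $\hat\Gamma_2=\emptyset$ (your labelling is the mirror image of the paper's, which is immaterial by Remark \ref{symmetry-H9-rmk} since (A10)(i) holds); and assertion (ii) is Step 7 of the proof of Lemma \ref{RegReflSatisfisCond-Lemma} with notational changes.

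There is, however, one step where your argument as written does not close: the passage from the maximum-principle conclusion to non-constancy. Hopf's lemma and the strong maximum principle applied to $\partial_\ee(\varphi-\varphi_\mO)$ and $\partial_\ee(\varphi-\varphi_\mN)$ (each vanishing on its boundary piece and globally maximal there by Theorem \ref{PrandtlReflPropertiesTh}(\ref{PrandtlSolnProperties-5})) yield only that $\phi_\ee$ is constant in $\Omega$; they do not yield that $\phi$ is affine. To derive a contradiction from ``$\phi_\ee$ constant'', the paper writes $\phi(S,T)=aS+g(T)$ in the coordinates with basis $\{\ee,\ee^{\perp}\}$, pins down $g'$ by the full gradient matching $D\varphi=D\varphi_\mN$ along the arc $\Gso^\mN$ (a curve of nontrivial extent in $T$), and then uses the real analyticity of $\phi$ up to $\Gsh^0$ (Lemma \ref{lem:analyticity}) to conclude that $\phi$ is affine in all of $\Omega$, i.e. $\varphi$ is a constant state, contradicting Remark \ref{nonconsta-Prandtl-rmk}. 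The ingredients you cite do not substitute for this: with $\ee=\bn_{\rm w}$, the slip condition on the straight segment $\Gw$ only determines the constant $a$ and gives no information on $g$, and in the subsonic case the single-point condition $D\varphi(O)=D\varphi_\mO(O)$ cannot force $g'$ to be constant on an interval (your supersonic-case ingredient, gradient matching along $\Gso^\mO$, does work; $\Gso^\mN$ is available in both configurations and is what the paper uses). Relatedly, your sentence about $\Gw^0$ is self-contradictory: for $\ee=\bn_{\rm w}$ one has $\ee\cdot\bt=0$ on $\Gw$, so the oblique condition \eqref{equ:boundary-ofOmega-RegRefl-w} is not applicable there; what is actually used (and what your parenthesis in effect states) is that $\phi_{\bn_{\rm w}}$ is constant on $\Gw$ by the slip condition, the oblique condition being reserved for directions $\ee\ne\pm\bn_{\rm w}$ in the verification of (A9).
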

	
	\begin{proof}
		We first discuss the proof of assertion (\ref{RegReflSatisfisCond-Lemma-Prandtl-i1}).
		
		Conditions (A1)--(A5) follow directly as in Lemma \ref{RegReflSatisfisCond-Lemma}.
		In particular, in (A5), $Con=Con(\ee_{S_\mO},\ee_{S_\mN})$,
		where we have used (\ref{3.5a}). Also, $A=P_1$ and $B=P_2$, where $P_1:=O$ in the
		subsonic/sonic case.
		
		For condition (A6), we choose $\ee=\bn_{\rm w}$, where $\bn_{\rm w}$ is
		defined in   Theorem \ref{PrandtlReflPropertiesTh}(\ref{PrandtlSolnProperties-5}).
		Then $\ee\in Con$, which can be seen from the fact that $u_\mO>0$ and $v_\mO>0$
		with $\frac{v_\mO}{u_\mO}>\tan\theta_{\rm w}$,
		and  $\ee_{\mN}=-(\cos\theta_{\rm w}, \sin\theta_{\rm w})$.
		
		\smallskip
		In the argument below, the local extrema are relative to $\overline\Omega$.
		Also, we discuss the supersonic and subsonic/sonic cases together and
		define $\overline{\Gso^\mO}:=\{O\}$ and $P_1:=O$ for the subsonic/sonic case.
		
		Recall the boundary conditions:
		$$
		\partial_{\bn}\varphi=0, \quad \partial_{\bn}\varphi_\mO=0,  \quad \partial_{\bn}\varphi_\mN=0
		\qquad\;\; \mbox{on $\overline\Gw$}.
		$$
		Also,
		$D\varphi=D\varphi_\mO$ on
		$\overline{\Gso^\mO}$ by
		Definition \ref{Prandtl-admisSolnDef}(\ref{PrandtlAdmSolnProperties-2}).
		Thus, $\partial_\ee(\varphi-\varphi_\mO)=0$ on
		$\overline\Gw\cup\overline{\Gso^\mO}$,
		which is the global maximum over
		$\overline\Omega$ by Theorem \ref{PrandtlReflPropertiesTh}(\ref{PrandtlSolnProperties-5}).
		Since $\varphi$ is not a constant state,
		arguing as in Step 5 of the
		proof of Lemma \ref{RegReflSatisfisCond-Lemma}, we find that, if
		$\phi_\ee$ attains its local minimum
		on $\overline\Gw\cup\overline{\Gso^\mO}$, then $\phi_\ee$ is constant in $\Omega$.
		Similarly,
		using that $D\varphi=D\varphi_\mN$
		on $\overline{\Gso^\mN}$ and arguing as above,
		we conclude that $\phi_\ee$ cannot attain its local minimum
		on $\overline{\Gso^\mN}$, unless $\phi_\ee$ is constant in $\Omega$.
		Combining all the facts together, we conclude that, if
		$\phi_\ee$ attains its local minimum
		on $\overline{\Gso^\mN}\cup\overline\Gw\cup\overline{\Gso^\mO}$,
		then $\phi_\ee$ is constant in $\Omega$.
		
		We now show that, if $\phi_\ee$ is constant in $\Omega$, then $\varphi$ is a constant
		state in $\Omega$. To fix notations, we consider first the
		supersonic case.
		Since conditions (A1)--(A5) have been verified, we can apply Lemma \ref{lem:shock graph}.
		We work in the $(S,T)$--coordinates with basis $\{\ee, \ee^\perp\}$ and origin $O$,
		where the orientation of $\ee^\perp$ is as in Lemma \ref{lem:shock graph}.
		Then
		$T_{P_4}<T_{P_1}<T_{P_2}<T_{P_3}$, where we have used that $A=P_1$ and $B=P_2$. Also,
		$$
		\Gso^\mO=\{S=f_{\mO}(T), \;T\in (T_{P_4},\,T_{P_1})\},\,\,
		\Gso^\mN=\{S=f_{\mN}(T), \;T\in (T_{P_2},\,T_{P_3})\},
		$$
		where $f_{\mO}\in C^\infty((T_{P_4},\,T_{P_1}))$
		and $f_{\mN}\in C^\infty((T_{P_2},\,T_{P_3}))$ are positive. With this, we obtain
		\begin{align*}
		\Omega=\{(S, T)\,:\, T\in (T_{P_4},\,T_{P_3}), \;\;
		S\in (0,\, \hat f(T) ) \},
		\end{align*}
		where $\hat f\in C(T_{P_4},\,T_{P_3})$ satisfies
		\begin{equation}\label{domainInPrandtlCoord}
		\hat f=f_{\mO}\;\mbox{on $(T_{P_4},\,T_{P_1})$},  \,\,
		\hat f=f_{\ee}\;\mbox{on $(T_{P_1},\,T_{P_2})$}, \,\,
		\hat f=f_{\mN}\;\mbox{on $(T_{P_2},\,T_{P_3})$}.
		\end{equation}
		Let $\phi_\ee\equiv a$ in $\Omega$. Then, from the structure of $\Omega$ described above,
		$\phi(S,T)=aS+g(T)$ in $\Omega$ for some $g\in C^1(\mR)$.
		Then, noting that $D\varphi=D\varphi_{\mN}$ on $\Gso^{\mN}$, we obtain
		$$
		g'(T)=\partial_{\ee^\perp}\phi(f_{\mN}(T), T)=D(\varphi_{\mN}-\varphi_\infty)\cdot\ee^\perp
		\qquad \mbox{for all $T\in (T_{P_2},\,T_{P_3})$},
		$$
		where we have used that $D(\varphi_{\mN}-\varphi_\infty)$ is a constant vector.
		Thus, $g'(T)$ is constant on $(T_{P_2},\,T_{P_3})$ so that
		$\phi(S,T)=aS+bT+c$  in $\hat\Omega:=\{(S, T)\,:\, T\in (T_{P_2},\,T_{P_3}), \; S\in (0,\, f_{\mN}(T) ) \} \subset \Omega$.
		Then, arguing as in Step 5 of the
		proof of Lemma \ref{RegReflSatisfisCond-Lemma}, we conclude that $\varphi$ is a constant state
		in $\Omega$, which is a contradiction. For the subsonic/sonic case, the argument is the same,
		except the structure of $\Omega$, where now $P_4=P_1=O$, and
		$f_{\mO}$ is not present in \eqref{domainInPrandtlCoord}.

		Therefore, Case (iii) of (A6) holds with  $\ee=\bn_{\rm w}$,
		$\Gamma_1:=\Gso^\mN\cup\overline\Gw\cup\Gso^\mO$
		for the supersonic case (resp.
		$\Gamma_1:=\Gso^\mN\cup\overline\Gw\setminus\{P_2\}$
		for the subsonic case),
		and $\Gamma_2=\emptyset$.
		
		\smallskip
		We now show that conditions (A7)--(A10) are satisfied with
		$\hat\Gamma_0=\overline{\Gso^\mN}\setminus\{P_2\}$,
		$\hat\Gamma_1=\Gw^0$,
		$\hat\Gamma_2=\emptyset$,
		and $\hat\Gamma_3=\overline{\Gso^\mO}\setminus\{P_1\}$
		for the supersonic case
		(resp. $\hat\Gamma_3=\emptyset$ for the subsonic case).
		Indeed, then (A7) clearly holds.
		Also, (A8) holds since $D\varphi=D\varphi_\mN$
		on $\overline{\Gso^\mN}$,  and
		$D\varphi=D\varphi_\mO$ on $\overline{\Gso^\mO}$ for the supersonic case.
		
		Condition (A9) on $\hat\Gamma_1=\Gw^0$ can be checked as follows:
		If $\ee \cdot \bt\ne 0$ on $\Gw^0$, then the argument of Step 5 in the proof
		of Lemma \ref{RegReflSatisfisCond-Lemma} applies here to yield that
		$\phi_\ee$ cannot attain the local minima or maxima on $\Gw^0$.
		In the other case,
		when $\ee=\pm\bn_{\rm w}$, we use the boundary condition:
		$$
		\partial_{\bn}\varphi=0\qquad \mbox{on $\Gw^0$}
		$$
		to derive that $\partial_{\bn}\phi=-u_\infty\sin\theta_{\rm w}$ on $\Gw$,
		similar to (\ref{equ:boundary-ofOmega-RegRefl-phi}).
		Also, on $\hat\Gamma_0=\overline{\Gso^\mN}\setminus\{P_2\}$, $D\varphi=D\varphi_\mN$
		so that
		$\phi_{\ee}=\partial_{\ee}(\varphi_\mN-\varphi_\infty)=const$.
		The argument on $\hat\Gamma_3=\overline{\Gso^\mO}\setminus\{P_1\}$ in the supersonic case
		is similar.
		This verifies (A9).
		Case (i) of (A10) clearly holds here.
		
		To prove assertion (\ref{RegReflSatisfisCond-Lemma-Prandtl-i2}),
		we follow directly the argument
		of Step 7 in the proof of Lemma \ref{RegReflSatisfisCond-Lemma} with mostly notational changes,
		{\it e.g.}, now $\varphi_\infty$ replaces $\varphi_1$.
	\end{proof}
	
	Therefore, we have
	
	\begin{thm}
		If $\varphi$ is an admissible solution of the Prandtl-Meyer reflection problem,
		then its shock curve $\Gsh$ is uniformly convex
		in the sense described in
		Theorem {\rm \ref{thm:main theorem-strictUnif}}.
		Moreover, for a weak solution
		of the Prandtl-Meyer reflection problem in the sense of
		Definition {\rm \ref{def:weak solutionPrandtlRefl}} with properties
		{\rm (\ref{PrandtlAdmSolnProperties-2})}--\eqref{PrandtlSolnProperties-3-1} of
		Definition {\rm \ref{Prandtl-admisSolnDef}}, the transonic shock $\Gsh$ is a strictly convex graph
		as in \eqref{shock-graph-inMainThm}--\eqref{strictConvexity-degenerate-graph}
		if and only if
		property
		{\rm (\ref{PrandtlAdmSolnProperties-4})} of
		Definition {\rm \ref{Prandtl-admisSolnDef}} holds.
	\end{thm}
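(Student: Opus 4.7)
The plan is to deduce the theorem directly from Lemma \ref{RegReflSatisfisCond-Lemma-Prandtl} combined with Theorems \ref{thm:main theorem} and \ref{thm:main theorem-strictUnif}, mirroring the argument used for the shock reflection-diffraction problem. First I would establish the uniform convexity for admissible solutions: by Lemma \ref{RegReflSatisfisCond-Lemma-Prandtl}(\ref{RegReflSatisfisCond-Lemma-Prandtl-i1}), any admissible solution $\varphi$ in the sense of Definition \ref{Prandtl-admisSolnDef} verifies hypotheses (A1)--(A6) of Theorem \ref{thm:main theorem} as well as the additional hypotheses (A7)--(A10) of Theorem \ref{thm:main theorem-strictUnif}. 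Invoking these two theorems for $\phi=\varphi-\varphi_\infty$ on the domain $\Omega$ (with the cone $Con=Con(\ee_{S_\mO},\ee_{S_\mN})$ and endpoints $A=P_1$, $B=P_2$) then yields that $\Gsh$ is a uniformly convex graph of a $C^\infty$ function with $f''(T)<0$ on $(T_A,T_B)$, which is precisely the sense of uniform convexity asserted in Theorem \ref{thm:main theorem-strictUnif}.

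For the equivalence assertion, I would argue the two directions separately. For the ``only if'' direction, assume that the shock $\Gsh$ of a weak solution satisfying properties (\ref{PrandtlAdmSolnProperties-2})--(\ref{PrandtlSolnProperties-3-1}) of Definition \ref{Prandtl-admisSolnDef} is a strictly convex graph in the sense of \eqref{shock-graph-inMainThm}--\eqref{strictConvexity-degenerate-graph}. Then Lemma \ref{RegReflSatisfisCond-Lemma-Prandtl}(\ref{RegReflSatisfisCond-Lemma-Prandtl-i2}) delivers exactly property (\ref{PrandtlAdmSolnProperties-4}) of Definition \ref{Prandtl-admisSolnDef}, namely the one-sided cone monotonicity $\partial_{\ee_{S_\mO}}(\varphi_\infty-\varphi)\le 0$ and $\partial_{\ee_{S_\mN}}(\varphi_\infty-\varphi)\le 0$ in $\Omega$. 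The argument there reduces to observing that, in the $(S,T)$-coordinates from \eqref{shock-graph-inMainThm}, the tangent vectors $\bt_A$ and $\bt_B$ are proportional to $\ee_{S_\mO}$ and $-\ee_{S_\mN}$ respectively, and the strict concavity $f'(T_A)>f'(T)>f'(T_B)$ combined with the strict sub-tangent line inequality $f(T)<f(T_1)+f'(T_1)(T-T_1)$ forces $\{P+Con\}\cap\Omega=\emptyset$ for every $P\in\overline\Gsh$, whence $\varphi\le\varphi_\infty$ in $\Omega$ and $\varphi=\varphi_\infty$ on $\Gsh$ give the required sign of the directional derivatives.

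For the ``if'' direction, assume property (\ref{PrandtlAdmSolnProperties-4}) of Definition \ref{Prandtl-admisSolnDef}. Then the weak solution satisfies all of properties (\ref{PrandtlAdmSolnProperties-2})--(\ref{PrandtlAdmSolnProperties-4}) and is therefore admissible in the sense of Definition \ref{Prandtl-admisSolnDef}. Applying the first part of this theorem, its shock is uniformly convex, and in particular it is a strictly convex graph in the sense of \eqref{shock-graph-inMainThm}--\eqref{strictConvexity-degenerate-graph}. This closes the equivalence.

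The only nontrivial step in carrying out this plan is the verification already contained in Lemma \ref{RegReflSatisfisCond-Lemma-Prandtl}; no new estimates are needed here. The subtlety to watch in writing the proof is the distinction between the supersonic and subsonic/sonic configurations and the conventions $P_4=P_1=O$ and $\overline{\Gso^\mO}=\{O\}$ in the subsonic case, which affect the decomposition $\partial\Omega=\hat\Gamma_0\cup\hat\Gamma_1\cup\hat\Gamma_2\cup\hat\Gamma_3$: one takes $\hat\Gamma_0=\overline{\Gso^\mN}\setminus\{P_2\}$, $\hat\Gamma_1=\Gw^0$, $\hat\Gamma_2=\emptyset$, and $\hat\Gamma_3=\overline{\Gso^\mO}\setminus\{P_1\}$ in the supersonic case and $\hat\Gamma_3=\emptyset$ in the subsonic case, so that Case (i) of (A10) applies and no boundary analysis at an intermediate corner $Q^*$ is required.
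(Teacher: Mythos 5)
Your proposal is correct and takes essentially the same route as the paper: the theorem is deduced directly from Lemma \ref{RegReflSatisfisCond-Lemma-Prandtl} together with Theorems \ref{thm:main theorem} and \ref{thm:main theorem-strictUnif}, with the ``if'' direction handled exactly as you do, by observing that a weak solution with properties (i)--(iv) and (v) is admissible by definition and hence has a uniformly (so in particular strictly) convex shock. One inessential nit: in your parenthetical sketch of the cited lemma, $\ee_{S_\mN}$ is oriented toward the interior of $\Gsh$ from $P_2=B$, so it coincides with $\bt_B$ rather than $-\ee_{S_\mN}$; since you invoke the lemma rather than reprove it, this does not affect the argument.
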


    \begin{appendices}
	\section{\,Paths Connecting Endpoints of the Minimal and Maximal Chains}
	\label{Appendix1Section}
	
	For $\Lambda\subset\mR^n$,
	we denote
	\begin{equation}\label{A.1-a}
	\Lambda_r:=\{\xxi\in \Lambda \;: \;\dist(\xxi, \partial\Lambda)>r\}.
	\end{equation}

	\begin{lem}\label{lem:connected-path}
		Let $\Lambda\subset\mR^n$ be an open set such that
		$\Lambda_r$ is connected for each $r\in[0, r_0]$
		with given $r_0>0$.
		Let $P, Q\in \overline\Lambda$
		be such that $B_r(P)\cap\Lambda_\rho$ and $B_r(Q)\cap\Lambda_\rho$ are connected
		for each $0\le\rho< r\le r_0$. Then
		there exists a continuous curve
		$\mS$ with endpoints $P$ and $Q$
		such that
		$
		\mS^0\subset \Lambda$.
		More precisely, $\mS=g([0,1])$, where $g\in C([0, 1];\mR^n)$, $g$ is locally Lipschitz
		on $(0,1)$, $g(0)=P$, $g(1)=Q$, and $g(t)\in\Lambda$ for all $t\in (0,1)$.
	\end{lem}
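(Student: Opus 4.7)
The plan is to construct $\mS$ as the image of a path $g\colon[0,1]\to\mR^n$ assembled from countably many polygonal arcs: an infinite sequence accumulating at $P$ as $t\downarrow 0$, a single polygonal arc joining two deep-interior points on a middle subinterval, and a symmetric infinite sequence accumulating at $Q$ as $t\uparrow 1$. Each arc will lie in a set of the form $B_r(\cdot)\cap\Lambda_\rho$ or $\Lambda_\rho$ to which one of the two connectedness hypotheses applies, and since any open connected subset of $\mR^n$ is polygonally path-connected, connectedness suffices to produce the arc together with local Lipschitz regularity.

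To build the approach to $P$, I would fix a strictly decreasing sequence $r_k\downarrow 0$ with $r_1<r_0$. Since $P\in\overline{\Lambda}$ and $\Lambda$ is open, one can pick $P_k\in B_{r_k}(P)\cap\Lambda$ for each $k\ge 1$. Setting $\rho_k:=\tfrac12\min\bigl\{r_k,\,\dist(P_k,\partial\Lambda),\,\dist(P_{k+1},\partial\Lambda)\bigr\}>0$, both $P_k$ and $P_{k+1}$ lie in $B_{r_k}(P)\cap\Lambda_{\rho_k}$, and $\rho_k<r_k\le r_0$, so the second hypothesis gives that this set is connected; being open in $\mR^n$, it is polygonally path-connected, so let $\gamma_k$ be a polygonal arc from $P_k$ to $P_{k+1}$ inside it. Reparameterizing $\gamma_k$ onto $\bigl[3^{-1}2^{-k},\,3^{-1}2^{-(k-1)}\bigr]$ and concatenating defines $g$ on $(0,1/3]$ with $g(3^{-1}2^{-(k-1)})=P_k$ and $g(t)\in B_{r_k}(P)$ for $t\in\bigl(3^{-1}2^{-k},\,3^{-1}2^{-(k-1)}\bigr]$. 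The fully symmetric construction with points $Q_k\in B_{r_k}(Q)\cap\Lambda$ and corresponding radii $\rho_k'$ defines $g$ on $[2/3,1)$ accumulating at $Q$.

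For the middle piece, both $P_1$ and $Q_1$ lie in $\Lambda_{r^*}$ with $r^*:=\min\{\rho_1,\rho_1',r_0\}>0$; the first hypothesis says $\Lambda_{r^*}$ is connected, hence polygonally path-connected, so a polygonal arc inside $\Lambda_{r^*}\subset\Lambda$ joins $P_1$ to $Q_1$, and I would parameterize it on $[1/3,2/3]$. Setting $g(0):=P$ and $g(1):=Q$ completes the definition, and $\mS:=g([0,1])$ is the desired curve.

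Finally, $g$ is continuous at every interior breakpoint by construction, and locally Lipschitz on $(0,1)$ since each $\gamma_k$ and the middle arc are polygonal and only finitely many of them meet any compact subinterval of $(0,1)$. Continuity at $t=0$ follows because, for $t\in\bigl(0,3^{-1}2^{-(k-1)}\bigr]$, we have $g(t)\in B_{r_k}(P)$ and $r_k\to 0$; continuity at $t=1$ is symmetric. I expect the main difficulty to be not analytic but bookkeeping: producing coordinated radii $(r_k,\rho_k)$ such that two consecutive approach points always sit in a single set $B_{r_k}(P)\cap\Lambda_{\rho_k}$ with $\rho_k<r_k\le r_0$ to which the hypothesis can be applied, and matching the reparameterization so that the resulting map has continuous endpoint limits at $0$ and $1$.
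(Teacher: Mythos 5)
Your construction is correct and follows essentially the same route as the paper's own proof: sequences $P_k\to P$ and $Q_k\to Q$ whose consecutive terms are joined by Lipschitz arcs inside $B_{r_k}(P)\cap\Lambda_{\rho_k}$ (resp. the analogous sets at $Q$), a middle arc inside $\Lambda_{r^*}$ via the connectedness of $\Lambda_r$, and the same continuity/local-Lipschitz bookkeeping coming from the shrinking balls at the endpoints and the fact that any compact subinterval of $(0,1)$ meets only finitely many pieces. The only (harmless) difference is that where the paper builds its Lipschitz arcs by a finite ball-chain covering after reducing to bounded $\Lambda$, you invoke the standard polygonal path-connectedness of open connected subsets of $\mR^n$ applied directly to the sets whose connectedness the hypotheses guarantee, which is an equally valid and slightly more direct use of the same assumptions.
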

	
	\begin{proof}
		We first note that, after points $P$ and $Q$ are fixed,
		we can assume that $\Lambda$ is a bounded set;
		otherwise, we replace $\Lambda$ by $\Lambda\cap B$, where $B$ is an open ball and $P, Q\in B$.
		
		We divide the proof into three steps.
		
		\smallskip
		{\bf 1.} We notice that, if $P, Q\in \Lambda_r$ for some $r\in[0, r_0)$, then
		there exists a piecewise-linear path $\mS$ with a finite number of corner points connecting
		$P$ to $Q$ such that $\mS\subset\Lambda_{r/2}$.
		This is obtained via covering $\overline{\Lambda_r}$ by balls $B_{r/2}(\xi_i)$, $i=1, \dots, N$,
		with each $\xi_i\in \overline\Lambda_r$
		and via noting that, since $\Lambda_r$ is connected, then any
		$\xi_i$ and $\xi_j$ can be connected by a piecewise-linear path with at most $N$ corners,
		each section of which is a straight segment connecting centers of two intersecting balls
		of the cover.
		
		Thus, the path connecting $\xi_i$ to $\xi_j$ lies in
		$\cup_{k=1}^N B_{r/2}(\xi_k)\subset \Lambda_{r/2}$.
		Then we connect $P$ to $Q$ by first connecting $P$ (resp. $Q$)
		to the nearest center of ball
		$\xi_i$ (resp. $\xi_j$) via a straight segment that
		lies in $B_{r_2}(\xi_i)$ (resp. $B_{r_2}(\xi_j)$),
		and next connect $\xi_i$ to $\xi_j$ as above.
		In this way, the whole path $\mS$ between $P$ and $Q$ is Lipschitz up
		to the endpoints and lies in $\Lambda_{r/2}$.
		Clearly, there exists $g\in C^{0,1}([0,1]; \mR^n)$ with
		$g(0)=P$, $g(1)=Q$, and $g(t)\in\Lambda_{r/2}$ for all $t\in[0,1]$
		such that $\mS=g([0, 1]$.
		Therefore, this lemma is proved for any $P, Q\in\Lambda$.
		
		\medskip
		{\bf 2.} Now we consider the case when $P\in\partial \Lambda$ and $Q\in \Lambda$.
		Since $\Lambda$
		is open, there exists a sequence $P_m\to P$ with $P_m\in \Lambda$ for
		$m=1,2,\dots$.
		Then $P_m\in \Lambda_{r_m}$ with $r_m>0$ and $r_m\to 0$.
		Thus, taking a subsequence, we can assume
		without loss of generality that $0<r_m< \frac{r_0}m$ for all $m$.
		
		As proved in Step 1,
		$P_1$ can be connected to $Q$ by a Lipschitz curve that is
		parameterized by $g\in C^{0,1}([\frac 12,1]; \mR^n)$ with
        $$
        g(\frac 12)=P_1,\,\,\,\, g(1)=Q, \quad\,\,\, g(t)\in\Lambda_{\tilde r} \,\, \mbox{for all $t\in[0,1]$},
		$$
		where $\tilde r>0$.
		Since $(B_r(P)\cap\Lambda)_\eps= B_{r-\eps}(P)\cap\Lambda_\eps$
		for all $\eps\in [0, \frac{r}{2})$, then
		the assumptions of this lemma allow to apply the result
		of Step 1 to sets
		$B_{r_0/m}(P)\cap \Lambda$ for $m=1, 2, \dots$.
		Thus, for each $m=1, 2, \dots$, we
		obtain a Lipschitz path
		between $P_m$ and $P_{m+1}$ which lies in $B_{r_0/m}(P)\cap \Lambda$
		and is parameterized by
		$\displaystyle g\in C^{0,1}([\frac 1{m+2},\frac 1{m+1}]; \mR^n)$ with
		\begin{equation*}
		\begin{split}
		&g(\frac 1{m+1})=P_m,\;\;\;\; g(\frac1{m+2})=P_{m+1},\\
        &g(t)\in B_{\frac{r_0}{m}}(P)\cap \Lambda_{\tilde r_m}
		\,\,\,\,\mbox{ for all $t\in[\frac 1{m+2},\frac 1{m+1}]$}.
		\end{split}
		\end{equation*}
		Combining the above together, we obtain a function $g: (0, 1]\to \mR^n$ such that
		$\displaystyle g\in C([0,1]; \mR^n)\cap C^{0,1}_{\rm loc}((0,1]; \mR^n)$ with
		\begin{equation*}
		\begin{split}
		\lim_{t\to 0+} g(t)=P,\quad g(1)=Q,\;\quad\; g(t)\in\Lambda
		\;\mbox{ for all $t\in(0,1]$}.
		\end{split}
		\end{equation*}
		This completes the proof for the case when $P\in \partial\Lambda$ and $Q\in \Lambda$.
		
		\medskip
		{\bf 3.} The remaining case for both $P, Q\in \partial\Lambda$ now readily follows,
		by connecting each
		of $P$ and $Q$ to some $C\in\Lambda$ and taking the union of the paths.
	\end{proof}
	
	\begin{lem}\label{lem:connected-path-2d}
		Let $\Omega\subset \mR^2$ satisfy the conditions stated at the beginning of
		\S {\rm \ref{ExistMinMaxChainsSection}},
		and let $r^*$ be the constant from Lemma {\rm \ref{lem:connectBall}}.
		Let $\Omega_{\rho}$ be defined as in \eqref{A.1-a}.
		Then there exists $r_0\in (0, \frac{r^*}{10}]$
		such that sets $\Omega_\rho$
		are connected for all $\rho\in[0, r_0]$,
		and sets $B_r(E)\cap \Omega_\rho$ are connected for any $E\in\overline\Omega$
		and $0\le\rho<r\le r_0$.
		Moreover, if $0\le\rho< r\le 2r_0$, $P\in\overline\Omega$, and $\dist(P, \partial\Omega)<r$, then
		\begin{equation}\label{bondariesClose}
		\dist(E, \partial\Omega\cap B_r(P))\le C\rho\qquad\;\mbox{ for each $E\in \partial\Omega_\rho\cap B_r(P)$},
		\end{equation}
		where $C$ depends only on the constants in the assumptions on $\Omega$.
	\end{lem}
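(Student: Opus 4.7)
The plan is to transfer the local graph structure of $\partial\Omega$ used in the proof of Lemma \ref{lem:connectBall} to the shrunken domains $\Omega_\rho$, so that Lemma \ref{lem:connectBall} can be reapplied to $\Omega_\rho$ uniformly in $\rho$, and then to read off \eqref{bondariesClose} from the explicit parallel-curve description. First I would fix $r_0\in(0,r^*/10]$ small enough that, for every $r\le 10 r_0$ and every $P^*\in\partial\Omega$, either \eqref{domainSmoothCase} or \eqref{domainNearCorner}--\eqref{domainNearCorner-funct} holds on $Q_{2r}$ centered at $P^*$, with $C^{1,\alpha}$-norms of $g$ (or $g_i$) uniformly bounded by a constant depending only on $\Omega$. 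Since the corners of $\partial\Omega$ are finite in number and their interior angles lie in $(0,\pi)$, the corner slopes satisfy $g_2'(t)-g_1'(t)\ge 2\lambda_0>0$ uniformly on $Q_{4Nr_0}$ for some $\lambda_0>0$.

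Next I would describe $\Omega_\rho$ in these coordinates. Near a smooth boundary point, the signed distance function to $\partial\Omega$ is $C^{1,\alpha}$ on a one-sided tubular neighborhood, so $\Omega_\rho\cap Q_{2r}=\{s>g_\rho(t)\}$ for a $C^{1,\alpha}$ graph $g_\rho$ with $g(t)+c_1\rho\le g_\rho(t)\le g(t)+c_2\rho$ and $|g_\rho'|\le C$, the constants depending only on the uniform $C^{1,\alpha}$-norm of $g$. Near a corner, one obtains $\Omega_\rho\cap Q_{4Nr}=\{s>\max(g_{1,\rho}(t),g_{2,\rho}(t))\}$ with the same kind of bounds; the transversality $g_{2,\rho}'-g_{1,\rho}'\ge\lambda_0$ persists for $\rho$ small, so $\partial\Omega_\rho$ has a corner with interior angle bounded away from $0$ and $\pi$, located on the angle bisector at distance $\rho/\sin(\theta/2)$ from the original corner. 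Consequently $\Omega_\rho$ itself satisfies the structural assumptions on $\Omega$ with constants uniform in $\rho\in[0,r_0]$, and (after possibly shrinking $r_0$) the constant $r^*$ of Lemma \ref{lem:connectBall} applied to $\Omega_\rho$ is at least $r_0$ uniformly in $\rho$.

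For the connectedness statements, the connectedness of each $\Omega_\rho$ follows from the connectedness of $\Omega$: any two points in $\Omega_\rho$ are joined by a path in $\Omega$, and this path can be pushed inward by $\rho$ using the local normal coordinates on each short sub-arc, keeping it in $\Omega_\rho$. The connectedness of $B_r(E)\cap\Omega_\rho$ for $r\le r_0$ then follows by applying Lemma \ref{lem:connectBall} to $\Omega_\rho$, using the uniform lower bound on the associated $r^*$ established above; for $E\in\overline\Omega\setminus\overline{\Omega_\rho}$, one instead writes $B_r(E)\cap\Omega_\rho$ directly in local coordinates at the point of $\partial\Omega$ nearest to $E$ and repeats the argument of Lemma \ref{lem:connectBall}, noting that the defining inequalities become $s>g_\rho(t)$ or $s>\max(g_{1,\rho}(t),g_{2,\rho}(t))$ with the same structural properties.

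Finally, \eqref{bondariesClose} is extracted by comparing $\partial\Omega_\rho$ to $\partial\Omega$ in local coordinates at the point $P^*\in\partial\Omega$ nearest to $P$; since $\dist(P,\partial\Omega)<r\le 2r_0$, both $P$ and every $E\in B_r(P)$ lie in a coordinate patch $Q_{10r_0}$ at $P^*$, in which $\partial\Omega_\rho$ is the parallel graph $g_\rho$ (or $\max(g_{1,\rho},g_{2,\rho})$) constructed above. For $E=(s_E,t_E)\in\partial\Omega_\rho\cap B_r(P)$, the vertical projection $F:=(g(t_E),t_E)$ lies on $\partial\Omega$ at distance $\le C\rho$ from $E$ by the bound $g_\rho-g=O(\rho)$, with $C$ depending only on $\|g'\|_\infty$. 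The main obstacle, and the main content of the argument, is to verify that $F$, or a short tangential perturbation of $F$ along $\partial\Omega$, actually lies in $B_r(P)$ and not merely in a slightly larger ball: when $E$ lies near the lateral edge of $\partial\Omega_\rho\cap B_r(P)$, the naive projection may exit $B_r(P)$, and one must slide along $\partial\Omega$ back into $B_r(P)$ while preserving the $O(\rho)$ estimate, using the uniform bound on $|g'|$, the transversality at corners, and the hypothesis $\dist(P,\partial\Omega)<r$, which guarantees that $\partial\Omega\cap B_r(P)$ contains an arc of definite length where the tangential shift can be absorbed.
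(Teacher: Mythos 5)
Your overall strategy (transfer the local graph structure to $\Omega_\rho$ and redo the connectivity argument of Lemma \ref{lem:connectBall}) is close in spirit to the paper's proof, but it rests on a regularity claim that is false and that the paper is specifically structured to avoid. You assert that the signed distance to $\partial\Omega$ is $C^{1,\alpha}$ in a one-sided collar, so that $\partial\Omega_\rho$ is again piecewise $C^{1,\alpha}$ with constants uniform in $\rho$, and that Lemma \ref{lem:connectBall} can then be applied to $\Omega_\rho$ with a uniform $r^*\ge r_0$. For $\alpha<1$ a $C^{1,\alpha}$ boundary has in general unbounded curvature, so inward normals cross at distances arbitrarily close to the boundary: already for $g(t)=|t|^{1+\alpha}$ the inner parallel curve develops a corner over $|t|\lesssim\rho^{1/(1-\alpha)}$ for every $\rho>0$, and one can arrange countably many such bumps so that $\partial\Omega_\rho$ has infinitely many corners. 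Thus $\partial\Omega_\rho$ is in general only Lipschitz, it need not satisfy the structural hypotheses at the beginning of \S\ref{ExistMinMaxChainsSection}, and no uniform $r^*(\Omega_\rho)$ is available; the step ``apply Lemma \ref{lem:connectBall} to $\Omega_\rho$'' is therefore not justified. The paper instead proves only that $g^\rho$ is Lipschitz and close to $g$ in the $C^{0,1}$ norm, estimate \eqref{gDifFg0Lip}, and then reruns the ball-versus-graph comparison of Lemma \ref{lem:connectBall} with $g^\rho$ (resp.\ $g_1^\rho,g_2^\rho$) in the coordinates based at the nearest point of $\partial\Omega$, for every $E\in\overline\Omega$, obtaining \eqref{domainSmoothCase-r}--\eqref{domainNearCorner-funct-r} and the analogues of \eqref{connectedRnbhd-smooth} and \eqref{connectedRnbhd-corner}. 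Your fallback argument for $E\in\overline\Omega\setminus\overline{\Omega_\rho}$ is exactly this and should be used for all $E$; but it needs the smallness of the Lipschitz norm of $g_\rho-g$ in those coordinates (that is what makes the comparison with the circular arcs $f^\pm$ work), whereas you only record $g+c_1\rho\le g_\rho\le g+c_2\rho$ and a bound $|g_\rho'|\le C$, which is not enough. The same missing structure makes your ``push the path inward'' proof of connectedness of $\Omega_\rho$ hang in the air; the paper gets connectedness directly from the fact that $\partial\Omega_\rho$ is a Lipschitz Jordan curve.

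Concerning \eqref{bondariesClose}, the lateral-edge difficulty you single out is real, but your proposed cure — a tangential slide along $\partial\Omega$ of size $O(\rho)$ — cannot work at the claimed order. In the flat model $\Omega=\{s>0\}$, $P=(r-\rho,0)$ and $E=(\rho,t_E)$ with $t_E$ close to $2\sqrt{\rho(r-\rho)}$, one has $E\in\partial\Omega_\rho\cap B_r(P)$ while $\partial\Omega\cap B_r(P)=\{(0,t):|t|<\sqrt{\rho(2r-\rho)}\}$, so the distance from $E$ to $\partial\Omega\cap B_r(P)$ is of order $\sqrt{\rho r}$; no tangential shift ``absorbable'' with a constant depending only on $\Omega$ produces a $C\rho$ bound there. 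What the graph estimates genuinely give (and what the paper derives before invoking \eqref{bondariesClose}) is the closeness of the two boundaries, $\dist(E,\partial\Omega)\le C\rho$ for $E\in\partial\Omega_\rho$, together with the uniform local graph representations; if you want the displayed inequality with the ball $B_r(P)$ on the left, you must either enlarge that ball or argue differently, and in any case the application in Lemma \ref{lem:connected-path-inChain} only needs the vertical closeness of the graphs. As written, your last step is not closed, and the claim that the shift is $O(\rho)$ is incorrect.
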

	
	\begin{proof}
		Throughout this proof, $C$ denotes a universal constant, depending only on $\Omega$.
		We divide the proof into two steps.
		
		\smallskip
		{\bf 1.} We first describe the structure of $\partial\Omega_\rho$ for sufficiently small $\rho>0$
		and show that $\Omega_\rho$ is connected for such $\rho$ and (\ref{bondariesClose}) holds.
		
		Denote by $\Gamma_i$, $i=1,\dots, m$, the smooth regions of $\partial\Omega$ up to the corner points.
		Then, for $P\in\Omega$, we have
		$$
		{\dist}(P, \partial\Omega)=\min_{i=1,\dots, m}{\dist}(P, \Gamma_i).
		$$
		Denote
		$$
		\Omega_i=\{P\in \Omega\;:\; \dist(P, \partial\Omega)=\dist(P, \Gamma_i)\}.
		$$
		Using that each $\Gamma_i$ is $C^{1,\alpha}$ up to the corner points,
		and  the angles at the corner points are between $(0, \pi)$,
		we now show that there exists
		$r_0>0$ such that, for any $\rho\in (0, r_0)$ and $i=1,\dots, m$,
		the set:
		$$
		\Gamma_i^\rho:=\{P\in \Omega_i\;:\; \dist(P, \partial\Omega)=\rho\}
		$$
		is a  Lipschitz curve.
		In addition, $\Gamma_i^\rho$ is close to $\Gamma_i$ in the Lipschitz norm
		in the sense described bellow.
		
		\smallskip
		Consider first a curve $\Gamma=\{(s,t)\in \mr^2\;:\; s=g(t)\}$
		for some $g\in C^{1,\alpha}(\mR)$. Let $\rho>0$ and
		$\Gamma^\rho=\{(s,t)\in \mr^2\;:\; s>g(t), \;\dist((s,t), \Gamma)=\rho\}$.
		Fix $t_0\in \mR$  and $r>10\rho$, and
		denote $L:=\|g\|_{C^{0,1}([t_0-2r, t_0+2r])}$. Then
		we find that, for any $t_1\in[t_0-r, t_0+r]$,
		there exists $s_1\in[g(t_1)+\rho, \;g(t_1)+\rho\sqrt{L^2+1}]$
		such that $(s_1, t_1)\in\Gamma^\rho$ and
		$$
		\Gamma^\rho
		\cap\{(s,t)\in \mr^2\;:\;|t-t_0|\le r,\; s>s_1+L|t-t_1|\}=\emptyset
		$$
		by noting that $B_\rho(s_1, t_1)\cap \Gamma=\emptyset$.
		From this,
		$$
		\Gamma^\rho=\{(s,t)\in \mr^2\;:\; s=g^\rho(t)\}
		$$
        with $g^\rho\in C^{0,1}_{\rm loc}(\mr)$ and $\|g-g^\rho\|_{L^\infty([-r,r])}\le \rho\sqrt{L^2+1}$.
		Moreover,
		fix $P\in \Gamma^\rho$. Then there exists $Q\in\Gamma$ such that $\dist(P, Q)=\rho$.
		It follows that
		$$
		B_\rho(P)\cap\Gamma=\emptyset,\quad\;B_\rho(Q)\cap\Gamma^\rho=\emptyset.
		$$
		From this,  for any $r\in(0,1)$,
		we find that  there exists $r_0\in(0,\frac{r}{10}]$
		depending only on $r$, $\alpha$, and  $\hat L:=\|g\|_{C^{1,\alpha}([3r, 3r])}$
		such that, if
		$\rho\in(0, r_0]$, then, for any $P=(g^\rho(t_P), t_P)\in  \Gamma^\rho\cap\{t\in[-r, r]\}$,
		we have
		\begin{align*}
		g^\rho(t)&\ge  g^\rho(t_P)+ g'(t_Q)(t-t_P)-\hat L r^\alpha|t-t_P|\\
		&\ge  g^\rho(t_P)+ g'(t_P)(t-t_P)-\hat L (r^\alpha+\rho^\alpha)|t-t_P|
		\end{align*}
        for any $t\in[-2r, 2r]$,
        where $Q:=(g(t_Q), t_Q)$ a point such that $\dist(P, Q)=\rho$.
		
		Then, noting that
		\begin{align*}
		|g(t)- g(t_P)- g'(t_P)(t-t_P)|\le L r^\alpha|t-t_P|\qquad\,\mbox{for any $t\in[-2r, 2r]$},
		\end{align*}
		and $\|g-g^\rho\|_{L^\infty([-r,r])}\le \rho\sqrt{\hat L^2+1}$,
		we have
		$$
		\|g-g^\rho\|_{C^{0,1}([-r,r])}\le \hat L \rho^\alpha+\rho\sqrt{\hat L^2+1}.
		$$
		Thus, for any $\varepsilon\in (0,1)$, reducing $r_0$,
		we obtain
		\begin{equation}\label{gDifFg0Lip}
		\|g-g^\rho\|_{C^{0,1}([-r,r])}\le \varepsilon\qquad \; \mbox{ if $\rho\le r_0$}.
		\end{equation}

		From this,  under the conditions of Case (\ref{domainSmooth-case})
		in the proof of Lemma \ref{lem:connectBall},
		when (\ref{domainSmoothCase}) holds, we follow the argument in the proof of Lemma \ref{lem:connectBall}
		and  choosing sufficiently small $r_0$ and $\varepsilon$ in (\ref{gDifFg0Lip}) to obtain that,
		for any positive $\rho<\min\{r, r_0\}$,
		\begin{equation}\label{domainSmoothCase-r}
		\begin{split}
		&\Omega_\rho\cap Q_{\frac{3r}2}=\{(s,t)\in Q_{\frac{3r}2}\;:\; s>g^\rho(t)\},\\
		&\partial\Omega^\rho\cap Q_{\frac{3r}2}=\{(s,t)\in Q_{\frac{3r}2}\;:\; s=g^\rho(t)\}.
		\end{split}
		\end{equation}
		Furthermore, under the conditions of Case (\ref{domainNearCorner-case})
		in the proof of Lemma \ref{lem:connectBall},
		when (\ref{domainNearCorner})--(\ref{domainNearCorner-funct}) hold,
		we repeat the argument there by choosing small $r_0$ and $\varepsilon$, and conclude that, for any positive $\rho<\min\{r, r_0\}$,
		\begin{equation}\label{domainNearCorner-r}
		\begin{split}
		&\Omega^\rho\cap Q_{3N r}=\{(s,t)\in Q_{3N r}\;:\; s>\max(g_1^\rho(t), g_2^\rho(t))\},\\[0.5mm]
		&\partial\Omega^\rho\cap Q_{3N r}=\{(s,t)\in Q_{3Nr}\;:\; s=\max(g_1^\rho(t), g_2^\rho(t))\},
		\end{split}
		\end{equation}
		where $g_1^\rho$ and $g_2^\rho$ satisfy
		(\ref{gDifFg0Lip}) with $g_1$ and $g_2$, respectively,
		and that there exists $t_\rho\in (-C\rho, C\rho)$ such that
		\begin{equation}\label{domainNearCorner-funct-r}
		\begin{split}
		&g_1^\rho(t)>g_2^\rho(t)\;\mbox{  for $t<t_\rho$},\qquad
		g_1^\rho(t)<g_2^\rho(t)\;\mbox{  for $t>t_\rho$}.
		\end{split}
		\end{equation}
		
		We adjust $r_0$ so that $r_0\le \frac{r^*}{10}$.
		Then,  from  (\ref{domainSmoothCase-r})--(\ref{domainNearCorner-funct-r})
		with $r=r^*$,
		we obtain that,
		for each $\rho\in(0, r_0]$,
		$\partial\Omega_\rho$ is a Lipschitz curve without self-intersection.
		It follows that $\Omega_\rho$ is
		simply-connected.
		
		Also, combining  (\ref{domainSmoothCase-r}) with (\ref{domainSmoothCase}) and
		(\ref{domainNearCorner-r})--(\ref{domainNearCorner-funct-r})
		with (\ref{domainNearCorner})--(\ref{domainNearCorner-funct})
		for $r=r_0$,
		choosing $\varepsilon$ small in (\ref{gDifFg0Lip}) for $g, g_1$, and $g_2$,
		and adjusting $r_0$,
		we have
		$$
		\dist(\partial\Omega_\rho, \partial\Omega)\le C\rho \qquad\mbox{for each $\rho\in (0, r^0)$}.
		$$
		Then we conclude (\ref{bondariesClose}).

		\medskip
		{\bf 2.} Now we show that $B_r(E)\cap\Omega_\rho$ is connected for any $E\in \Omega$
		and $0\le\rho<r\le r_0$.
		
		\smallskip
		Assume that ${\dist}(E, \partial\Omega)<2r$ (otherwise, the result already holds).
		Since $r_0\le \frac{r^*}{10}$, we obtain (\ref{domainSmoothCase})--(\ref{domainNearCorner-funct})
		for $2r$ instead of $r$,
		so that (\ref{domainSmoothCase-r})--(\ref{domainNearCorner-funct-r}) hold for $2r$ instead of $r$.
		Then, arguing as in the proof of Lemma \ref{lem:connectBall} and possibly reducing $r_0$,
		we obtain the following:
		
		\begin{itemize}
			
			\smallskip
			\item  If $B_r(E)\cap\Omega$ has expression
			(\ref{connectedRnbhd-smooth}), then
			$$
			\Omega_\rho\cap B_{r}(E)=\{(s,t)\;:\;t\in(t^-_\rho, t^+_\rho),\;
			\max(f^-(t), g^\rho(t))<s<f^+(t)\},
			$$
			where $t^+_\rho\in (\frac{9r}{10}, r]$
			and $t^-_\rho\in [-r, -\frac{9r}{10})$
			with $|t^\pm_\rho-t^\pm|\le C\rho$,
			$f^+>g^\rho$ on $(t^-_\rho, t^+_\rho)$, and $f^+<g^\rho$
			on $[-r, r]\setminus [t^-_\rho, t^+_\rho]$;
			
			\smallskip
			\item If  $B_r(E)\cap\Omega$ has expression
			(\ref{connectedRnbhd-corner}), then
			$$
			\Omega_\rho\cap B_{r}(E)=\{(s,t)\;:\;t\in(t^-_\rho, t^+_\rho),\;
			\max(f^-(t), g_1^\rho(t), g_2^\rho(t))<s<f^+(t)\},
			$$
			where  $t^-_\rho\in [t^*- r, t^*)$ and $t^+_\rho\in (t^*, t^*+r]$
			with $|t^\pm_\rho-t^\pm|\le C\rho$, and
			$f^+(t)>\max(g_1^\rho(t), g_2^\rho(t))$ on $(t^-_\rho, t^+_\rho)$.
		\end{itemize}
		The facts above imply that sets $B_r(E)\cap\Omega_\rho$ are connected.
	\end{proof}
	
	In the next lemma, we use the minimal and maximal chains in the sense of
	Definition \ref{def:minimal chain}.
	
	\begin{lem}\label{lem:connected-path-inChain}
		Let $\Omega\subset \mR^2$ satisfy the conditions stated at the beginning of
		\S {\rm \ref{ExistMinMaxChainsSection}},
		and let $r_0$ be the constant
		from Lemma {\rm \ref{lem:connected-path-2d}}.
		Let $E_1, E_2\in \overline\Omega$, and let there exist
		a minimal or maximal chain $\{E^i\}_{i=1}^N$ of radius $r_1\in(0, r_0]$ connecting $E_1$ to $E_2$
		in $\Omega$,
		{\it i.e.}, $E^0=E_1$ and $E^N=E_2$. Denote
		$$
		\Lambda=\bigcup_{i=0}^N B_{r_1}(E^i)\cap\Omega
		$$
		so that $E_1, E_2\in \partial\Lambda$.
		Then there exists $\hat r_0>0$ such that
		set $\Lambda$ and points $\{E_1, E_2\}$
		satisfy the conditions of Lemma {\rm \ref{lem:connected-path}}
		with radius $\hat r_0$.
	\end{lem}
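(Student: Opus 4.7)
The plan is to find a single $\hat r_0>0$ that lets Lemma~\ref{lem:connected-path} apply to $\Lambda$ with endpoints $\{E_1,E_2\}=\{E^0,E^N\}$, which means verifying: (i) $\Lambda_\rho$ is connected for every $\rho\in[0,\hat r_0]$; and (ii) $B_r(E_j)\cap\Lambda_\rho$ is connected for $j=1,2$ and $0\le\rho<r\le\hat r_0$. The geometric fact I would establish at the outset is a separation property: since $\Lambda=\bigcup_i B_{r_1}(E^i)\cap\Omega$ is open, $\partial\Lambda\subseteq\partial\Omega\cup\bigcup_i\partial B_{r_1}(E^i)$, and any $y\in\partial\Lambda\setminus\partial\Omega$ must lie on some $\partial B_{r_1}(E^k)$ while satisfying $y\notin B_{r_1}(E^i)$ for every chain index $i$ (otherwise $y$ would be an interior point of $\Lambda$); equivalently, $|y-E^i|\ge r_1$ for every such $y$ and every $i$.

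Condition (ii) follows quickly from this separation. Since $E_j$ is a chain center, $B_r(E_j)\cap\Lambda=B_r(E_j)\cap\Omega$ whenever $r<r_1$; and since $E_j\in\partial\Omega$ (else $E_j$ would be interior to $\Lambda$, contradicting $E_j\in\partial\Lambda$), the separation gives, for $x\in B_r(E_j)\cap\Omega$ and $r<r_1/2$,
\[
\dist(x,\partial\Lambda\setminus\partial\Omega)\ge r_1-r>r>\dist(x,\partial\Omega),
\]
so $\dist(x,\partial\Lambda)=\dist(x,\partial\Omega)$ and hence $B_r(E_j)\cap\Lambda_\rho=B_r(E_j)\cap\Omega_\rho$, a set that Lemma~\ref{lem:connected-path-2d} guarantees is connected once $\hat r_0<\min\{r_0,r_1/2\}$.

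For condition (i) I would first build a connected ``skeleton'' $\Gamma\subseteq\Lambda_{\hat r_0}$ threading the chain. Setting $U_i:=B_{r_1-2\hat r_0}(E^i)\cap\Omega_{2\hat r_0}$ for each $i$, Lemma~\ref{lem:connected-path-2d} makes each $U_i$ connected (for $\hat r_0<r_1/4$), while the separation observation forces $\dist(\cdot,\partial\Lambda\setminus\partial\Omega)\ge 2\hat r_0$ on $U_i$, so $U_i\subset\Lambda_{\hat r_0}$. Nonemptiness of $U_i$ comes from the piecewise $C^{1,\alpha}$, finite-angle structure of $\partial\Omega$ near $E^i$; nonemptiness of $U_i\cap U_{i+1}$ is obtained by first selecting an interior point $P_i$ of the open set $B_{r_1}(E^i)\cap B_{r_1}(E^{i+1})\cap\Omega$ (non-empty by Lemma~\ref{lem:connectBall}(ii)) and then shrinking $\hat r_0$ so that each $P_i$ lies at distance at least $2\hat r_0$ from the boundary of that set, which is possible since the chain has only finitely many pairs. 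Hence $\Gamma:=\bigcup_i U_i$ is a connected subset of $\Lambda_{\hat r_0}$.

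The main obstacle, and the step requiring the most care, is showing that every $y\in\Lambda_{\hat r_0}$ can be joined to $\Gamma$ by a path remaining in $\Lambda_{\hat r_0}$. From $y\in\Lambda_{\hat r_0}$ I first deduce $y\in\Omega_{\hat r_0}$ (by the same argument as in (ii)) and $y\in B_{r_1}(E^{i_0})\cap\Omega_{\hat r_0}$ for some $i_0$, a set Lemma~\ref{lem:connected-path-2d} declares connected. The naive path from $y$ into $U_{i_0}$ supplied by that lemma may, however, pass within distance $\hat r_0$ of $\partial B_{r_1}(E^{i_0})$ and thereby temporarily leave $\Lambda_{\hat r_0}$. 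My proposed remedy is to observe that any such ``dangerous'' point $x$, namely one with $|x-E^{i_0}|>r_1-\hat r_0$, can belong to $\Lambda_{\hat r_0}$ only if the nearest point on $\partial B_{r_1}(E^{i_0})$ to $x$ is interior to $\Lambda$, which forces $x\in B_{r_1}(E^{i_1})$ for some other chain index $i_1$; one then reroutes the path through $B_{r_1}(E^{i_1})\cap\Omega_{\hat r_0}$ and iterates. Because the chain is finite and the path is compact, finitely many such reroutings produce the required path in $\Lambda_{\hat r_0}$ from $y$ to a point of $\Gamma$, completing the verification.
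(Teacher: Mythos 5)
Your separation property, your verification of condition (ii) (the identity $B_r(E_j)\cap\Lambda_\rho=B_r(E_j)\cap\Omega_\rho$ for $r<r_1/2$, which reduces it to Lemma \ref{lem:connected-path-2d}), and your construction of the connected skeleton $\Gamma=\bigcup_i U_i\subset\Lambda_{\hat r_0}$ are correct and run parallel to the corresponding steps of the paper's proof. The genuine gap is in your final step, where an arbitrary $y\in\Lambda_{\hat r_0}$ must be joined to $\Gamma$ inside $\Lambda_{\hat r_0}$. First, the key inference there is not valid as stated: if $x\in\Lambda_{\hat r_0}$ and $|x-E^{i_0}|>r_1-\hat r_0$, then indeed the nearest point $z\in\partial B_{r_1}(E^{i_0})$ must lie in $\Lambda$, hence $z\in B_{r_1}(E^{i_1})$ for some $i_1\ne i_0$; but this only yields $|x-E^{i_1}|<r_1+\hat r_0$, not $x\in B_{r_1}(E^{i_1})$, so the proposed rerouting through $B_{r_1}(E^{i_1})\cap\Omega_{\hat r_0}$ need not even be available. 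Second, and more fundamentally, the obstruction you are trying to remove concerns the \emph{intermediate} points of the path that the connectivity of $B_{r_1}(E^{i_0})\cap\Omega_{\hat r_0}$ supplies: Lemma \ref{lem:connected-path-2d} gives no control over where that path goes, such points need not belong to $\Lambda_{\hat r_0}$ at all, and your observation (which presupposes membership in $\Lambda_{\hat r_0}$) says nothing about them. Switching to another ball reproduces the identical difficulty one index later, and ``finitely many reroutings by compactness'' is not an argument: no quantity decreases, there is no reason the procedure terminates, and no reason a concatenated path stays in $\Lambda_{\hat r_0}$. So condition (i), the connectivity of $\Lambda_\rho$, is not established.

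For comparison, the paper never faces this step: its Step 1 rests on the identity \eqref{LambdaRhoForChain}, $\Lambda_\rho=\bigcup_{i} B_{r_1-\rho}(E^i)\cap\Omega_\rho$, obtained from $\dist(P,\partial\Lambda)=\min\big(\dist(P,\partial(\bigcup_i B_{r_1}(E^i))),\,\dist(P,\partial\Omega)\big)$, after which connectivity of $\Lambda_\rho$ is immediate from the connectivity of each piece $B_{r_1-\rho}(E^i)\cap\Omega_\rho$ together with the nonemptiness of consecutive overlaps; there are no leftover points of $\Lambda_\rho$ to connect. Your skeleton argument proves only one inclusion (a shrunken-radius version of ``$\supseteq$''), so you are left with exactly the set of points that the paper's identity disposes of, and that is where your proof is incomplete. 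To repair it you would need either to prove that every point of $\Lambda_\rho$ lies in some $B_{r_1-\rho}(E^i)$ (the ``$\subseteq$'' direction of \eqref{LambdaRhoForChain}, which itself requires a genuine argument near points where two spheres $\partial B_{r_1}(E^i)$, $\partial B_{r_1}(E^j)$ cross inside $\Omega$), or to give an explicit construction of a path in $\Lambda_\rho$ from such a point into the shrunken balls, rather than the rerouting as described.
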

	
	\begin{proof} We divide the proof into two steps.
		
		\smallskip
		{\bf 1.} We first show the existence of $\hat r_0\in (0, r_1)$ such that $\Lambda_\rho$ is connected
		for each $\rho\in (0, \hat r_0]$.
		We recall that $r_1\le r_0\le r^*$ so that the conclusions
		of Lemma \ref{lem:connectBall} hold for $B_{r_1}(E^i)$.
		
		Since, for each $P\in\Lambda$,
		$$
		{\dist}(P, \partial\Lambda)=\min\big\{ {\dist}(P, \; \partial(\bigcup_{i=0}^N B_{r_1}(E^i))),\;
		{\dist}(P, \;\partial\Omega)
		\big\},
		$$
		then
		\begin{equation}\label{LambdaRhoForChain}
		\Lambda_\rho=  \bigcup_{i=0}^N B_{r_1-\rho}(E^i) \cap \Omega_\rho.
		\end{equation}

		By Lemma \ref{lem:connectBall}(\ref{lem:connectBall-i2}) and property
		(\ref{def:minimal chain-Ib}) of
		Definition \ref{def:minimal chain}, we see that,
		if $r_1\le r^*$, then $B_{r_1}(E^i) \cap B_{r_1}(E^{i+1}) \cap \Omega\ne\emptyset$
		for $i=0, \dots, N-1$.
		Note that all the sets in the last intersection are open.
		Then, recalling that $r_1\le r_0$ and using (\ref{bondariesClose}) in
		Lemma \ref{lem:connected-path-2d},
		we obtain that there exists $\hat r^0\in (0, r_1)$ such that,
		for any $\rho\in (0, \hat r_0)$,
		$$
		B_{r_1-\rho}(E^i)\cap  B_{r_1-\rho}(E^{i+1}) \cap \Omega_\rho\ne\emptyset\qquad\,
		\mbox{for $i=0, \dots, N-1$}.
		$$
		Also, from Lemma \ref{lem:connected-path-2d},
		sets $B_{r_1-\rho}(E^i)\cap\Omega_\rho$ are connected, since $r_1\le r_0$.
		Then we obtain that
		$\displaystyle  \bigcup_{i=0}^N B_{r_1-\rho}(E^i) \cap \Omega_\rho$
		is connected by using the argument in the proof of
		Lemma \ref{lem:connected-chain}(\ref{lem:connected-chain-i1}).
		Thus, by (\ref{LambdaRhoForChain}),
		we conclude that
		$\Lambda_\rho$ is connected for all $\rho\in (0, \hat r_0)$.
		
		\medskip
		{\bf 2.} Since $B_{r_1}(E^0)\cap\Omega\subset\Lambda$, then we use (\ref{LambdaRhoForChain})
		to obtain
		$$
		B_{r}(E^0)\cap\Lambda_\rho
		=B_{r}(E^0)\cap\Omega_\rho
		\qquad \mbox{ for all $r\in(0, \frac {r_1} {10}]$ and $\rho\in (0, r)$}.
		$$
		Sets $B_{r}(E^0)\cap\Omega_\rho$ with $r$ and $\rho$ as above are connected by
		Lemma \ref{lem:connected-path-2d}.
		Thus, the assumptions of Lemma \ref{lem:connected-path}
		with radius $\frac {r_1} {10}$ hold for point $E_1=E^0$.
		For point $E_2=E^N$, the argument is similar.
	\end{proof}
\end{appendices}

 \section*{Conflict of interest}

 The authors declare that they have no conflict of interest.

\end{document}